\newtheorem{theorem}{Theorem}[section]
\newtheorem{prop}[theorem]{Proposition}
\newtheorem{quote_prop}[theorem]{``Proposition''}
\newtheorem{lemma}[theorem]{Lemma}
\newtheorem{cor}[theorem]{Corollary}
\newtheorem*{cor*}{Corollary}
\newtheorem*{theorem*}{Theorem}
\newtheorem*{prop*}{Proposition}
\newtheorem*{cobhyp}{Baez-Dolan Cobordism Hypothesis}
\newtheorem{exercise}{Exercise}[section]
\theoremstyle{definition}
\newtheorem{definition}[theorem]{Definition}
\newtheorem{predef}[theorem]{Preliminary Definition}
\theoremstyle{remark}
\newtheorem{remark}[theorem]{Remark}
\newtheorem{example}[theorem]{Example}
\newtheorem*{example*}{Example}
\DeclareMathOperator{\Set}{Set}
\DeclareMathOperator{\Hom}{Hom}
\DeclareMathOperator{\Aut}{Aut}
\DeclareMathOperator{\Sym}{Sym}
\DeclareMathOperator{\ob}{ob}
\DeclareMathOperator{\Fun}{Fun}
\DeclareMathOperator{\Pre}{Pre}
\DeclareMathOperator{\cat}{Cat}
\DeclareMathOperator{\CSS}{CSS}
\DeclareMathOperator{\Seg}{Seg}
\DeclareMathOperator{\set}{Set}
\DeclareMathOperator{\sSet}{sSet}
\DeclareMathOperator{\Top}{Top}
\newcommand{\Bord}{\mathsf{Bord}}
\DeclareMathOperator*{\colim}{colim}  
\newcommand{\RP}{\mathbb{RP}}
\def\L8{L_\infty}
\def\op{\textrm{op}}
\def\cB{\mathcal B}\def\cC{\mathcal C}\def\cD{\mathcal D}
\def\cF{\mathcal F}
\def\cJ{\mathcal J}\def\cK{\mathcal K}
\def\cM{\mathcal M}\def\cN{\mathcal N}
\def\cR{\mathcal R}\def\cS{\mathcal S}
\def\cW{\mathcal W}
\def\CC{\mathbb C}
\def\GG{\mathbb G}
\def\KK{\mathbb K}
\def\NN{\mathbb N}
\def\RR{\mathbb R}
\def\ZZ{\mathbb Z}
\def\sA{\mathscr A}\def\sB{\mathscr B}
\def\sK{\mathscr K}
\def\sk{\mathscr k}
\def\bB{\mathbf B}
 \def\bDelta{\mathbf \Delta}
\def\fJ(E){\mathfrak E}
\def\fJ{\mathfrak J}
\def\ev{\mathrm{ev}}
\def\coev{\mathrm{coev}}
\def\Id{\mathrm{Id}}
\begin{document}

\title{Dualizability in Low-Dimensional Higher Category Theory}

\author{Christopher J. Schommer-Pries}

\address{Max-Planck Institute for Mathematics, Vivatsgasse 7, 53111 Bonn, Germany}
\email{schommerpries.chris.math@gmail.com}
\thanks{The author was partially supported by NSF fellowship DMS-0902808.}

\subjclass[2010]{Primary 57R56, 18D05; Secondary 18D10}

\date{August 16, 2013}

\dedicatory{This paper is dedicated to my daughter, Lilith}

\keywords{$(\infty,n)$-cateogries, cobordism hypothesis, topological quantum field theory, unicity}

\begin{abstract} These lecture notes form an expanded account of a course given at the Summer School on Topology and Field Theories held at the Center for Mathematics at the University of Notre Dame, Indiana during the Summer of 2012. A similar lecture series was given in Hamburg in January 2013. The lecture notes are divided into two parts. 

The first part, consisting of the bulk of these notes, provides an expository account of the author's joint work with Christopher Douglas and Noah Snyder on dualizability in low-dimensional higher categories and the connection to low-dimensional topology. 
The cobordism hypothesis provides bridge between topology and algebra, establishing important connections between these two fields. One example of this is the prediction that the $n$-groupoid of so-called `fully-dualizable' objects in any symmetric monoidal $n$-category inherits an $O(n)$-action. However the proof of the cobordism hypothesis outlined by Lurie is elaborate and inductive. Many consequences of the cobordism hypothesis, such as the precise form of this $O(n)$-action, remain mysterious. The aim of these lectures is to explain how this $O(n)$-action emerges in a range of low category numbers ($n \leq 3$).  

The second part of these lecture notes focuses on the author's joint work with Clark Barwick on the Unicity Theorem, as presented in arXiv:1112.0040. This theorem and the accompanying machinery provide an axiomatization of the theory of $(\infty,n)$-categories and several tools for verifying these axioms. The aim of this portion of the lectures is to provide an introduction to this material.
\end{abstract}

\maketitle





\section*{Introduction}

The cobordism hypothesis \cite{MR2555928} establishes a powerful relationship between extended topological field theories taking values in a symmetric monoidal higher category and objects in that higher category with various kinds of duality. It states that the higher groupoid of ``fully dualizable'' objects in $\cC$ is equivalent to the higher category of fully extended framed $n$-dimensional field theories in $C$, i.e. those where each 
bordism is equipped with a tangential framing. Since this version of the bordism category has a natural action of the orthogonal group (acting by change of framing), such an equivalence induces a 
(homotopically coherent) $O(n)$-action on the groupoid of fully-dualizable objects. 

Understanding this action is fundamental in applications of the cobordism hypothesis, as it provides a bridge to understanding extended topological field theories for bordisms  equipped with a different tangential structure group. 
The groupoid of field theories, say 
with structure group $G$, is obtained by starting with the groupoid of fully-dualizable objects 
and passing to the $G$-homotopy fixed points. 

Unfortunately the Hopkins-Lurie proof of the 
cobordism hypothesis is inductive, and the origin and nature of the induced $O(n)$-action on 
the groupoid of fully-dualizable objects remains largely mysterious and elusive. 

The goal of these lectures is to explain part of the author's joint work with C. Douglas and N. Snyder \cite{DSPS_TC3, DSPS_DTC1, DSPS_DTC2} which explores aspects of the $O(n)$-action on the $n$-groupoid of fully-dualizable objects in a range of ``low'' category numbers ($n\leq 3$). In coming to grips with how a topological group can act on a higher category, we will 
touch upon the beautiful connections between modern homotopy theory and higher category 
theory. 

Following Atiyah and Segal \cite{MR1001453, MR2079383}, a topological field theory may be understood as a symmetric monoidal functor from a geometric or topological category of $d$-manifolds and bordisms to an algebraic category, typically taken to be the category of vector spaces over a fixed field. Both the target category and the source category can be altered. For example the source may be altered  by equipping the manifolds and bordisms with orientations, spin structures, or framings. This provides a rich source of examples and variations on the notion of topological field theory.  

By their very conception topological field theories have the potential for providing a two-way bridge between algebraic structures and topology. Perhaps one of the earliest examples of this is the once-folklore result that 2-dimensional topological field theories (for oriented bordisms) with values in $k$-vector spaces are in natural bijection with commutative Frobenius algebras over $k$. 

Recently there has been dramatic progress developing such a algebraic--topological bridge in the setting of extended topological field theories. An extended topological field theory is a higher categorical extension of the Atiyah-Segal axioms which allows for topological bordisms to be decomposed along submanifolds of arbitrary codimension. This is formalized by replacing the cobordism category with a cobordism $n$-category. More specifically it will be an $n$-category whose objects are 0-manifolds, whose 1-morphisms are 1-dimensional cobordisms, whose 2-morphisms are 2-dimensional cobordisms between the 1-dimensional cobordisms, and so on up to dimension $n$. An extended topological field theory is then defined to be a symmetric monoidal functor from the symmetric monoidal $n$-category $\Bord_n$ to a chosen target symmetric monoidal $n$-category.

The introduction of higher categories adds a new element to the study of topological field theories. Recent developments have shown that there is a homotopy theory of higher categories which in many ways closely mimics the homotopy theory of ordinary spaces. The second part of this manuscript, begining in section \ref{sec:unicity_intro}, provides an expository account of the author's joint work with C. Barwick on this topic \cite{BarSch1112}. One aspect of this is the {\em Homotopy Hypothesis} which states that the homotopy theory of $n$-groupoids should be equivalent to the homotopy theory of $n$-types. This point of view allows for many ideas and techniques from homotopy theory to be imported into the study of extended topological field theories. 

In the past few years the study of extended topological field theories has undergone a dramatic transformation. The proof by Lurie \cite{MR2555928} of the cobordism hypothesis allows the complete classification of these field theories (c.f. also \cite{MR2713992}). In particular one learns that the tangentially framed bordism $n$-category has a particularly nice universal property: it is the free symmetric monoidal $n$-category generated by a `fully-dualizable object'. More precisely the cobordism hypothesis states that the category of extended tangentially framed topological field theories with values in the symmetric monoidal $n$-category $\cC$ is given precisely by the n-groupoid of so-called `fully-dualizable' objects of $\cC$. The property of being fully-dualizable can be thought of as a strong finiteness property.  

The framed bordism category has interesting automorphisms. In particular the group $O(n)$ acts on this symmetric monoidal $n$-category by changing the framings. Thus the cobordism hypothesis predicts that the $n$-groupoid of fully-dualizable objects in any symmetric monoidal $n$-category should also inherit a natural $O(n)$-action. 

It is the purpose of these lectures to try to understand and explain the nature of this action for low values of $n$, namely less than or equal to three. We will start in the first portion (sections~\ref{sec:strictNCats}-\ref{sec:exercises_1}) with a discussion of higher categories. In the second portion (sections~\ref{sec:cats_via_gen_reln}-\ref{sec:exercises_2}) we will describe the $O(1)$-action on 1-dualizable categories, in the third (sections~\ref{sec:Serre_Auto}-\ref{sec:exercises_3}) we describe the $SO(2)$-action on 2-dualizable categories, and in the fourth portion (sections~\ref{sec:3_dual}-\ref{sec:exercises_4}) we describe part of the $SO(3)$-action on 3-dualizable categories together with some applications. In many cases the proofs are only sketched and we refer the reader to the actual papers \cite{DSPS_TC3, DSPS_DTC1, DSPS_DTC2} for complete details. In the final portion (sections~\ref{sec:unicity_intro}-\ref{sec:Unicity}) we delve more thoroughly into the theory of higher categories and in particular into the Unicity theorems \cite{BarSch1112}, which provide a solution to the comparison problem in higher category theory.

\section*{Acknowledgements}

These lectures are based primarily on the author's joint work with Chris Douglas and Noah Snyder. The final section on the unicity theorem is based on joint work with Clark Barwick. Without these individuals, these lectures would not be possible. I would also like to thank Mike Hopkins, Stephan Stolz, and Peter Teichner for many helpful conversations about these ideas. Finally, I would like to extend generous thanks to Ryan Grady, whose careful note-taking and latex skills produced the first (and nearly complete) draft of these notes.

\specialsection*{Higher categories}

Higher categories are higher dimensional versions of categories. Where a category has objects and morphisms passing between these objects, a higher category has objects, morphisms between the objects, 2-morphisms between the morphisms, and possibly still higher morphisms between those. An $n$-category will have morphisms up to dimension $n$. The idea is best conveyed pictorially using pasting diagrams.
\[
\begin{array}{clc}
\text{Objects} && a,b,c \dotsc \\[2ex]
1\text{-morphisms} && a \xrightarrow{F} b\\[2ex]
2\text{-morphisms} &&\xymatrix{
a\ar@/^1pc/[rr]^{F}_{}="1"\ar@/_1pc/[rr]_G^{}="2"&& b \ar@{=>}^\alpha"1";"2"
}\\[2ex]
\vdots && \vdots

\end{array}
\]
In addition we have various compositions, and coherence equations.  For example, given 2-morphisms
\begin{center}
\begin{tikzpicture}[thick]
\node (a) at (0,0) {$a$};
\node (b) at (2,0) {$b$};
\node (b2) at (5,0) {$b$};
\node (c) at (7,0) {$c$};
\node at (3.5,0) {and};

\draw [->] (a) to [bend left = 40] node [above] {$F$} (b);
\draw [->] (a) to [bend right = 40] node [below] {$G$} (b);
\draw [->] (b2) to [bend left = 40] node [above] {$H$} (c);
\draw [->] (b2) to [bend right = 40] node [below] {$K$} (c);

\node at (1,0) {$\Downarrow \alpha$};
\node at (6,0) {$\Downarrow \beta$};

\end{tikzpicture}
\end{center}

\noindent
we should be able to compose them {\it horizontally} to obtain a new 2-morphism from $H \circ F$ to $K \circ G$
\begin{center}
\begin{tikzpicture}[thick]
\node (a) at (0,0) {$a$};
\node (b) at (2,1) {$b$};
\node (b2) at (2,-1) {$b$};
\node (c) at (4,0) {$c$};

\draw [->] (a) to [bend left = 30] node [above] {$F$} (b);
\draw [->] (a) to [bend right = 30] node [below] {$G$} (b2);
\draw [->] (b) to [bend left = 30] node [above] {$H$} (c);
\draw [->] (b2) to [bend right = 30] node [below] {$K$} (c);

\node at (2,0) {$\Downarrow \beta \ast \alpha$};
\end{tikzpicture}
\end{center}
Now suppose we have 2-morphisms with compatible source and target of the form
\begin{center}
\begin{tikzpicture}[thick]
\node (a) at (0,0) {$a$};
\node (b) at (2,0) {$b$};
\node (a2) at (5,0) {$a$};
\node (b2) at (7,0) {$b$};
\node at (3.5,0) {and};

\draw [->] (a) to [bend left = 40] node [above] {$F$} (b);
\draw [->] (a) to [bend right = 40] node [below] {$G$} (b);
\draw [->] (a2) to [bend left = 40] node [above] {$G$} (b2);
\draw [->] (a2) to [bend right = 40] node [below] {$K$} (b2);

\node at (1,0) {$\Downarrow \alpha$};
\node at (6,0) {$\Downarrow \beta$};

\end{tikzpicture}
\end{center}
then we should be able to compose them {\it vertically} as follows:
\begin{center}
\begin{tikzpicture}[thick]

\node (a) at (0,0) {$a$};
\node (b) at (2,0) {$b$};

\draw [->] (a) to [bend left = 70] node [above] {$F$} (b);
\draw [->] (a) to (b);
\draw [->] (a) to [bend right=70] node [below] {$K$} (b);

\node at (1,.35) {$\Downarrow \alpha$};
\node at (1,-.35) {$\Downarrow \beta$};
\end{tikzpicture}.
\end{center}
When $n=2$, so that we are thinking about 2-categories, then for each pair of objects, $a$ and $b$, we obtain a category of morphisms between them. The objects of this category will be the morphisms from $a$ to $b$, and the morphisms of this category will be the 2-morphisms between these. The vertical composition is the composition for this {\em hom category}. 

More generally, in an $n$-category, the morphisms between two objects should form an $(n-1)$-category. Thus the theory of higher categories is closely tied to the idea of {\em enriched category theory}. This also leads to the easiest and first attempt at defining higher categories, {\em strict $n$-categories}, which we describe in the next section. However, we will see that strict $n$-categories are not quite adequate for our needs. 

\section{Strict n-Categories} \label{sec:strictNCats}

There are several equivalent ways to define strict $n$-categories. One of the fastest is by induction via the theory of {\em enriched categories}. If $C$ is a category with finite products, then a {\em category enriched in $C$}, say $X$, consists of a set of objects $a, b, c, \dots \in \ob X$ and hom objects $\hom_X(a,b) \in C$ for every pair $a,b \in \ob X$. In addition there are associative composition maps
\begin{equation*}
	\hom_X(b,c) \times \hom_X(a,b) \to \hom_X(a,c)
\end{equation*}
with units $1 \to \hom_X(a,a)$ for each $a$. More generally one may enrich over a monoidal category $(C, \otimes, 1)$; it is not necessary for the monoidal structure to be the categorical product. 
An ordinary (small) category is then the same as a category enriched in sets.

There is an obvious notion of functor between categories enriched in $C$, giving rise to a category of enriched categories. This category will again have finite products, and so we may iterate this process.  

\begin{definition}
	The category of strict $0$-categories is defined as the category of sets. The category of {\em strict $n$-categories} is the category of categories enriched in strict $(n-1)$-categories. 
\end{definition}

\begin{example}
	A strict 1-category is a (small) category in the usual sense.  
\end{example}

\begin{example}
	Any strict $n$-category may be regarded as a strict $(n+1)$-category with only identity $(n+1)$-morphisms. 
\end{example}

	A strict $2$-category consists, in particular, of a set of objects $a,b,c$, and for each pair of objects a category $\hom(a,b)$. The objects of $\hom(a,b)$ are called {\em 1-morphisms} (from $a$ to $b$) and the morphisms of $\hom(a,b)$ are called $2$-morphisms. 

\begin{example}
	We form a (large) strict 2-category with objects (small) categories, 1-morphisms functors between categories, and 2-morphisms given by natural transformations. 
\end{example}

\begin{example}
	The cell $C_k$ is the `free-walking $k$-morphism'. They can be inductively defined as follows. The $0$-cell is the terminal $n$-category, the singleton point. Then $C_k$ is defined to have two objects $0$ and $1$. The $(k-1)$-category of morphisms from $0$ to $1$ consists of $C_{k-1}$. These are the only non-identity morphisms. There is a unique composition making this into a strict $k$-category.
	A functor $C_k \to X$ consists of precisely a $k$-morphism of $X$. 
\end{example}

The category of strict $n$-categories is Cartesian closed; it has finite products and admits an internal hom functor, right adjoint to the cartesian product. Denote this internal hom by $\Fun(X,Y)$ for any two strict $n$-categories $X$ and $Y$. The objects of this $n$-category are the functors $X \to Y$. More generally, the $k$-morphisms are the functors $C_k \times X \to Y$. 

\section{Bicategories}

Most examples of higher categories which `occur in nature' are not strict $n$-categories, but something weaker. One of the earliest examples of such a weaker notion is that of {\em bicategories}, to which we now turn. 

Like a strict 2-category, a bicategory $\cC$ has a collection of objects $a,b,c,\dots$ and for each pair of objects we have a category $\cC(a,b)$, the objects and morphisms of which we refer to as 1-morphisms and 2-morphisms, respectively. There exists a composition functor
\begin{equation*}
	c^{\cC}_{abc}:\cC(b,c) \times \cC(a,b) \to \cC(a,c)
\end{equation*}
and there exist distinguished 1-morphisms $1_a \in \cC(a,a)$ for each object $a \in \ob \cC$. We will also use the notation $f \circ g$ for the composite of 1-morphisms $f$ and $g$, and $\alpha * \beta$ for the `horizontal' composite $c^{\cC}(\alpha, \beta)$ of 2-morphisms $\alpha$ and $\beta$.

Bicategories differ from strict 2-categories in that the composition functor is not required to be associative, nor is the unit object required to be a strict unit for the composition. Instead we are supplied with natural isomorphisms, which in components are:
\begin{align*}
	a:& (h \circ g) \circ f \stackrel{\sim}{\to} h \circ (g \circ f) & \textrm{(associator)} \\
	\ell: & 1_b \circ f \stackrel{\sim}{\to} f & \textrm{(left unitor)} \\
	r: &  f \circ 1_a \stackrel{\sim}{\to} f & \textrm{(right unitor)} 
\end{align*}
for all 1-morphisms $f \in \Hom(a,b)$, $g \in \Hom(b,c)$, and $h \in \Hom(c,d)$. These natural transformations are required to satisfy the {\em pentagon} and {\em triangle} axioms. These say that the following two diagrams commute:
\begin{center}
\begin{tikzpicture}
	\node (MT) at (2.2, 2.5) {$(f\circ g) \circ (h \circ k)$};
	\node (LT) at (0, 1.5) {$((f \circ g) \circ h) \circ k$};
	\node (LB) at (0.5, 0) {$(f \circ (g \circ h)) \circ k$};
	\node (RT) at (5, 1.5) {$f \circ (g \circ (h \circ k))$};
	\node (RB) at (4.5, 0) {$f \circ ((g \circ h) \circ k)$};
	\draw [->] (LT) -- node [left] {$a * 1_k$} (LB);
	\draw [->] (LT) -- node [above left] {$a$} (MT);
	\draw [->] (MT) -- node [above right] {$a$} (RT);
	\draw [->] (RB) -- node [right] {$1_f * a$} (RT);
	\draw [->] (LB) -- node [below] {$a$} (RB);
\end{tikzpicture}\\
\begin{tikzpicture}
	\node (LT) at (0, 1.5) {$(f \circ 1_b) \circ g$};
	\node (MB) at (2, 0) {$f \circ g$};
	\node (RT) at (4, 1.5) {$f \circ (1_b \circ g)$};
	\draw [->] (LT) -- node [left] {$r*1_g$} (MB);
	\draw [->] (LT) -- node [above] {$a$} (RT);
	\draw [->] (RT) -- node [right] {$1_f * \ell$} (MB);
\end{tikzpicture}
\end{center}
whenever the relevant compositions make sense. 

The notion of functor between bicategories is also weaker than the notion for strict 2-categories. We will give the definition in a moment, but first I would like to list some of the examples that we will now have at our fingertips. 

\begin{example}[Strict 2-categories]
	Every strict 2-category will be a bicategory in which the three natural transformations $a$, $\ell$, and $r$ are identities. This includes in particular the bicategory of categories with objects (small) categories, 1-morphisms functors between categories, and 2-morphisms given by natural transformations.
\end{example}


\begin{example}[Monoidal categories $(\cC, \otimes)$] We form a 2-category denoted $\bB (\cC,\otimes)$ with one object $pt$.  The 1-morphisms in $\bB (\cC, \otimes)$ are given by the objects of $\cC$ while the 2-morphisms are the morphisms of $C$.  The horizontal composition is then given by the monoidal structure $\otimes$, i.e.

	\begin{center}
	\begin{tikzpicture}[thick]

	\node (a) at (0,0) {$pt$};
	\node (b) at (2,0) {$pt$};
	\node (c) at (3,0) {$pt$};
	\node (d) at (5,0) {$pt$};

	\node (e) at (7,0) {$pt$};
	\node (f) at (10,0) {$pt$};

	\node at (-1,0) {$\ast :$};
	\node at (2.5,0) {,};
	\node at (6,0) {$\mapsto$};

	\draw [->] (a) to [bend left = 40] node [above] {$a$} (b);
	\draw [->] (a) to [bend right = 40] node [below] {$b$} (b);
	\draw [->] (c) to [bend left = 40] node [above] {$c$} (d);
	\draw [->] (c) to [bend right = 40] node [below] {$d$} (d);
	\draw [->] (e) to [bend left = 40] node [above] {$a \otimes c$} (f);
	\draw [->] (e) to [bend right = 40] node [below] {$b \otimes d$} (f);

	\node at (1,0) {$\Downarrow F$};
	\node at (4,0) {$\Downarrow G$};
	\node at (8.5,0) {$\Downarrow F \otimes G$};

	\end{tikzpicture}
	\end{center}
\end{example}

\begin{example}[The bicategory of algebras]
	Fix a ground ring $\KK$ and form a bicategory with objects $\KK$-algebras, 1-morphisms given by bimodules, and 2-morphisms given by maps of bimodules.  Note that the vertical composition in this bicategory is given by composition of maps of bimodules.  The horizontal composition is given by the tensor product, so given an $A$-$B$ bimodule $M$ and $B$-$C$ bimodule $N$ we have
	\[
	M \ast N = M \otimes_B N
	\]
	viewed as an $A$-$C$ bimodule. The identity bimodule for an algebra $A$ is simply $A$ itself. The unitors and associators are induced by the universal property of the tensor product. 
\end{example}

\begin{example}[The fundamental $2$-groupoid of a space]
	  Given a topological space $X$ we can form a bicategory where all the 2-morphisms are invertible and all the 1-morphisms are weakly invertible (invertible up to a $2$-isomorphism).
We call such bicategories  {\em 2-groupoids}. 
For a space $X$ we denote this groupoid by $\Pi_{\le 2} X$, this $2$-groupoid has objects corresponding to the points of $X$, 1-morphisms are paths in $X$, 2-morphisms are equivalence classes of paths between paths, with the equivalence relation of homotopy relative boundary. Paths are composed in the obvious way, with associators and unitors induced by homotopies which re-parameterize the composite paths. 
\end{example}

\begin{example}[Commutative monoids] \label{exam:commMonoidAsBicat}
	 Consider a 2-category with one object, one 1-morphism, and a set of 2-morphisms $\cS$.  Then the vertical and horizontal compositions give two monoidal structures on $\cS$ such that
	\[
	(p \circ q) \ast (r \circ s) = (p \ast r) \circ (q \ast r) .
	\]
	The Eckmann-Hilton argument (see Exercise \ref{ex:Eckman-Hilton} ) then implies that $\cS$ is a commutative monoid and that $\circ = \ast$. 
	Conversely, every commutative monoid gives rise to a 2-category of this type. 
\end{example}

\begin{definition}
Let $\sA$ and $\sB$ be bicategories. A {\em homomorphism} 
\index{bicategory!homomorphism} 
\index{homomorphism}
\index{homomorphism|see{bicategory}}
$F: \sA \to \sB$ consists of the data:
\begin{enumerate}
\item A function $F: \text{ob } \sA \to \text{ob } \sB$,
\item Functors $F_{ab}: \sA(a,b) \to \sB( F(a), F(b))$,
\item Natural isomorphisms 
\begin{align*}
\phi_{abc}:  c^\sB_{F(a) F(b) F(c) } \circ (F_{bc} \times F_{ab})   & \to   F_{ac} \circ c^\sA_{abc}\\
\phi_a: I^\sB_{F(a)} &\to F_{aa} \circ I_a^\sA 
\end{align*}
(thus invertible  2-morphisms $\phi_{gf}: Fg \circ F f \to F(g \circ f)$ and $\phi_a: I^\sB_{Fa} \to F( I^\sA_a)$ ).
\end{enumerate}
such that the following diagrams commute:
		\begin{center}
		\begin{tikzpicture}[thick]
			\node (A) at (3, 6) {$(Fh \circ Fg) \circ Ff$};
			\node (B) at (0,4) {$Fh \circ (Fg \circ Fh)$};
			\node (C) at (0,2) {$Fh \circ F(g \circ f)$};
			\node (D) at (3,0) {$F(h \circ (g\circ f))$};
			\node (E) at (6,2) {$F((h \circ g) \circ f)$};
			\node (F) at (6,4) {$F(h \circ g) \circ Ff$};
			
			\draw [->] (A) -- node [above left] {$a^\sB$} (B);
			\draw [->] (B) -- node [left] {$1_{Fh} * \phi$} (C);
			\draw [->] (C) -- node [below left] {$\phi$} (D);
			\draw [->] (A) -- node [above right] {$\phi * 1_{Ff}$} (F);
			\draw [->] (F) -- node [right] {$\phi$} (E);
			\draw [->] (E) -- node [below right] {$Fa^\sA$} (D);
		\end{tikzpicture}
		\end{center}
		\begin{center}
		\begin{tikzpicture}[thick]
			\node (A) at (2, 4) {$(Ff) \circ (I_{Fb}^\sB)$};
			\node (B) at (0,2) {$(Ff) \circ (FI^\sA_b)$};
			\node (C) at (2,0) {$F(f \circ I\sA_b)$};
			\node (D) at (4,2) {$Ff$};
			\node (E) at (6,4) {$(I_{Fa}^\sB) \circ (Ff)$};
			\node (F) at (8,2) {$(FI_a^\sA) \circ (Ff)$};
			\node (G) at (6,0) {$F(I_a^\sA \circ f)$};
			
			\draw [->] (A) -- node [above left] {$1_{Ff} * \phi_b$} (B);
			\draw [->] (B) -- node [below left] {$ \phi$} (C);
			\draw [->] (C) -- node [below right] {$Fr^\sA$} (D);
			\draw [->] (A) -- node [above right] {$r^\sB$} (D);
			\draw [->] (E) -- node [above right] {$\phi_{a} * 1_{Ff}$} (F);
			\draw [->] (F) -- node [below right] {$\phi$} (G);
			\draw [->] (G) -- node [below left] {$F\ell^\sA$} (D);
			\draw [->] (E) -- node [above left] {$\ell^\sB$} (D);

		\end{tikzpicture}
		\end{center}
If the natural isomorphisms $\phi_{abc}$ and $\phi_a$ are identities, then the homomorphism $F$ is called a {\em strict homomorphism}. 
\end{definition}

\begin{definition} \label{DefnBicatTransformation}
Let $(F, \phi), (G, \psi): \sA \to \sB$ be two homomorphisms between bicategories. A {\em transformation} 
\index{bicategory!transformation} 
\index{transformation} 
\index{transformation|see{bicategory}}
$\sigma: F \to G$ is given by the data:
\begin{enumerate}
\item 1-morphisms $\sigma_a: Fa \to Ga$ for each object $a \in \sA$,
\item Natural Isomorphisms, $\sigma_{ab}: (\sigma_a)^* \circ G_{ab} \to (\sigma_b)_* \circ F_{ab}$

(thus invertible  2-morphisms $\sigma_f: Gf \circ \sigma_a \to \sigma_b \circ Ff$ for every $f \in \sA_1$).
\end{enumerate}
such that the diagrams in Figure~\ref{FigBicatTransformationAxioms} commute for all 1-morphisms in $\sA$, $f: a \to b$ and $g: b \to c$.
\begin{figure}[h]
\begin{center}
	\begin{center}
		\begin{tikzpicture}[thick]
			\node (A) at (0,6) {$(Gg \circ Gf) \circ \sigma_a$};
			\node (B) at (0,4) {$Gg \circ(Gf \circ \sigma_a)$};
			\node (C) at (0,2) {$ Gg \circ( \sigma_b \circ Ff)$};
			\node (D) at (3,0) {$ (Gg \circ \sigma_b) \circ Ff$};
			\node (E) at (6,2) {$(\sigma_c \circ  Fg) \circ Ff$};
			\node (F) at (6,4) {$\sigma_c \circ (Fg \circ Ff)$};
			\node (G) at (6,6) {$\sigma_c \circ F(g \circ f)$};
			\node (H) at (3,8) {$G(g \circ f) \circ \sigma_a$};
			
			\draw [->] (A) -- node [left] {$a^\sB$} (B);
			\draw [->] (B) -- node [left] {$id_{Gg} * \sigma_f$} (C);
			\draw [->] (C) -- node [below left]  {$(a^\sB)^{-1}$} (D);
			\draw [->] (D) -- node [below right] {$\sigma_g * id_{Ff}$} (E);
			\draw [->] (E) -- node [right] {$a^\sB$} (F);
			\draw [->] (F) -- node [right] {$id_{\sigma_c} * \phi_{g,f}$} (G);
			\draw [->] (A) -- node [above left] {$\psi_{g,f} * id_{\sigma_a}$} (H);
			\draw [->] (H) -- node [above right] {$\sigma_{gf}$} (G);
		\end{tikzpicture}
		\end{center}
		\begin{center}
		\begin{tikzpicture}[thick]
			\node (A) at (0,2) {$I^\sB_{Ga} \circ \sigma_a$};
			\node (B) at (3,3) {$\sigma_a $};
			\node (C) at (6,2) {$ \sigma_a \circ I^\sB_{Fa}$};
			\node (D) at (5,0) {$ \sigma_a \circ (FI^\sA_a)$};
			\node (E) at (1,0) {$(GI^\sA_a) \circ \sigma_a$};
			
			\draw [->] (A) -- node [above left] {$\ell^\sB$} (B);
			\draw [->] (B) -- node [above right]  {$(r^\sB)^{-1}$} (C);
			\draw [->] (A) -- node [below left] {$\psi_a * id_{\sigma_a}$} (E);
			\draw [->] (E) -- node [below] {$\sigma_{I^\sA_a}$} (D);
			\draw [->] (D) -- node [below right] {$id_{\sigma_a} * \phi_a^{-1}$} (C);
		\end{tikzpicture}
		\end{center}
\caption{Transformation Axioms}
\label{FigBicatTransformationAxioms}
\end{center}
\end{figure}

\end{definition}

\begin{definition}
Let $(F, \phi), (G, \psi): \sA \to \sB$ be two homomorphisms between bicategories and let $\sigma, \theta: F \to G$ be two transformations between homomorphisms. A {\em modification} \index{bicategory!modification} 
\index{modification}
\index{modification|see{bicategory}} 
$\Gamma: \sigma \to \theta$ consists of  2-morphisms $\Gamma_a: \sigma_a \to \theta_a$ for every object $a \in \sA$,
such that the following square commutes:
\begin{center}
\begin{tikzpicture}[thick]
	\node (LT) at (0,1.5) 	{$Gf \circ \sigma_a$ };
	\node (LB) at (0,0) 	{$\sigma_b \circ F f$};
	\node (RT) at (3,1.5) 	{$Gf \circ \theta_a$};
	\node (RB) at (3,0)	{$\theta_b \circ Ff$};
	\draw [->] (LT) --  node [left] {$\sigma_f$} (LB);
	\draw [->] (LT) -- node [above] {$id * \Gamma_a$} (RT);
	\draw [->] (RT) -- node [right] {$\theta_f$} (RB);
	\draw [->] (LB) -- node [below] {$\Gamma_b * id$} (RB);
\end{tikzpicture}
\end{center}
for every 1-morphism $f: a \to b$ in $\sA$. 
\end{definition}

\begin{remark}
	We may also form pointed variants of the above notions. A {\em pointed bicategory} $(\sA, p_\sA)$ is a bicategory equipped with a distinguished object $p_\sA \in \sA$. A homomorphism $F$ between pointed bicategories $(\sA, p_\sA)$ and $(\sB, p_\sB)$ is a {\em pointed homomorphism} if $F(p_\sA) = p_\sB$.
	If $(F, \phi)$ and $(G, \psi)$ are two pointed homorphisms from $\sA$ to $\sB$, then a transformation $\sigma$ from $F$ to $G$ is a {\em pointed transformation} if $\sigma_{p_\sA} = I_{p_\sB}$. Finally, if $\sigma$ and $\theta$ are two pointed transformations from $F$ to $G$, and $\Gamma$ is a modification from $\sigma$ to $\theta$, then we say $\Gamma$ is a {\em pointed modification} if $\Gamma_{p_\sA}$ is the identity of $I_{p_\sB}$.
\end{remark}

\section{Higher Categories: Hypotheses of Baez and Dolan}

As we have seen $2$-categories and bicategories already encode a multitude of interesting mathematical structures. There are many approaches to the construction of a theory of weak higher categories, and it is a testament to the creativity and artistry of mathematicians that so many diverse models for higher categories exist. In order to decide which, if any, of these approaches to consider, it will be useful to have some criteria by which we can judge them. 
In \cite{MR1355899}, Baez and Dolan recorded three properties that a reasonable definition of $n$-category should possess.  These properties go under the names of the {\it homotopy}, {\it stabilization}, and {\it cobordism} hypotheses. We will describe the first two properties now. The third property, the cobordism hypothesis, will be the key focus for many of the remaining lectures. We will turn to it in section \ref{sec:cobordism_hypothesis} after introducing a good model of higher categories and discussing duality in higher category theory.  

\subsection{The Homotopy Hypothesis}


As motivation let us recall the fundamental groupoid of a topological space $X$, which we denoted $\Pi_{\le 1} X$, familiar in algebraic topology.  The groupoid $\Pi_{\le 1} X$ has objects the points of $X$ and morphisms given by paths in $X$ up to homotopy.  We have now defined a functor:
\[
\Pi_{\le 1} : \text{Spaces} \to \text{Groupoids} .
\]
Further, we have the classifying space construction which gives a functor:
\[
\lvert - \rvert : \text{Groupoids} \to \text{Spaces}.
\]
Now for a given space $X$, we can compare the homotopy groups of $X$ and $\Pi_{\le 1} X$: 
\begin{align*}
	\pi_0(\lvert \Pi_{\le 1} X \rvert) & \cong \pi_0(X) \\
	\pi_n(\lvert \Pi_{\le 1} X \rvert, x_0) & \cong \begin{cases}
		\pi_1(X, x_0) & n=1 \\
		0 & n > 1
	\end{cases}
\end{align*}
for all basepoints $x_0 \in X$ (such a point is also an object of $\Pi_{\le 1}X$, hence also gives a basepoint in $\lvert \Pi_{\le 1} X \rvert$). 
In short, $\lvert \Pi_{\le 1} X \rvert$ encodes the {\it homotopy} 1-{\it type} of $X$ and loses all higher homotopical information.

We can enhance our motivating example and show that the functors $\Pi_{\le 1}$ and $\lvert - \rvert$ actually form an equivalence of homotopy theories between {\em 1-types} (spaces with trivial higher homotopy groups) and groupoids (see Section \ref{sec:HomotopyTheory} for a discussion of abstract homotopy theories).

 In {\it Pursuing Stacks} \cite{Grothendieck_PS}, Alexander Grothendieck proposed that this equivalence of homotopy theories should be extended to include higher homotopical data. The {\it homotopy hypothesis} states that in a reasonable paradigm of $n$-category, we should have an equivalence of homotopy theories:
\[
\lvert - \rvert : n \text{-groupoids} \rightleftarrows   \left \{\begin{array}{c} \text{Spaces with } \pi_k = 0 \\ \text{for } k>n \end{array} \right \} : \Pi_{\le n}
\]
Spaces $X$ for which $\pi_k(X) = 0$ for all $k>n$ are called homotopy {\em $n$-types}. So the theory of $n$-groupoids and $n$-types should be equivalent. A weaker version of this statement, which is often mentioned, is that we obtain a bijection between equivalence classes of $n$-groupoids and homotopy $n$-types. 

\subsection{The stabilization hypothesis}

We learned above that a 2-category with a single object and single 1-morphism is just a commutative monoid (see example \ref{exam:commMonoidAsBicat}).  A similar analysis shows that this process stabilizes in the sense that an $n$-category with a unique $l$-morphism for each $l<n$ is again a commutative monoid.  We now consider the case of an $(k+n)$-category where there is a unique $l$-morphism for $l<k$, that is, we allow (potentially) interesting higher morphisms.

The {\it stabilization hypothesis} states that any notion of higher category should satisfy the periodicity displayed in Table \ref{tab:periodic_table}: the {\em periodic table of higher category theory}.  The entries describe the structure of a $(k+n)$-category which has $k$ trivial layers, i.e. there is a unique $l$-morphism for $l<k$. Borrowing from topological language, these are $(k-1)$-connected $(k+n)$-categories.  
\begin{table}[htbp]
	\begin{center}
	\begin{tabular}{|l|c|c|c| c|}
	\hline
	&$n=0$&$n=1$&$n=2$ &  $n$ \\
	\hline
	$k=0$ & Set & Category & 2-Category & $n$-Cat\\
	\hline
	$k=1$ & Monoid & Monoidal Cat & Monoidal 2-Cat & \quad$E_1$-$n$-Cat \\
	\hline
	$k=2$ & {\bf Commutative  }& Braided  & Braided  & \quad$E_2$-$n$-Cat\\
	& {\bf Monoid} & Monoidal Cat & Monoidal 2-Cat &\\
	\hline
	$k=3$ & {\bf Commutative } & {\bf Symmetric } & Sylleptic & \quad$E_3$-$n$-Cat\\
	& {\bf Monoid} & {\bf Monoidal Cat} & Monoidal 2-Cat &\\
	\hline
	$k=4$ & {\bf Commutative } & {\bf Symmetric  } &{\bf Symmetric } & \quad$E_4$-$n$-Cat\\
	& {\bf Monoid} & {\bf Monoidal Cat} & \bf Monoidal 2-Cat &\\
	\hline
	$k=5$ & {\bf Commutative } & {\bf Symmetric  } &{\bf Symmetric } & $\cdots$\\
	& {\bf Monoid} & {\bf Monoidal Cat} & \bf Monoidal 2-Cat &\\
	\hline
	\end{tabular}
	\end{center}
	\caption{The periodic table of higher categories.}
	\label{tab:periodic_table}
\end{table}

More compactly, the stabilization hypothesis states that the forgetful functor (which just forgets the unique 0-morphism)
\[
\left \{ \begin{array}{c}
pointed \\
(k)\text{-connected}\\
(n+k+1)\text{-categories}
\end{array}
\right \} 
\to
\left \{ \begin{array}{c}
pointed \\
(k-1)\text{-connected}\\
(n+k)\text{-categories}
\end{array}
\right \}
\]
is an equivalence of homotopy theories if $k \ge n+2$. This is known in several special cases. For $(n+k)$-groupoids, i.e. $(n+k)$-types, this equivalence can be deduced from the homotopy hypothesis and the  Freudenthal suspension theorem. Thus the stabilization hypothesis can be viewed as an extension of this classic result from algebraic topology, an extension which applies to higher categories in which not all morphisms are invertible. 

\subsection{Conclusions about Higher Categories}

The homotopy hypothesis suggests that we should model homotopy $n$-types (spaces with trivial homotopy groups above level $n$) by $n$-groupoids.  At a formal level (at least initially) we could send $n$ to $\infty$ and  model arbitrary spaces  by something we might call $\infty$-groupoids; we take this as a jumping off point to consider $\infty$-categories.  An $\infty$-category is morally a higher category with $l$-morphisms for each $l \in \NN$.

\begin{predef}
An $(\infty, n)$-category is an $\infty$-category where all morphisms are invertible above dimension $n$.
\end{predef}

The homotopy hypothesis then suggests that $(\infty, 0)$-categories ($\infty$-groupoids) should define the same homotopy theory as that of spaces.  There are (again) many models for $(\infty,n)$-categories, but we have the following {\it unicity theorem} which characterizes reasonable models.

\begin{theorem}[Barwick--Schommer-Pries]
There are 4 axioms which characterize the homotopy theory of $(\infty, n)$-categories up to equivalence.
\end{theorem}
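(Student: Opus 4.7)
The plan is to fix a reference model — Rezk's complete Segal $\Theta_n$-spaces, say — and then argue that any homotopy theory $\mathcal{X}$ satisfying the four axioms is equivalent to it via the universal property of a reflective localization. Here $\Theta_n$ denotes Joyal's cell category, whose objects are the combinatorial pasting diagrams built from the cells $C_k$ of Section~\ref{sec:strictNCats} and whose morphisms encode gluings and collapses. The payoff of this setup is that the whole uniqueness problem reduces to the classification of reflective localizations of a single presheaf $\infty$-category.

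First I would isolate the axioms and their roles. Natural candidates are: (A1) $\mathcal{X}$ is a presentable $(\infty,1)$-category; (A2) there is a functor $\Theta_n \to \mathcal{X}$ whose essential image strongly generates $\mathcal{X}$ under small colimits, sending $C_k$ to the ``free-living $k$-morphism''; (A3) a Segal-type axiom requiring that the fundamental pushouts of cells (gluing two cells along a common boundary) remain pushouts in $\mathcal{X}$, so that the cell-indexed mapping objects reproduce the expected $(\infty,n-1)$-categorical structure; and (A4) a completeness/univalence condition asserting that the internal notion of equivalence between parallel $k$-morphisms is classified by the $(k+1)$-cell.

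Next I would run the classification argument. Axioms (A1) and (A2), combined with the recognition theorem for presentable $\infty$-categories, identify $\mathcal{X}$ with a reflective localization of the presheaf $\infty$-category $\mathrm{PSh}(\Theta_n)$. Axiom (A3) forces this localization to invert at least the Segal maps, exhibiting $\mathcal{X}$ as a reflective subcategory of Segal $\Theta_n$-spaces. Axiom (A4) forces the further inversion of the completeness maps, landing in complete Segal $\Theta_n$-spaces. The reverse containment — that no additional maps are inverted — should follow from the \emph{strong} generation clause in (A2): any further localization would collapse cells that (A2) insists remain distinct. The net result is an adjoint equivalence between $\mathcal{X}$ and the reference model.

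The main obstacle is the dual calibration of the axioms. They must be \emph{weak enough} to be verifiable in all the candidate models (Rezk's $\Theta_n$-spaces, $n$-fold complete Segal spaces, Verity's complicial sets, Lurie's $(\infty,n)$-categories, and so on), yet \emph{strong enough} to pin the homotopy theory down uniquely. Each verification is a nontrivial combinatorial job, because the cells and their pushouts sit differently inside each model. A secondary but essential difficulty is extracting the ``internal hom'' and cell-indexed mapping objects needed to state (A3) and (A4) rigorously from the bare data of (A1) and (A2); this typically requires establishing that the ambient presheaf category is Cartesian closed and that the Segal and completeness reflections preserve the relevant internal homs. Once these technical underpinnings are in place, the uniqueness assertion is essentially a categorified Yoneda argument applied to the canonical map $\Theta_n \hookrightarrow \mathcal{X}$.
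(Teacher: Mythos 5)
Your reduction of the axioms to the statement ``$\cX$ is an accessible localization of $\Pre(\Theta_n)$ inverting at least the Segal and completeness maps'' is fine, but the closing step---the claim that the strong generation clause supplies the reverse containment, so that no \emph{further} maps are inverted---is exactly where the argument fails, and it cannot be repaired within your axiom list. Strong generation is inherited by every further accessible localization: if $\cX \simeq S^{-1}\Pre(\Theta_n)$ and $T \supseteq S$ is a larger strongly saturated class, then for any $X$ in $T^{-1}\Pre(\Theta_n)$ the canonical map $\colim_{\theta \to X} L_T(\theta) \to X$ is still an equivalence (apply the colimit-preserving functor $L_T$ to the Yoneda density colimit and use the adjunction to identify the comma categories), and the images of the Segal and completeness maps are of course still equivalences. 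Consequently the homotopy theory of spaces (the further localization at all maps $\theta \to C_0$, under which every cell becomes the free-walking $k$-morphism of the $\infty$-groupoid world, namely a point), the localization inverting $C_n \to C_{n-1}$ (essentially $(\infty,n-1)$-categories), and even the terminal quasicategory all satisfy (A1)--(A4) exactly as you have stated them. Axioms of the shape ``presentable, strongly generated by a fixed small category, and a prescribed list of maps become equivalences'' impose only a lower bound on the localizing class and therefore can never characterize the theory up to equivalence.

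This is precisely why the fourth axiom in the Barwick--Schommer-Pries axiomatization recorded in these notes is a \emph{universality} axiom: $\cC$ is required to be universal with respect to the other axioms, meaning any $\cD$ satisfying them is a localization of $\cC$; it is this maximality clause, not strong generation, that forbids inverting anything beyond what is forced. The remaining axioms also differ from yours in ways that matter for carrying out the proof and the model-by-model verifications: the generating category is not $\Theta_n$ but $\Upsilon_n$ (gaunt $n$-categories built from the cells under retracts and fiber products over cells), there is an axiom providing internal homs in the over-categories $\cC/C_k$, and it is that axiom which allows the infinite family of Segal-type relations to be compressed into the \emph{finite} list of colimit equations of (A3). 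Finally, the full theorem also identifies the space of quasicategories satisfying the axioms with $(B\ZZ/2)^n$; this requires identifying $\tau_{\leq 0}\cC$ with the gaunt $n$-categories and computing the automorphisms of the generating subcategory, none of which is addressed by the ``categorified Yoneda'' remark at the end of your proposal, which at best produces the comparison functor rather than the uniqueness statement.
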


The above unicity theorem is discussed in detail in section \ref{sec:Unicity}. Instead of expanding upon the statement of the the theorem here, we present one such model for $(\infty,n)$-categories, namely {\it Segal} $n${\it -categories}.

\section{Segal Categories} \label{sec:SegalCats}

\subsection{Simplicial objects}

Let $\bDelta$ denote the category of ``combinatorial simplices''. The objects of $\bDelta$ consist of the totally ordered sets $[n] = (0, 1, 2, \dots, n)$ and morphisms are the order preserving maps. A {\em simplical object} in a category $C$ is a functor $X: \bDelta^\op \to C$. We will denote the value of $X$ on the object $[n]$ by $X_n$. 

Taking $C = \Set$ we obtain the category of presheaves on $\bDelta$, which we call the category of {\em simplicial sets} $\sSet$.  Any totally ordered set $T$ gives rise to a 
simplical set $\Delta^T$ by the assignment $(\Delta^T)_n = \Hom([n], T)$. For example we have the representable simplicial sets $\Delta^n$, but also simplicial sets such as $\Delta^{\{ 0, 2\}}$. 

A {\em cosimplicial object} of $D$ is a simplical object in the opposite category. In other words it is a functor $c:\bDelta \to D$. If $D$ is cocomplete, then from each cosimplicial object $c$ we get an adjunction:
\begin{equation*}
	|| - ||_c: \sSet \leftrightarrows D: N_c
\end{equation*}
The right adjoint, the {\em $c$-nerve}, is given by the formula $N_c(d)_n = D(c([n]), d)$. The left adjoint is given by the left Kan extension of $c$ along the Yoneda embedding $\bDelta \hookrightarrow \sSet$.  It may be computed as a coend.

For example, we can realize each combinatorial simplex $\Delta^n \in \bDelta \subseteq \sSet$, as the topological simplex $\{ (x_i) \in \RR^{n+1} \; | \; x_i \geq 0, \sum x_i = 1 \}  $. This cosimplicial object induces the classical adjunction
\begin{equation*}
	|-|: \sSet \leftrightarrows \Top: Sing
\end{equation*}
the right adjoint of which associates to a topological space its singular simplicial set. In this case the left adjoint is known as {\em geometric realization}.

\subsection{The categorical nerve} \label{sec:categorical_nerve}

There is a fully-faithful embedding of $\bDelta$ into $\mathsf{Cat}$
which assigns to $[n]$ the corresponding poset category with $n+1$ objects $\{0, 1 , \dotsc , n\}$ and a single morphism from $i$ to $j$ if and only if $i \le j$. When convenient we will identify the object $[n]$ with its image under this functor.

Since $\mathsf{Cat}$ is cocomplete, we obtain an adjunction as above:
\begin{equation*}
	\tau_1: \sSet \leftrightarrows \mathsf{Cat}: N.
\end{equation*}
The left adjoint $\tau_1$ associates to a simplicial set $Y$ its {\em fundamental category} $\tau_1 Y$. The objects of $\tau_1Y$ are the vertices of $Y$, and $\tau_1Y$ is freely generated by the edges of $Y$ modulo relations coming from the 2-simplices. Specifically for each 2-simplex $\sigma \in Y_2$ we impose the relation that $d_0(\sigma)\circ d_2(\sigma) = d_1(\sigma)$. In particular the fundamental category only depends on the 2-skeleton of $Y$ and degenerate edges are identity morphisms. 

In the other direction, given an ordinary category $X$, its {\it nerve} is a simplicial set $NX : \bDelta^{op} \to \Set$.
The set of $n$-simplices of $NX$ consists of the collection of all $n$-tuples of composable morphisms in $X$.  More explicitly, we have
\begin{equation*}
	\begin{array}{ccl}
	[0] &\mapsto& \text{objects of } X \\[2ex]
	[1] & \mapsto &{\displaystyle \text{morphisms   in } X = \coprod_{a,b \in X} X(a,b) = \mathrm{Fun} ([1],X)}\\[2ex]
	[2] & \mapsto &{\displaystyle \text{ composable morphisms in } X = \coprod_{a,b,c \in X} X(a,b) \times X(b,c) = \mathrm{Fun} ([2],X)}\\[2ex]
	\vdots && \quad \vdots\\[2ex]
	[n] & \mapsto & {\displaystyle \mathrm{Fun} ([n] ,X)}
	\end{array}
\end{equation*}

Notice that $NX$ encodes all of the structure of $X$. For instance, the source and target of each morphism can be recovered via the face maps
\begin{equation*}
	d_0, d_1: NX_1 \to NX_0 = \ob X.
\end{equation*}
To recover the composition rule in $X$ we observe that we have three morphisms $[1] \to [2]$ which we picture as:
\begin{center}
\begin{tikzpicture}[thick]

\node (a) at (0,0) {$0$};
\node (b) at (1.5,0) {$1$};
\node (c) at (3,0) {$2$};

\node (d) at (4,0) {$0$};
\node (e) at (5.5,0) {$1$};
\node (f) at (7,0) {$2$};

\node (g) at (8,0) {$0$};
\node (h) at (9.5,0) {$1$};
\node (k) at (11,0) {$2$};

\draw[->] (a) to (b);
\draw[->] (b) to (c);
\draw[->] (d) to (e);
\draw[->] (e) to (f);
\draw[->] (g) to (h);
\draw[->] (h) to (k);

\node (l) at (0,-2) {$0$};
\node (m) at (1.5,-2) {$1$};
\node (n) at (5.5,-2) {$0$};
\node (o) at (7,-2) {$1$};
\node (p) at (8,-2) {$0$};
\node (q) at (11, -2) {$1$};

\draw[->] (l) to (m);
\draw[->] (n) to (o);
\draw[->] (p) to (q);

\draw[->] (0.75,-1.5) -- node [left] {$d_2$} (0.75,-.5);
\draw[->] (6.25,-1.5) -- node [left] {$d_0$}(6.25,-.5);
\draw[->] (9.5,-1.5) -- node [left] {$d_1$} (9.5, -.5);

\end{tikzpicture}
\end{center}
Using the first two maps, $d_2$ and $d_0$, we may express the category $[2]$ as a pushout (of either posets or categories):
\begin{equation*}
	[2] = \{0,1\} \cup^{\{1\}} \{1,2\},
\end{equation*}
which induces an isomorphism for the nerve of any category:
\begin{equation*}
	(d_2, d_0) : NX_2 \stackrel{\cong}{\to} NX_1 \times_{NX_0} NX_1.
\end{equation*}
The composition map is then obtained as the composite:
\[
\circ : NX_1 \times_{NX_0} NX_1 \xleftarrow{\cong} NX_2 \xrightarrow{d_1} NX_1.
\]

From these observations it follows that the counit map $\tau_1 NX \to X$ is an isomorphism of categories. Equivalently, the nerve functor $N$ is fully-faithful. 
%
%
We can ask how to characterize the image of the nerve functor, i.e. for which simplicial sets $Z$ does there exist a category $X$ such that $Z = N X$? 

One characterization is that the $n$-simplices of $NX$ are obtained as iterated pullbacks. The {\em spine} $S_n$ of the simplex $\Delta[n]$ is a sub-simplicial set consisting of the union of all the consecutive 1-simplices. There is the natural inclusion of simplicial sets
\begin{equation*}
	s_n: S_n = \Delta^{\{0,1\}} \cup^{\Delta^{\{1\}}} \Delta^{\{1,2\}} \cup^{\Delta^{\{2\}}} \cdots  \cup^{\Delta^{\{n-1\}}}  \Delta^{\{n-1,n\}} \to \Delta[n],
\end{equation*}
which corepresents the $n^\textrm{th}$ {\em Segal map}:
\begin{equation*}
	s_n: Z_n \to Z(S_n) = Z_1 \times_{Z_0}  Z_1 \times_{Z_0} \cdots \times_{Z_0}  Z_1.
\end{equation*}
A simplicial set is isomorphic to the nerve of a category if and only if each Segal map is a bijection for $n \geq 1$. Morever the full subcategory of simplical sets satisfying this property is equivalent to the category of small categories and functors. 

\subsection{Segal categories} \label{sec:Segal_cats_subsection}
Recall that bicategories are a model for weak 2-categories; that a bicategory is essentially a category  enriched in categories, but where the associativity of 1-morphisms is only required up to higher (coherent) isomorphism.  Hence, we are naturally led to define $(\infty,1)$-categories as a categories enriched in $(\infty,0)$-categories.  Via the homotopy hypothesis we could then say that an $(\infty,1)$-category is a {\it topological category}, i.e. a category enriched in spaces.  Such a definition is perfectly reasonable, however topological categories can be a bit unwieldy (for instance, to permit may examples it is sometimes desirable that composition is associative only up to higher coherent homotopy). Therefore we present a closely related, but more flexible approach.

A {\it Segal category} is a homotopical weakening of the simplicial structure we have just described.  They appear to have first been studied by Dwyer, Kan, and Smith by a different name \cite{MR984042}, but see also \cite{MR1317592}. 


\begin{definition}
A {\it Segal category} is a simplicial space $X$ (i.e. a functor $X : \bDelta^{op} \to \mathsf{Spaces}$) such that
\begin{itemize}
\item {\bf Discreteness.} $X_0$ is discrete;
\item {\bf Segal Condition.} For each $n>0$ the Segal map is a homotopy equivalence
\[
s_n: X_n \xrightarrow{\simeq} \underbrace{ X_1 \times_{X_0} X_1 \times \dotsb \times X_1 \times_{X_0} X_1 }_{n \text{ factors}}.
\]
\end{itemize}
\end{definition}

The Segal condition guarantees that we have a notion of composition which is coherent up to higher homotopy.

In practice, it is extremely useful that Segal categories can be distinguished as simplicial spaces which satisfy a certain lifting condition.  More precisely we have the following.

\begin{theorem}[Bergner \cite{MR2664620}, Hirschowitz-Simpson \cite{math.AG/9807049}]
There is a Quillen model category structure on the category of simplicial spaces with $X_0$ discrete such that Segal categories are the fibrant objects in this model structure.
\end{theorem}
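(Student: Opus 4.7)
The plan is to realize this model structure as a left Bousfield-type localization of a Reedy model structure on simplicial spaces, while carefully arranging that the discreteness condition on $X_0$ is preserved throughout. The difficulty is that the subcategory of simplicial spaces with $X_0$ discrete is not closed under all colimits, so one cannot simply invoke standard localization machinery applied to the ambient Reedy or projective model structure on bisimplicial sets.

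First I would set up the three classes. Take as the underlying category $\mathsf{SSp}_d$ the full subcategory of simplicial spaces $X : \bDelta^{op} \to \sSet$ with $X_0$ a constant (discrete) simplicial set. Define cofibrations to be monomorphisms of simplicial spaces (which automatically act as inclusions on the vertex set). Weak equivalences are defined as the maps $f : X \to Y$ such that the induced map $L(f) : L(X) \to L(Y)$ between "Segalifications" is a Dwyer--Kan equivalence of Segal categories, i.e.\ is essentially surjective on $\pi_0$ and induces weak equivalences on mapping spaces between chosen vertices. Fibrations are then forced by the right lifting property against trivial cofibrations.

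Second, I would construct the Segalification functor $L$. Given $X \in \mathsf{SSp}_d$, iteratively form pushouts along the maps $S_n \otimes K \hookrightarrow \Delta[n] \otimes K$ (for $K$ running over simplicial sets, $\otimes$ being the external product) until all Segal maps $X_n \to X_1 \times_{X_0} \cdots \times_{X_0} X_1$ become weak equivalences. Because the spine inclusions are identities on $0$-simplices, each such pushout leaves $X_0$ unchanged; a small-object argument yields a functor landing in Segal categories, equipped with a natural map $X \to L(X)$ that is the identity on vertex sets. This is the key device making discreteness of $X_0$ compatible with localization.

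Third, I would verify the model-category axioms. Two-out-of-three and the retract axiom follow formally from the analogous properties of Dwyer--Kan equivalences. The factorization axioms are established by two small-object arguments, with generators consisting of: (a) Reedy-type generating cofibrations $\partial\Delta[m]\otimes F(\Delta^n) \hookrightarrow \Delta[m]\otimes F(\Delta^n)$ for $m \geq 0$ and $n \geq 1$, together with the set map $\emptyset \to \Delta[0]$ handling new vertices as a set; (b) for trivial cofibrations, the Reedy generating trivial cofibrations (encoding horn fillers in positive levels) augmented by the spine inclusions $s_n : S_n \hookrightarrow \Delta[n]$, suitably tensored, to enforce the Segal condition. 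The lifting axiom for (cofibration, trivial fibration) reduces to the corresponding Reedy statement; the other lifting axiom is built into the definition of the fibrations. The fibrant objects are, by construction, those with the right lifting property against all generating trivial cofibrations; the spine inclusions give the Segal condition and the Reedy horn fillers give ordinary Reedy fibrancy, so fibrant objects are precisely Reedy fibrant Segal categories. Every Segal category is weakly equivalent to such a Reedy fibrant replacement.

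The main obstacle will be managing the discreteness of $X_0$. In the ambient category of bisimplicial sets, Bousfield localization runs cleanly, but trivial cofibrant replacements can introduce non-discrete $X_0$'s. The Segalification $L$ above side-steps this because the spine-to-simplex maps are identities on vertices, but one still has to verify that when one performs the small-object argument for factorizations inside $\mathsf{SSp}_d$, the output remains in $\mathsf{SSp}_d$. This is why the generator $\emptyset \to \Delta[0]$ (adding a single new vertex as a discrete set) must be used instead of cell attachments of the form $\partial\Delta[0] \otimes F(\Delta^n)$ for $n > 0$, which would attach a non-discrete component at level $0$. Checking that this replacement of generators yields the same class of cofibrations (monomorphisms preserving the discreteness) and that the factorization axioms go through with this modified generating set is the technical heart of Bergner's argument, and is where I would spend the most effort; the Hirschowitz--Simpson variant instead constructs the model structure by transfer along an adjunction discretizing $X_0$, which is an alternative route to the same obstacle.
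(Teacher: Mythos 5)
Note first that the paper does not prove this theorem: it is quoted from Bergner and from Hirschowitz--Simpson, and the sentence immediately after the statement says the proof is beyond the scope of the notes. So there is no proof in the paper to compare against; your proposal has to be judged against the arguments in the cited references. Judged that way, your overall strategy matches theirs in outline: work inside the category of Segal precategories (simplicial spaces with $X_0$ discrete) rather than naively localizing the ambient category of bisimplicial sets, define weak equivalences through a Segalification functor that is the identity on the set of objects (possible because the spine inclusions $S_n \to \Delta[n]$ are bijective on vertices), take monomorphisms as cofibrations for the injective-type structure, and adjust the generating sets at level $0$ so that cell attachments only ever add discrete points (the generator $\emptyset \to \Delta[0]$). (A small imprecision: pushing out along $S_n \otimes K \to \Delta[n] \otimes K$ for \emph{all} simplicial sets $K$ is a proper class; one should use the pushout-products of the spine inclusions with the boundary inclusions $\partial\Delta[m] \subset \Delta[m]$, and interleave with a level-preserving fibrant replacement, to conclude that the Segal maps of $L(X)$ are weak equivalences.)

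The genuine gap is in your third step, where the model category axioms are ``verified.'' You define fibrations by the right lifting property against all trivial cofibrations, but then run the small object argument against the specific set consisting of Reedy generating trivial cofibrations together with tensored spine inclusions, and assert both that this produces the (trivial cofibration, fibration) factorizations and that the remaining lifting axiom is built into the definition. Neither is automatic: in a localization-type structure the class of trivial cofibrations is in general strictly larger than the saturation of such a set, so one must prove (i) that every pushout of the chosen generators is a weak equivalence in your sense, which requires knowing that the Segalification $L$ carries these attachments to Dwyer--Kan equivalences, and (ii) that a map which has the RLP against the chosen set and is a weak equivalence is in fact a trivial fibration --- equivalently, that every monomorphism which is a weak equivalence lies (up to retract) in the saturation of the generators. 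Point (ii), together with the identification of the fibrant objects as the Reedy fibrant Segal categories, is the technical heart of Bergner's and Hirschowitz--Simpson's proofs (analysis of $L$ on pushouts and colimits, properness-type arguments, and the interplay between the reduced generators at level $0$ and the monomorphisms); your sketch presupposes its conclusion rather than proving it. Until that step is supplied, the description of the fibrant objects in your outline is also unproved.
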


\noindent This theorem is beyond the scope of these notes, however the relevant notion of weak equivalence is important and will play a role in what follows. 

Let $X$ be a Segal category and let $a,b \in X_0$ be a pair of objects. We define $X(a,b)$ as the pullback 
\begin{center}
\begin{tikzpicture}
	\node (LT) at (0, 1.5) {$X(a,b)$};
	\node (LB) at (0, 0) {$pt$};
	\node (RT) at (3, 1.5) {$X_1$};
	\node (RB) at (3, 0) {$X_0 \times X_0$};
	\draw [->] (LT) -- node [left] {$$} (LB);
	\draw [->] (LT) -- node [above] {$$} (RT);
	\draw [->] (RT) -- node [right] {$(d_0 , d_1)$} (RB);
	\draw [->] (LB) -- node [below] {$(a,b)$} (RB);
	\node at (0.5, 1) {$\ulcorner$};
\end{tikzpicture}
\end{center}
Similarly for a triple of objects $a,b,c \in X_0$ we define $X(a,b,c)$ as the fiber of $X_2$ over $(a,b,c) \in X_0^{\times 3}$. Since $X$ satisfies the Segal conditions the Segal maps induce an equivalence
\begin{equation*}
	X(a,b,c) \stackrel{\sim}{\to} X(a,b) \times X(b,c).
\end{equation*}

\begin{definition}
Let $X$ be a Segal category.  The {\it homotopy category} of $X$, denoted $hX$, is the category with objects $X_0$ and morphisms from $a \in X_0$ to $b \in X_0$ given by $\pi_0 X(a,b)$.
\end{definition}
\noindent The composition in this category is induced by the diagram of spaces:
\begin{equation*}
	 X(a,b) \times X(b,c) \stackrel{\sim}{\leftarrow} X(a,b,c) \to X(a,c).
\end{equation*}
One may easily check that it is associative and unital.

\begin{definition}
A map $f: X \to Y$ of Segal categories is an {\it equivalence} if the following conditions are satisfied
\begin{enumerate}
\item The induced map $hf : hX \to hY$ is an equivalence of categories;
\item We have a homotopy equivalence $X(a,b) \xrightarrow{\simeq} Y(f(a), f(b))$ for all $a,b \in X_0$.
\end{enumerate}
\end{definition}



\section{Higher Categories: Segal n-categories}

The construction of Segal categories explained in the last section can be vastly generalized where we replace spaces by an appropriate category with weak equivalences. In good cases this construction can be iterated. Iterating, begining with Segal categories and the above describe equivalences yields Hirschowitz and Simpson's notion of Segal n-category, which is closely related to Tamsamani's notion of weak $n$-category. This notion has many good properties and gives a robust model of $(\infty,n)$-categories. We will now describe some general properties of categories with weak equivalences and the general Segal construction. 

\subsection{Relative Categories}

\begin{definition}
	A {\em relative category} consists of a pair $(\cC, \cW)$ of a category $\cC$ and a subcategory $\cW$ containing all the identities (hence $\cW$ has the same objects as $\cC$). The morphisms of $\cW$ we call {\em weak equivalences}.  We will often abuse notation a write $\cC$ for the pair $(\cC, \cW)$. A {\em relative functor} is a functor of categories which sends weak equivalences to weak equivalences. 
	
	A relative category $(\cC, \cW)$ is called {\em homotopical} if $\cW$ satisfies the {\em 2-out-of-6 property}: If $h:w \to x$, $g:x\to y$, and $f:y \to z$ are three morphisms of $\cC$ and the composites $fg$ and $gh$ belong to $\cW$, then so do the remaining four $f$, $g$, $h$, and $fgh$. 
\end{definition}

\begin{example}
	Let $\cC$ be an ordinary category. We have relative categories: 
	\begin{enumerate}
		\item $\check \cC = (\cC, ob\cC)$ is the minimal relative category structure; the only weak equivalences are identities.
		\item $\hat \cC = (\cC, \cC)$ is the maximal relative category structure; all morphisms are weak equivalences. 
		\item $\underline{\cC} = (\cC, Isom(\cC))$ is the homotopical relative category in which the weak equivalences are the isomorphisms. 
	\end{enumerate}
\end{example}

\begin{example}
	The category of topological spaces or the category of simplicial sets, each with the weak homotopy equivalences form important relative categories.  
\end{example}

\begin{example}
	If $(\cC, \cW)$ is a relative category and $f:\cD \to \cC$ is a functor let $f^{-1}\cW$ denote the subcategory of $\cD$ consisting of all morphisms which map to weak equivalences in $\cC$. Then $(\cD, f^{-1}\cW)$ is a relative category, which is homotopical if $\cC$ is.   
\end{example}

\begin{definition}
	If $(\cC, \cW)$ is a relative category, then the {\em homotopy category of $\cC$} is defined to be $h \cC = \cW^{-1} \cC$, the category obtained by formally inverting the morphisms of $\cW$. It is equipped with the canonical localization $\ell: \cC \to h\cC$.  A relative category $(\cC, \cW)$ is {\em saturated} if $\cW = \ell^{-1}(Isom(h\cC))$. In otherwords $\cC$ is saturated if every morphism which becomes invertible in the homotopy category was already a weak equivalence. 
\end{definition}

Saturated relative categories are homotopical. Every model category gives a saturated relative category. For example the category of sets and bijections, the category of simplicial sets and the weak homotopy equivalences, and the category of categories and equivalences of categories form saturated relative categories. 

\begin{lemma} \label{lma:homotopycatcommutesproduct}
	The homotopy category of a product of relative categories is the product of the homotopy categories, $h(\cM_1 \times \cM_2) \simeq h\cM_1 \times h \cM_2$.
\end{lemma}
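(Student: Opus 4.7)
The plan is to produce a comparison functor $\Phi : h(\cM_1 \times \cM_2) \to h\cM_1 \times h\cM_2$ and then verify it is an isomorphism by matching the universal properties of the two sides. The projections $\pi_i : \cM_1 \times \cM_2 \to \cM_i$ send $\cW_1 \times \cW_2$ into $\cW_i$, so the composites $\ell_i \circ \pi_i$ invert weak equivalences; the universal property of $\ell : \cM_1 \times \cM_2 \to h(\cM_1 \times \cM_2)$ then produces a unique $\Phi$ whose two components are the induced functors $h(\cM_1 \times \cM_2) \to h\cM_i$, and $\Phi$ is the identity on objects by construction.

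To show $\Phi$ is an isomorphism of categories, it suffices to check that $h\cM_1 \times h\cM_2$, equipped with the composite $\ell_1 \times \ell_2$, also enjoys the universal property of the localization at $\cW_1 \times \cW_2$. Concretely, for any category $\cD$ I would establish a natural bijection between functors $\cM_1 \times \cM_2 \to \cD$ that invert $\cW_1 \times \cW_2$ and functors $h\cM_1 \times h\cM_2 \to \cD$. Any morphism $(w_1, w_2)$ with $w_i \in \cW_i$ factors as $(w_1, \Id_Y) \circ (\Id_X, w_2)$, so inverting $\cW_1 \times \cW_2$ is equivalent to separately inverting all morphisms of the two ``axis'' forms $(w_1, \Id)$ and $(\Id, w_2)$. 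Using the Cartesian closure $\Fun(\cM_1 \times \cM_2, \cD) \cong \Fun(\cM_1, \Fun(\cM_2, \cD))$, a functor $F$ corresponds to $\widetilde F : \cM_1 \to \Fun(\cM_2, \cD)$; the second condition says $\widetilde F$ lands in $\Fun(h\cM_2, \cD)$, while the first says $\widetilde F$ sends $\cW_1$ to natural isomorphisms and hence factors through $h\cM_1$. Unwinding the same adjunction on the other side gives a functor $h\cM_1 \times h\cM_2 \to \cD$, and the construction is manifestly inverse to precomposition with $\ell_1 \times \ell_2$ and natural in $\cD$.

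The main point to get right is the clean decoupling of $\cW_1 \times \cW_2$ into its two axis pieces via the factorization above, which relies only on the fact that each $\cW_i$ contains identities; once that is in hand the proof is a bookkeeping exercise interleaving the universal property of localization with the exponential adjunction. A purely combinatorial alternative, bypassing Yoneda entirely, is to use the Gabriel--Zisman zigzag presentation of morphisms in $\cW^{-1}\cC$: a zigzag in $\cM_1 \times \cM_2$ is literally a pair of componentwise zigzags, and the equivalence relation generated by inserting identities and composing consecutive arrows in the same direction decouples coordinatewise, yielding bijectivity of $\Phi$ on Hom sets by direct inspection. Either route avoids any use of model-categorical hypotheses, so no further assumption on $\cM_1$ or $\cM_2$ (such as saturation) is needed.
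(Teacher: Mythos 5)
Your proof is correct and is in substance the same argument as the paper's: both rest on the observation that, since each $\cW_i$ contains identities, inverting $\cW_1 \times \cW_2$ is the same as inverting the two axis classes $\cW_1 \times \mathrm{id}$ and $\mathrm{id} \times \cW_2$, and then on localizing one variable at a time to identify the universal properties. Your use of the exponential adjunction $\Fun(\cM_1 \times \cM_2, \cD) \cong \Fun(\cM_1, \Fun(\cM_2, \cD))$ simply makes explicit the step the paper dispatches as a ``direct calculation'' in its two-stage localization, so the two proofs differ only in packaging.
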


\begin{proof}
	Let $(\cM_1, \cW_1)$ and $(\cM_2, \cW_2)$ be relative categories. 
	The category $h\cM_1 \times h \cM_2$ can be obtained as a sequence of two localizations. First consider the relative category $\cM_1 \times \check{\cM_2}$ whose weak equivalences consist of those which are weak equivalences in $\cM_1$ and identities in $\cM_2$. A direct calculation shows $h(\cM_1 \times \check{\cM_2}) = h(\cM_1) \times \check{\cM_2}$. Next we view $h(\cM_1) \times \cM_2$ as the relative category $\check{h(\cM_1)} \times {\cM_2}$, whose weak equivalences are identities in $h(\cM_1)$ and weak equivalences in $\cM_2$. An identical calculation gives $h(\check{h(\cM_1)} \times {\cM_2}) = h(\cM_1) \times h(\cM_2)$. This shows that the product of homotopy categories $h(\cM_1) \times h(\cM_2)$ is universal for functors from $\cM_1 \times \cM_2$ which localize the classes $\cW_1 \times ident(\cM_2)$ and $ident(\cM_1) \times \cW_2$ (these classes commute so the order of localization is irrelevant). But this is precisely the same universal property shared by $h(\cM_1 \times \cM_2)$.
\end{proof}

%
%
%

\subsection{The Segal Category Construction.}

\begin{definition}
	A {\em Segalic relative category} is a triple $\cM = (\cM, \cW, \pi_0)$ consisting of a relative category $(\cM, \cW)$ and a relative functor $\pi_0: (\cM, \cW) \to \underline{\Set} = (\Set, Isom)$ such that
	\begin{enumerate}
		\item the category $\cM$ admits all finite products;
		\item the class $\cW$ is closed under finite products, i.e., for every $(f:x \to y) \in \cW$ and every object $z \in \cM$, the map $(f \times id: x \times z \to y \times z) \in \cW$; and
		\item $\pi_0$ preserves finite products.   
	\end{enumerate}
A {\em Segalic functor} from $(\cM, \cW, \pi^{\cM})$ to $(\cM', \cW', \pi^{\cM'})$ consists a functor $F: \cM \to \cM'$ and a natural isomorphism $\pi_0^{\cM'} \circ F \cong \pi_0^{\cM}$ such that
\begin{enumerate}
	\item $F: (\cM, \cW) \to (\cM', \cW')$ is a relative functor; 
	\item $F$ preserves terminal objects; and  
	\item $F$ is {\em weakly product preserving} in the sense that for all object $x, y \in \cM$ the canonical map (induced by the projections) $F(x \times y) \to F(x) \times F(y)$ is a weak equivalence in $\cW'$.
\end{enumerate}
\end{definition}

Segalic relative categories and Segalic functors form a category. The Segal construction will take as input a Segalic relative category $\cM$ and produce a new Segalic relative category $\Seg(\cM)$, and take Segalic functors to Segalic functors.

The objects of $\Seg(\cM)$ consist of the {\em $\cM$-enriched Segal categories}:

\begin{definition}
	Given a set $S$ let $\Delta_S$ denote the category $(\Delta \downarrow S)$ whose objects consist of pairs $([n], \phi:[n] \to S)$ of an object of $\Delta$ and a set-theoretic map to $S$. The morphisms of $\Delta_S$ are morphisms $[m] \to [n]$ in $\Delta$ which lie over the maps to $S$. We may denote objects of $\Delta_S$ by their ordered image in $S$, e.g. $(s_0, s_1, \dots s_n)$. 
	
	An {\em $\cM$-enriched precategory} is a pair $(S, X)$ consisting of a set $S$ (the set of {\em objects}) and a functor $X: (\Delta_S)^\op \to \cM$ such that $X( s_0 ) = 1 \in \cM$, the terminal object. An $\cM$-enriched precategory satisfies the {\em Segal condition} if for all $(s_0, s_1, \dots, s_n) \in \Delta_S$ the Segal maps
	\begin{equation*}
		X(s_0, s_1, \dots, s_n) \to X(s_0, s_1) \times X(s_1, s_2) \times \dots \times X(s_{n-1}, s_n)
	\end{equation*}
	are weak equivalences. These $\cM$-enriched precategories will be call  {\em $\cM$-enriched Segal categories}.
	
	A morphism of $\cM$-enriched precategories $f:(S, X) \to (T,Y)$ consist of a set map $f: S \to T$ (which induces a functor $f: \Delta_S \to \Delta_T$) and a natural transformation $X \to f^*Y$ of functors $\Delta_S^{\op} \to \cM$. Denote the category of all $\cM$-enriched Segal categories by $\Seg(\cM)$.
\end{definition}

\begin{example}
	If $(\cM, \cW) = (\Set, Isom)$, then an $\cM$-enriched precategory $(S,X)$ is the same as a simplicial set with vertex set $S$.
\end{example}

\begin{example}
	If $\cM = \underline{\cC}$ is an ordinary category with finite products, viewed as a relative category whose weak equivalences are the isomorphisms, then $\Seg(\underline{\cC})$ is equivalent the usual category of $\cC$-enriched categories and enriched functors. 
\end{example}

\noindent The definition of $\cM$-enriched Segal category only uses the structure of the relative category $(\cM, \cW)$ and the existence of finite products in $\cM$. However, while it is clear that under these weaker assumptions the category of $\cM$-enriched precategories admits finite products, it is not clear that the Segal condition is preserved under these products. Property (2), namely that $\cW$ is closed under products, ensures that $\Seg(\cM)$ again has finite products. 

Moreover, a weakly product preserving relative functor induces a functor between categories of enriched Segal objects. To define the weak equivalences of $\Seg(\cM)$ we make use of the product preserving relative functor $\pi_0^\cM: \cM \to \underline{\Set}$. 
\begin{definition}
	Applying $\pi_0: \cM \to \underline{\Set}$ levelwise induces the {\em homotopy category functor}: \begin{equation*}
		h:\Seg(\cM) \to \Seg(\underline{\Set}) \cong \cat.
	\end{equation*}
	In otherwords $hX$ is the category whose objects are the same as those of $X$ and for which the morphisms from $a$ to $b$ consist of the set $\pi_0X(a,b)$. 
\end{definition}

\begin{definition}
	Let $(\cM, \cW, \pi_0^{\cM})$ be a Segalic relative category. 
	A morphism $f: X \to Y$ in $\Seg(\cM)$ is a {\em weak equivalence} if 
	\begin{enumerate}
		\item it induces an equivalence of homotopy categories $hf: hX \to hY$, and
			\item for each pair of objects $a, b$ in $X$, we have an induced weak equivalence:
			\begin{equation*}
				X(a,b) \to Y(fa, fb).
			\end{equation*}
	\end{enumerate}
\end{definition}

\noindent With these weak equivalences $\Seg(\cM)$ becomes relative category which is homotopical or saturated if $\cM$ is. Moreover it is immediate that the class of weak equivalences $\cW_{\Seg(\cM)}$ is closed under products. We may define the product preserving relative functor $\pi_0: \Seg(\cM) \to \underline{\Set}$ as the functor which sends $X$ to the set of isomorphism classes of objects in $hX$. The triple $(\Seg(\cM), \cW_{\Seg(\cM)}, \pi_0)$ is again Segalic, hence we have arrived at:

\begin{theorem}
	The Segal construction $(\cM, \cW, \pi_0^{\cM}) \mapsto (\Seg(\cM), \cW_{\Seg(\cM)}, \pi_0^{\Seg(\cM)})$ defines an endofunctor on the category of Segalic relative categories and Segalic functors. 
\end{theorem}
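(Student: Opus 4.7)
The plan is to verify, in turn, that $\Seg(\cM)$ inherits a Segalic relative category structure as claimed, that a Segalic functor $F\colon \cM \to \cM'$ lifts to a Segalic functor $\Seg(F)\colon \Seg(\cM) \to \Seg(\cM')$, and that this assignment is functorial. The main obstacle lies in the second step, specifically in showing that postcomposing an enriched Segal category with a merely weakly product preserving functor again satisfies the Segal condition.

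For the first step, I would make products explicit: the product of $(S,X)$ and $(T,Y)$ has object set $S \times T$ and levelwise values $X(s_0,\dots,s_n) \times Y(t_0,\dots,t_n)$, with terminal object the one-object enriched category whose unique hom-value is $1 \in \cM$. The Segal map for this product is the product of the Segal maps for $X$ and $Y$, hence is a weak equivalence by axiom (2) for a Segalic relative category. The same axiom, applied levelwise to hom-objects, together with Lemma~\ref{lma:homotopycatcommutesproduct} to handle the homotopy categories, shows that $\cW_{\Seg(\cM)}$ is closed under products. Defining $\pi_0^{\Seg(\cM)}(X)$ to be the isomorphism classes of objects in $hX$, product preservation reduces again to Lemma~\ref{lma:homotopycatcommutesproduct} combined with the fact that $\pi_0$ of a product of categories is the product of $\pi_0$'s, while preservation of weak equivalences is immediate from condition~(1) in the definition of $\cW_{\Seg(\cM)}$.

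For the second step, define $\Seg(F)(S, X) = (S, F \circ X)$. The crucial verification is the Segal condition for $F \circ X$: the relevant map factors as
\begin{equation*}
F(X(s_0,\dots,s_n)) \xrightarrow{F(\mathrm{Segal}_X)} F\bigl(X(s_0,s_1) \times \cdots \times X(s_{n-1},s_n)\bigr) \xrightarrow{\mathrm{can}} FX(s_0,s_1) \times \cdots \times FX(s_{n-1},s_n).
\end{equation*}
The first arrow is a weak equivalence because $F$ is a relative functor applied to a weak equivalence in $\cM$, and the second is a weak equivalence by induction on $n$ from the binary weak product preservation of $F$, using closure of $\cW'$ under products to extend beyond two factors. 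Composition of weak equivalences then shows $F \circ X$ lies in $\Seg(\cM')$. Preservation of the terminal object is immediate from $F(1) = 1$; weak product preservation of $\Seg(F)$ reduces to that of $F$ on hom-objects; and the natural isomorphism $\pi_0^{\Seg(\cM')} \circ \Seg(F) \cong \pi_0^{\Seg(\cM)}$ follows from the corresponding natural isomorphism for $F$, since $\Seg(F)$ is the identity on underlying object sets and $h$ is built levelwise from $\pi_0$. That $\Seg(F)$ sends weak equivalences to weak equivalences then splits into two parts: condition~(2) is preserved because $F$ is a relative functor on each hom-object, and condition~(1) is preserved because the natural isomorphism $\pi_0^{\cM'} F \cong \pi_0^\cM$ induces a natural isomorphism $h(F \circ X) \cong hX$ under which $h(\Seg(F)(f))$ corresponds to $hf$.

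Finally, functoriality in $F$ is automatic: $\Seg$ is defined by postcomposition with $F$ on the presheaf $X$, so $\Seg(G \circ F) = \Seg(G) \circ \Seg(F)$ and $\Seg(\Id) = \Id$ hold on the nose, and the coherence data for the natural isomorphism involving $\pi_0$ composes correspondingly. This completes the verification that $\Seg$ is an endofunctor on Segalic relative categories and Segalic functors.
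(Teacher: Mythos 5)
Your proposal is correct and follows essentially the same route as the paper, whose proof consists of the surrounding discussion: closure of $\cW$ under products gives finite products in $\Seg(\cM)$ and the Segal condition for them, weakly product preserving relative functors induce functors on enriched Segal categories via postcomposition, and the weak equivalences and $\pi_0^{\Seg(\cM)}$ are handled exactly as you describe. One small repair: the fact that $h$ of Segal categories commutes with products is not an instance of Lemma~\ref{lma:homotopycatcommutesproduct} (which concerns localizations of relative categories) but follows directly from axiom (3), that $\pi_0^{\cM}$ preserves finite products, applied levelwise to hom-objects.
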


\begin{example}
	{\em Weak $n$-categories} (in the sense of Tamsamani) are obtained by iterating the above Segal category construction $n$-times with the base case $\cM = \Set$. {\em Segal $n$-categories} $\Seg_n$ are obtained by iterating this construction $n$-times with the base case $\cM = \sSet$. The $n=1$ case of the later is the category of Segal categories, also denoted $\Seg$. These examples are saturated. 
\end{example}

%

\begin{example}
	The inclusion $\underline{\Set} \to \sSet$ induces an inclusion of weak $n$-categories among all Segal $n$-categories. Likewise, $\pi_0: \sSet \to \underline{\Set}$ induces relative functors $\tau_{\leq n}: \Seg_n \to n\cat$. These functors are both compatible with the projection $\pi_0$ to $\Set$. 
\end{example}


\subsection{Segal $n$-categories}

Spelling out the definition of Segal $n$-category we arrive at:

\begin{definition}
A {\it Segal} $n${\it -category} is a simplicial Segal $(n-1)$-category $X_\bullet$, such that
\begin{itemize}
\item {\bf Discreteness.} $X_0$ is discrete;
\item {\bf Segal Condition.} For each $k \geq 0$ we have an equivalence of Segal $(n-1)$-categories
\[
\underbrace{ X_1 \times_{X_0} X_1 \times \dotsb \times X_1 \times_{X_0} X_1 }_{k \text{ factors}} \xleftarrow{\simeq} X_k .
\]
\end{itemize}
\end{definition}

It is a result of Hirschowitz, Simpson, and Pellissier, building on Bergner's work, that Segal $n$-categories can be characterized as the fibrant objects in a certain model category, see \cite{math.AG/9807049, Pellissier_WEC, MR2341955, MR2883823}. This is a model category whose objects are $n$-fold simplical spaces with appropriate discreteness conditions imposed. Simpson's book \cite{MR2883823}, among other things, proves the existence of this model structure. There it is also shown that Segal n-categories satisfy the homotopy hypothesis and also a version of the stabilization hypothesis. 

Many examples of higher categories can be expressed in the language of Segal $n$-categories with less difficulty than other models. For example Lack and Paoli \cite{MR2366560} construct an explicit {\em 2-nerve} which takes a bicategory and produces a weak 2-category in the sense of Tamsamani (which is a special kind of Segal 2-category). They also construct a {\em realization} functor the other way and prove that these functors form a weak equivalence. Segal $n$-categories are a model with a rich supply of examples. 

We may also speak about $k$-morphisms in a Segal $n$-category, which we may define inductive. A $0$-morphism in a Segal $n$-category $X$ is defined to be an object of $X$. A $k$-morphism of $X$ is inductively defined to be a $(k-1)$-morphism of $X(a,b)$ for some pair of objects $a, b \in X$.

\section{Dualizability in 2-categories}

One essential ingredient of the cobordism hypothesis is the notion of {\it dualizability}.  We begin by discussing dualizability in the setting of 2-categories. Later we will compare our notion of dualizable with the more classical notion in the setting of monoidal categories.

\begin{definition}
Let $f: a \to b$ be a 1-morphism in a 2-category and let $\alpha$ be the associator in this 2-category.  We say $f$ is {\it left dualizable} if there exists a 1-morphism $g: b \to a$ and 2-morphisms $\ev : f \circ g \to \mathbf{1}_b$ and $\coev : \mathbf{1}_a \to g \circ f$ such that the following compositions of 2-morphisms are identities
\begin{align*}
	f = f \circ \mathbf{1}_a \stackrel{\Id_f * \coev}{\to} f \circ (g \circ f) \stackrel{\alpha^{-1}}{\to} (f \circ g) \circ f \stackrel{\ev * \Id_f}{\to} \mathbf{1}_b \circ f = f \\
	g = \mathbf{1}_a \circ g \stackrel{\coev * \Id_g}{\to} (g \circ f) \circ g \stackrel{\alpha}{\to} g \circ (f \circ g) \stackrel{\Id_g * \ev}{\to} g \circ \mathbf{1}_b = g .
\end{align*}

\end{definition}

Similarly, we have the notion of {\it right dualizable}.  In terms of pasting diagrams we express the {\it dualizability data} (i.e. the collection $\{f, g, \ev , \coev\}$) by the following:

\begin{center}
\begin{tikzpicture}[thick]

\node (a) at (0,0) {$b$};
\node (b) at (2,1) {$a$};
\node (c) at (4,0) {$b$};
\draw[->] (a) to [bend left = 20] node [above] {$g$} (b); 
\draw[->] (b) to [bend left=20] node [above] {$f$} (c);
\draw[->] (a) to [bend right=40] node [below] {$\mathbf{1}_b$} (c);
\node at (2,0) {$\Downarrow \ev$};

\node at (5,0) {and};

\node (d) at (6,0) {$a$};
\node (e) at (8,-1) {$b$};
\node (f) at (10,0) {$a$};
\node at (8,0) {$\Downarrow \coev$};
\draw[->] (d) to [bend left =40] node [above] {$\mathbf{1}_a$} (f);
\draw[->] (d) to [bend right =20] node [below] {$f$} (e);
\draw[->] (e) to [bend right =20] node [below] {$g$} (f);

\end{tikzpicture}
\end{center}

In order to write down the condition that certain compositions are identities, we switch to the Poincar\'e dual picture and use  {\it string diagrams}, which are read from left to right and from top to bottom. In the following examples, the darker region should be thought of representing $a$ and the lighter region $b$. A morphism $f: a \to b$ is given by the following picture:
\begin{center}
\begin{tikzpicture}[thick]
\fill [black!20] (4, 2) rectangle (5, 0);
	\fill [black!10] (5, 2) rectangle (6, 0);
	\draw (5, 2) -- (5, 0);
	
	\node at (5,2.5) {$f$};
\end{tikzpicture}
\end{center}

\noindent While the identity $\mathbf{1}_a :a \to a$ corresponds to the following diagram:

\begin{center}
\begin{tikzpicture}[thick]

\fill [black!20] (4, 2) rectangle (6, 0);
		
	\node at (5,2.5) {$\mathbf{1}_a$};
	
\end{tikzpicture}
\end{center}

\noindent We represent the dualizability data in the following diagrams:

\begin{center}
\begin{tikzpicture}[thick]

\begin{scope}
	\node (LT) at (0, 1) {};
	\node (f) [label=below:$f$] at (.5, 0) {}; 
	\node (g) [label=below:$g$]  at (1.5, 0) {};
	\node (RB) at (2, 0) {};
	\node at (-.7, .5) {$\coev =$};
	
	\draw (f.center) arc (180: 0: 0.5 cm);
	\begin{pgfonlayer}{background}
		\fill [black!20] (LT.center) -- (LT.center |- RB.center) -- 
		(f.center) arc (180: 0: 0.5 cm) -- (RB.center) -- (RB.center |- LT.center) -- cycle;
		\fill [black!10]  (f.center) arc (180: 0: 0.5 cm) -- cycle;
	\end{pgfonlayer}	
\end{scope}

\begin{scope}[xshift = 5cm]
	\node (LT) at (0, 1) {};
	\node (g) [label=above:$g$] at (.5, 1) {}; 
	\node (f) [label=above:$f$]  at (1.5, 1) {};
	\node (RB) at (2, 0) {};
	\node at (-.5, .5) {$\ev =$};
	
	\draw (g.center) arc (180: 360: 0.5 cm);
	\begin{pgfonlayer}{background}
		\fill [black!20] (g.center) arc (180: 360: 0.5 cm) -- cycle;
		\fill [black!10] (LT.center) -- (g.center) arc (180: 360: 0.5 cm) -- (LT.center -| RB.center) -- (RB.center) -| (LT.center);
	\end{pgfonlayer}	
\end{scope}
\end{tikzpicture}
\end{center}

\noindent That the compositions above are identities can be encoded by the following diagrams:

\begin{center}
\begin{tikzpicture}[thick]
\begin{scope}
	\node (LB) at (0,0) {};
	\node (f) at (.5, 1) {};
	\node (g) at (2.5, 2) {};
	\node (RT) at (3, 2) {};
	\draw (f.center |- LB.center) -- (f.center) arc (180:0: 0.5 cm) arc (180: 360: 0.5 cm) -- (g.center);

	\node at (3.5, 1) {=};
	
	\fill [black!20] (4, 2) rectangle (5, 0);
	\fill [black!10] (5, 2) rectangle (6, 0);
	\draw (5, 2) -- (5, 0);
	
	\begin{pgfonlayer}{background}
		\fill [black!20] (LB.center) -- (f.center |- LB.center) -- (f.center) arc (180:0: 0.5 cm) arc (180: 360: 0.5 cm) -- (g.center) -- (g.center -| LB.center) -- cycle;
		\fill [black!10] (f.center |- LB.center) -- (f.center) arc (180:0: 0.5 cm) arc (180: 360: 0.5 cm) -- (g.center) -- (RT.center) -- (RT.center |- LB.center) -- cycle;	
	\end{pgfonlayer}	

\end{scope}

\begin{scope}[yshift = -3cm]
	\node (LT) at (0,2) {};
	\node (f) at (.5, 1) {};
	\node (g) at (2.5, 0) {};
	\node (RB) at (3, 0) {};
	\draw (f.center |- LT.center) -- (f.center) arc (180:360: 0.5 cm) arc (180: 0: 0.5 cm) -- (g.center);

	\node at (3.5, 1) {=};
	
	\fill [black!10] (4, 2) rectangle (5, 0);
	\fill [black!20] (5, 2) rectangle (6, 0);
	\draw (5, 2) -- (5, 0);
	
	\begin{pgfonlayer}{background}
		\fill [black!10] (LT.center) -- (f.center |- LT.center) -- (f.center) arc (180:360: 0.5 cm) arc (180: 0: 0.5 cm) -- (g.center) -- (g.center -| LT.center) -- cycle;
		\fill [black!20] (f.center |- LT.center) -- (f.center) arc (180:360: 0.5 cm) arc (180: 0: 0.5 cm) -- (g.center) -- (RB.center) -- (RB.center |- LT.center) -- cycle;	
	\end{pgfonlayer}	
\end{scope}

\end{tikzpicture}

\end{center}

\begin{example}
To explicate dualizability, let's consider the symmetric monoidal category of vector spaces (say over $\CC$) with monoidal structure given by tensor product.  To this data we have an associated bicategory $\bB \mathsf{Vect}$ with one object and 1-morphisms given by vector spaces.  Let $V$ be a vector space, then $V$ is left dualizable if there exists a vector space $V^\vee$ and linear maps $\ev : V \otimes V^\vee \to \CC$ and $\coev: \CC \to V^\vee \otimes V$ such that the appropriate compositions are identities.  For any vector space we can find its linear dual which plays the role of $V^\vee$ and we always have an evaluation map $\ev$, however we have a coevaluation map $\coev$ satisfying the duality equations if and only if $V$ is finite dimensional.  We deduce that a vector space is left (or right) dualizable precisely when it is finite dimensional.
\end{example}

As suggested by the previous example, one can see that if $(\cC, \otimes)$ is a symmetric monoidal category, then left dualizability of 1-morphisms in $\bB C$ is the same as right dualizability.  In general, left and right dualizability may be distinct as the next example illustrates.

\begin{example}
Consider the 2-category of categories, functors, and natural transformations.  A functor $F: \cC \to \cD$ is left dualizable precisely if it is a left adjoint.  In this case the dual $G: \cD \to \cC$ is right adjoint to $F$ and the 2-morphisms $\ev$ and $\coev$ are the unit and counit for the adjunction.  Of course the same holds for right dualizability and right adjoints.
\end{example}

\section{Duality in Higher Categories}

Before discussing dualizability in Segal $n$-categories, let's give an advertisement for our use of Segal $n$-categories as a model for $(\infty,n)$-categories.  We are principally interested in three aspects of Segal $n$-categories:
\begin{enumerate}
\item It is relatively easy to construct examples of Segal $n$-categories;
\item Given two Segal $n$-categories we can form a Segal $n$-category of functors between them;
\item For a fixed Segal $n$-category we can extract various $k$-categories for $k \le n$; in particular by (carefully) considering three consecutive layers we can associate a 2-category.
\end{enumerate}
We will utilize property (3) in defining dualizability for higher morphisms in an $n$-category.

\begin{definition}
Let $X$ be a Segal $n$-category, with $n\geq2$. The category of Segal $n$-categories may be written as a two-fold iteration of the Segal category construction: $\Seg_n = \Seg(\Seg(\Seg_{n-2}))$. The relative functor $\pi_0:\Seg_{n-2} \to \set$, sending a Segal $(n-2)$-category to its set of isomorphism classes of objects induces a relative functor
\begin{equation*}
	h_2: \Seg_n = \Seg(\Seg(\Seg_{n-2})) \to \Seg(\Seg(\set)) \simeq \cat_2.
\end{equation*}
Let's describe $h_2X$.  The objects of $h_2X$ are just same as those of $X$, the discrete set $X_0$.  For any two objects $a,b \in X_0$ we have a Segal $(n-1)$-category $X(a,b)$ and the category of morphisms in the bicategory $h_2X$ is given by the homotopy category $hX(a,b)$.  That is, 2-morphisms are identified up to homotopy/isomorphism. 

We will say that $h_2X$ is the {\em homotopy bicategory } of $X$, although that is slightly abusive; the horizontal composition in $h_2X$ is not specified but can be chosen up to unique natural isomorphism. Alternatively we may also just was well apply the Lack-Paoli realization to obtain an actual bicategory \cite{MR2366560}.
\end{definition}

\begin{example}
	There is a 3-category $\cat_3$ whose objects are 2-categories, 1-morphisms are functors, 2-morphisms are natural transformations, and 3-morphisms are natural `modifications' between the transformations. Then $h_2 \cat_3$ has the same objects and 1-morphisms, but the 2-morphisms are now isomorphism classes of natural transformations. 
\end{example}

\begin{definition}
Now for each $0 \le k \le n-2$ we will inductively define a 2-category, the $k${\it th level bicategory} $h^{(k)}_2 X$. First we set $h^{(0)}_2 X = h_2 X$. Now notice that if $X$ is a Segal $n$-category, then we may form the union $\sqcup_{a,b \in \ob X} X(a,b)$, which is a Segal $(n-1)$-category. For $k>1$ we set 
 $h^{(k)}_2 X = h^{(k-1)}_2 (\sqcup_{a,b \in \ob X} X(a,b))$.
\end{definition}

Thus $h^{(k)}_X$ has objects the $k$-morphisms of $X$, morphisms the $(k+1)$-morphisms of $X$, and 2-morphisms the equivalence classes of $(k+2)$-morphisms of $X$. 

\begin{definition}
Let $X$ be a Segal $n$-category.  A $(k+1)$-morphism $f$ of $X$ is {\it left dualizable} if $f$ is left dualizable in the $k$th level bicategory $h_2^{(k)}X$. Similarly, a $(k+1)$-morphism $g$ is {\it right dualizable} if $g$ is right dualizable in $h_2^{(k)}X$.
\end{definition}

\subsection{Symmetric monoidal $n$-categories}

In what follows, where we write $n$-category we will implicitly be working with $(\infty,n)$-categories modeled on Segal $n$-categories. We will be interested in symmetric monoidal $n$-categories.  There are several approaches to defining what a symmetric monoidal $n$-category $(\cC, \otimes)$ actually is; here we list three:
\begin{enumerate}
\item The category $\cC$ is an algebra over the $E_\infty$-operad;
\item The category $\cC$ is a $\Gamma$ object, where $\Gamma$ is Segal's category of finite pointed sets;
\item In accordance with the stabilization hypothesis, for all $k$, $\cC$ can be realized as the higher morphism $(\infty,n)$-category of an $(\infty,n+k)$-category with $k$ trivial layers. Thus $\cC$ is taken as a compatible family of such $(\infty,n+k)$-categories.
\end{enumerate}

The first and second approaches can be compared using the same techniques employed by May and Thomason \cite{MR508885}. For concreteness we will always consider symmetric monoidal $(\infty,n)$-categories to be `special $\Gamma$-objects' in Segal $n$-categories. Some work on the third approach has been initiated by Simpson \cite{MR2883823}. In any event these structures are sufficient to endow the homotopy category $hX$ of a Segal $n$-category with the structure of a symmetric monoidal category. An object of $X$ will be said to have a dual if it has a dual in $hX$.

%
%

\subsection{Full-Dualizability}

Let $(\cC, \otimes)$ be a symmetric monoidal $n$-category.

\begin{definition}
The symmetric monoidal $n$-category $\cC$ is $k${\it -fully dualizable} if all $l$-morphisms are both left and right dualizable for $0 \le l \le k-1$ (in the case $l =0$ we mean that the objects of $X$ are dualizable). 
\end{definition}
\noindent In every symmetric monoidal $n$-category there exists a maximal $k$-fully dualizable subcategory. When $k=n$ we denote this as $\cC^\textrm{fd}$. In these notes we will be interested in 2-full dualizability and 3-full dualizability.

\section{The Cobordism Hypothesis} \label{sec:cobordism_hypothesis}

A key player in the remainder will be the $n$-dimensional bordism category $\mathsf{Bord}_n$.  At the 1-categorical level, $\mathsf{Bord}_n$ has as objects closed $(n-1)$-manifolds and morphisms are bordisms between them. Composition in the category is given by gluing bordisms.  $\mathsf{Bord}_n$ is a symmetric monoidal category with respect to disjoint union.  We will often insist that our manifolds come equipped with a certain tangential structure, e.g. framings or orientations.

As an introduction the cobordism hypothesis and its consequences, let's consider the 1-dimensional oriented bordism category $\mathsf{Bord}^{or}_1$, which is generated by the following objects and 1-morphisms (see section \ref{sect:presentation}).

\begin{center}
\begin{tikzpicture}[thick]

\node at (0,0) {Generating objects:};
\node at (3,0) {$+$};
\node at (4,0) {and};
\node at (5,0) {$-$};

\node at (0,-1) {Generating 1-morphisms:};
\draw (3.5,-1) arc (90:-90:0.7cm and 0.4cm);
\node at (5.8,-1.4) {and};
\draw (8,-1) arc (90:270:0.7cm and 0.4cm);
\node at (3.2,-1) {$+$};
\node at (3.2,-1.8) {$-$};
\node at (8.3,-1) {$-$};
\node at (8.3,-1.8) {$+$};

\end{tikzpicture}
\end{center}

\noindent
Additionally, we have the following generating relations.

\begin{center}
\begin{tikzpicture}[thick]

\draw (0,0) --(1,0) arc(90:-90:0.7cm and 0.4cm) arc(90:270:0.7cm and 0.4cm) -- (2,-1.6);
\draw (4,0) --(4,-1.6);

\node at (0,.3) {$+$};
\node at (2,-1.9) {$+$};
\node at (4,.3) {$+$};
\node at (4,-1.9) {$+$};
\node at (3,-.8) {$=$};

\node at (6,-.8) {and};

\draw (10,0) --(9,0) arc(90:270:0.7cm and 0.4cm) arc(90:-90:0.7cm and 0.4cm) -- (8,-1.6);
\draw (12,0) --(12,-1.6);

\node at (10,.3) {$-$};
\node at (8,-1.9) {$-$};
\node at (12,.3) {$-$};
\node at (12,-1.9) {$-$};
\node at (11,-.8) {$=$};

\end{tikzpicture}
\end{center}

We are interested in studying symmetric monoidal functors $Z: \Bord_1^{or} \to (\cC , \otimes)$, where $(\cC, \otimes)$ is any symmetric monoidal category. Such a functor $Z$ is called a {\it 1-dimensional topological field theory}.  The 1-dimensional cobordism hypothesis can be stated as follows.

\begin{prop}
Given a symmetric monoidal category $(\cC, \otimes)$, the evaluation $Z \mapsto Z (+)$ determines am equivalence between the groupoid of one dimensional oriented topological field theories with values in $\cC$ and the groupoid of fully dualizable objects in $\cC$.
\end{prop}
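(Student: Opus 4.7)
The plan is to exploit the generators-and-relations presentation of $\Bord_1^{or}$ just displayed. Since $\Bord_1^{or}$ is presented as a symmetric monoidal category by the generating objects $+, -$ together with the cup and cap $1$-morphisms subject only to the two snake relations, specifying a symmetric monoidal functor $Z: \Bord_1^{or} \to \cC$ amounts to choosing objects $Z(+), Z(-)$ and morphisms for the cup and the cap in $\cC$ satisfying the two snake relations. But the snake relations in $\cC$ are precisely the defining equations exhibiting $Z(-)$ as a dual of $Z(+)$, with $Z(\mathrm{cup})$ as evaluation and $Z(\mathrm{cap})$ as coevaluation. Therefore the groupoid of TFTs corresponds, via $Z \mapsto Z(+)$, to the groupoid of objects of $\cC$ admitting a dual, which in the $1$-categorical setting is exactly the notion of ``fully dualizable''.

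To prove essential surjectivity of $Z \mapsto Z(+)$, I would start with a dualizable object $x \in \cC$, choose a dual $x^\vee$ together with evaluation and coevaluation maps $\ev_x, \coev_x$ satisfying the duality equations, and use the universal property of the presentation to produce a symmetric monoidal functor $Z_x: \Bord_1^{or} \to \cC$ sending $+ \mapsto x$, $- \mapsto x^\vee$, and the cup and cap to $\ev_x$ and $\coev_x$ respectively. Then $Z_x(+) = x$ on the nose.

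For full faithfulness on groupoids, I would check that any isomorphism $\phi: Z(+) \xrightarrow{\sim} Z'(+)$ extends uniquely to a monoidal natural isomorphism $\eta: Z \Rightarrow Z'$. The component $\eta_-$ is forced, since a dual of a given object is unique up to a unique isomorphism compatible with the evaluation and coevaluation; this uses only that $Z(-)$ and $Z'(-)$ are duals of $Z(+)$ and $Z'(+)$ respectively, a fact guaranteed by the snake relations. One then verifies naturality of $\eta$ on the two generating $1$-morphisms, which is immediate from the way $\eta_-$ was forced; by the presentation this implies naturality everywhere and compatibility with the monoidal structure. Combined with essential surjectivity, this yields the required equivalence of groupoids.

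The main obstacle is not any of the steps above, which are formal consequences of the universal property of a symmetric monoidal category presented by generators and relations, but rather the presentation itself: that every oriented $1$-bordism can be built by composing and tensoring the listed cup, cap, and identity generators, and that every relation among such expressions is a consequence of the two displayed snake identities. This classification is a Morse-theoretic input about oriented $1$-manifolds with boundary that must be justified separately. Granting that presentation, the cobordism hypothesis in dimension one reduces to the routine matching between snake identities and duality axioms sketched above.
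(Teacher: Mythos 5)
Your proposal is correct and follows essentially the same route as the paper: it uses the generators-and-relations presentation of $\Bord_1^{or}$ to identify a symmetric monoidal functor with a quadruple $(X, X^\vee, \ev, \coev)$ satisfying the snake relations, and then invokes the uniqueness (contractibility) of duality data --- the content of Exercise~\ref{Ex:uniquenessofduals} --- to see that forgetting everything but $X$ is an equivalence of groupoids. Your closing caveat about justifying the presentation itself matches the paper's own treatment, which delegates that Morse-theoretic input to Section~\ref{sect:presentation}.
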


We can restate the cobordism hypothesis as an equivalence of categories
\[
\mathrm{Fun}^\otimes (\Bord_1^{or} , \cC) \simeq \sK (\cC^{fd}),
\]
where $\sK (\cC^{fd})$ is the {\it core} of $\cC$ which is the maximal groupoid on dualizable objects. 
 Using the presentation of $\Bord_1^{or}$ an object of $\mathrm{Fun}^\otimes (\mathsf{Bord}_1^{or} , \cC)$ can be given explicitly by a quadruple $(X, X^\vee, \ev, \coev)$ for $X \in \cC$ an object, $X^\vee$ its dual, and $\ev$ and $\coev$ the morphisms exhibiting $X^\vee$ as the dual of $X$. The map to $\sK (\cC^{fd})$ just remembers the object $X$. Exercise \ref{Ex:uniquenessofduals} may be used to show this forgetful functor is indeed an equivalence.

The preceding discussion can be ``beefed up'' to $n$-dimensions.  The $n$-dimensional framed bordism category $\mathsf{Bord}_n^{fr}$ is an $n$-category (really an $(\infty,n)$-category) with objects given by finite disjoint unions of points, 1-morphisms are bordisms between points, 2-morphisms are bordisms between bordisms (so manifolds with corners), and so on.  All the manifolds are compact and framed, that is, every manifold is equipped with a trivialization of its tangent bundle, stabilized to dimension $n$ if necessary. Note that in one dimension framings and orientations are the same notion.  The following was established by Mike Hopkins and Jacob Lurie in dimension 2 and by Lurie for all dimensions $n$ \cite{MR2555928} (compare also \cite{MR2713992} for a different approach).

\begin{cobhyp}
The framed bordism category $\mathsf{Bord}_n^{fr}$ is the free symmetric monoidal $(\infty,n)$-category generated by a single $n$-fully dualizable object.
\end{cobhyp}

As a corollary we have an equivalence of $(\infty,n)$-categories for any symmetric monoidal $(\infty,n)$-category $\cC$:
\[
\mathrm{Fun}^\otimes (\mathsf{Bord}_n^{fr}, \cC) \simeq \sK (\cC^{fd})
\]
where $\sK (\cC^{fd} )$ is the maximal $\infty$-groupoid generated by the maximal $n$-fully dualizable subcategory of $\cC$.  This equivalence is again induced by the evaluation functor which evaluates a topological field theory on a point.

There is a useful and very general principle in mathematics.
Given two categories (spaces, $n$-categories, etc.) $\cB$ and $\cC$, the automorphisms of $\cB$ act on the mapping object $\mathrm{Maps}(\cB, \cC)$.  In the case where $\cB = \mathsf{Bord}_n^{fr}$ we have an action of the orthogonal group $O(n)$ by automorphisms; this action is realized geometrically by the $O(n)$ action on the choice of framings for an $n$-manifold.  Hence, we have an action of $O(n)$ on 
\[
\mathrm{Fun}^{\otimes} (\mathsf{Bord}_n^{fr} , \cC) \simeq \sK (\cC^{fd}).
\]
In what follows, we will explore this $O(n)$ action for $n=1,2,3$.

There is also a version of the cobordism hypothesis for other sorts of topological field theories with different {\em tangential structure}.  Let $M$ be an $n$-manifold and let $\tau : M \to BO(n)$ be the classifying map of its tangent bundle, i.e. $\tau^\ast EO(n) \simeq TM$.  Now given a topological group $G$ and a homomorphism $G \to O(n)$ we can build a bordism category $\mathsf{Bord}_n^G$ where objects are now equipped with a lift of $\tau$ to a map $\widetilde{\tau} : M \to BG$, so the following commutes
\[
\xymatrix{
& BG \ar[d]\\
M \ar[ur]^{\widetilde{\tau}} \ar[r]^\tau & BO(n) .}
\]
Interesting examples are the cases where $G$ is trivial, $O(n)$, $SO(n)$, or $Spin(n)$ which correspond to the tangent bundle of $M$ being framed, no condition, oriented, or spin. This gives the following identifications:
\begin{align*}
	\mathsf{Bord}_n^{\{1\}} \simeq \mathsf{Bord}_n^{fr}, & & \mathsf{Bord}_n^{O(n)} \simeq \mathsf{Bord}_n, \\
	\mathsf{Bord}_n^{SO(n)} \simeq \mathsf{Bord}_n^{or}, &  & \mathsf{Bord}_n^{Spin(n)} \simeq \mathsf{Bord}_n^{spin}.
\end{align*}
The cobordism hypothesis then asserts the following equivalence of categories
\[
\mathrm{Fun}^\otimes (\mathsf{Bord}_n^G , \cC ) \simeq \left [ \sK (\cC^{fd}) \right ]^{hG},
\]
where the superscript indicates passing to homotopy fixed points with respect to the action of $G$.

\section{Exercises} \label{sec:exercises_1}

\begin{exercise}
	Show that the category of strict $n$-categories is Cartesian closed and that the internal hom is as described at the end of section \ref{sec:strictNCats}. Describe the composition of $k$-morphisms.  
\end{exercise}

\begin{exercise}[Eckman-Hilton] \label{ex:Eckman-Hilton}
	Let $S$ be a set with two unital binary operations $\circ$ and $*$ which satisfy the following distributivity law
	\begin{equation*}
		(a \circ b) * (c \circ d) = (a * c) \circ (b * d).
	\end{equation*}
	Show that both $*$ and $\circ$ are commutative and associative, and in fact that they agree identically ($* = \circ$). In particular the units of $*$ and $\circ$ agree.
\end{exercise}

\begin{exercise}
	Let $F: \cC \to \cD$ and $G: \cD \to \cC$ be given functors. Show the following additional structures are equivalent:
	\begin{enumerate}
		\item a bijection $\Hom_\cD(Fc, d) \cong \Hom_\cC(c, Gd)$, natural in both $c \in \cC$ and  $d \in \cD$. 
		\item natural transformations $\varepsilon:FG \to id_\cD$ (counit) and $\eta: id_\cC \to GF$ (unit) satisfying the equations
		\begin{align*}
			 (\varepsilon * 1_F) \circ (1_F * \eta) &= 1_F \\
			 (1_G * \varepsilon ) \circ (\eta * 1_G) &= 1_G.
		\end{align*}
	\end{enumerate}
\end{exercise}

\begin{exercise} \label{Ex:uniquenessofduals}
	Let $F: \cC \to \cD$ be a fixed functor. Define a category of {\em dualizability data} for $F$ as follows. The objects consist of triples $(G, \varepsilon, \eta)$ which witness $G$ as a right adjoint to $F$. A morphism from $(G, \varepsilon, \eta)$ to $(G', \varepsilon', \eta')$ consists of a natural transformation $\phi: G \to G'$ compatible with the unit and counit in the sense that  
	\begin{equation*}
		\varepsilon' \circ (1_F * \phi) = \varepsilon \quad \quad \eta' = (\phi * 1_F) \circ \eta.
	\end{equation*}
	Show that the resulting category is either empty or {\em contractible} (equivalent to the terminal category $pt$). Does a similar result hold if any of the data of $(G, \varepsilon, \eta)$ is removed?

		 Allow $F$ to vary by considering an analogous category of quadruples $(F,G, \varepsilon, \eta)$. Show that the forgetful functor $(F, G, \varepsilon, \eta) \mapsto F$ induces an equivalence of this category with the groupoid consisting of left adjoint functors and natural isomorphisms. Deduce that dualizability data may be ``chosen in families''.   
\end{exercise}

\begin{exercise}[$\star$]
	Let $G$ be a group and $A$ an abelian group. Calculate and compare the maps from $BG$ to $B^2A$ as
	\begin{enumerate}
		\item spaces,
		\item strict 2-categories,
		\item bicategories, and
		\item Segal 2-categories. (using the 2-nerve from \cite{MR2366560})
	\end{enumerate}
\end{exercise}

\begin{exercise}[$\star$]
	Prove that ``left duals = right duals'' for objects in any symmetric monoidal category. To what extent does this hold for braided monoidal categories? Find a monoidal category with duals, but where (some) left duals fail to be isomorphic to the corresponding right dual. 
\end{exercise}

\begin{exercise}
	Explore dualizability in the bicategory of algebras, bimodules, and bimodule maps. Which algebras are dualizable? Which bimodules admit left or right duals? Demonstrate a class of fully 2-dualizable algebras. 
\end{exercise}

%
%

\specialsection*{Understanding the $O(1)$-action}

\section{Defining categories via generators and relations} \label{sec:cats_via_gen_reln}

There are many equivalent ways to define categories. Traditionally a (small) category $C$ consists of a set $C_0$ of objects and for each pair of objects $a$ and $b$, a set of morphism from $a$ to $b$. Taking the union over all pairs of objects yields a global set of morphisms $C_1$. This set comes equipped with source and target maps $s,t: C_1 \to C_0$ as well as associative and unital compositions. 

By a {\em graph} we will mean what is also commonly called a {\em directed multigraph}. It consists of a set of vertices $C_0$ and a set of arrows $C_1$ which have sources and targets which are vertices. In other words a graph consists of two sets $C_0$ and $C_1$, and a pair of maps $s,t: C_1 \to C_0$. The evident category of graphs is a presheaf category on the quiver $* \rightrightarrows *$.  

There is a forgetful functor from categories to graphs and a corresponding free functor (left adjoint to the forgetful functor) which takes a graph and constructs the free category built from that graph. The adjunction
\begin{equation*}
	F: \textrm{Graphs} \rightleftarrows \textrm{Cat}: U
\end{equation*}
is monadic, that is, a category is exactly the same thing as a graph which is an algebra for the monad $UF$. This has several consequences, for example the category of small categories has all small limits and colimits. 

More importantly for any category $X$, the following is a coequalizer diagram:
\begin{equation*}
	FUFU(X) \rightrightarrows FU(X) \to X.
\end{equation*}
As $FU(X) \to X$ is a bijection on objects, it follows that the functor $FU(X) \to X$ is necessarily surjective on morphisms. In otherwords $X$ can be obtained from $FU(X)$ by identifying certain pairs of arrows. This is a special case of a presentation of a category by generators and relations.  

\begin{definition}
	A {\em presentation} of a category $X$ consists of a set of generating objects and arrows, i.e., a graph $G = (G_1 \rightrightarrows G_0)$, together with a set $R$ of pairs of parallel arrows in the free category $F(G)$, and an equivalence of categories between $X$ and the resulting quotient
	\begin{equation*}
		X \simeq (\sqcup_R  C_1) \cup_{\sqcup_R \partial C_2} F(G).
	\end{equation*}
\end{definition}

In the above $C_1$ denotes the free-walking arrow, the category with two objects $0$ and $1$ and a single non-identity arrow which goes from the former to the latter. $\partial C_2$ denotes the free-walking pair of parallel arrows (the reason for this notation will hopefully become clear later). The category $ \partial C_2$ has two objects $0$ and $1$ and there are precisely two non-identity morphisms which both go from $0$ to $1$. A map $r:\partial C_2 \to Y$ consists of precisely a pair of parallel morphisms. There is exactly one functor $\partial C_2 \to C_1$ which is the identity on objects. It collapses the non-trivial morphisms together. The above pushout is formed using this map.  

Given a presentation $(G,R)$ for a category $X$, we may easily describe the category of functors out of $X$ into a target category $Y$. Up to (unique) equivalence such a functor is just a functor out of $F(G)$ such that each pair of arrows in $R$ have the same image in $Y$. A natural transformation of such functors is just a natural transformation of functors $F(G) \to Y$; the relations $R$ play no role for natural transformations. Furthermore functors $F(G) \to Y$ are equivalent to maps of graphs $G \to U(Y)$. That is, such a functor is equivalent to specifying the images of the generating objects and generating morphisms of $X$. Natural transformations also have a simple description in terms of the images of generators. Such a presentation is analogous to other more familiar algebraic presentations, such as those for groups or rings. 

A presentation for a category is also similar to a CW structure for certain spaces, but with extra care given to take account of the fact that the cells of a category are directed. 
There are similar presentations for symmetric monoidal categories.

\section{Presentations for low-dimensional bordism categories}\label{sect:presentation}

Standard Morse-theoretic techniques allow us to obtain presentations for low-dimensional bordism categories. In the 2-dimensional non-extended case this is discussed in \cite{MR2037238}, and the 1-dimensional and extended 2-dimensional cases are covered thoroughly in \cite{MR2713992}. We refer the interested reader to these sources for details. An alternative method would be to use the classification of manifolds of small dimension. 

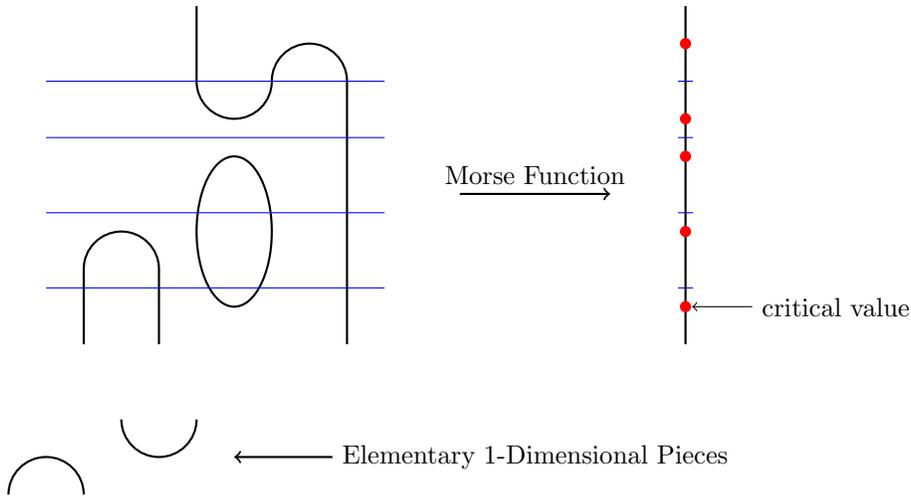
\begin{figure}[ht]
\begin{center}
\begin{tikzpicture}[thick]
	\draw (0,0) -- (0,1) arc (180: 0: 0.5cm) -- (1,0);
	\draw (2, 1.5) ellipse (0.5 cm and 1cm);
	\draw (1.5, 4.5) -- (1.5, 3.5) arc (180: 360: 0.5cm) arc (180: 0: 0.5cm) -- (3.5, 0);
	\draw [thin, blue] (-0.5, 0.75) -- +(4.5,0) +(8.4, 0) -- +(8.6, 0);
	\draw [thin, blue] (-0.5, 1.75) -- +(4.5,0) +(8.4, 0) -- +(8.6, 0);
	\draw [thin, blue] (-0.5, 2.75) -- +(4.5,0) +(8.4, 0) -- +(8.6, 0);
	\draw [thin, blue] (-0.5, 3.5) -- +(4.5,0) +(8.4, 0) -- +(8.6, 0);

	\draw[->] (5, 2) -- node [above] {Morse Function} +(2, 0);
	
	\begin{scope}[xshift=-1cm]
		\draw (9, 4.5) -- +(0, -4.5);

		\node (A) [circle, fill=red,inner sep=1.5pt] at (9, 0.5) {};
		\node [circle, fill=red,inner sep=1.5pt] at (9, 1.5) {};
		\node [circle, fill=red,inner sep=1.5pt] at (9, 2.5) {};
		\node [circle, fill=red,inner sep=1.5pt] at (9, 3) {};
		\node [circle, fill=red,inner sep=1.5pt] at (9, 4) {};

		\node (B) at (11, 0.5)  {critical value};	
		\draw [->, thin] (B) -- (A);
	\end{scope}

	\draw (-1, -2) arc (180: 0: 0.5cm);
	\draw (0.5, -1) arc (180: 360: 0.5cm);
	\node (C) at (6, -1.5) {Elementary 1-Dimensional Pieces};
	\draw [->] (C) -- (2, -1.5);
\end{tikzpicture}
\caption{A one-dimensional decomposition induced by a Morse function}
\label{fig:1DDecompFromMorse}
\end{center}
\end{figure} 

The essential idea is that a Morse function provides a way to decompose any bordism into a composite of elementary bordisms, see Figure \ref{fig:1DDecompFromMorse}. Thus we see that the 1-dimensional bordism category has generating objects the points, and generating morphisms given by the left and right `elbows'. These left and right elbows are precisely the connected bordisms which have exactly one Morse critical point. The relations may similarly be obtained by considering families of Morse functions. In other words by the use of elementary Cerf theory. 

\section{The $O(1)$-action via Presentations}

In this section we dissect the action of $O(1)$ on the core of a fully-dualizable symmetric monoidal category $\cC$.  In the process we describe the unoriented bordism category $\mathsf{Bord}_1$, homotopy quotients, and set the stage for analyzing both the higher dimensional case (which we pursue later) and general tangential structures (a topic which we do not pursue).

Given a symmetirc monoial category $\cC$ we are interested in analyzing the action of $O(1)$ on $\sK (\cC^{fd})$.  In particular, we are interested in the homotopy fixed points of this action as the cobordism hypothesis tells us we have an equivalence
\[
\mathrm{Fun}^\otimes (\mathsf{Bord}_1^{O(1)} , \cC) \simeq \left [ \sK (\cC^{fd}) \right]^{hO(1)}.
\]

One way to understand the action of $O(1)$ on $\sK (\cC^{fd})$ is to understand the $O(1)$ action on $\mathsf{Bord}_1^{or} = \mathsf{Bord}_1^{fr}$, indeed, this is the way in which the action arrises. Now in general for a topological group $G$ (and a pointed map $BG \to BO(1)$) we will have
\[
\mathsf{Bord}_1^{G} = (\mathsf{Bord}_1^{fr})_{hG},
\]
where the subscript denotes the homotopy quotient/coinvariants.  This identification continues to hold in arbitrary dimension where we replace $1$ by $n$.  Hence, we reduce our study to finding a nice presentation for the homotopy quotient of $\mathsf{Bord}_1^{or}$ under the action of $O(1)$.

Recall from above the presentation of $\mathsf{Bord}_1^{or}$.  The action of $O(1) = \ZZ/2$ on this category is, at a first pass, described on generating objects and morphisms by the following assignments.

\begin{center}
\begin{tikzpicture}[thick]

\node at (0.2,1) {$+$};
\node at (4.55,1) {$-$};
\node at (2.2,1) {$\mapsto$};

\node at (0.2,0) {$-$};
\node at (4.55,0) {$+$};
\node at (2.2,0) {$\mapsto$};

\draw (0,-1) arc (90:-90:0.7cm and 0.4cm);
\draw (.4,-2.5) arc (90:270:0.7cm and 0.4cm);
\node at (-.3,-1) {$+$};
\node at (-.3,-1.8) {$-$};
\node at (.7,-2.5) {$-$};
\node at (.7,-3.3) {$+$};

\draw (4,-1) -- (4.7,-1.8) arc (-90:90:0.7cm and 0.4cm) -- (4,-1.8);
\node at (3.7,-1) {$-$};
\node at (3.7,-1.8) {$+$};
\draw (5.1,-2.5)--(4.3,-3.3) arc (270:90:0.7cm and 0.4cm) --(5.1,-3.3);
\node at (5.4,-2.5) {$+$};
\node at (5.4,-3.3) {$-$};

\node at (2.2,-1.4) {$\mapsto$};
\node at (2.2,-2.9) {$\mapsto$};

\end{tikzpicture}
\end{center}

The following proposition gives a presentation of noriented bordism category $\mathsf{Bord}_1$, which in light of the above action, identifies $\mathsf{Bord}_1$ with the quotient of $\mathsf{Bord}_1^{or}$ by the action of $O(1)$.

\begin{prop}
The unoriented bordism category $\mathsf{Bord}_1$ has the following presentation.

\begin{center}
\begin{tikzpicture}[thick]

\node at (0,0) {Generating objects:};
\node at (3,0) {$\bullet$};

\node at (0,-1) {Generating 1-morphisms:};
\draw (3.5,-1) arc (90:-90:0.7cm and 0.4cm);
\node at (5.8,-1.4) {and};
\draw (8,-1) arc (90:270:0.7cm and 0.4cm);
\node at (3.5,-1) {$\bullet$};
\node at (3.5,-1.8) {$\bullet$};
\node at (8,-1) {$\bullet$};
\node at (8,-1.8) {$\bullet$};

\end{tikzpicture}
\end{center}

\noindent
Subject to the following relations.

\begin{center}
\begin{tikzpicture}[thick]

\draw (0,0) --(1,0) arc(90:-90:0.7cm and 0.4cm) arc(90:270:0.7cm and 0.4cm) -- (2,-1.6);
\draw (4,0) --(4,-1.6);

\node at (0,0) {$\bullet$};
\node at (2,-1.6) {$\bullet$};
\node at (4,0) {$\bullet$};
\node at (4,-1.6) {$\bullet$};
\node at (3,-.8) {$=$};
\node at (5,-.8) {$=$};

\draw (8,0) --(7,0) arc(90:270:0.7cm and 0.4cm) arc(90:-90:0.7cm and 0.4cm) -- (6,-1.6);

\node at (8,0) {$\bullet$};
\node at (6,-1.6) {$\bullet$};

\end{tikzpicture}
\end{center}

\noindent
and

\begin{center}
\begin{tikzpicture}[thick]

\draw (0,0) -- (1.4,-1.6) arc (-90:90:1.4cm and 0.8cm) -- (0,-1.6);
\node at (0,0) {$\bullet$};
\node at (0,-1.6) {$\bullet$};

\node at (4.3,-.8) {$=$};
\draw (5.3,0) arc(90:-90:1.4cm and 0.8cm);
\node at (5.3,0) {$\bullet$};
\node at (5.3,-1.6) {$\bullet$};

\end{tikzpicture}
\end{center}
\end{prop}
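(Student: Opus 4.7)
The plan is to prove this proposition along the same lines as the oriented case mentioned in section~\ref{sect:presentation}, using Morse-theoretic decompositions and Cerf-type moves, while accounting for the absence of orientations. Equivalently, one can extract it from the already-discussed oriented presentation by computing the $O(1) = \mathbb{Z}/2$-quotient, using the identification $\Bord_1 \simeq (\Bord_1^{or})_{hO(1)}$ asserted in the text preceding the proposition.

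First I would verify that the stated generators suffice. Given a compact unoriented $1$-manifold with boundary $\Sigma$ representing a morphism in $\Bord_1$, I choose a generic Morse function $f\colon \Sigma \to [0,1]$ with $f^{-1}(0)$ and $f^{-1}(1)$ the incoming and outgoing boundaries. Slicing at regular levels between consecutive critical values writes $\Sigma$ as a monoidal composition of elementary pieces, each being either a cylinder on a single point (the identity on $\bullet$) or a local minimum/maximum of $f$ (a cup or cap). Since no orientation is tracked, a single point-type $\bullet$ and single cup/cap suffice, so the free symmetric monoidal category on the stated generators surjects onto $\Bord_1$.

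Next I would verify that the stated relations actually hold in $\Bord_1$. The two snake relations follow because composing a cap with a cup, with a cylinder interposed on one strand, produces a manifold diffeomorphic (as an abstract $1$-manifold with boundary) to a straight cylinder. The twist relation expresses that an arc in the plane connecting two boundary points represents the same abstract bordism whether or not it is drawn with a self-crossing, both being diffeomorphic to $[0,1]$. This is exactly the feature that fails in the oriented category, where a crossing would flip the local orientations.

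Third, and most importantly, I would argue that the listed relations are complete. Here I would invoke Cerf theory: any two generic Morse functions on $\Sigma$ respecting the boundary can be connected by a generic one-parameter family, along which only two codimension-one singularities occur. Swaps of critical values yield relations automatically satisfied in any symmetric monoidal category, while births/deaths of critical pairs give precisely the snake relations. The twist relation is the remaining ingredient: it encodes the additional identifications introduced when we forget orientations, and under the identification $\Bord_1 \simeq (\Bord_1^{or})_{hO(1)}$ it arises as the relation forced by the $\mathbb{Z}/2$-action --- which collapses the two oriented object-generators to $\bullet$, the two oriented cups (resp.\ caps) to a single cup (resp.\ cap), the two oriented snake relations to one, and demands $\ev \circ \sigma_{\bullet,\bullet} = \ev$ on the identified generators.

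The main obstacle is this completeness step. It is essentially the unoriented analogue of the Cerf-theoretic calculation carried out in \cite{MR2713992}, and the subtle point is to confirm that stripping orientations introduces exactly the one new family of identifications captured by the twist relation --- no more and no less. Phrased in terms of the quotient, the delicate verification is that the homotopy quotient $(\Bord_1^{or})_{hO(1)}$ is already $1$-truncated as a symmetric monoidal $(\infty,1)$-category, so that no additional higher cells are required beyond the relations displayed above.
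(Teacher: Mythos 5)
Your primary argument---a Morse-theoretic decomposition to see that the point, cup, and cap generate, followed by Cerf theory for completeness (births/deaths of critical pairs giving the snake relations, exchanges of critical values absorbed by the symmetric monoidal structure, and the extra unoriented symmetry of the elbow giving the twist relation)---is exactly the route the paper takes, which it only sketches in section~\ref{sect:presentation} and delegates to \cite{MR2713992} and \cite{MR2037238} for details. One caution: your ``equivalent'' second route via the identification $\Bord_1 \simeq (\Bord_1^{or})_{hO(1)}$ should not be leaned on here, since in the paper's development that identification is the content of the subsequent theorem, which is itself proved by comparing the presentation of the homotopy quotient with the presentation asserted in this very proposition, so deriving the proposition from it would be circular.
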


So far we have written down the action of $O(1)$ on the category $\mathsf{Bord}_1^{or}$ and identified the quotient category as $\mathsf{Bord}_1$.  However, we have been a bit cavalier and not completely rigorous in our analysis, as we never expressed in detail what it means to have an action of a group on a category, nor have we sufficiently explained the notion of homotopy quotient in the categorical setting.  We rectify our sins in the next section.

\section{Unoriented Bordism as a Homotopy Orbit}

We are interested in a better understanding of the homotopy quotient of the oriented bordism category $\mathsf{Bord}_1^{or}$ under an action of $O(1) = \ZZ/2$. As a preliminary let us consider the situation in spaces.  Let $X$ be a space equipped with an action of a group $G$.  If the action of $G$ on $X$ is not free then the resulting quotient space can be quite badly behaved, homotopically. Instead we free up the action by considering the diagonal action of $G$ on $X \times EG$, where $EG$ is a contractible free $G$ space.  The {\it homotopy quotient} of $X$ by $G$, denoted by $X_{hG}$, is given by
\[
X_{hG} \overset{def}{=} (X \times EG)/G .
\]
If the action of $G$ on $X$ was actually free then we have a homotopy equivalence $X/G \sim X_{hG}$.  For example, if $G$ is the trivial group then $X_{hG} \sim X$.

In spaces we also have the notion of {\it homotopy fixed point sets}.  If $Y$ is a space with a $G$ action, then we can define the space of homotopy fixed points $Y^{hG}$ by
\[
Y^{hG} \overset{def}{=} \mathrm{Maps} (EG, Y).
\]
At this point we could define the homotopy quotient of the core $\sK (\cC^{fd})$ by making use of the homotopy hypothesis to find a space $Y$ such that $\sK (\cC^{fd})$ is the fundamental groupoid of $Y$.  We could then consider the homotopy fixed points of our $O(1)$ action on $Y$.

 Instead we will construct the homotopy quotient of the bordism categories.  The main tool is finding an analog of the universal $G$ space $EG$.  Let's restrict to the case where $G= \ZZ/2$.  We define a category $\cJ$ which is the free-walking isomorphism. That is $\cJ$ has two objects $j, \overline{j}$ and in addition to identities, two morphisms which are inverse to each other. There is an clear free $\ZZ/2$-action on $\cJ$, given by swapping the two objects. Moreover $\cJ$ is contractible and hence it serves as a categorical model of $E\ZZ/2$.
\begin{prop}
Given any symmetric monoidal category $(\cC, \otimes)$ there exists a symmetric monoidal category $\cC \boxtimes \cJ$ characterized by the following equivalence of categories
\[
\mathrm{Fun}^\otimes (\cC \boxtimes \cJ , \cD) \simeq \mathrm{Fun} (\cJ , \mathrm{Fun}^\otimes (\cC, \cD)).
\]
$\cC \boxtimes \cJ$ is unique up to unique equivalence
\end{prop}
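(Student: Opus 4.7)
My plan is to establish uniqueness formally from Yoneda and then exhibit $\cC \boxtimes \cJ$ explicitly and verify the universal property. Uniqueness is standard: if $\cM$ and $\cM'$ both represent the 2-functor $\cD \mapsto \Fun(\cJ, \Fun^\otimes(\cC, \cD))$, evaluating the natural equivalence at $\cD = \cM$ and $\cD = \cM'$ on the respective identities produces mutually inverse symmetric monoidal functors $\cM \simeq \cM'$, and naturality yields the higher compatibilities.

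For existence, I would first reduce the problem via a currying identity. For any small category $\cE$ and any symmetric monoidal $\cD$, the functor category $\Fun(\cE, \cD)$ inherits a pointwise symmetric monoidal structure, and by direct unwinding of the data on both sides one obtains a natural equivalence
\[
\Fun^\otimes(\cC, \Fun(\cE, \cD)) \simeq \Fun(\cE, \Fun^\otimes(\cC, \cD)).
\]
Applied with $\cE = \cJ$, this identifies the right-hand side of the desired universal property with $\Fun^\otimes(\cC, \Fun(\cJ, \cD))$, so it suffices to exhibit a left adjoint to the endofunctor $\cD \mapsto \Fun(\cJ, \cD)$ on symmetric monoidal categories.

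I would construct this left adjoint by generators and relations, following Section \ref{sec:cats_via_gen_reln}. Starting from any presentation of $\cC$ as a symmetric monoidal category, form $\cC \boxtimes \cJ$ by the following data: for every generating object $c$ of $\cC$, include two labelled copies $c_j$ and $c_{\bar j}$; include the generating morphisms of $\cC$ in each labelling; adjoin generating isomorphisms $\iota_c \colon c_j \xrightarrow{\sim} c_{\bar j}$ and their formal inverses; and impose the original relations of $\cC$ within each labelling together with naturality relations $f_{\bar j} \circ \iota_c = \iota_{c'} \circ f_j$ for every generating morphism $f \colon c \to c'$. To verify the universal property, note that a symmetric monoidal functor $\cC \boxtimes \cJ \to \cD$ is determined by its behavior on generators: extracting the two labelled copies yields symmetric monoidal functors $F, G \colon \cC \to \cD$, and the images of the $\iota_c$ assemble into a monoidal natural isomorphism $\alpha \colon F \xrightarrow{\sim} G$, which is precisely the data of a functor $\cJ \to \Fun^\otimes(\cC, \cD)$. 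Natural transformations correspond similarly, because modifications of functors out of $\cC \boxtimes \cJ$ are determined by their components on generators.

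The main obstacle is coherence. One must verify that naturality relations imposed only on generating morphisms propagate to naturality on all of $\cC$ (which follows from the universal property of the presentation), and that the monoidal compatibilities of the resulting natural isomorphism $\alpha$ are encoded correctly; the latter requires also imposing doubled relations on the generating monoidal coherence morphisms of $\cC$, so that $\alpha_{c \otimes c'}$ corresponds to $\alpha_c \otimes \alpha_{c'}$ under the structure maps. Because $\cJ$ is contractible, one could alternatively take $\cC \boxtimes \cJ = \cC$ and verify the universal property directly by observing that $\Fun(\cJ, \cX) \simeq \cX$ for any $\cX$; however, the explicit doubled-presentation model is the one that makes the natural $\ZZ/2$-action (swapping the labels $j$ and $\bar j$) manifest, which is the structure required in the sequel.
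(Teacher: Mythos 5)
Your argument is essentially correct, and it supplies a proof where the paper offers none: the proposition is stated without proof, and the only thing the paper does in this direction is to write down, in the special case $\cC = \Bord_1^{or}$, a presentation of $\cC \boxtimes \cJ$ obtained by doubling all objects and morphisms, adjoining the isomorphisms $(\mathbf{1}_b, j \to \overline{j})$, and imposing naturality and monoidal compatibility relations. Your doubled-presentation model is the same construction in spirit, the only difference being that you double a chosen presentation of $\cC$ rather than the canonical one with every object and morphism as a generator; the coherence bookkeeping you flag (propagating naturality from generating morphisms, and defining the component of $\alpha$ at a tensor word as the tensor of the generating components) is exactly the content hidden in the paper's phrase ``relations given by commutative diagrams of the following type,'' so your acknowledgement of it is appropriate rather than a gap. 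What your write-up adds beyond the paper is the formal scaffolding: uniqueness by the bicategorical Yoneda argument, and the currying equivalence $\Fun^\otimes(\cC, \Fun(\cJ,\cD)) \simeq \Fun(\cJ, \Fun^\otimes(\cC,\cD))$ identifying $\cC \boxtimes \cJ$ as a tensor (copower) of $\cC$ by $\cJ$ in symmetric monoidal categories, both of which are correct. Your closing observation is also worth keeping: since $\cJ \simeq \pt$, the category $\cC$ itself already represents the stated pseudofunctor, so existence for the proposition \emph{as literally stated} is a one-line matter, and the uniqueness clause then forces $\cC \boxtimes \cJ \simeq \cC$; the real content of the construction, as you say, is the explicit model carrying a strict free $\ZZ/2$-action by exchanging $j$ and $\overline{j}$, which is what the paper needs to form the strict quotient $(\Bord_1^{or} \boxtimes \cJ)_{\ZZ/2}$ computing the homotopy quotient.
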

\noindent In light of this proposition, we define $\cC_{h\ZZ/2}$ to be the quotient of $\cC \boxtimes \cJ$ by the diagonal (level-wise) action of $\ZZ/2$.

In the case of the oriented bordism category we have the following presentation of $\mathsf{Bord}_1^{or} \boxtimes \cJ$.  The objects of $\mathsf{Bord}_1^{or} \boxtimes \cJ$ are generated by pairs $(b,k)$ where $b \in \mathsf{Bord}_1^{or}$ and $k \in \{j, \overline{j}\}$.  The morphisms are generated by pairs of the form $(f: b \to b' , \mathbf{1}_j)$, $(f: b \to b' , \mathbf{1}_{\overline{j}})$, and $(\mathbf{1}_b, k \to k')$ for $k,k' \in \{j, \overline{j}\}$. Further, morphisms of the form $(f: b \to b', \mathbf{1}_j)$ and $(\mathbf{1}_b, k \to k')$ commute.  We also have isomorphisms of the form
\[
(b,j) \otimes (b',j) \cong (b \otimes b' , j).
\]
Lastly, we have the relations coming from $\mathsf{Bord}_1^{or}$, $\cJ$, monoidal identities, and relations given by commutative diagrams of the following type:
\[
\xymatrix{ (b,j) \otimes (b',j) \ar[r]^{\cong} \ar[d] & (b\otimes b', j) \ar[d] \\(b',j) \otimes (b,j) \ar[r]^{\cong} & (b' \otimes b,j)}
\].

We can now give a presentation for the homotopy quotient of $\mathsf{Bord}_1^{or}$
\[
\left ( \mathsf{Bord}_1^{or} \right )_{h\ZZ/2} =
 \left ( \mathsf{Bord}_1^{or} \boxtimes \cJ \right )_{\ZZ/2}.
\]

\begin{center}
\begin{tikzpicture}[thick]

\node at (0,0) {Generating objects:};
\node at (3,0) {$+$};
\node at (4,0) {and};
\node at (5,0) {$-$};

\node at (0,-1) {Generating 1-morphisms:};
\draw[->] (3.5,-1) arc (90:-90:0.7cm and 0.4cm);
\node at (5.8,-2.4) {and};
\draw[->] (8,-1) arc (90:270:0.7cm and 0.4cm);
\node at (3.2,-1) {$+$};
\node at (3.2,-1.8) {$-$};
\node at (8.3,-1) {$-$};
\node at (8.3,-1.8) {$+$};

\draw(2,-3.4)--(2.5,-3.4);
\draw (4,-3.4)--(4.5,-3.4);
\draw[->] (3.5,-3.4) -- (2.5,-3.4);
\draw[->] (3.5,-3.4) -- (4,-3.4);
\node at (1.7,-3.4) {$-$};
\node at (4.8,-3.4) {$+$};

\draw[->](7,-3.4)--(7.5,-3.4);
\draw[->](9.5,-3.4)--(9,-3.4);
\draw (7.5,-3.4)--(8.5,-3.4);
\draw (9,-3.4)--(8.5,-3.4);
\node at (6.7,-3.4) {$+$};
\node at (9.8,-3.4) {$-$};

\end{tikzpicture}
\end{center}

\noindent
Here we have conflated notation in the sense that the object $+$ represents the orbit of $(+,j)$ which is the same as the orbit of $(-,\overline{j})$ and similarly for the object $- \sim (-, j) \sim (+, \overline{j})$.  The second set of morphisms correspond to the maps $(-,j) \to (-,\overline{j})$ and $(+,j) \to (+,\overline{j})$ respectively.  As far as generating relations, we have the ones coming from the bordism category

\begin{center}
\begin{tikzpicture}[thick]
\draw[->] (0,0) --(.5,0);
\draw(.5,0)-- (1,0) arc(90:-90:0.7cm and 0.4cm) arc(90:270:0.7cm and 0.4cm);
\draw[->] (1,-1.6) --(1.5,-1.6);
\draw(1.5,-1.6)-- (2,-1.6);

\draw[->] (4.3,-.8) --(4.8,-.8);
\draw (4.8,-.8)--(5.3,-.8);
\draw[->] (5.3,-.8)--(5.8,-.8);
\draw (5.8,-.8)--(6.3,-.8);
\node at (0,.3) {$+$};
\node at (2,-1.9) {$+$};
\node at (4,-.8) {$+$};
\node at (6.6,-.8) {$+$};
\node at (3,-.8) {$=$};

\end{tikzpicture}
\end{center}

\noindent
and a similar picture for the object $-$.  Furthermore, we have that the following two morphisms are inverses:
\begin{center}
\begin{tikzpicture}[thick]

\draw(2,-3.4)--(2.5,-3.4);
\draw (4,-3.4)--(4.5,-3.4);
\draw[->] (3.5,-3.4) -- (2.5,-3.4);
\draw[->] (3.5,-3.4) -- (4,-3.4);
\node at (1.7,-3.4) {$-$};
\node at (4.8,-3.4) {$+$};

\draw[->](7,-3.4)--(7.5,-3.4);
\draw[->](9.5,-3.4)--(9,-3.4);
\draw (7.5,-3.4)--(8.5,-3.4);
\draw (9,-3.4)--(8.5,-3.4);
\node at (6.7,-3.4) {$+$};
\node at (9.8,-3.4) {$-$};

\end{tikzpicture}
\end{center}

\noindent
Finally, we have the following relation (and the corresponding one for the object $+$)

\begin{center}
\begin{tikzpicture}[thick]

\draw(-2.5,0)--(-2,0);
\draw (-.5,0)--(0,0);
\draw[->] (-1.5,0) -- (-2,0);
\draw[->] (-1.5,0) -- (-.5,0);
\draw (0,0) -- (1.4,-1.6) arc (-90:90:1.4cm and 0.8cm) -- (0,-1.6);
\draw[->] (0,-1.6)--(-2,-1.6);
\draw (-2,-1.6)--(-2.5,-1.6);

\node at (-2.8,0) {$-$};
\node at (-2.8,-1.6) {$-$};

\node at (3.8,-.8) {$=$};

\draw[->] (7.5,0)--(5.5,0);
\draw (5,0)--(5.5,0);

\draw (7.5,0) arc(90:-90:1.4cm and 0.8cm);

\draw(5,-1.6)--(5.5,-1.6);
\draw (7,-1.6)--(7.5,-1.6);
\draw[->] (6,-1.6) -- (5.5,-1.6);
\draw[->] (6,-1.6) -- (7,-1.6);

\node at (4.7,0) {$-$};
\node at (4.7,-1.6) {$-$};

\end{tikzpicture}
\end{center}
\noindent It is a nice exercise for the reader (exercise \ref{ex:verify_pres_homotopy_quotient}) to verify that these relations come precisely from the ones induced from $\mathsf{Bord}_1^{or} \boxtimes \cJ$.

Given these presentations, we have a functor
\[
\left ( \mathsf{Bord}_1^{or} \right )_{h\ZZ/2} \to \mathsf{Bord}_1^{un}.
\]
This functor send both objects $+$ and $-$ to the generating object of $\mathsf{Bord}_1^{un}$ (i.e. the point). This functor is an equivalence of symmetric monoidal categories (exercise \ref{ex:verify_equivalence}). Thus we have proven the following 1-categorical version of the cobordism hypothesis:

\begin{theorem}
	The unoriented bordism category is the $\ZZ/2$-homotopy quotient of the oriented bordism category $\mathsf{Bord}_1^{or}$ by the natural $\ZZ/2$-action given by reflecting orientations. This is precisely the action which sends an object to its dual. Consequently for any symmetric monoidal category $\cC$ we have an natural equivalence
	\begin{equation*}
		\mathrm{Fun}^\otimes (\mathsf{Bord}_1^{un} , \cC) \simeq \left [ \sK (\cC^{fd}) \right]^{hO(1)}.
	\end{equation*}
	given by evaluating the TFT on the point.  
\end{theorem}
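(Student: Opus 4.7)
The plan is to establish the theorem in three stages. First, verify that the symmetric monoidal functor $\Phi \colon (\Bord_1^{or})_{h\ZZ/2} \to \Bord_1^{un}$ constructed before the theorem is an equivalence. Second, identify the induced $\ZZ/2$-action on $\sK(\cC^{fd})$ with the duality action $X \mapsto X^\vee$. Third, combine these with the proposition on $\cC \boxtimes \cJ$ and the oriented cobordism hypothesis to derive the functor-category equivalence, and check that the composite is given by evaluation at the point.

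For the first step, I would exploit the two generators-and-relations presentations already in hand. The functor $\Phi$ collapses both orbit representatives $+,-$ to the single generating object $\bullet$ of $\Bord_1^{un}$, sends the invertible generators to the identity of $\bullet$, and sends the oriented elbows to the unoriented elbows. To produce an inverse, I would fix one representative, say $+$, and define $\Psi \colon \Bord_1^{un} \to (\Bord_1^{or})_{h\ZZ/2}$ by $\bullet \mapsto +$ together with a coherent choice sending the unoriented cup and cap to the $(+,-)$-oriented ones, pre/post-composed with the canonical invertible generators as needed so that the endpoints match. The two checks are then that (a) the unoriented S-diagram relations follow from the oriented S-diagram relations after transporting endpoints along the invertible generators, and (b) the ``closed-circle = symmetry'' relation of $\Bord_1^{un}$ is forced because the invertible generators identify the two orientations in the orbit category. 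Then $\Phi \circ \Psi = \mathrm{id}$ on the nose, while $\Psi \circ \Phi$ is naturally isomorphic to the identity via the invertible generator $(+,j)\to(-,\bar{j})$, which becomes an identity after passing to the orbit. I expect this step to be the main obstacle, precisely because one must verify that the relations listed in the quotient presentation exactly match those of $\Bord_1^{un}$ with no extra or missing relations; this is essentially the content of Exercises \ref{ex:verify_pres_homotopy_quotient} and \ref{ex:verify_equivalence}.

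For the second step, I would trace the action through the one-dimensional oriented cobordism hypothesis, in which the equivalence $\Fun^\otimes(\Bord_1^{or}, \cC) \simeq \sK(\cC^{fd})$ sends a TFT $Z$ to $Z(+)$. The generator of $O(1)=\ZZ/2$ acts on $\Bord_1^{or}$ by the orientation-reversing symmetry exchanging $+ \leftrightarrow -$ and exchanging the two elementary elbows. Pre-composing $Z$ with this automorphism yields a new TFT $Z'$ with $Z'(+) = Z(-)$. Because the oriented S-diagram relations force $Z(-)$ to be a dual of $Z(+)$, the transported action is precisely $X \mapsto X^\vee$, as asserted in the theorem.

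For the third step, I would combine the previous two steps with the universal property of $\cC \boxtimes \cJ$. Since $\cJ$ is a categorical model of $E\ZZ/2$ and $(\Bord_1^{or})_{h\ZZ/2}$ is by definition the $\ZZ/2$-quotient of $\Bord_1^{or} \boxtimes \cJ$, mapping into $\cC$ and using the cited adjunction produces the chain of natural equivalences
\[
\Fun^\otimes\bigl((\Bord_1^{or})_{h\ZZ/2},\, \cC\bigr) \simeq \Fun^\otimes(\Bord_1^{or} \boxtimes \cJ,\, \cC)^{\ZZ/2} \simeq \Fun(\cJ,\, \Fun^\otimes(\Bord_1^{or},\, \cC))^{\ZZ/2},
\]
whose right-hand side is by definition $[\Fun^\otimes(\Bord_1^{or}, \cC)]^{h\ZZ/2}$. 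Applying the oriented cobordism hypothesis and the action identification of the second step converts this to $[\sK(\cC^{fd})]^{hO(1)}$, while the equivalence from the first step identifies the left-hand side with $\Fun^\otimes(\Bord_1^{un}, \cC)$. Tracking the basepoint through the composite identifies the equivalence with evaluation on the point, completing the proof.
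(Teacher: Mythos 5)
Your proposal is correct and takes essentially the same approach as the paper: the equivalence $(\Bord_1^{or})_{h\ZZ/2} \simeq \Bord_1^{un}$ via the two presentations (which the paper delegates to Exercises \ref{ex:verify_pres_homotopy_quotient} and \ref{ex:verify_equivalence}, checking fully faithful plus essentially surjective rather than writing an explicit quasi-inverse), the universal property of $\cC \boxtimes \cJ$ to turn maps out of the homotopy quotient into homotopy fixed points, and the $1$-dimensional cobordism hypothesis together with the identification of the transported action as $X \mapsto X^\vee$, exactly as in the paper's closing remark. There is no gap beyond the presentation-matching verifications that the paper itself leaves as exercises.
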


In the above, the $O(1) = \ZZ/2$-action on $\sK (\cC^{fd})$ is not precisely strict but homotopically coherent, meaning there is a monoidal functor from the discrete monoidal category $\ZZ/2$ to the monoidal category $Aut(\sK (\cC^{fd}))$, where the latter is the monoidal category of self-equivalences and natural isomorphisms. This action is induced from the equivalence $\sK (\cC^{fd}) \simeq \mathrm{Fun}^\otimes (\mathsf{Bord}_1^{or} , \cC)$. As we saw there is a strict $\ZZ/2$-action on the latter which exchanges the values of the positively and negatively oriented points. In short the $O(1)$-action on  $\sK (\cC^{fd})$ is given by sending an object to its dual.  

\section{Exercises} \label{sec:exercises_2}

\begin{exercise}
	Let $J$ be the {\em free walking isomorphism}, i.e. the groupoid with exactly two isomorphic objects and no non-trivial automorphisms. What familiar space is the CW-complex $|NJ|$?
\end{exercise}

\begin{exercise} \label{ex:verify_pres_homotopy_quotient}
	Using the presentation of $\Bord_1^{or}$, verify that $(\Bord_1^{or} \boxtimes \cJ)_{\ZZ/2}$ has the claimed presentation. 
\end{exercise}

\begin{exercise}[``Whitehead's Theorem'' for symmetric monoidal categories]
	Let $F:(\cC, \otimes) \to (\cD, \otimes)$ be a symmetric monoidal functor. Show that $F$ is a symmetric monoidal equivalence precisely if it is fully-faithful and essentially surjective (i.e. it is an equivalence after forgetting about symmetric monoidal structures). 
	\end{exercise}

\begin{exercise} \label{ex:verify_equivalence}
	Use the previous exercise to verify that the functor $(\Bord_1^{or} \boxtimes \cJ)_{\ZZ/2} \to \Bord_1^{un}$ described above is an equivalence of symmetric monoidal categories. 
\end{exercise}

%
%

\specialsection*{Understanding the $O(2)$-action}

We now move to dimension two and consider the cobordism hypothesis in this dimension. The material of this Chapter draws from many sources. Sections \ref{sec:Serre_Auto} and \ref{sec:2-Full-Dual} follow \cite{MR2555928}. Section \ref{sec:SimpConn3Types} is essentially standard material in topology. Section \ref{sec:ApplyWhiteheadToCats} establishes an explicit connection between higher categories and topology. This material first appeared in a preprint of A. Joyal and R. Street \cite{MR1250465} as part of their development of braided monoidal categories. A similar analysis for the symmetric case appears as appendix B.2 of \cite{MR2192936} (c.f. \cite{Sinh_thesis, Sinh_1982} for an even earlier treatment). The connection to higher categorical group actions is established in \cite{DSPS_DTC2}.

\section{The Serre automorphism} \label{sec:Serre_Auto}

To illustrate the extra structure imparted by full dualizability we define the Serre automorphism (see \cite[Rk. 4.2.4]{MR2555928} for a discussion of the naming convention for this automorphism).  The Serre automorphism is an automorphism for each object in a 2-fully dualizable symmetric monoidal $n$-category (c.f. Exercise \ref{Ex:SerreNaturality}).

Let $X \in \cC$ be an object and assume $\cC$ is at least 2-fully dualizable.  By assumption $X$ is dualizable, so let $X^\vee$ denote its dual and $\ev : X \otimes X^\vee \to \mathbf{1}$ the evaluation map. Now $\ev$ is a 1-morphism which is itself dualizable. Let $\ev^R$ denote its right dual (so $\ev^R : \mathbf{1} \to X \otimes X^\vee$).  Letting $\tau$ denote the braiding isomorphism in $(\cC, \otimes)$, then the {\it Serre automorphism} of the object $X$, denoted $S_X$, is given by the composition:
\[
S_X : X \to X \otimes \mathbf{1} \xrightarrow{\mathbf{1}_X \otimes \ev^R} X \otimes X \otimes X^\vee \xrightarrow{\tau \otimes \mathbf{1}_{X^\vee}} X \otimes X \otimes X^\vee \xrightarrow{\mathbf{1}_X \otimes \ev} X \otimes \mathbf{1} \to X.
\]
The Serre automorphism is described by the string diagram in Figure \ref{fig:Serre_auto}.
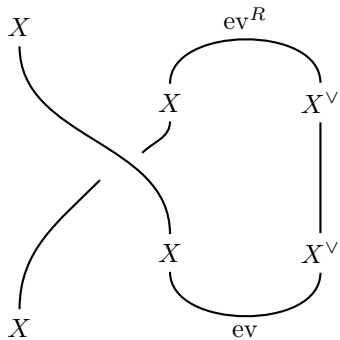
\begin{figure}[htpb]
\begin{center}
\begin{tikzpicture}[thick]
\node (A) at (0,-2) {$X$};
\node (B) at (0,2) {$X$};
\node (C) at (4,-1) {$X^\vee$};
\node (D) at (2,-1) {$X$};
\node (E) at (2,1) {$X$};
\node (F) at (4,1) {$X^\vee$};
\node (G) at (1.5,0.2) {};
\node (H) at (1.2,0.1) {};
\draw (B) to [out = 270, in = 90] (D);
\draw (E) to [out = 270, in =45] (G);
\draw (H) to [out = 225, in =90] (A);
\draw (E) to [out =90,in=90] node [above] {$\ev^R$} (F);
\draw (F) to [out = 270, in =90] (C);
\draw (C) to [out = 270, in = 270] node [below] {$\ev$} (D);
\end{tikzpicture}
\end{center}
\caption{The Serre automorphism.}
\label{fig:Serre_auto}
\end{figure}

Let us explore this in the case $\cC = \mathsf{Bord}^{fr}_2$ is the 2-dimensional tangentially framed bordism category. For the purposes of this discussion there is no harm in viewing $\cC$ as a symmetric monoidal bicategory, rather than a symmetric monoidal $(\infty,2)$-category. The 2-morphisms of this symmetric monoidal bicategory are equivalence classes of 2-dimensional bordisms between 1-dimensional bordisms, equipped with a framing of the 2-dimensional tangent space. 
To make sense of this structure on the lower dimensional bordisms it is best to equip all of our manifolds with a germ of a higher dimensional manifolds surrounding it. Thus the 1-morphisms are equipped with a germ of a surfaces surrounding them, together with a 2-dimensional framing on that germ of a surface. The objects are equipped with a germ of a 1-manifold contained in a 2-manifold, again equipped with a 2-dimensional framing. 

\begin{figure}[htbp]
	\begin{center}
	\begin{tikzpicture}[smooth, tension = 0.25, baseline = 0]
		\fill[fill=black!30] plot  coordinates 
				{(0,1) (0,1.25) (-0.1,1.5) (-1,2.5) (-2.5,1.5) 
				(-2.25,1.25) (-2,1.5) (-3,2.5) (-4,1) (-1,-1) (-0.1,-0.5) (0,-0.25) (0,0)} -- cycle
				plot  coordinates 
				{(1,0) (1, -0.25) (0.9, -0.5) (-1,-2) (-4.5, 1) (-3,3) (-1.5, 1.5) (-2.25, 0.75) (-3, 1.5) (-1,3) (0.9, 1.5) (1, 1.25) (1,1)} -- cycle;
		\fill[fill=black!30] plot coordinates 
				{(0,0) (-0.25, 0) (-0.5,-0.1) (-3,-2) (1,-3) (3,-2) (1.5, -0.1) (1.25, 0) (1,0)} -- cycle
				plot coordinates 
				{(1,1) (1.25, 1) (1.5, 0.9) (4,-2) (1,-4) (-4,-2) (-0.5, 0.9) (-0.25, 1) (0,1)} -- cycle;
		\draw plot  coordinates 
			{(0,1) (0,1.25) (-0.1,1.5) (-1,2.5) (-2.5,1.5) 
			(-2.25,1.25) (-2,1.5) (-3,2.5) (-4,1) (-1,-1) (-0.1,-0.5) (0,-0.25) (0,0)};
		\draw plot  coordinates 
			{(1,1) (1,1.25) (0.9,1.5) (-1,3) (-3,1.5) 
			(-2.25,0.75) (-1.5,1.5) (-3,3) (-4.5,1) (-1,-2) (0.9,-0.5) (1,-0.25) (1,0)};
		\draw plot  coordinates
			{(0,0) (-0.25, 0) (-0.5,-0.1) (-3,-2) (1,-3) (3,-2) (1.5, -0.1) (1.25, 0) (1,0)};
		\draw plot coordinates
			{(1,1) (1.25, 1) (1.5, 0.9) (4,-2) (1,-4) (-4,-2) (-0.5, 0.9) (-0.25, 1) (0,1)};
	\end{tikzpicture}
	\begin{tikzpicture}[domain=0:2, thick, baseline = 1.5, samples=50]
		\draw plot ({\x + 0.5*sin(3.14159*\x r)}, {3-0.25*cos(3.14159*\x r)});
		\draw plot (\x, 2);
		\draw plot ({\x + 0.5*sin(3.14159*\x r)}, {1+0.25*cos(3.14159*\x r)});
		\draw plot ({\x + 0.5*sin(6.28318*\x r)}, {0.25*cos(6.28318*\x r)});
	\end{tikzpicture}
	\end{center}
	\caption[An immersion of the punctured torus]{An immersion of the punctured torus where the blackboard framing induces an interesting tangential framing. To the right several immersed arcs are depicted with distinct 2-dimensional blackboard framings, rel. boundary.}
	\label{fig:immersed_torus}
\end{figure}
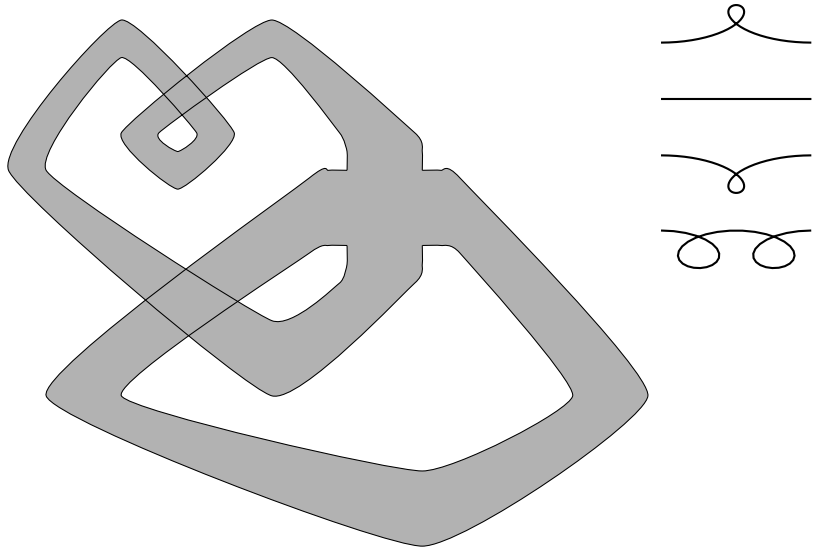

We may obtain a rich supply of easily visualized 2-framed manifolds by using embeddings, and more generally immersions, into the plane. The plane has a standard `blackboard' framing and so any surface immersed into the plane inherits this tangential framing. Of course not every surface immerses into the plane, but it is a consequence of Hirsch-Smale immersion theory that every tangential framing on a connected surface with non-empty boundary may be realized up to isotopy as the blackboard framing induced from an immersion into the plane. Hence many interesting examples arise this way.

Figure \ref{fig:immersed_torus} depicts an immersed punctured torus with an interesting induced tangential framing, as well as several immersed arcs. The isotopy classes of framings on an arc, relative to a fixed framing on the boundary are either empty (if there is no framing on the arc compatible with the framing on the boundary) or a torsor for $\pi_0 \Omega GL_n(\RR) \cong \ZZ$. Under this identification, the immersed arcs in Figure \ref{fig:immersed_torus} differ by consecutive integers.  

In addition to a tangential framing, every bordism has a decomposition of its boundary into incoming and outgoing segments. This decomposition induces, and is equivalent to, a {\em co-orientation} of each boundary segment, i.e., an orientation of its normal bundle. Specifically we will make the convention that the co-orientation for the incoming boundary segments agrees with the inward pointing normal vector, while the co-orientation for the outward boundary segment agrees with the outward pointing normal vector. 
\begin{figure}[htbp]
	\begin{center}
		\begin{tikzpicture}[align=center, decoration={markings,
			mark=between positions 0 and 1 step 2pt with 
				{ \draw [help lines] (0,0) -- (0,0.1); },
				mark=at position -0.1pt with 
				{ \draw [help lines] (0,0) -- (0,0.1); }}]
			
			\fill [fill=black] (1, 4) circle (2pt);
			\draw [dotted] (1, 4) circle (0.5);
			\draw [postaction={decorate}] (1,4) -- (1,3.5);
			\draw [thick, ->] (1,4) -- (1, 3.5);
			\node at (1,3) {positive\\ point};
			
			\fill [fill=black] (3, 4) circle (2pt);
			\draw [dotted] (3, 4) circle (0.5);
			\draw [postaction={decorate}] (3,3.5) -- (3,4);
			\draw [thick, ->] (3,4) -- (3, 3.5);
			\node at (3,3) {negative\\ point};
			
			\draw [postaction={decorate}] (5,4) arc (180:360:0.75cm);
			\draw [->] (5,4) arc (180:195:0.75cm);
			\draw [->] (6.5,4) arc (0:-15: 0.75cm);
			\node at (5.75, 2.5) {$ev: pt_+ \sqcup pt_- \to \emptyset$};
			
			\draw [postaction={decorate}] (7.5,3.5) arc (180:0:0.75cm);
			\draw [->] (7.5,3.5) arc (180:195:0.75cm);
			\draw [->] (9,3.5) arc (0:-15: 0.75cm);
			\node at (8.25, 3) {$coev$};
			
			\draw [postaction={decorate}] (1.5,0.5) arc (0:180:0.75cm);
			\draw [->] (0,0.5) arc (180:195:0.75cm);
			\draw [->] (1.5,0.5) arc (0:-15:0.75cm);
			\node at (0.75,0) {$ev^R$};
			
			\filldraw [postaction={decorate}, fill=black!20] (3.5,0.5) circle (0.75cm);
			\node at (3.5, -0.5) {$\varepsilon: ev \circ ev^R \to id_{\emptyset}$};
			
			\fill [fill=black!20] 
				(5.5 , 1.5) -- (5.5, -0.5) arc (180:0:0.75cm)
				-- (7, 1.5) arc (360:180:0.75cm);
			\draw [postaction={decorate}] 
				(5.5 , 1.5) -- (5.5, -0.5)
				(7,-0.5) -- (7,1.5)
				(5.5, 1.5) arc (180:360:0.75cm)
				(7,-0.5) arc (0:180:0.75cm);
			\draw [->] (5.5, 1.5) -- +(0, -5pt);
			\draw [->] (5.5, -0.5) -- +(0, -5pt);
			\draw [->] (7, 1.5) -- +(0, -5pt);
			\draw [->] (7, -0.5) -- +(0, -5pt);
			\node at (6.25, -1) {$\eta: id_{pt_+ \sqcup pt_-} \to ev^R \circ ev$};
			
			\draw [postaction={decorate}] 
				(8.25,1.5) -- (8.25, 1) to [out = -90, in = 90] (8.75, 0.25) arc (180:360:0.5cm)
				 -- (9.75, 0.75) arc (0:180:0.5cm) to [out = -90, in = 90] (8.25, 0) -- (8.25, -0.5);
			\draw [->] (8.25, 1.5) -- +(0, -5pt);
			\draw [->] (8.25, -0.5) -- +(0, -5pt);
			
			\node (S) at (4, -2) {Serre automorphism of the positive point};
			\draw [->] (S) -| (9, -0.75);
		\end{tikzpicture}
	\end{center}
	\caption{Some tangentially framed bordisms.}
	\label{fig:framed-2-bordisms}
\end{figure}
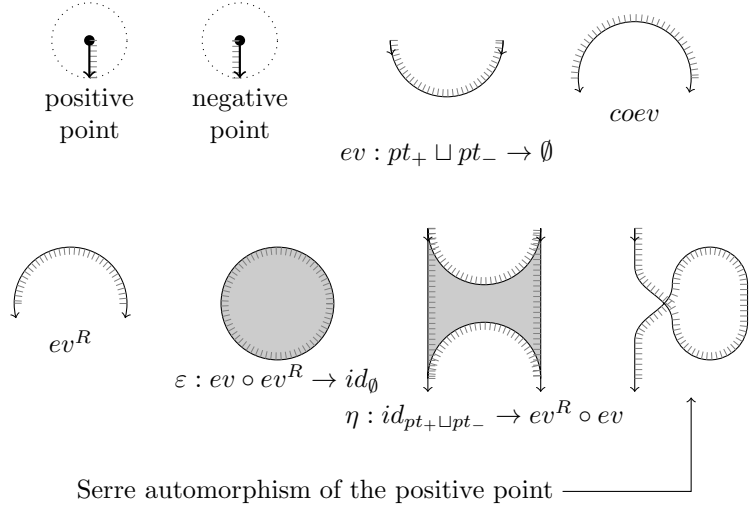

This convention is very useful in illustrating examples of 2-framed bordisms, and several are shown in Figure~\ref{fig:framed-2-bordisms}. In particular the evaluation pairing between the positively framed point and the negatively framed point, as well as its right adjoint are depicted. This allows us to calculate the Serre automorphism of the positive point, which is also depicted in Figure~\ref{fig:framed-2-bordisms}. We find that the framing of the Serre automorphim (a framing on the arc) differs from the framing on the identity morphism of the positive point by a unit in $\pi_0 \Omega GL_2(\RR) \cong \pi_1 GL_2(\RR) \cong \ZZ$. In particular it is a non-trivial automorphism.

\section{2-full dualizability and the action of $O(2)$} \label{sec:2-Full-Dual}

Let $(\cC, \otimes)$ be a symmetric monoidal 2-category which is 2-fully dualizable.  Recall that the core of $\cC$ which we continue to denote by $\sK (\cC^{fd})$ is a 2-groupoid which, via the cobordism hypothesis,  carries an action of $O(2)$ induced by the identification
\[
\sK(\cC^{fd}) \simeq \mathrm{Fun}^\otimes (\mathsf{Bord}_2^{fr} , \cC).
\]
Now we have the splitting $O(2) = SO(2) \rtimes \ZZ/2$.  We analyzed the action of $\ZZ/2$ on categories in previous sections, so here we will focus on the action of $SO(2)$.

What does it mean for $SO(2)$ to act on $\sK(\cC^{fd})$? Loosely, it means that we have something like a group homomorphism
\[
SO(2) \to \mathrm{Aut} \left ( \sK (\cC^{fd}) \right ).
\]
However, the right hand side is a higher categorical object, so we must use more than just the group structure on $SO(2)$; we also use its topology!  More specifically an $SO(2)$ action includes the following (a priori infinite) list of assignments:
\[
\begin{array}{rcl}
\text{a point } x \in SO(2)&\mapsto &\text{a functor } \left (\sK(\cC^{fd}) \to \sK (\cC^{fd}) \right )\\[2ex]
\text{a path } \gamma \subset SO(2)& \mapsto & \text{a natural isomorphism}\\[2ex]
\text{a path of paths} & \mapsto & \text{higher natural isomorphisms}\\
\vdots \hspace{36pt} && \hspace{48pt} \vdots
\end{array}
\]
\noindent These must also have some sort of respect for the the group structure. In particular as $SO(2)$ is connected, the action map 
\[
SO(2) \to \mathrm{Aut} \left ( \sK (\cC^{fd}) \right )
\]
must land in the identity component, which we denote $\mathrm{Aut}^0 \left ( \sK (\cC^{fd}) \right )$.  Hence, we should consider the action of $SO(2)$ as a map of spaces 
\[
SO(2) \to \mathrm{Aut}^0 \left (\sK (\cC^{fd}) \right ).
\]
with some additional structure and properties showing compatibility with the group structure. The map will at least be well-defined up to homotopy. 
 
Now since $\sK(\cC^{fd})$ is a 2-groupoid, $\mathrm{Aut}^0 \left (\sK (\cC^{fd}) \right )$ is a homotopy 2-type (more on this below) and we know that $SO(2)$ has only one interesting homotopy group $\pi_1$ which is $\ZZ$.  So the generator of $\pi_1 SO(2)$ gets sent to a loop at the identity of $\mathrm{Aut}^0 \left (\sK (\cC^{fd}) \right )$.

In the case of the 2-dimensional framed bordism category, there is an $SO(2)$-action given by change of framing. The loop corresponding to $\pi_1(SO(2))$ gives us an invertible 1-dimensional bordism from the point to itself, which, as we just saw in the last section, corresponds to the Serre automorphism of the positive point in the bordism category.  The cobordism hypothesis then tells us that for the $SO(2)$ action on $\sK (\cC^{fd})$, the generator of $\pi_1 SO(2)$ also gets sent to the Serre automorphism:
\[
S: \Id_{\sK (\cC^{fd})} \to \Id_{\sK (\cC^{fd})} .
\]

This is not the complete story, however. The map to $\mathrm{Aut}^0 \left (\sK (\cC^{fd}) \right )$ is not determined by just what it does on homotopy groups. Moreover, we also need to ask that the map behaves like a group homomorphism, that it is compatible with composition.  We address these issues by passing to classifying spaces.

\section{Reducing to the study of simply connected 3-types} \label{sec:SimpConn3Types}

A {\em homotopical action} of $SO(2)$ on $\sK (\cC^{fd})$ may equivalently be described as a pointed map of classifying spaces:
\[
BSO(2) \to B\mathrm{Aut}^0 \left (\sK (\cC^{fd}) \right ).
\]

By assumption $\cC$ is a 2-category, so $\sK (\cC^{fd})$ is a 2-groupoid.  By the homotopy hypothesis, $\sK(\cC^{fd})$ is a homotopy 2-type.  It then follows that $\mathrm{Aut}^0 \left (\sK (\cC^{fd}) \right )$ is also a 2-type and hence $B \mathrm{Aut}^0 \left (\sK (\cC^{fd}) \right )$ is a 3-type. Moreover, as a 3-groupoid $B\mathrm{Aut}^0 \left (\sK (\cC^{fd}) \right )$  has layers
\[
B \mathrm{Aut}^0 \left (\sK (\cC^{fd}) \right ) = \left ( \begin{array}{c} \pi_3 \\ \pi_2 \\ \{\Id_{\sK(\cC^{fd})} \} = \pi_1 \\ \{pt\} = \pi_0 \end{array} \right ).
\]
We see immediately that $B\mathrm{Aut}^0 \left (\sK (\cC^{fd}) \right )$ is actually a simply connected 3-type.

Recall that $BSO(2) \simeq \CC P^\infty$ and that we have a filtration
\[
S^2 \simeq \CC P^1 \subset \CC P^2 \subset \dotsb \subset \CC P^n \subset \dotsb \subset \CC P^\infty \simeq BSO(2).
\]
A (pointed) map $S^2 \to Y$ from the first stage of this filtration is given by an element $\pi_2Y$. 

So far this is exactly what we have constructed. We have a map 
\begin{equation*}
	S^2 \to B\mathrm{Aut}^0 \left (\sK (\cC^{fd}) \right )
\end{equation*}
 given by the Serre automorphism (as $\pi_2 B\mathrm{Aut}^0 \left (\sK (\cC^{fd}) \right )\cong \pi_1 \mathrm{Aut}^0 \left (\sK (\cC^{fd}) \right )$).  We are interested in lifting this to a map \begin{equation*}
 	BSO(2) \to B\mathrm{Aut}^0 \left (\sK (\cC^{fd}) \right )
 \end{equation*} 
 which is a non-trivial problem as $S^2$ and $\CC P^\infty$ do not have the same homotopy 3-type.  Note that we need only lift our map $S^2 \to B\mathrm{Aut}^0 \left (\sK (\cC^{fd}) \right )$ to $\CC P^2$ as $\CC P^2 , \CC P^3 ,\dotsc , \CC P^\infty$ do have the same 3-type.  We summarize this in the following diagram.

\begin{center}
\begin{tikzpicture}[thick]
\node (A) at (0,0) {$BSO(2)$};
\node at (0,-1) {$\CC P^\infty$};
\node at (0,-2) {$\vdots$};
\node (B) at (0,-3) {$\CC P^2$};
\node (C) at (0,-4) {$S^2$};
\node (D) at (6,0) {$B\mathrm{Aut}^0 \left (\sK (\cC^{fd}) \right )$};
\node[rotate=90] at (0,-.5) {$\subset$};
\node[rotate=90] at (0,-1.5) {$\subset$};
\node[rotate=90] at (0,-2.5) {$\subset$};
\node[rotate=90] at (0,-3.5) {$\subset$};

\draw[->,dashed] (B) to [bend right =20] (D);
\draw [->] (C) to [bend right = 40]  (D);
\draw [->,dashed] (A) to (D);

\draw [
    decoration={
        brace,mirror,
        raise=.75cm
    },
    decorate
] (0,.3) -- (0,-3.3);

\node[rotate=90] at (-1.3,-1.5) {Same $3$-type};
\node[rotate=35] at (4.5,-3.2) {Serre automorphism};

\end{tikzpicture}
\end{center}

\subsection{Whitehead's certain exact sequence and the $\Gamma$-functor}

Let $X$ be a pointed, simply connected, homotopy 3-type.  Following Whitehead, we will construct a complete invariant of such spaces.

Given a space $X$ as above, we form the infinite symmetric product $\Sym^\infty X$ and consider the fibration
\[
F=\mathrm{hofib} (i) \to X \xrightarrow{i} \Sym^\infty X.
\]
It is a theorem of Dold and Thom \cite{MR0097062} that $\pi_k (\Sym^\infty X) = \tilde{H}_k (X)$.  Furthermore, the map $i$ represents the Hurewicz homomorphism. From the Hurewicz Theorem we know that $\pi_2 X \cong H_2 (X)$ and $\pi_3X \to H_3(X)$ is surjective.  Combining with the long exact sequence in homotopy we obtain {\it Whitehead's Certain Exact Sequence}:
\[
0 \to H_4 (X) \to \pi_3 F \xrightarrow{q} \pi_3 X \to H_3 (X) \to 0 .
\]
This short exact sequence is functorial in $X$ and can be regarded as an invariant of the simply-connected 3-type. 

\begin{theorem}[Whitehead \cite{MR0035997}]
The above exact sequence together with the homotopy group $\pi_2$ is a complete invariant of simply connected homotopy 3-types.
\end{theorem}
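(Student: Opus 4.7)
The plan is to prove the classification by identifying every simply-connected 3-type with a piece of algebraic data that can be read off from Whitehead's certain exact sequence, and then building a homotopy equivalence between any two 3-types whose data match.

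The first step is to reinterpret $\pi_3 F$ algebraically. Since $X$ is simply connected, $\pi_2 \Sym^\infty X \cong H_2(X) \cong \pi_2 X$ via Hurewicz and the Dold--Thom theorem, so the long exact sequence of the fibration $F \to X \to \Sym^\infty X$ gives $\pi_2 F = 0$ and identifies $\pi_3 F$ with the kernel of the Hurewicz map $\pi_3 X \to H_3 X$ extended by $H_4 X$, exactly as in the stated sequence. The decisive point is the natural identification $\pi_3 F \cong \Gamma(\pi_2 X)$, where $\Gamma$ is Whitehead's quadratic functor; this follows because the obstruction for a map $S^2 \to X$ to deloop into $\Sym^\infty X$ is exactly the Whitehead-square obstruction, whose universal receptacle is $\Gamma$. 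Consequently the exact sequence becomes
\[
0 \to H_4(X) \to \Gamma(\pi_2 X) \xrightarrow{\eta_X} \pi_3 X \to H_3(X) \to 0,
\]
and the entire invariant is captured by the pair $(\pi_2 X, \eta_X)$, together with the abelian groups occurring at its ends.

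The second step is to produce a minimal CW model. Any simply-connected 3-type $X$ admits a CW approximation with cells only in dimensions $2$, $3$, and $4$: 2-cells realize generators of $\pi_2 X$, 3-cells are attached along relations in $\pi_2$ of the 2-skeleton, and 4-cells kill unwanted $\pi_3$. The attaching map of a 3-cell is an element of $\pi_2$ of a wedge of 2-spheres and is determined (up to the action of $\Gamma$) by its image in $H_2$; the attaching map of a 4-cell is an element of $\pi_3$ of a Moore-type space whose structure is computed from $\Gamma(\pi_2 X)$ together with $H_3$-data. Thus the cellular structure is determined by the invariant.

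The third step is the comparison argument. Given two simply-connected 3-types $X$, $Y$ together with an isomorphism of their invariants (an isomorphism $\pi_2 X \cong \pi_2 Y$ intertwining $\eta_X$ and $\eta_Y$, and compatible isomorphisms on $H_3, H_4$), I build a map $X \to Y$ skeleton by skeleton on minimal CW models. The 2-skeleton map is chosen to realize the isomorphism on $\pi_2$; extension over 3-cells exists because both sides agree on $H_3$ and on the $\Gamma$-quadratic correction; extension over 4-cells is obstructed by elements of $H_4$, which match by hypothesis. By Whitehead's theorem for CW complexes, the resulting map is a homotopy equivalence. Realization of an abstract invariant by some 3-type is handled by the same cellular construction, starting from free abelian presentations and attaching cells according to the prescribed data.

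The hardest step is the natural identification $\pi_3 F \cong \Gamma(\pi_2 X)$: it requires understanding the Hurewicz obstruction not merely as a linear map but as a quadratic one, and then recognising $\Gamma$ as its universal target. Once this is in hand, the obstruction-theoretic matching is routine, and the rest of the proof is a skeleton-by-skeleton comparison of minimal models.
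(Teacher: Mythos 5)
The paper does not actually prove this statement (it is cited from Whitehead), so your argument stands on its own; its main problem is Step 2. A simply connected 3-type does \emph{not} in general admit a CW approximation with cells only in dimensions $2$, $3$, $4$: a CW approximation is a weak equivalence, and a 3-type such as $K(\ZZ/2,2)$ (simply connected, $\pi_3=0$) has nonzero homology in infinitely many dimensions, hence is not weakly equivalent to any finite-dimensional complex; conversely, a complex with cells only in dimensions $\le 4$ will typically have nonvanishing $\pi_4,\pi_5,\dots$, so it is not a 3-type at all. What is true, and what the classical argument uses, is that every simply connected 3-type is the Postnikov truncation $\tau_{\le 3}K$ of a simply connected complex $K$ with cells in dimensions $2$--$4$; one compares such 4-dimensional models and then truncates, using that $[K,Y]\cong[\tau_{\le 3}K,Y]$ when $Y$ is a 3-type, and that an equivalence of 3-types need only be checked on $\pi_2$ and $\pi_3$. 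As written, your skeleton-by-skeleton comparison and the final appeal to Whitehead's theorem are applied to models that do not exist, so the argument needs this repair before it can even start.

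Beyond that, the two load-bearing steps are asserted rather than proved. The natural identification $\pi_3 F\cong\Gamma(\pi_2 X)$ is precisely the separate theorem of Whitehead quoted in the paper (equivalently the computation $H_4(K(A,2))\cong\Gamma(A)$); the sentence about the ``Whitehead-square obstruction to delooping'' is not an argument, and you would do better to cite it. Likewise, in the comparison step the obstruction to extending over the 4-cells lies in $\pi_3 Y$, and showing it vanishes is exactly where the compatibility $\phi_3\circ q_X=q_Y\circ\Gamma(\phi_2)$ must be pushed through the certain exact sequence of the 3-skeleton (whose $\pi_3$ is computed by $\Gamma$ of its $\pi_2$); the step you call routine is the heart of the proof, and it also requires an explicit isomorphism on $\pi_3$, not just on $H_3$ and $H_4$. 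Note finally that the paper's subsequent discussion suggests a cleaner route than minimal CW models: a simply connected 3-type is a two-stage Postnikov system classified by $k_2\in H^4(K(\pi_2,2);\pi_3)\cong\Hom(\Gamma(\pi_2),\pi_3)$, and $q=k_2$, so completeness of the invariant follows from the standard classification of such Postnikov systems.
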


We now describe how we can calculate the group $\pi_3 F$; we describe an endofunctor $\Gamma$ of abelian groups such that $\pi_3 F = \Gamma (\pi_2 X)$, which for simplicity we simply denote $\Gamma_X$.

\begin{definition}
Let $A$ be an abelian group.  The abelian group $\Gamma (A)$ is the abelian group generated by symbols $\gamma (a)$ for $a \in A$ subject to the relations:
\begin{enumerate}
\item $\gamma (a) = \gamma (-a)$;
\item $\gamma (a) + \gamma (b) + \gamma (c) + \gamma (a+b+c) = \gamma (a+b) + \gamma (b+c) + \gamma (c+a)$.
\end{enumerate}
\end{definition}

The map $A \to \Gamma (A)$ sending $a$ to $\gamma(a)$ is the {\it universal quadratic map}. That is we have a bijection of sets
\[
\mathrm{Hom}_{Ab} (\Gamma(A),B) \leftrightarrow \left\{ 
\text{ quadratic maps } f: A \to B
\right\}.
\]
Here a quadratic map $f:A \to B$ is a map satisfying: 
\begin{enumerate}
\item $f (a) = f (-a)$;
\item $f (a) + f (b) + f (c) + f (a+b+c) = f (a+b) + f (b+c) + f (c+a)$.
\end{enumerate}
One may check that for such maps $f(na) = n^2 f(a)$, and that \begin{equation*}
	B(a_1,a_2)\overset{def}{=}f(a_1 +a_2) - f(a_1) -f(a_2)
\end{equation*} is a symmetric bilinear map. If $2$ is invertible in $A$ and $B$, then such quadratic maps are equivalent to symmetric bilinear maps. 

\begin{theorem}[Whitehead \cite{MR0035997}]
	 $\Gamma_X := \Gamma(\pi_2 X) \cong \pi_3 F$.
\end{theorem}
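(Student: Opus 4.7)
The plan is to reduce the theorem to a classical computation of Eilenberg--MacLane. The additional ingredient, beyond Whitehead's exact sequence already established in the excerpt, is the identification $H_4(K(A, 2)) \cong \Gamma(A)$.

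First, I would show that $\pi_3 F$ depends only on $A = \pi_2 X$, so that one may replace $X$ with $K(A, 2)$. The Postnikov $2$-truncation $p: X \to K(A, 2)$ is natural, and by naturality of $\Sym^\infty$ (Dold--Thom) and of the homotopy fiber, $p$ induces a comparison map $\pi_3 F_X \to \pi_3 F_{K(A, 2)}$. To prove this map is an isomorphism, I would analyze the Serre spectral sequence of the Postnikov fibration $K(\pi_3 X, 3) \to X \to K(A, 2)$: the $k$-invariant appears as a transgression $d_4$ which simultaneously cuts $H_4(K(A, 2))$ down to $H_4(X)$ and cuts $\pi_3 X$ down to the kernel of the Hurewicz map into $H_3 X$. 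Substituting these into Whitehead's exact sequence for $X$, the two contributions cancel, yielding $\pi_3 F_X \cong H_4(K(A, 2)) \cong \pi_3 F_{K(A, 2)}$.

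Second, for $X = K(A, 2)$ itself the Whitehead exact sequence collapses: since $\pi_3 K(A, 2) = 0$ and $H_3 K(A, 2) = 0$, we obtain $\pi_3 F \cong H_4(K(A, 2))$. The theorem is therefore reduced to the Eilenberg--MacLane identification $H_4(K(A, 2)) \cong \Gamma(A)$, classically proven via an explicit chain model (for instance the bar construction $B^2 A$) together with the Pontryagin square operation. The isomorphism sends the generator $\gamma(a) \in \Gamma(A)$ to the class represented by the cup-product square of the fundamental class $\iota \in H^2(K(A, 2); A)$ applied to $a$; the quadratic nature of squaring matches the universal property of $\Gamma$. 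One then verifies the isomorphism first for $A = \ZZ$ using $K(\ZZ, 2) = \CC P^\infty$ (where the hyperplane class squares to the generator of $H^4$, matching $\Gamma(\ZZ) = \ZZ$), extends to finitely generated $A$ by K\"unneth and Bockstein, and passes to general $A$ by filtered colimits, both sides commuting with such colimits.

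The main obstacle is the reduction step. Because $\Gamma$ is a nonadditive (genuinely quadratic) functor, a naive five-lemma comparison of the Whitehead exact sequences for $X$ and $K(A, 2)$ does not suffice, since the right-hand term $\ker(\pi_3 X \to H_3 X)$ need not vanish. One must instead track the Postnikov $k$-invariant as a $d_4$-differential in the Serre spectral sequence and show that the apparent extra contribution from $\pi_3 X$ is canceled exactly by the drop from $H_4(K(A, 2))$ to $H_4(X)$. Once this accounting is in place, the Eilenberg--MacLane calculation, while itself nontrivial, is a standard appeal.
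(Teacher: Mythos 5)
Your overall route --- reduce to $X=K(A,2)$ via the Postnikov truncation and then invoke the Eilenberg--MacLane computation $H_4(K(A,2))\cong\Gamma(A)$ --- is a legitimate classical strategy (the paper itself offers no proof, only the citation to Whitehead, so there is no in-text argument to compare against). Steps two and three of your outline are fine: for $K(A,2)$ the sequence collapses to $\pi_3F_{K(A,2)}\cong H_4(K(A,2))$, and the identification of $H_4(K(A,2))$ with $\Gamma(A)$ is standard. The gap is in the reduction step, and it is exactly where you placed the weight. Your Serre spectral sequence bookkeeping is correct as far as it goes: it gives $H_4(X)\cong\ker\bigl(q\colon\Gamma(A)\to\pi_3X\bigr)$ and $\ker\bigl(\pi_3X\to H_3X\bigr)\cong\mathrm{Im}(q)$, so Whitehead's sequence exhibits $\pi_3F_X$ as an extension $0\to\ker q\to\pi_3F_X\to\mathrm{Im}\,q\to0$. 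But $\Gamma(A)$ is merely \emph{another} extension of $\mathrm{Im}\,q$ by $\ker q$; knowing the sub and the quotient does not determine the group, and nothing in your "cancellation" shows that the comparison map $\phi\colon\pi_3F_X\to\pi_3F_{K(A,2)}$ is an isomorphism. Naturality of Whitehead's sequence only identifies $\phi$ restricted to $H_4(X)$ with the inclusion $\ker q\hookrightarrow\Gamma(A)$; it gives no control of $\phi$ on the rest of $\pi_3F_X$, since the target of $\pi_3F_X\to\pi_3X$ maps to $\pi_3K(A,2)=0$.

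The gap is not cosmetic: take $A=\ZZ/2$, $\pi_3X=\ZZ/2$, and the nontrivial $k$-invariant in $H^4(K(\ZZ/2,2);\ZZ/2)\cong\Hom(\Gamma(\ZZ/2),\ZZ/2)$, so that $q$ is the surjection $\ZZ/4\to\ZZ/2$. Your accounting then only says that $\pi_3F_X$ is an extension of $\ZZ/2$ by $\ZZ/2$, i.e.\ either $(\ZZ/2)^2$ or $\ZZ/4$; the theorem asserts the non-split answer $\Gamma(\ZZ/2)\cong\ZZ/4$, and resolving precisely this extension is the entire content of the statement. To close the argument you need genuinely more input than the $d_4$-differential: either Whitehead's original route (work with a CW model whose $2$-skeleton is a wedge of $2$-spheres, compute $\pi_3$ of such a wedge via the Hopf class and Whitehead products with the relation $[\iota,\iota]=2\eta$, and push forward by naturality), or an argument that $\pi_3F_{(-)}$ is a quadratic functor of the $2$-type together with a natural transformation $\Gamma(\pi_2X)\to\pi_3F_X$ constructed at the level of spaces and then checked to be an isomorphism. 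As written, the "once this accounting is in place" step assumes exactly what has to be proved.
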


So far we have described the groups in Whitehead's Certain Exact Sequence; what remains is to describe the map $q$ explicitly in terms of $\pi_2 X$.  To accomplish this we use the Postnikov tower of $X$ (for an introduction to Postnikov towers see for instance \cite[Sect. 4.3]{MR1867354}).  By assumption $X$ is a simply connected 3-type, so its Postnikov tower is given by
\[
\xymatrix{K(\pi_3 X , 3) \ar[r] & X \ar[d] \\ &K(\pi_2 X , 2) \ar[r]^{k_2} & K(\pi_3 X , 4)}
\]
Now by the Hurewicz theorem the map $\pi_3 K(\pi_2 X ,2) \to H_3 (K(\pi_2 X,2))$ is surjective, so $H_3 (K(\pi_2 X  ,2)) =0$.  Hence, by the universal coefficients theorem we have that
\begin{align*}
k_2 \in \mathrm{Map} (K(\pi_2 X , 2) , K(\pi_3 X, 4)) &\cong H^4 (K(\pi_2 X , 2) ; \pi_3)\\
& \cong \mathrm{Hom} (H_4 (K(\pi_2 X , 2)) , \pi_3 X).
	\end{align*}

From a further appliction of the Hurewicz and universal coefficients theorems we have an isomorphishm of abelian groups
\[
H_4(X) \cong H_4 (K(\pi_2 X ,2)),
\]
which by Whitehead's theorem is $\Gamma_X = \Gamma(\pi_2X)$.
The following proposition then relates the quadratic map $q$ and the $k$-invariant $k_2$.

\begin{prop}
Let $X$ be a simply connected 3-type.  Then we have
\[
q=k_2 : \Gamma_X \to \pi_3 X ,
\]
where $q$ is the quadratic map from Whitehead's Certain Exact Sequence and $k_2$ is the second $k$-invariant in the Postnikov tower for $X$. The corresponding quadratic map $\pi_2X \to \pi_3 X$ is given by pre-composition with the Hopf map $S^3 \to S^2$. 
\end{prop}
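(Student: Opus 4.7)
The plan is to prove both claims simultaneously by showing that $q$ and $k_2$ correspond, under the universal property of $\Gamma$, to the same quadratic map $\pi_2 X \to \pi_3 X$: namely $\alpha \mapsto \alpha \circ \eta$, precomposition with the Hopf map. Both statements of the proposition then follow at once.

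The first step is naturality. Both constructions $X \mapsto q$ and $X \mapsto k_2$ are natural in $X$ over the category of simply connected 3-types: for $q$ by naturality of the Dold--Thom fibration $F \to X \to \Sym^\infty X$ together with Whitehead's identification $\pi_3 F \cong \Gamma(\pi_2 X)$; for $k_2$ by standard functoriality of Postnikov towers. Given $\alpha \in \pi_2 X$ represented by $f \colon S^2 \to X$, naturality with respect to $f$ expresses $q(\gamma(\alpha))$ as $f_*(q_{S^2}(\gamma(1)))$, and similarly for $k_2$. It therefore suffices to compute both $q$ and $k_2$ in the universal case $X = S^2$ on the generator $\gamma(1) \in \Gamma(\ZZ) = \Gamma(\pi_2 S^2)$, and to check that each equals $\eta \in \pi_3 S^2$.

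In the universal case: the Dold--Thom theorem identifies $\Sym^\infty S^2 \simeq K(\ZZ, 2) = \CC P^\infty$, and the homotopy fiber of the Hurewicz map $S^2 \to \CC P^\infty$ is $S^3$, with fiber inclusion the Hopf fibration $S^3 \to S^2$. Under Whitehead's isomorphism $\pi_3 F \cong \Gamma(\ZZ)$, the tautological generator of $\pi_3 S^3 = \ZZ$ corresponds to $\gamma(1)$, so $q(\gamma(1)) = \eta$. For the $k$-invariant, one identifies $k_2^{S^2} \in H^4(\CC P^\infty; \ZZ)$ with the cup square generator $x^2$: it must be non-zero (else the Postnikov section would produce a non-trivial map $S^2 \to K(\ZZ, 3)$, contradicting $H^3(S^2) = 0$), and its precise identification with $x^2$ can be obtained from the Serre spectral sequence of the fibration $K(\ZZ, 3) \to P_3 S^2 \to \CC P^\infty$, or from the cofiber sequence $S^3 \xrightarrow{\eta} S^2 \to \CC P^2 \hookrightarrow \CC P^\infty$. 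Under the isomorphisms $H^4(\CC P^\infty; \ZZ) \cong \Hom(H_4 \CC P^\infty, \ZZ) \cong \Hom(\Gamma(\ZZ), \ZZ)$, the class $x^2$ sends $\gamma(1)$ to $\eta$, completing the calculation.

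The hardest step will be the explicit identification of $k_2^{S^2}$ with the cup square generator; this is classical but requires genuine Eilenberg--MacLane cohomology computation. A secondary technical point is confirming that Whitehead's isomorphism $\pi_3 F \cong \Gamma(\pi_2 X)$ is natural in the precise sense required and carries the Hopf class in $\pi_3 S^3$ to $\gamma(1) \in \Gamma(\ZZ)$, which is needed to complete the reduction to the case $X = S^2$.
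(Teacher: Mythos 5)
The paper itself offers no proof of this proposition: it is recorded as a classical fact (going back to Whitehead and Eilenberg--Mac\,Lane), with the surrounding text only setting up the identification of $k_2$ as an element of $\Hom(H_4(K(\pi_2X,2)),\pi_3X)\cong\Hom(\Gamma_X,\pi_3X)$, so there is no in-paper argument to compare yours against. Your route---naturality of both Whitehead's certain exact sequence and the $k$-invariant, followed by a computation in the universal example---is a sound and standard way to prove it, and it matches the spirit of the paper's later remarks that for $S^2\to \Sym^\infty S^2\simeq \CC P^\infty$ the fiber inclusion is the Hopf map and that the attaching map of the $4$-cell of $\CC P^2$ is $q(s)$. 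Three points deserve care in a full write-up. First, $S^2$ is not a $3$-type, so the naturality reduction must run through the truncation $P_3S^2$; this is harmless because $\pi_2$, $\pi_3$, and hence $\Gamma(\pi_2)$ are unchanged, both constructions are natural along $S^2\to P_3S^2$, and the $k$-invariant of $S^2$ is by definition that of $P_3S^2$, but it should be said explicitly. Second, for the $k$-invariant mere non-vanishing is not enough (the class $2x^2$ is nonzero yet would give the wrong quadratic map); you need $k_2^{S^2}=\pm x^2$, and your Serre spectral sequence route does deliver this, since $S^2\to P_3S^2$ is $4$-connected, so $H^3(P_3S^2)=H^4(P_3S^2)=0$ and the transgression $d_4(\iota_3)=k_2$ must generate $H^4(\CC P^\infty;\ZZ)$. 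Third, the normalization of Whitehead's isomorphism $\pi_3F\cong\Gamma(\pi_2)$---that in the universal case the generator of $\pi_3S^3$ goes to $\gamma(1)$, and that $\gamma(1)$ matches the canonical generator of $H_4(\CC P^\infty)$ under $H_4(K(\ZZ,2))\cong\Gamma(\ZZ)$---is where all sign and convention issues hide; you flag this correctly, and it is the one place where a complete argument must open up Whitehead's (or Eilenberg--Mac\,Lane's) definitions rather than quote them.
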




While both $\CC P^2$ and $S^2$ are simply connected, they are not 3-types. However, for our current purposes we disregard the higher homotopical information and record their homotopical 3-types (we could be more pedantic and truncate them at this stage).

\begin{center}
\begin{tabular}{c|c|c}

&$\CC P^2$& $S^2$  \\
\hline
$\pi_2$& $\ZZ$ & $\ZZ$\\
\hline
$\pi_3$ & 0 & $\ZZ$\\
\hline
$q$ &0&$n \mapsto n^2$
\end{tabular}
\end{center}

Equivalently, the attaching map of the 4-cell in $\CC P^2$ is precisely the Hopf map $q(s) \in \pi_3(S^2)$, where $s \in \pi_2S^2$ is the generator. From these considerations we have the following observation:

\begin{prop}
	Let $X$ be a simply connected 3-type. Then homotopy classes of maps $\CC P^2 \to X$ are naturally in bijection with an elements $s \in \pi_2(X)$ such that $q(s) = 0 \in \pi_3X$. 
\end{prop}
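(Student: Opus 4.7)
The plan is to use the standard CW structure on $\CC P^2$, namely $\CC P^2 = S^2 \cup_\eta D^4$, where $\eta: S^3 \to S^2$ is the Hopf map. This realizes $\CC P^2$ as the mapping cone of $\eta$, and thus we have a cofiber sequence $S^3 \xrightarrow{\eta} S^2 \to \CC P^2$. Extending this to the Puppe sequence yields
\begin{equation*}
    S^3 \xrightarrow{\eta} S^2 \to \CC P^2 \to S^4 \xrightarrow{\Sigma \eta} S^3 \to \cdots
\end{equation*}
I would then apply the contravariant functor $[-, X]_*$ of pointed homotopy classes of maps into $X$.

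The resulting sequence of pointed sets is exact, and reads (in part):
\begin{equation*}
    [S^4, X]_* \to [\CC P^2, X]_* \to [S^2, X]_* \xrightarrow{\eta^*} [S^3, X]_*.
\end{equation*}
Under the standard identifications $[S^2, X]_* = \pi_2(X)$ and $[S^3, X]_* = \pi_3(X)$, the map $\eta^*$ sends $s \in \pi_2(X)$ to $s \circ \eta \in \pi_3(X)$. By the preceding proposition (identifying $q$ with precomposition by the Hopf map), this is precisely the quadratic map $q: \pi_2(X) \to \pi_3(X)$. Exactness at $[S^2, X]_*$ then identifies the image of the restriction map $[\CC P^2, X]_* \to \pi_2(X)$ with the set $\{s \in \pi_2(X) : q(s) = 0\}$.

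The remaining step is injectivity of the restriction map. Exactness at $[\CC P^2, X]_*$ says that two classes restrict to the same element of $\pi_2(X)$ if and only if they differ by (the action of) an element coming from $[S^4, X]_* = \pi_4(X)$. Since $X$ is a $3$-type, $\pi_4(X) = 0$, so the map $[\CC P^2, X]_* \to \pi_2(X)$ is injective. Combined with the image calculation, this yields the claimed bijection $[\CC P^2, X]_* \cong \{s \in \pi_2(X) : q(s) = 0\}$.

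Finally, naturality in $X$ (with respect to maps of simply-connected $3$-types) is automatic since both sides are functorial via post-composition, and the identification of $\eta^*$ with $q$ is natural by the previous proposition. The main subtlety to be careful about is the step where we identify $\eta^*$ with $q$: strictly speaking, $q$ is defined on $\Gamma(\pi_2 X)$, but its composition with the universal quadratic map $\pi_2 X \to \Gamma(\pi_2 X)$ is exactly $s \mapsto s \circ \eta$, so there is no ambiguity. No hard calculation is needed beyond exactness of the Puppe sequence.
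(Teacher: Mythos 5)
Your proof is correct and is essentially the same argument the paper has in mind (the paper only sketches it): both rest on the cell structure $\CC P^2 = S^2 \cup_\eta e^4$, the preceding proposition identifying $q$ with precomposition by the Hopf map, and the vanishing of $\pi_4 X$ for a $3$-type. Your Puppe-sequence bookkeeping, including the action of $[S^4,X]_* = \pi_4(X)$ to get injectivity of restriction, is exactly the right way to make the extension/uniqueness step precise.
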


\section{Applying Whitehead's construction to higher categories} \label{sec:ApplyWhiteheadToCats}

Via the homotopy and stablization hypotheses we translate our work on simply connected 3-types to the setting of braided 2-groups and then to the case of $B\mathrm{Aut}^0 \left (\sK (\cC^{fd})\right)$. 
The fundamental $n$-groupoid assigns a pointed, simply connected, 3-groupoid $\Pi_{\le3} X$ to a pointed, simply connected 3-type $X$ (recall that the homotopy hypothesis is that this association is an equivalence).  The stablization hypothesis implies that $\Pi_{\le 3} X$ is then equivalent to a braided monoidal category $\cB$ where all the objects and morphisms are invertible; we will call such categories {\it braided 2-groups}.

The question is how to use our discussion of Whitehead's Certain Exact Sequence to determine our braided 2-group $(\cB,\otimes)$ up to equivalence. More precisely, what are the groups $\pi_2 \cB$ and $\pi_3 \cB$ and what is the quadratic map $q: \pi_2 \cB \to \pi_3 \cB$? This was first solved by A. Joyal and R. Street \cite{MR1250465}. 

Note that by universality, a quadratic map $q: \pi_2 \cB \to \pi_3 \cB$ is the same as a homomorphism $\Gamma (\pi_2 \cB) \to \pi_3 \cB$.  Let $\mathbf{1}$ denote the monoidal unit in $\cB$, then we define our homotopy groups as follows
\[
\begin{array}{l}
\pi_2 \cB = \text{ isomorphism classes of objects of } \cB;\\[2ex]
\pi_3 \cB = \mathrm{Aut} (\mathbf{1}).
\end{array}
\]
We define the map $q: \pi_2 \cB \to \pi_3 \cB$ on objects of $\cB$ and leave it as an exercise to verify that it is well defined and only depends on the isomorphism class of the object (Exercise \ref{ex:q_well_defined}).
Let $b \in \cB$ be an object and $\overline{b} \in \cB$ its $\otimes$-inverse. This is also its (left) dual, so we have isomorphisms
\[
\ev : \overline{b} \otimes b \xrightarrow{\cong} \mathbf{1}  \text{\hspace{16pt} and \hspace{16pt}} \coev: \mathbf{1} \xrightarrow{\cong} b \otimes \overline{b}.
\]
Then define $q : \pi_2 \cB \to \pi_3 \cB$ by
\[
q \overset{def}{=} \ev \circ \tau \circ \coev,
\]
where $\tau$ is the braiding isomorphism. We can visualize the map $q$ evaluated on an object $b \in \cB$ as follows:

\begin{center}
\begin{tikzpicture}[thick]

\node (A) at (0,0) {$b$};
\node (B) at (1,0) {$\overline{b}$};
\node (C) at (0,-2) {$\overline{b}$};
\node (D) at (1,-2) {$b$};
\node (E) at (0.6,-.9) {};
\node (F) at (0.4,-1.1) {};

\node at (-4,-1) {$b$};
\node (G) at (-3,-1) {};
\node (H) at (-1,-1) {};

\draw (A) to [out =90,in=90] node [above] {$\mathbf{1}$} (B);
\draw (C) to [out=-90,in=-90] node [below] {$\mathbf{1}$} (D);
\draw (A) to [out=-90,in=90] (D);
\draw (B) to [out=-90,in=45] (E);
\draw (F) to [out=225,in=90] (C);
\draw[|->] (G) to (H);

\end{tikzpicture}
\end{center}

\begin{prop}
The map $q: \pi_2 \cB \to \pi_3 \cB$ is quadratic.
\end{prop}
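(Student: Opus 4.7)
The plan is to verify directly the two defining axioms of a quadratic map, using string diagrams and the hexagon axioms for the braiding. For the first axiom $q(b) = q(\bar b)$, I would observe that the string diagram defining $q$ is invariant (up to snake/zigzag isotopy) under relabeling $b \leftrightarrow \bar b$: the morphism $\ev_b$ becomes $\ev_{\bar b}$, $\coev_b$ becomes $\coev_{\bar b}$, and $\tau_{b,\bar b}$ becomes $\tau_{\bar b,b}$, which is precisely the diagram for $q(\bar b)$.

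The heart of the argument is the second axiom. The strategy is to establish the stronger fact that
\[
B(a,b) := q(a \otimes b) - q(a) - q(b)
\]
is a symmetric, bi-additive map, by computing it explicitly as the double braiding $\tau_{b,a} \circ \tau_{a,b}$, viewed in $\pi_3 \cB = \mathrm{Aut}(\mathbf 1)$ via the canonical isomorphism $\mathrm{Aut}(a\otimes b) \cong \mathrm{Aut}(\mathbf 1)$ arising from the invertibility of $a \otimes b$. To derive this formula, I would draw the string diagram for $q(a\otimes b)$, use the natural isomorphism $\overline{a\otimes b} \cong \bar b \otimes \bar a$ to split each thick strand into two strands, apply the hexagon axioms to expand the big braiding $\tau_{a\otimes b,\,\bar b \otimes \bar a}$ as a composite of four elementary braidings, and then apply the zigzag identities to resolve the resulting nested caps and cups. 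The diagram visibly decomposes into two side-by-side copies of the diagrams for $q(a)$ and $q(b)$, joined by a central configuration equal to $\tau_{b,a}\circ \tau_{a,b}$.

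Given the formula $B(a,b) = \tau_{b,a}\circ\tau_{a,b}$, symmetry $B(a,b) = B(b,a)$ is automatic from the definition of $B$ combined with the isomorphism $a\otimes b \cong b\otimes a$ induced by $\tau$, and bi-additivity is a further application of the hexagon axioms (one hexagon in each slot, expanding $\tau_{a \otimes a', b}$ and $\tau_{a, b\otimes b'}$). The cocycle identity then follows formally: bi-additivity gives
\[
q(a+b+c) - q(a) - q(b) - q(c) = B(a+b,c) + B(a,b) = B(a,c) + B(b,c) + B(a,b),
\]
and the same three-term expression is obtained by expanding $q(a+b) + q(b+c) + q(a+c) - q(a) - q(b) - q(c)$, establishing the quadratic identity.

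The main obstacle will be the diagrammatic bookkeeping: correctly tracking overcrossings versus undercrossings when applying the hexagon axioms, and pinning down the identification of $\mathrm{Aut}(a\otimes b)$ with $\mathrm{Aut}(\mathbf 1)$ coherently enough that the bi-additivity computation goes through without sign or ordering ambiguities.
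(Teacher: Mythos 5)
Your strategy is sound, and it is worth noting that the paper itself does not prove this proposition at all: it is stated without proof and attributed to Joyal--Street \cite{MR1250465}, with only the well-definedness of $q$ (independence of the choice of duality data and of the representative of the isomorphism class) delegated to Exercise \ref{ex:q_well_defined}. So you are supplying the standard argument rather than paralleling one in the text, and the skeleton you describe -- identify the polarization $B(a,b)=q(a\otimes b)-q(a)-q(b)$ with the double braiding, get bi-additivity from the hexagons after transporting to the abelian group $\Aut(\mathbf 1)$, and deduce the cocycle identity (2) formally from bi-additivity and symmetry -- is exactly how the classification of braided $2$-groups by quadratic forms is proved. Your algebraic reduction of axiom (2) to bi-additivity of $B$ is correct as written.

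Two cautions. First, with the paper's conventions $q(b)=\ev\circ\tau\circ\coev$ (which in a strict skeletal model computes to $c(b,\bar b)$ rather than $c(b,b)$), the polarization comes out as the \emph{inverse} of the double braiding $\tau_{b,a}\circ\tau_{a,b}$; this is harmless, since $B$ and $B^{-1}$ are bi-additive and symmetric together, but if you assert the formula on the nose you must fix orientation conventions for the crossings, as you anticipate. Second, the first axiom $q(b)=q(\bar b)$ is not pure relabeling: the diagram for $q(\bar b)$ is computed with duality data for $\bar b$ (e.g.\ $(\coev_b^{-1},\ev_b^{-1})$, which is legitimate precisely because of the well-definedness exercise), and matching it with the diagram for $q(b)$ requires a genuine rotation of the closed diagram using naturality of the braiding and the zigzag identities -- in a braided (not symmetric) setting this is an isotopy of framed tangles, not a planar symmetry, so it deserves an explicit check. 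Finally, in the bi-additivity step the hexagons produce the double braidings of the factors only up to conjugation by associators and braidings; the reason this is invisible in $\pi_3\cB$ is that the identification $\Aut(a\otimes b)\cong\Aut(\mathbf 1)$ (via the equivalence $-\otimes(a\otimes b)$) kills all conjugations, $\Aut(\mathbf 1)$ being abelian by Eckmann--Hilton. Make that transport argument explicit and the proof is complete.
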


Applying our discussion to the simply connected 3-type $B\mathrm{Aut}^0 \left (\sK (\cC^{fd})\right)$ we have
\begin{align*}
	\pi_2 &= \text{ Natural automorphisms of } \Id_{\sK(\cC^{fd})}, \; \textrm{ and } \\
	\pi_3 &= \text{ Natural automorphisms of } \Id_{\Id_{\sK(\cC^{fd})}}.
\end{align*}
The Serre automorphism is a natural automorphism of the identity functor on $\sK(\cC^{fd})$ (see Exercise \ref{Ex:SerreNaturality}), so for a morphism $f: x \to y$ we have an induced 2-isomorphism
\[
S_f : f \circ S_x \Rightarrow S_y \circ f
\]
Witnessing the commutativity of the following square:
\begin{center}
\begin{tikzpicture}[thick]
\node (A) at (0,0) {$x$};
\node (B) at (2,2) {$x$};
\node (C) at (4,0) {$y$};
\node (D) at (2,-2) {$y$};
\draw[->] (A) to node [above] {$S_x \;$} (B);
\draw[->] (B) to node [above] {$f$} (C);
\draw[->] (A) to node [below] {$f$} (D);
\draw[->] (D) to node [below] {$\; S_y$} (C);
\node at (2,0) {$\Downarrow S_f$};
\end{tikzpicture}
\end{center}
This 2-isomorphism implements the {\em naturality} of the Serre automorphism. 

Let $S \in \pi_2 B\mathrm{Aut}^0 \left (\sK (\cC^{fd})\right)$ be the Serre automorphism. We need to compute $q(S)$. Let $x \in \sK(\cC^{fd})$ be an object and $S_x$ the associated Serre automorphism. Note that $S_x$ is itself invertible and we denote its inverse by $S_x^{-1}$. We have that $q(S_x)$ is given by the assignment depicted in Figure \ref{fig:q_of_serre}. The evaluation and coevaluation maps in this diagram make $S_x^{-1}$ into the left adjoint of $S_x$; they form an adjoint equivalence. 

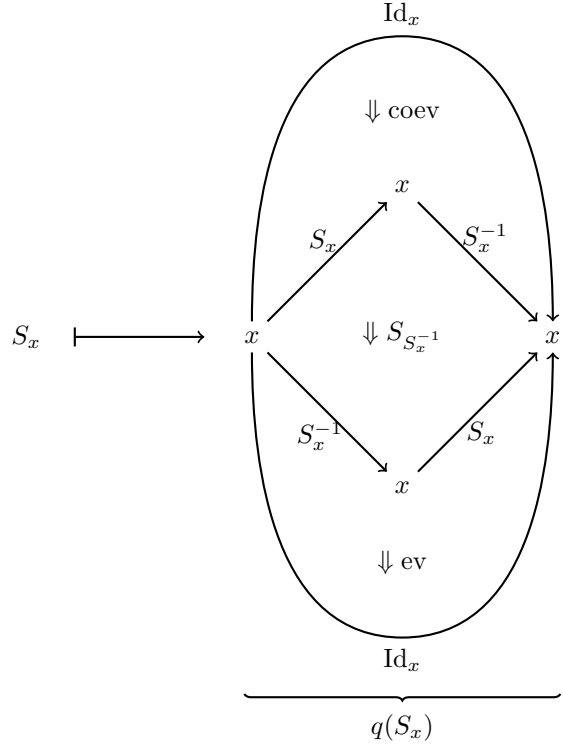
\begin{figure}[htbp]
	\begin{center}
	\begin{tikzpicture}[thick]

	\node (A) at (0,0) {$x$};
	\node (B) at (2,2) {$x$};
	\node (C) at (4,0) {$x$};
	\node (D) at (2,-2) {$x$};

	\draw[->] (A) to node [above] {$S_x \;$} (B);
	\draw[->] (B) to node [above] {$\; \; S_x^{-1}$} (C);
	\draw[->] (A) to node [below] {$S_x^{-1} \; \;$} (D);
	\draw[->] (D) to node [below] {$\;S_x$} (C);
	\node at (2,0) {$\Downarrow S_{S_x^{-1}}$};

	\draw[->] (A) to [out=90,in=180] (2,4) to [out=0,in=90] (C);
	\draw[->] (A) to [out=-90,in=180] (2,-4) to [out=0,in=-90] (C);

	\node at (2,4.3) {$\Id_x$};
	\node at (2,-4.3){$\Id_x$};
	\node at (2,3) {$\Downarrow \coev$};
	\node at (2,-3) {$\Downarrow \ev$};

	\node at (-3,0) {$S_x$};
	\node (F) at (-2.5,0) {};
	\node (G) at (-.5,0) {};
	\draw[|->] (F) to (G);

	\draw [
	    decoration={
	        brace,mirror,
	        raise=.75cm
	    },
	    decorate
	] (-.1,-4) -- (4.1,-4);

	\node at (2,-5.2) {$q(S_x)$};

	\end{tikzpicture}
	\end{center}
	\caption{The Serre automorphism, precomposed with the Hopf map.}
	\label{fig:q_of_serre}
\end{figure}

We computed the 3-types of $\CC P^2$ (which is the same as $\CC P^\infty$) and $S^2$ in the previous section.  It follows that in order to lift our map given by the Serre automorphism $S^2 \to B\mathrm{Aut}^0 \left (\sK (\cC^{fd})\right)$ we must have that $q(S)$ is the identity.

\begin{prop}[\cite{DSPS_DTC2} and \cite{MR2555928}]
Let $S \in \pi_2 B\mathrm{Aut}^0 \left (\sK (\cC^{fd})\right)$ be the Serre automorphism, then $q(S) \in \pi_3 B\mathrm{Aut}^0 \left (\sK (\cC^{fd})\right)$ is the identity.
\end{prop}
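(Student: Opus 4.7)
My approach is to reduce the claim to the universal case $\cC = \Bord_2^{fr}$ via the cobordism hypothesis, and then to exploit the fact that in the universal case the Serre automorphism factors through the simply connected space $BSO(2) \simeq \CC P^\infty$, whose vanishing $\pi_3$ immediately forces $q(S)=0$.

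For the reduction step, the cobordism hypothesis provides for every $x \in \cC^{fd}$ a symmetric monoidal functor $F_x \colon \Bord_2^{fr} \to \cC$ with $F_x(\pt_+) = x$. The Serre automorphism $S_x$, its inverse $S_x^{-1}$, the adjunction data witnessing $S_x^{-1}$ as a right adjoint of $S_x$, and the naturality 2-isomorphism $S_{S_x^{-1}}$ appearing in the definition of $q(S_x)$ are all constructed in purely universal terms from the 2-dualizability data of $x$. Since symmetric monoidal 2-functors preserve duals and adjunctions, $F_x$ carries every ingredient of $q(S_{\pt_+})$ in $\Bord_2^{fr}$ to the corresponding ingredient of $q(S_x)$ in $\cC$, and hence $q(S_x) = F_x\!\bigl(q(S_{\pt_+})\bigr)$. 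It therefore suffices to prove the proposition when $\cC = \Bord_2^{fr}$.

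For the universal example, I would observe that $\Bord_2^{fr}$ carries a manifest geometric action of $SO(2)$ by rotation of tangential 2-framings. This action restricts to $\sK(\Bord_2^{fr})$ (every object of $\Bord_2^{fr}$ is fully dualizable) and, upon classifying, yields a pointed map
$$\rho \colon BSO(2) \longrightarrow B\Aut^0\bigl(\sK(\Bord_2^{fr})\bigr).$$
Under $\rho_*$, a generator of $\pi_2 BSO(2) \cong \pi_1 SO(2) \cong \ZZ$ is sent to a natural automorphism of $\Id_{\sK(\Bord_2^{fr})}$, whose component at $\pt_+$ is the 1-bordism $\pt_+ \to \pt_+$ whose framing twists by one full rotation along the arc. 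By inspection, this is precisely $S_{\pt_+}$. Since $q$ is realized by precomposition with the Hopf map $\eta \colon S^3 \to S^2$, we have $q(S_{\pt_+}) = \rho_*(\eta^*\alpha)$ for $\alpha$ generating $\pi_2 BSO(2)$; but $BSO(2) \simeq \CC P^\infty \simeq K(\ZZ,2)$ has $\pi_3 = 0$, so $\eta^*\alpha$ already vanishes in $\pi_3 BSO(2)$. Hence $q(S_{\pt_+}) = 0$, and pulling back along $F_x$ gives $q(S_x) = 0$ for every $x \in \cC^{fd}$.

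The main obstacle I anticipate is the rigorous verification that the naive geometric change-of-framing action on $\Bord_2^{fr}$ assembles into a genuinely coherent $SO(2)$-action at the level of symmetric monoidal Segal 2-categories, and that the image of a generator of $\pi_1 SO(2)$ under this coherent action really matches the algebraically defined Serre automorphism (rather than, say, its inverse or a conjugate). A fully hands-on alternative would be to unwind $q(S_{\pt_+})$ into a single explicit 2-framed surface bordism---a disk whose framing obstruction is classified by an element of $\pi_3 GL_2^+(\RR) \cong \pi_3 SO(2) = 0$---and argue its triviality directly.
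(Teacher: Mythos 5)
Your argument is correct in outline, but it takes a genuinely different route from the one these notes have in mind, and it is worth spelling out what each route costs. The proof the notes defer to \cite{DSPS_DTC2} is a direct $2$-categorical computation: $q(S)_x$ is the explicit composite of Figure~\ref{fig:q_of_serre}, built from the adjoint equivalence $(\ev,\coev)$ between $S_x$ and $S_x^{-1}$ and the naturality constraint $S_{S_x^{-1}}$, and one checks by a string-diagram/naturality argument that this composite is the identity using nothing beyond $2$-full dualizability of $\cC$. You instead import the full framed cobordism hypothesis in dimension $2$ together with its $SO(2)$-equivariance and the identification of the image of the generator of $\pi_1 SO(2)$ with the algebraically defined Serre automorphism; granting that package, $q(S)=\rho_*\bigl(q(\alpha)\bigr)=0$ is immediate from $\pi_3 BSO(2)=0$ and the Hopf-map description of $q$ --- and then your object-by-object reduction to $\cC=\Bord_2^{fr}$ via $F_x$ is in fact superfluous, since the quoted equivariance statement already applies to arbitrary $\cC$. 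This is a legitimate reading of the citation to \cite{MR2555928}, but note the tension with the program of these notes: the whole point of proving $q(S)=\mathrm{id}$ algebraically is that it is the obstruction whose vanishing \emph{produces} the lift $BSO(2)\to B\Aut^0\bigl(\sK(\cC^{fd})\bigr)$, i.e.\ the $SO(2)$-action, without presupposing the (hard, inductive) equivariant cobordism hypothesis; deducing it from that hypothesis is therefore close to circular relative to what the proposition is for, and the coherence issues you flag yourself (that the geometric change-of-framing action is a genuinely coherent $SO(2)$-action and that its $\pi_1$-generator really is the Serre automorphism rather than its inverse) are exactly where the nontrivial content sits. Two small points: your transport step $q(S_x)=F_x\bigl(q(S_{\pt_+})\bigr)$ does need the uniqueness-of-duality-data argument (Exercise~\ref{Ex:uniquenessofduals}) to compare $F_x(S_{\pt_+})$ with $S_x$, which you implicitly use; and in your hands-on alternative the framing obstruction for a disk rel boundary lives in $\pi_2$ of $SO(2)$ (maps $D^2/\partial D^2\to SO(2)$), not $\pi_3$ --- both groups vanish, so the conclusion is unaffected, but the index is off by one.
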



\subsection{Conclusion}

Let $(\cC, \otimes)$ be a symmetric monoidal n-category which is 2-fully dualizable.  Then the action of $SO(2)$ on $\sK(\cC^{fd})$ is given by the Serre automorphism
\[
S: \Id_{\sK(\cC^{fd})} \to \Id_{\sK(\cC^{fd})}
\]
subject to the condition that the quadratic map $q$ with 
\[
q(S) : \Id_{\Id_{\sK(\cC^{fd})}} \to \Id_{\Id_{\sK(\cC^{fd})}}
\]
satisfies the identity:
\[
q(S) = \Id_{\Id_{\Id_{\sK(\cC^{fd})}}}.
\]

\section{Exercises} \label{sec:exercises_3}

\begin{exercise}[$\star$] \label{Ex:SerreNaturality}
	To what extent does the Serre automorphism depend on the choice of dualizablity data? Is the Serre automorphism a natural transformation?
\end{exercise}

\begin{exercise}
	Show that if you replace $ev^R$ be the left adjoint $ev^L$ in the formula for the Serre automorphism $S$, then you obtain its inverse $S^{-1}$. 
\end{exercise}

\begin{exercise}
	Let $(\cC, \otimes)$ be the monoidal category of $\ZZ/2$-graded complex vector spaces. Show that, up to equivalence, there are four braided monoidal structures on $(\cC, \otimes)$ which distribute over direct sums. Which of these are symmetric monoidal?
\end{exercise}

\begin{exercise}
	If $X$ is a simply connected 3-type, relate Whitehead's Certain Exact Sequence of $X$ to the Serre Spectral sequence of the Postnikov Tower of $X$. 
\end{exercise}

\begin{exercise}
	Compute $\Gamma(\ZZ/n) \cong \ZZ/n$ for odd $n$, and $\Gamma(\ZZ/2^i) \cong \ZZ/2^{2i}$. 
\end{exercise}

\begin{exercise} \label{ex:q_well_defined}
	Recall the map $q$ defined in lecture, which maps objects of a braided 2-group to automorphisms of the unit object. Use Exercise \ref{Ex:uniquenessofduals} to show that $q$ is well defined and in fact only depends on the isomorphism class of the object. 
\end{exercise}

%
%
%

\specialsection*{Understanding the $O(3)$-action}

\section{3-full dualizability and the action of $O(3)$} \label{sec:3_dual}

Let $(\cC, \otimes)$ be a 3-fully dualizable symmetric monoidal 3-category.  The cobordism hypothesis implies that the core (the maximal 3-groupoid) $\sK (\cC^{fd})$ carries an action of $O(3) = SO(3) \rtimes \ZZ/2$.  In this section we unwind the data of the $SO(3)$ action and as an application we recover a result of Etingof, Nikshych, and Ostrik on fusion categories.

As 3-full dualizability implies 2-full dualizability, the $SO(3)$ action induces an $SO(2)$ action on $\sK(\cC^{fd})$.  The action of $SO(2)$ has the special property that the Serre automorphism is of order 2.  Let's recall the first three homotopy groups of $SO(2)$ and $SO(3)$.

\begin{center}
\begin{tabular}{c|c|c}

&$SO(2)$& $SO(3)$  \\
\hline
$\pi_1$& $\ZZ$ & $\ZZ/2$\\
\hline
$\pi_2$ & 0 & $0$\\
\hline
$\pi_3$ &0&$\ZZ$
\end{tabular}
\end{center}

In order to actually prove that the Serre autormorphism is of order 2 we need the following lemma.

\begin{lemma}[{\cite{DSPS_DTC1} c.f. \cite[Rk 3.4.22]{MR2555928}}] \label{3catlemma}
Let $\cC$ be a symmetric monoidal 3-category and $f: x \to y$ a 1-morphism.  Suppose that $f$ admits a right dual, i.e. there is a quadruple (satisfying the duality relations)
\[
(f,f^R , \ev: f \circ f^R \Rightarrow \Id_y , \coev : \Id_x \Rightarrow f^R \circ f).
\]
Further, suppose that $\ev$ and $\coev$ admit left duals.  Then, the quadruple
\[
(f^R , f , \coev^L , \ev^L)
\]
exhibits $f^R$ as a left dual of $f$.
\end{lemma}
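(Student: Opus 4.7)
The plan is to verify the two triangle identities making $(f^R, f, \coev^L, \ev^L)$ into an adjunction in which $f^R$ is a left adjoint of $f$. Concretely I must exhibit invertible $3$-morphisms
\[
\sigma_f \colon (\mathrm{id}_f \ast \coev^L) \circ (\ev^L \ast \mathrm{id}_f) \stackrel{\sim}{\Rrightarrow} \mathrm{id}_f
\quad\text{and}\quad
\sigma_{f^R} \colon (\coev^L \ast \mathrm{id}_{f^R}) \circ (\mathrm{id}_{f^R} \ast \ev^L) \stackrel{\sim}{\Rrightarrow} \mathrm{id}_{f^R},
\]
these being the snake relations for the putative new adjunction (in a $3$-category they need hold only up to coherent invertible $3$-cells). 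Note that the symmetric monoidal structure plays no role: the statement is really about rotating a $2$-categorical adjunction inside any ambient $3$-category, using the extra flexibility supplied by the existence of left adjoints to the structure $2$-morphisms $\ev$ and $\coev$.

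The essential tool is the mate correspondence applied one categorical level up. The original adjunction $f \dashv f^R$ provides invertible triangle $3$-cells $T_f$ and $T_{f^R}$. The $2$-morphism adjunctions $\ev^L \dashv \ev$ in the hom $2$-category $\cC(y,y)$ and $\coev^L \dashv \coev$ in $\cC(x,x)$ provide unit and counit $3$-cells $\eta_{\ev}, \varepsilon_{\ev}, \eta_{\coev}, \varepsilon_{\coev}$, which are not themselves invertible but satisfy their own snake equations one dimension higher. Pasting $T_f^{-1}$ together with $\eta_{\ev}$ and $\varepsilon_{\coev}$, suitably whiskered with $\mathrm{id}_f$, produces a $3$-cell with exactly the source and target needed for $\sigma_f$; its inverse is built symmetrically by pasting $T_f$ with the opposite mates $\varepsilon_{\ev}$ and $\eta_{\coev}$, and the two snake equations for the $2$-adjunctions force these two pastings to be mutually inverse. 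The identity $\sigma_{f^R}$ is constructed identically starting from $T_{f^R}$ instead of $T_f$.

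The main obstacle is the bookkeeping of higher associators, unitors, and interchange $3$-cells that proliferate in any $3$-categorical pasting, together with the verification that the constructed $\sigma_f$ and $\sigma_{f^R}$ fit into a coherent adjunction datum rather than merely producing unrelated triangle invertibles. Surface diagrams for $3$-categories render this manageable, but a fully explicit write-up is lengthy. A conceptually cleaner route, which I would adopt, is to invoke the principle that in any $(\infty,n)$-category the space of data witnessing a given $1$-morphism as left adjoint to another is either empty or contractible (the higher analogue of Exercise~\ref{Ex:uniquenessofduals}). It then suffices to exhibit \emph{some} adjunction presenting $f^R$ as a left adjoint of $f$, after which the prescribed data $(\coev^L,\ev^L)$ may be identified with the canonical unit and counit up to a unique coherent invertible $3$-cell, because $\ev^L$ and $\coev^L$ are themselves characterized by universal properties dual to those of $\ev$ and $\coev$. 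A version of this two-step strategy appears in \cite[\S 3.4]{MR2555928}.
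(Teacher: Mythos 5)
Your first construction is essentially the paper's own argument: the paper phrases it with string-diagram movies, but the comparison 3-cell it builds from the new snake composite to $\Id_f$ is exactly your double mate of the invertible triangle witness of the original adjunction, whiskered with the unit/counit 3-cells of $\ev^L \dashv \ev$ and $\coev^L \dashv \coev$; and the paper, like you, asserts that the symmetric (rotated) pasting is the inverse and leaves that verification as an exercise. One framing point: the coherence worry you raise (that the triangle invertibles should assemble into a coherent adjunction datum) is not required by the statement, since dualizability of 1-morphisms is defined via the homotopy bicategory $h_2$, so it suffices to produce the two invertible 3-cells witnessing the snake identities for $(f^R, f, \coev^L, \ev^L)$.

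The genuine problem is the ``conceptually cleaner route'' you say you would adopt: it is circular. Contractibility of the space of duality data is a uniqueness statement and cannot supply existence, and exhibiting \emph{some} adjunction presenting $f^R$ as a left adjoint of $f$ is precisely the content of the lemma --- there is no independent source for it other than the pasting construction you already described. Moreover, even granting such an adjunction with unit $u$ and counit $c$, the identification of $(\coev^L, \ev^L)$ with $(c,u)$ is not automatic: one would have to show, for instance, that $u$ is a left adjoint of $\ev$, which is an assertion of exactly the same nature and difficulty as the lemma itself. So the proof must go through your first construction, and the real content --- deferred both in your sketch and in the paper's --- is the check that the two symmetric pastings are mutually inverse; that computation uses the triangle identities of the two 2-morphism adjunctions together with interchange and the invertibility of the original snake witnesses, and is not an immediate formal consequence of the triangle identities alone.
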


\begin{cor}[\cite{DSPS_DTC1}]
In a 3-fully dualizable symmetric monoidal 3-category, we have a canonical natural isomorphism
\[
R: S^2 \cong \Id_{\Id_{\sk(\cC^{fd})}},
\]
where $S$ is the Serre automorphism.
\end{cor}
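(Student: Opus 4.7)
The plan is to produce $R$ from a canonical natural 2-isomorphism $\alpha\colon S \cong S^{-1}$: horizontally composing $\alpha$ with $\Id_S$ and then using the invertibility witness $S \circ S^{-1} \cong \Id$ yields the desired $R\colon S^2 \cong \Id_{\Id_{\sK(\cC^{fd})}}$. Thus the corollary reduces to identifying the Serre automorphism with its own inverse, up to canonical 2-isomorphism.

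First I would invoke the preceding exercise: $S_X^{-1}$ is computed by the same string-diagram formula as $S_X$ but with the right adjoint $\ev^R$ replaced by a left adjoint $\ev^L$ of $\ev\colon X \otimes X^\vee \to \mathbf{1}$. Consequently it suffices to produce a canonical 2-isomorphism between $\ev^R$ and (some choice of) $\ev^L$ identifying them as left adjoints of $\ev$. This is precisely what Lemma \ref{3catlemma} supplies, once we verify its hypotheses for $f = \ev$: three-full dualizability guarantees that the unit and counit of the adjunction $(\ev, \ev^R)$, being 2-morphisms of $\cC$, themselves admit left adjoints. Applying the lemma produces a canonical quadruple $(\ev^R, \ev, \coev^L, \ev^L)$ witnessing $\ev^R$ as a left dual of $\ev$, and the uniqueness of adjoint data (Exercise \ref{Ex:uniquenessofduals}) then provides the canonical 2-isomorphism between $\ev^R$ and any chosen $\ev^L$.

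Plugging this identification into the Serre formula yields $\alpha_X\colon S_X \xrightarrow{\sim} S_X^{-1}$ and hence the 2-isomorphism $R_X\colon S_X^2 \xrightarrow{\sim} \Id_X$. The main subtlety is coherence: we need the family $\{R_X\}$ to assemble into a modification from $S^2$ to $\Id_{\Id_{\sK(\cC^{fd})}}$. Because every step of the construction is drawn from the contractible $\infty$-groupoids of dualizability data furnished by Exercise \ref{Ex:uniquenessofduals}, naturality is automatic: the comparison 2-isomorphisms depend functorially on the underlying object, and the contractibility of the total space of choices propagates through to $R$. The real content of the argument is concentrated in Lemma \ref{3catlemma}; granted that lemma, the present corollary is a formal consequence of the uniqueness and canonicity of adjoints.
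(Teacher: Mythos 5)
Your proposal is correct and follows essentially the same route as the paper: the paper also observes that $S_X^{-1}$ is given by the Serre formula with $\ev^L$ in place of $\ev^R$, applies Lemma \ref{3catlemma} to $f=\ev$ (using $3$-full dualizability to supply left adjoints of the unit and counit) to identify $\ev^R$ with $\ev^L$ canonically, and concludes $S_X \cong S_X^{-1}$, hence $S^2 \cong \Id$. Your additional remarks on naturality via the contractibility of spaces of duality data only spell out what the paper's "canonical and natural in $X$" leaves implicit.
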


\begin{proof}
Recall the Serre automorphism and its inverse which are given by the following diagrams.

\begin{center}
\begin{tikzpicture}[thick]
\node (A) at (0,-3) {$X$};
\node (B) at (0,3) {$X$};
\node (C) at (4,-1.5) {$X^\vee$};
\node (D) at (2,-1.5) {$X$};
\node (E) at (2,1.5) {$X$};
\node (F) at (4,1.5) {$X^\vee$};
\node (G) at (1.5,.5) {};
\node (H) at (1.2,.2) {};

\draw (B) to [out = 270, in = 90] (D);
\draw (E) to [out = 270, in =45] (G);
\draw (H) to [out = 225, in =90] (A);
\draw (E) to [out =90,in=90] node [above] {$\ev^R$} (F);
\draw (F) to [out = 270, in =90] (C);
\draw (C) to [out = 270, in = 270] node [below] {$\ev$} (D);

\draw [
    decoration={
        brace,mirror,
        raise=.55cm
    },
    decorate
] (-.2,-3) -- (4.2,-3);

\node at (2,-4) {$S_X$};

\node (A1) at (6,-3) {$X$};
\node (B1) at (6,3) {$X$};
\node (C1) at (10,-1.5) {$X^\vee$};
\node (D1) at (8,-1.5) {$X$};
\node (E1) at (8,1.5) {$X$};
\node (F1) at (10,1.5) {$X^\vee$};
\node (G1) at (7.5,.5) {};
\node (H1) at (7.2,.2) {};

\draw (B1) to [out = 270, in = 90] (D1);
\draw (E1) to [out = 270, in =45] (G1);
\draw (H1) to [out = 225, in =90] (A1);
\draw (E1) to [out =90,in=90] node [above] {$\ev^L$} (F1);
\draw (F1) to [out = 270, in =90] (C1);
\draw (C1) to [out = 270, in = 270] node [below] {$\ev$} (D1);

\draw [
    decoration={
        brace,mirror,
        raise=.55cm
    },
    decorate
] (5.8,-3) -- (10.2,-3);

\node at (8,-4) {$S_X^{-1}$};
	
\end{tikzpicture}
\end{center}

Since we are in a 3-fully dualizable symmetric monoidal category, we can apply the lemma to deduce that  $\ev^R \cong \ev^L$ canonically, so we have a natural (in $X$) and canonical isomorphism $S_X \cong S_X^{-1}$.

\end{proof}

The isomorphism $R:S^2 \cong \Id_{\Id_{\sk(\cC^{fd})}}$ is called the {\it Radford isomorphism}. Notice that this Lemma \ref{3catlemma} also implies that in a  3-fully dualizable category, duality of 1-morphisms is ambidexterous, left and right duals canonically agree. This is the first hint that there is something really magical happening when we pass to dimension three and above.

\begin{proof}[Sketch of proof of Lemma \ref{3catlemma}]
We utilize string diagrams to outline the proof.  By assumption the morphism $f$ admits a right dual, so we have

\begin{center}
\begin{tikzpicture}[thick]

\begin{scope}
	\node (LT) at (0, 1) {};
	\node (fR) [label=below:$f^R$] at (.5, 0) {}; 
	\node (f) [label=below:$f$]  at (1.5, 0) {};
	\node (RB) at (2, 0) {};
	\node at (-.7, .5) {$\coev =$};
	
	\draw (fR.center) arc (180: 0: 0.5 cm);
	\begin{pgfonlayer}{background}
		\fill [black!20] (LT.center) -- (LT.center |- RB.center) -- 
		(fR.center) arc (180: 0: 0.5 cm) -- (RB.center) -- (RB.center |- LT.center) -- cycle;
		\fill [black!10]  (fR.center) arc (180: 0: 0.5 cm) -- cycle;
	\end{pgfonlayer}	
\end{scope}

\begin{scope}[xshift = 5cm]
	\node (LT) at (0, 1) {};
	\node (f) [label=above:$f$] at (.5, 1) {}; 
	\node (fR) [label=above:$f^R$]  at (1.5, 1) {};
	\node (RB) at (2, 0) {};
	\node at (-.5, .5) {$\ev =$};
	
	\draw (f.center) arc (180: 360: 0.5 cm);
	\begin{pgfonlayer}{background}
		\fill [black!20] (f.center) arc (180: 360: 0.5 cm) -- cycle;
		\fill [black!10] (LT.center) -- (f.center) arc (180: 360: 0.5 cm) -- (LT.center -| RB.center) -- (RB.center) -| (LT.center);
	\end{pgfonlayer}	
\end{scope}
\end{tikzpicture}
\end{center}

\noindent
The evaluation and coevaluation satisfy the following identities.

\begin{center}
\begin{tikzpicture}[thick]
\begin{scope}
	\node (LB) at (0,0) {};
	\node (f) at (.5, 1) {};
	\node (g) at (2.5, 2) {};
	\node (RT) at (3, 2) {};
	\draw (f.center |- LB.center) -- (f.center) arc (180:0: 0.5 cm) arc (180: 360: 0.5 cm) -- (g.center);

	\node at (3.5, 1) {=};
	
	\fill [black!20] (4, 2) rectangle (5, 0);
	\fill [black!10] (5, 2) rectangle (6, 0);
	\draw (5, 2) -- (5, 0);
	\node at (5,2.5) {$f^R$};
	
	\begin{pgfonlayer}{background}
		\fill [black!20] (LB.center) -- (f.center |- LB.center) -- (f.center) arc (180:0: 0.5 cm) arc (180: 360: 0.5 cm) -- (g.center) -- (g.center -| LB.center) -- cycle;
		\fill [black!10] (f.center |- LB.center) -- (f.center) arc (180:0: 0.5 cm) arc (180: 360: 0.5 cm) -- (g.center) -- (RT.center) -- (RT.center |- LB.center) -- cycle;	
	\end{pgfonlayer}	

\end{scope}

\begin{scope}[yshift = -3cm]
	\node (LT) at (0,2) {};
	\node (f) at (.5, 1) {};
	\node (g) at (2.5, 0) {};
	\node (RB) at (3, 0) {};
	\draw (f.center |- LT.center) -- (f.center) arc (180:360: 0.5 cm) arc (180: 0: 0.5 cm) -- (g.center);

	\node at (3.5, 1) {=};
	
	\fill [black!10] (4, 2) rectangle (5, 0);
	\fill [black!20] (5, 2) rectangle (6, 0);
	\draw (5, 2) -- (5, 0);
	\node at (5,2.5) {$f$};
	
	\begin{pgfonlayer}{background}
		\fill [black!10] (LT.center) -- (f.center |- LT.center) -- (f.center) arc (180:360: 0.5 cm) arc (180: 0: 0.5 cm) -- (g.center) -- (g.center -| LT.center) -- cycle;
		\fill [black!20] (f.center |- LT.center) -- (f.center) arc (180:360: 0.5 cm) arc (180: 0: 0.5 cm) -- (g.center) -- (RB.center) -- (RB.center |- LT.center) -- cycle;	
	\end{pgfonlayer}	
\end{scope}

\end{tikzpicture}

\end{center}

\noindent
Further, by assumption $\ev$ and $\coev$ admit left duals, so we have

\begin{center}
\begin{tikzpicture}[thick]

\begin{scope}
	\node (LT) at (0, 1) {};
	\node (f) [label=below:$f$] at (.5, 0) {}; 
	\node (fR) [label=below:$f^R$]  at (1.5, 0) {};
	\node (RB) at (2, 0) {};
	\node at (-.7, .5) {$\ev^L =$};
	
	\draw (f.center) arc (180: 0: 0.5 cm);
	\begin{pgfonlayer}{background}
		\fill [black!10] (LT.center) -- (LT.center |- RB.center) -- 
		(f.center) arc (180: 0: 0.5 cm) -- (RB.center) -- (RB.center |- LT.center) -- cycle;
		\fill [black!20]  (f.center) arc (180: 0: 0.5 cm) -- cycle;
	\end{pgfonlayer}	
\end{scope}

\begin{scope}[xshift = 5cm]
	\node (LT) at (0, 1) {};
	\node (fR) [label=above:$f^R$] at (.5, 1) {}; 
	\node (f) [label=above:$f$]  at (1.5, 1) {};
	\node (RB) at (2, 0) {};
	\node at (-.85, .5) {$\coev^L =$};
	
	\draw (fR.center) arc (180: 360: 0.5 cm);
	\begin{pgfonlayer}{background}
		\fill [black!10] (fR.center) arc (180: 360: 0.5 cm) -- cycle;
		\fill [black!20] (LT.center) -- (fR.center) arc (180: 360: 0.5 cm) -- (LT.center -| RB.center) -- (RB.center) -| (LT.center);
	\end{pgfonlayer}	
\end{scope}
\end{tikzpicture}
\end{center}

\noindent
By dualizability, we have morphisms $\ev^L \circ \ev \to \Id$ and $\Id \to \ev \circ \ev^L$, that is

\begin{center}
\begin{tikzpicture}[thick]

\begin{scope}
		
	\draw (-1,0) arc (180: 360: 0.5 cm);
	\draw (-1,-3) arc (180:0:0.5 cm);
	\draw (3.5,0) -- (3.5,-3);
	\draw (4.5,0)--(4.5,-3);
	\node at (1.75,-1.5) {$\to$};
	\begin{pgfonlayer}{background}
		\fill [black!10] (-2,0)--(-2,-3)--(1,-3)--(1,0) -- cycle;
		\fill [black!10] (2.5,0)--(2.5,-3)--(5.5,-3)--(5.5,0)--cycle;
		\fill [black!20] (3.5,0)--(3.5,-3)--(4.5,-3)--(4.5,0)--cycle;
		\fill [black!20] (-1,0) arc (180:360:0.5 cm);
		\fill [black!20] (-1,-3) arc (180:0:0.5cm);
	\end{pgfonlayer}	
\end{scope}

\begin{scope}[yshift = -5cm]
	\draw (3.5,0) circle (0.5 cm);
	\begin{pgfonlayer}{background}
		\fill [black!10] (-1,1)--(-1,-1)--(1,-1)--(1,1) -- cycle;	
		\fill [black!10] (2.5,1)--(2.5,-1)--(4.5,-1)--(4.5,1) -- cycle;
		\fill [black!20]  (3.5,0) circle (0.5 cm);
	\end{pgfonlayer}	
	\node at (1.75,0) {$\to$};
\end{scope}
\end{tikzpicture}
\end{center}

\noindent
Similarly, by dualizability of $\coev$ we have

\begin{center}
\begin{tikzpicture}[thick]

\begin{scope}	
	\draw (0,0) circle (0.5 cm);
	\begin{pgfonlayer}{background}
		\fill [black!20] (-1,1)--(-1,-1)--(1,-1)--(1,1) -- cycle;
		\fill [black!10]  (0,0) circle (0.5 cm);
		\fill [black!20] (2.5,1)--(2.5,-1)--(4.5,-1)--(4.5,1) -- cycle;
	\end{pgfonlayer}	
	\node at (1.75,0) {$\to$};
\end{scope}

\begin{scope}[yshift = -2cm]
		
	\draw (3.50,0) arc (180: 360: 0.5 cm);
	\draw (3.5,-3) arc (180:0:0.5 cm);
	\draw (-1,0) -- (-1,-3);
	\draw (0,0)--(0,-3);
	\node at (1.75,-1.5) {$\to$};
	\begin{pgfonlayer}{background}
		\fill [black!20] (-2,0)--(-2,-3)--(1,-3)--(1,0) -- cycle;
		\fill [black!10] (-1,0)--(-1,-3)--(0,-3)--(0,0)--cycle;
		\fill [black!20] (2.5,0)--(2.5,-3)--(5.5,-3)--(5.5,0)--cycle;
		\fill [black!10] (3.5,0) arc (180:360:0.5 cm);
		\fill [black!10] (3.5,-3) arc (180:0:0.5cm);
	\end{pgfonlayer}	
\end{scope}
\end{tikzpicture}
\end{center}

Now in order to for $f^R$ to be left dual to $f$ we need the following diagrams

\begin{center}
\begin{tikzpicture}[thick]
\begin{scope}
	\node (LB) at (0,0) {};
	\node (f) at (.5, 1) {};
	\node (g) at (2.5, 2) {};
	\node (RT) at (3, 2) {};
	\draw (f.center |- LB.center) -- (f.center) arc (180:0: 0.5 cm) arc (180: 360: 0.5 cm) -- (g.center);

	\node at (3.5, 1) {=};
	
	\fill [black!10] (4, 2) rectangle (5, 0);
	\fill [black!20] (5, 2) rectangle (6, 0);
	\draw (5, 2) -- (5, 0);
	\node at (5,2.5) {$f$};
	
	\begin{pgfonlayer}{background}
		\fill [black!10] (LB.center) -- (f.center |- LB.center) -- (f.center) arc (180:0: 0.5 cm) arc (180: 360: 0.5 cm) -- (g.center) -- (g.center -| LB.center) -- cycle;
		\fill [black!20] (f.center |- LB.center) -- (f.center) arc (180:0: 0.5 cm) arc (180: 360: 0.5 cm) -- (g.center) -- (RT.center) -- (RT.center |- LB.center) -- cycle;	
	\end{pgfonlayer}	

\end{scope}

\begin{scope}[yshift = -3cm]
	\node (LT) at (0,2) {};
	\node (f) at (.5, 1) {};
	\node (g) at (2.5, 0) {};
	\node (RB) at (3, 0) {};
	\draw (f.center |- LT.center) -- (f.center) arc (180:360: 0.5 cm) arc (180: 0: 0.5 cm) -- (g.center);

	\node at (3.5, 1) {=};
	
	\fill [black!20] (4, 2) rectangle (5, 0);
	\fill [black!10] (5, 2) rectangle (6, 0);
	\draw (5, 2) -- (5, 0);
	\node at (5,2.5) {$f^R$};
	
	\begin{pgfonlayer}{background}
		\fill [black!20] (LT.center) -- (f.center |- LT.center) -- (f.center) arc (180:360: 0.5 cm) arc (180: 0: 0.5 cm) -- (g.center) -- (g.center -| LT.center) -- cycle;
		\fill [black!10] (f.center |- LT.center) -- (f.center) arc (180:360: 0.5 cm) arc (180: 0: 0.5 cm) -- (g.center) -- (RB.center) -- (RB.center |- LT.center) -- cycle;	
	\end{pgfonlayer}	
\end{scope}

\end{tikzpicture}

\end{center}

\noindent
Let us first consider the first identity, as the second follows similarly. First we have a comparison map:
\begin{center}
\begin{tikzpicture}[thick,scale=0.75]
\draw (.5,-4) -- (.5,-3) arc (180:0: 0.5 cm) arc (180: 360: 0.5 cm) -- (2.5,0);
%
		
	\begin{pgfonlayer}{background}
		\fill [black!10] (0,0) --(0,-4)--(3,-4)--(3,0)--cycle;
		\fill [black!20] (.5,-4) -- (.5,-3) arc (180:0: 0.5 cm) arc (180: 360: 0.5 cm) -- (2.5,0)--(3,0)--(3,-4)-- cycle;	
	\end{pgfonlayer}	

\node at (3.5,-2) {$=$};
\draw (4.5,-4) -- (4.5,-3) arc (180:0: 0.5 cm) arc (180: 360: 0.5 cm) -- (6.5,-1) arc (0:180:0.5cm) arc (0:-180:0.5cm)--(4.5,0);

\begin{pgfonlayer}{background}
		\fill [black!10] (4,0) --(4,-4)--(7,-4)--(7,0)--cycle;
		\fill [black!20] (4.5,-4) -- (4.5,-3) arc (180:0: 0.5 cm) arc (180: 360: 0.5 cm) -- (6.5,-1) arc (0:180:0.5cm) arc (0:-180:0.5cm)--(4.5,0) -- (7,0)--(7,-4)--cycle;	
	\end{pgfonlayer}	
	
\draw (8.5,0)--(8.5,-4);
\draw (10,-2) circle (0.5 cm);	

\begin{pgfonlayer}{background}
		\fill [black!10] (8,0) --(8,-4)--(11,-4)--(11,0)--cycle;
		\fill [black!20] (8.5,0)--(8.5,-4)--(11,-4)--(11,0)--cycle;	
		\fill [black!10] (10,-2) circle (0.5cm);
	\end{pgfonlayer}	
	
\node at (11.5,-2.5) {$\to$};
\node at (7.5,-2.5) {$\to$};

\draw (13,0)--(13,-4);

\begin{pgfonlayer}{background}
		\fill [black!10] (12,0) --(12,-4)--(13,-4)--(13,0)--cycle;
		\fill [black!20] (13,0)--(13,-4)--(15,-4)--(15,0)--cycle;	
	\end{pgfonlayer}	

\end{tikzpicture}
\end{center}
Now these arrows are not isomorphisms individually, but the composite is an isomorphism. A map going the other way can be built in nearly the identical fashion. Specifically, take the above diagram, rotate each figure 180 degrees and reverse the colors. This will be a new sequence of operations where the natural map goes in the other direction. In fact this is precisely the inverse of original map. We leave the details as an exercise. 
\end{proof}

\section{The data of an $SO(3)$ action}

Using the notation of the previous few sections, we have the following characterization.

\begin{theorem}[{\cite{DSPS_DTC2}}]
Let $\cC$ be a symmetric monoidal 3-category.  To give an action of $SO(3)$ on $\cC$ is to specify the following data
\begin{itemize}
\item $S: \Id_\cC \xrightarrow{\cong} \Id_\cC$;
\item $\sigma : q(S) \xrightarrow{\cong} \Id_{\Id_{\Id_\cC}}$;
\item $R : S^2 \xrightarrow{\cong} \Id_{\Id_\cC}$;
\end{itemize}
subject to a condition:
\begin{equation*}
	\frac{\eta q}{2}  (R,S) = 0.
\end{equation*}
\end{theorem}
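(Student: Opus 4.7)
The approach mirrors the analysis of the $SO(2)$-action in Section \ref{sec:ApplyWhiteheadToCats}, now one homotopical dimension higher. An $SO(3)$-action on $\cC$ is equivalent, after passage to classifying spaces, to a pointed map
\[
\phi: BSO(3) \longrightarrow B\Aut^{0}(\sK(\cC^{\mathrm{fd}})).
\]
Because $\sK(\cC^{\mathrm{fd}})$ is a 3-groupoid, the target is a simply-connected 4-type, so only the 4-truncation of $BSO(3)$ matters. Recall $\pi_{\leq 4}(BSO(3))=(0,0,\ZZ/2,0,\ZZ)$, so this 4-truncation admits a minimal CW model of the form $S^{2}\cup_{2}e^{3}\cup_{\alpha}e^{4}$, for an attaching class $\alpha$ that I shall need to identify with a categorical expression.

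First I would extract the data by climbing this cell model. Restricting $\phi$ to the 2-cell selects an element of $\pi_2$ of the target, i.e.\ a natural automorphism $S$ of $\Id_\cC$; matching with the $SO(2)\hookrightarrow SO(3)$ subgroup and Section \ref{sec:2-Full-Dual} identifies $S$ with the Serre automorphism. The 3-cell, attached by the degree-two self-map of $S^{2}$, demands an isomorphism $R: S^{2}\xrightarrow{\sim}\Id_{\Id_\cC}$, which is the Radford datum already present under 3-full dualizability (Section \ref{sec:3_dual}). Compatibility with the restriction to $BSO(2)$ then produces, by the $SO(2)$-classification of Section \ref{sec:ApplyWhiteheadToCats}, the trivialization $\sigma: q(S)\xrightarrow{\sim}\Id_{\Id_{\Id_\cC}}$.

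Next I would pin down the single remaining condition. After $S$, $R$, and $\sigma$ are specified, the only obstruction to extending $\phi$ across the last 4-cell lies in $\pi_{4}$ of the target, i.e.\ in $\pi_{3}\Aut^{0}(\sK(\cC^{\mathrm{fd}}))$, and is given by the pairing of the attaching class $\alpha$ with the chosen data. The content of the theorem is the explicit identification of this obstruction with $\tfrac{\eta q}{2}(R,S)$. Here $q$ is the categorical Hopf operation on $\pi_{2}$ of Section \ref{sec:ApplyWhiteheadToCats}, and the symbol ``$/2$'' is meaningful precisely because the null-homotopy $R$ of $2S$ supplies the divisibility witness that converts the $\eta$-composition into its half. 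Once this identification is made, all higher cells of $BSO(3)$ couple only to $\pi_{\geq 5}$ of the target, which vanish, so no further conditions arise and the data $(S,R,\sigma)$ satisfying $\tfrac{\eta q}{2}(R,S)=0$ classifies $SO(3)$-actions up to homotopy.

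The main obstacle is precisely this categorical interpretation of the attaching class $\alpha$. I would carry it out by tracing $\alpha$ through the Postnikov decomposition of the target braided 3-group, in the same spirit as the identification of the Hopf element in Section \ref{sec:ApplyWhiteheadToCats}: the second $k$-invariant of $BSO(3)$ lives in $H^{5}(K(\ZZ/2,2);\ZZ)$ and, pulled back under $\phi$, lands in $\pi_{3}\Aut^{0}(\sK(\cC^{\mathrm{fd}}))$. Unwinding this pullback via the Whitehead $\Gamma$-functor expresses it as a composition of a braiding with the evaluation and coevaluation of $S$, halved via $R$ to account for the trivialization of $2S$; this gives the displayed expression $\tfrac{\eta q}{2}(R,S)$ and completes the classification.
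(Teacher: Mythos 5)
Your overall strategy---obstruction theory for a pointed map $BSO(3)\to B\Aut^0\left(\sK(\cC^{\mathrm{fd}})\right)$ into a simply connected 4-type---is in the right spirit, but the cell-level bookkeeping has an off-by-one error that erases exactly the place where the condition $\frac{\eta q}{2}(R,S)=0$ comes from. The complex $S^2\cup_2 e^3\cup_\alpha e^4$ is (at best) the 4-skeleton of $BSO(3)$; it is not a model for the 4-truncation, and for maps into a target with nontrivial $\pi_4$ (here the automorphisms of $\Id_{\Id_{\Id_\cC}}$) the 5-cells cannot be discarded: the obstruction to extending over a $k$-cell lies in $\pi_{k-1}$ of the target, so the 4-cell contributes an obstruction in $\pi_3$ whose null-homotopy is a choice of \emph{datum} (a $\pi_4$-torsor), while $\pi_4$-valued \emph{conditions} arise only from 5-dimensional attaching maps. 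In your model, which stops at $e^4$, nothing is left to impose the condition, and the sentence ``the obstruction to extending across the last 4-cell lies in $\pi_4$ of the target'' is the misstep. Relatedly, $\sigma$ should not be imported by appeal to the $BSO(2)$-restriction: in your own model the attaching class $\alpha$ is, up to sign, the image of the Hopf map under $\pi_3(S^2)\to\pi_3(\Sigma\RP^2)\cong\ZZ/4$, so $\phi_*\alpha=q(S)$ and the extension over the 4-cell \emph{is} the trivialization $\sigma$ of $q(S)$; by leaving $\alpha$ to be ``identified'' with $\frac{\eta q}{2}(R,S)$ you conflate the 4-cell attaching map with the 5-dimensional attaching data (equivalently the $k$-invariant in $H^5(K(\ZZ/2,2);\ZZ)\cong\ZZ/4$), which is where the condition actually lives.

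For comparison, the paper organizes the same obstruction theory so that these two roles are kept separate. It introduces $B\mathrm{Orp}(3)$, the homotopy fiber of $p_1:BSO(3)\to K(\ZZ,4)$, whose minimal cell structure begins $S^2\cup_2 e^3\cup_\phi e^5$ (no 4-cell, since $\pi_4 B\mathrm{Orp}(3)=0$): mapping it to the target yields the data $(S,R)$, and the 5-cell yields precisely the condition $\frac{\eta q}{2}(R,S)=0$, the image of the generator of $\pi_4(\Sigma\RP^2)\cong\ZZ/4$. The datum $\sigma$ comes from the other leg $BSO(2)\simeq\CC P^\infty$, whose Hopf-attached 4-cell trivializes $q(S)$, and the theorem follows because the square with corners $S^2$, $BSO(2)$, $B\mathrm{Orp}(3)$, $BSO(3)$ is a pushout of 4-types, gluing $(S,\sigma)$ and $(S,R)$ along the common $S$. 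If you want to keep a direct analysis of $BSO(3)$, you must work with its 5-skeleton (or the $k$-invariant of $\tau_{\leq 4}BSO(3)$), read $\sigma$ off the 4-cell as above, and then prove that the 5-cell obstruction equals $\frac{\eta q}{2}(R,S)$---the identification that both you and the paper's sketch leave open, and which is carried out in \cite{DSPS_DTC2}.
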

\noindent We will explain how this last condition arises in the proof sketched below.

\begin{proof}[Proof Sketch.]
	The following is only a partial sketch of the proof. The full account, including a higher categorical interpretation of the condition $\frac{\eta q}{2}  (R,S) = 0$, may be found in \cite{DSPS_DTC2}.

Let $B \mathrm{Orp} (3)$ be the homotopy fiber in the fibration
\[
B \mathrm{Orp} (3) \to BSO(3) \xrightarrow{p_1} K(\ZZ,4).
\]
This is the structure group corresponding to an orientation and a {\em $p_1$-structure}, i.e. a lift of the classifying map of the tangent bundle to $B\mathrm{Orp}(3)$ is the same as an orientation and trivialization of $p_1$. On an 3-manifold $p_1$ is always trivializable, but there are different trivializations. Such a lift is also called a `2-framing' by Atiyah \cite{MR1046621}. 

We then have that
\[
\pi_2 B \mathrm{Orp} (3) = \ZZ/2, \quad \pi_3 B\mathrm{Orp} (3) = \ZZ/4 = \Gamma ( \ZZ/2), \text{ and } \pi_4B\mathrm{Orp} (3) = 0.
\]
One may construct a minimal $CW$-structure for $B\mathrm{Orp} (3)$, which begins
\begin{equation*}
	S^2 \cup_{2} e^3 \cup_\phi e^5 \cup \textrm{... higher cells ...}
\end{equation*}
Thus we can study $\mathrm{Orp}(3)$-actions just as we studied $SO(3)$-actions. As before the $S^2$-part of the action consists of giving $S: \Id_\cC \xrightarrow{\cong} \Id_\cC$. Next the effect of the 3-cell is to trivialize $S\circ S$, hence this corresponds to $R : S^2 \xrightarrow{\cong} \Id_{\Id_\cC}$.

This part of the action corresponds to an $\Omega \Sigma \RP^2$-action. The loop space $\Omega \Sigma \RP^2$ is the `free $A_\infty$-group generated by $\RP^2$'. The homotopy groups of $B\Omega \Sigma \RP^2 \simeq \Sigma \RP^2$ are:
\begin{equation*}
	\pi_2 = \ZZ/2, \quad \pi_3 = \ZZ/4, \textrm{ and } \pi_4 = \ZZ/4.
\end{equation*}
The effect of the last cell is to trivialize the generator of $\pi_4$, which is the attaching map $\phi$. In other words given $S$ and $R$, there exists a canonical element that we can construct. It is an automorphism of the identity of the identity of the identity functor, which is always 4-torsion. The construction of this element is analogous to the construction of $q(S)$ from $S$, but is more complicated and uses both $R$ and $S$. We name this element 
$\frac{\eta q}{2}  (R,S) $ for reasons we won't go into here. See \cite{DSPS_DTC2} for details.  

The $\Omega \Sigma \RP^2$-action extends to an $\mathrm{Orp}(3)$-action precisely if the equation:
\begin{equation*}
	\frac{\eta q}{2}  (R,S) = 0
\end{equation*}
holds. The proof of the theorem then follows by establishing that the following  square is a pushout square of 4-types:
\begin{center}
\begin{tikzpicture}[thick]
\node (A) at (0,0) {$S^2$};
\node (B) at (3,0) {$BSO(2)$};
\node (C) at (0,-2) {$B \mathrm{Orp} (3)$};
\node (D) at (3,-2) {$BSO(3)$};
\node (E) at (6,-4) {$B \mathrm{Aut}^0 (\cC)$};

\draw[->] (A) to (B);
\draw[->] (A) to (C);
\draw[->] (B) to (D);
\draw[->] (C) to (D);
\draw[->] (B) to [bend left =20] node [above right] {$(S,\sigma)$}  (E);
\draw[->] (C) to [bend right =20] node [below left] {$(S,R)$} (E);
\draw[->,dashed] (D) to (E);
\node at (2.5, -1.5) {$\lrcorner$};
\end{tikzpicture}
\end{center}

\end{proof}

What about $\pi_3 SO(3)$? This group is non-trivial, how can we see its image?
We will call the image of the generator $a$ (for `anomaly') which is a map
\[
a: \Id_{\Id_{\Id_\cC}} \to \Id_{\Id_{\Id_\cC}}.
\]
Let us apply the quadratic map $q$ to the Radford
\[
q(R) : q(S)^4 = q(S^2)  \xrightarrow{\cong} q(\Id) = \Id_{\Id_\Id}.
\]
Then the map $a$ is given as the following composition
\[
a: \Id_{\Id_\Id} \overset{\sigma^{-4}}{\Rightarrow} q(S)^4 \overset{q(R)}{\Rightarrow} \Id_{\Id_\Id}.
\]

\section{An application to fusion categories}

Here we apply our work above in the setting of fusion categories. 

\begin{definition}\footnote{Actually what we call {\em fusion categories} above are more commonly called {\em multi-fusion categories}. Fusion categories are traditionally required to satisfy the additional requirement that the monoidal unit object is simple.}
A {\it fusion category} is a monoidal $\KK$-linear abelian category $\cF$ satisfying two additional properties:
\begin{enumerate}
\item $\cF$ is semi-simple with a finite number of isomorphism classes of simple objects and finite dimensional hom sets;
\item $\cF$ is rigid, i.e. every object has both left and right duals.
\end{enumerate}
\end{definition}

Fusion categories are well-known in the world of representation theory. We recall a few examples:
\begin{itemize}
\item The category of representations of a finite quantum group (a.k.a. finite dimensional semisimple Hopf algebra) is a fusion category.
\item The category of level $\ell$ positive energy representations of a loop group is fusion.
\item Given $A \subseteq B$ a finite depth finite index subfactor, so $A$ and $B$ are von Neumann algebras, then the {\em planar algebra} or {\em standard invariant} associated to the subfactor is essentially a fancy version of a fusion category.
\item Fusion categories also arise in many approaches to conformal field theory.

\end{itemize}

\noindent Given an object $X \in F$, let $X^\ast$ denote the (right) dual.

\begin{theorem}[Etingof-Nikshych-Ostrik \cite{MR2183279}]
Let $\cF$ be a fusion category, then the endofunctor which sends an object $X \in \cF$ to $X^{\ast \ast \ast \ast}$ is canonically and naturally monoidally isomorphic to the identity functor.
\end{theorem}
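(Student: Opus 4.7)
The plan is to realize the statement as a direct consequence of the Radford isomorphism $R: S^2 \cong \Id$ established in the previous corollary, applied to fusion categories viewed as 3-dualizable objects in an appropriate symmetric monoidal 3-category.

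First I would work in the symmetric monoidal 3-category $\mathsf{TC}$ of $\KK$-linear tensor categories, whose 1-morphisms are bimodule categories, 2-morphisms are bimodule functors, and 3-morphisms are natural transformations, with monoidal structure given by the Deligne tensor product. Fusion categories are then objects of $\mathsf{TC}$, and the first key step is to verify that they are $3$-fully dualizable in $\mathsf{TC}$. The dual of $\cF$ at the object level is the opposite tensor category $\cF^{\op}$, with evaluation bimodule category given by $\cF$ itself viewed as an $(\cF \boxtimes \cF^{\op})$-$\mathsf{Vect}$-bimodule. The finite semisimplicity together with rigidity of $\cF$ ensure that the evaluation and coevaluation bimodule categories, and their iterated adjoints, exist; this is exactly where the hypotheses in the definition of fusion category are used essentially.

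Next I would compute the Serre automorphism $S_\cF$ of $\cF$ using the general recipe: $S_\cF$ is built from the evaluation $\ev$, its right adjoint $\ev^R$, and the braiding of the symmetric monoidal structure. Unwinding what this means in $\mathsf{TC}$, $S_\cF$ is an autoequivalence of $\cF$ as a bimodule over itself, and a direct computation identifies it with the double-dual functor $(-)^{**}: \cF \to \cF$. The essential input for this identification is the relationship between the right adjoint of the evaluation bimodule and the canonical double-dual pairing in a rigid monoidal category.

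With the identification $S_\cF \simeq (-)^{**}$ in hand, the Radford isomorphism $R: S_\cF^2 \xrightarrow{\cong} \Id_\cF$ from the corollary above immediately produces a canonical natural monoidal isomorphism $(-)^{****} \cong \Id_\cF$. Naturality in $\cF$ (and in morphisms of fusion categories) follows from naturality of the Serre automorphism and Radford isomorphism at the level of the whole 3-category, while monoidality is automatic because $R$ lives as a 3-morphism between identity 2-morphisms of $\Id_\cF \in \mathsf{TC}(\cF,\cF)$; such a 3-morphism is precisely the data of a monoidal natural automorphism of the underlying endofunctor. The main obstacle is the 3-dualizability verification: producing the iterated adjoints of evaluation and coevaluation bimodule categories for an arbitrary fusion category requires careful unpacking of separability and semisimplicity, which is the technical heart of the Douglas--Schommer-Pries--Snyder analysis; once this is granted, the theorem follows purely formally from the general Radford isomorphism.
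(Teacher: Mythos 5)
Your proposal follows essentially the same route the paper sketches: realize (separable) fusion categories as fully dualizable objects in the Douglas--Schommer-Pries--Snyder symmetric monoidal 3-category of tensor categories, identify the Serre automorphism with the double-dual bimodule ${}_{\cF}\cF_{\cF^{**}}$ (equivalently the functor $(-)^{**}$), and apply the Radford isomorphism $S^2 \cong \Id$, using semisimplicity to pass from the bimodule equivalence to a canonical monoidal natural isomorphism $(-)^{****} \cong \Id_\cF$. Apart from a harmless mis-indexing (the Radford datum is an invertible 2-morphism, i.e.\ a bimodule equivalence, rather than a 3-morphism between identity 2-morphisms), this matches the paper's argument.
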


The usual proof begins by non-canonically realizing $\cF$ as the representation category of a weak Hopf algebra. Then we apply an algebraic analog of {\it Radford's} $S^4$ {\it formula}, where $S$ is the antipode in the Hopf algebra.  We outline a more canonical proof which utilizes the notion of full-dualizability to re-prove this theorem.  

To begin we must first find a home for fusion categories in a higher categorical setting, where it will be possible to discuss higher dualizability. This is established in \cite{DSPS_TC3} where a symmetric monoidal 3-category of tensor categories is constructed. {\em Tensor categories} are analogous to Fusion categories, but where the semi-simplicity assumption is dropped. This symmetric monoidal 3-category is a categorification of the 2-category of algebras, bimodules, and bimodule maps, and is given as follows:
\begin{center}
\begin{tabular}{rcl}
Objects && Tensor categories \\
1-morphisms && Bimodule categories\\
2-morphisms && Functors\\
3-morphisms && Natural transformations
\end{tabular}
\end{center}
The monoidal structure and composition of morphisms are given by the (relative) Deligne tensor product.

One example of a bimodule category is the identity bimodule category $\prescript{}{F}F_{F}$. We can also twist one of the actions by any tensor autoequivalence $\alpha: F \to F$ to get a new bimodule category $\prescript{}{F}F_{F^{\alpha}}$.

\begin{theorem}[\cite{DSPS_DTC1}]
The fully-dualizable objects of the above symmetric monoidal 3-category are precisely the separable fusion categories. In characteristic zero every fusion category is separable. In positive characteristic over a perfect field, if the unit object is simple, then separability is equivalent to global dimension non-zero.
In all cases for an object $\cF$ the Serre automorphism is given by the bimodule
\[
S_\cF = \prescript{}{F}F_{F^{\ast \ast}}.
\]
\end{theorem}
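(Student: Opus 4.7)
The plan is to verify each layer of dualizability in turn and identify where each finiteness/separability hypothesis enters. For the first (object) layer, every tensor category $\cF$ admits a dual given by the opposite tensor category $\cF^{\mathrm{op}}$, with the evaluation and coevaluation realized by the regular bimodule $\prescript{}{\cF\boxtimes\cF^{\mathrm{op}}}{\cF}$ and its reverse; the duality triangles are the standard Morita-theoretic ones and need no extra hypothesis beyond $\cF$ being a reasonable tensor category. The second (1-morphism) layer requires that the evaluation $ev$ and coevaluation $coev$ bimodule categories admit both left and right adjoints in the 3-category of tensor categories. I would show that these adjoints exist precisely when $\cF$ is finite as a linear category, and, crucially, that they differ from $ev$, $coev$ by a twist involving the double dual functor $(-)^{**}$ on $\cF$ (this is where the bimodule $\prescript{}{\cF}\cF_{\cF^{**}}$ will naturally appear).

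The third (2-morphism) layer is the heart of the matter. I would use the criterion of Lemma~\ref{3catlemma}: it suffices to verify that the unit and counit 2-morphisms of the adjunctions built in the previous step admit \emph{one}-sided adjoints. These unit/counit 2-morphisms are bimodule functors whose underlying endofunctors are essentially the multiplication map $\cF\otimes_\cF\cF\to\cF$ and its adjoint, i.e. they correspond to the \emph{multiplication monad} on $\cF$-bimodules. The existence of an adjoint to the multiplication 2-morphism is equivalent to asking that this monad be separable, i.e. that the multiplication bimodule map admits a bimodule splitting. This is precisely the definition of $\cF$ being a \emph{separable tensor category}. Conversely, given such a splitting, I would construct the required adjoints explicitly and verify the triangle identities, thereby upgrading to 3-full dualizability.

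Next, I would identify separable fusion categories in different characteristics. In characteristic zero, semisimplicity of $\cF$ (built into being fusion) gives automatic separability: the multiplication bimodule map $\cF\otimes_\cF\cF\to\cF$ splits because every short exact sequence of bimodule categories splits. In positive characteristic over a perfect field, semisimplicity alone is not enough, and a standard computation identifies the obstruction to splitting with (a multiple of) the global dimension $\dim(\cF)\in \mathbb{K}$; assuming the unit object is simple, the splitting exists iff $\dim(\cF)\neq 0$. This is the content of parts (2) and (3), and I would cite the relevant analyses from \cite{DSPS_TC3,DSPS_DTC1} for the detailed obstruction calculation.

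Finally, for the Serre automorphism, I would directly compute the composite
\[
S_\cF: \cF \to \cF\boxtimes\mathbf{1} \xrightarrow{\mathrm{id}\boxtimes ev^R} \cF\boxtimes\cF\boxtimes\cF^{\mathrm{op}} \xrightarrow{\tau\boxtimes\mathrm{id}} \cF\boxtimes\cF\boxtimes\cF^{\mathrm{op}} \xrightarrow{\mathrm{id}\boxtimes ev} \cF\boxtimes\mathbf{1} \to \cF
\]
in the 3-category of tensor categories. The key step is the identification of $ev^R$: tracing through the adjunction constructed in layer two, the right adjoint of the regular evaluation bimodule is not again the regular bimodule but the regular bimodule with one action twisted by $(-)^{**}$. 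Feeding this through the braided composite above collapses everything to the bimodule $\prescript{}{\cF}\cF_{\cF^{**}}$, yielding the stated formula for $S_\cF$. The main obstacle throughout is the layer-three verification: writing down the splitting of the multiplication monad as an honest bimodule-level datum, and checking that this datum provides adjoints for the unit/counit 2-morphisms coherently (rather than merely existentially) is the most technical part, and is where the bulk of \cite{DSPS_DTC1} is spent.
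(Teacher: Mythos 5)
These lecture notes do not prove this theorem at all: it is quoted from \cite{DSPS_DTC1}, so there is no in-paper argument to compare you against. Measured against the strategy of that cited paper, your outline tracks it at the level of headings: the dual object is the monoidal opposite (your ``$\cF^{\mathrm{op}}$'' should really be the monoidal opposite $\cF^{mp}$, though for rigid categories the two agree) with the regular bimodule as evaluation; finiteness is what produces adjoints of $\mathrm{ev}$ and $\mathrm{coev}$, with the double-dual twist showing up in $\mathrm{ev}^R$ and hence in $S_\cF={}_\cF\cF_{\cF^{**}}$; and separability is the condition governing the third layer. As a plan this is the right shape.

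The genuine gap is your characteristic-zero step. You claim that semisimplicity of $\cF$ gives separability automatically ``because every short exact sequence of bimodule categories splits.'' That argument proves too much: it makes no reference to the characteristic, so it would equally show that every fusion category in characteristic $p$ is separable, contradicting the global-dimension dichotomy in the very statement you are proving (fusion categories of global dimension zero in characteristic $p$ are semisimple but not separable). Semisimplicity of $\cF$ does not formally yield the splitting you need: that splitting is a datum at the level of $\cF\boxtimes\cF^{mp}$-module categories (equivalently, semisimplicity of the Drinfeld center, or separability of the canonical algebra object), and in characteristic zero its existence rests on the nontrivial Etingof--Nikshych--Ostrik theorem that the global dimension of a fusion category is nonzero \cite{MR2183279} --- an arithmetic/positivity input, not a formal one. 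Relatedly, your assertion that existence of an adjoint for the multiplication unit/counit $2$-morphism ``is equivalent to'' separability, and that this ``is precisely the definition'' of a separable tensor category, is in fact the technical heart of \cite{DSPS_DTC1} (where separability is defined via separable algebra objects, and the equivalence with full dualizability is a theorem in both directions); you flag the verification of the adjunctions as the hard part, but the forward implication, fully dualizable $\Rightarrow$ separable, is asserted rather than argued in your plan.
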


From the Radford isomorphism $R: S_\cF \circ S_\cF \simeq id$ we immediately deduce the following. (Passing from the bimodule to the endofunctor uses semisimplicity.)

\begin{cor}[ \cite{DSPS_DTC1}]
Let $\cF$ be a fusion category, then the endofunctor which sends an object $X \in \cF$ to $X^{\ast \ast \ast \ast}$ is canonically and naturally monoidally isomorphic to the identity functor.
\end{cor}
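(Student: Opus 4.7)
The plan is to deduce the corollary directly from the Radford isomorphism $R: S_\cF \circ S_\cF \cong \mathrm{id}$ applied to the three-dualizable object $\cF$ in the symmetric monoidal 3-category of tensor categories. The previous theorem identifies a (separable) fusion category with a fully-dualizable object whose Serre automorphism is the twisted bimodule $S_\cF = \prescript{}{\cF}\cF_{\cF^{**}}$. Since $\cF$ is 3-fully dualizable (we are in characteristic zero, or the positive characteristic hypothesis applies), the Radford corollary applies verbatim and produces a canonical, natural isomorphism $S_\cF \circ S_\cF \cong \mathrm{id}_\cF$ of 1-endomorphisms of $\cF$.

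The next step is to identify $S_\cF \circ S_\cF$ explicitly. Composition of 1-morphisms in the 3-category of tensor categories is the relative Deligne tensor product of bimodule categories. A short computation with twisted bimodules yields
\[
S_\cF \circ S_\cF \;=\; \prescript{}{\cF}\cF_{\cF^{**}} \,\boxtimes_\cF\, \prescript{}{\cF}\cF_{\cF^{**}} \;\cong\; \prescript{}{\cF}\cF_{\cF^{****}},
\]
where the right-hand side is $\cF$ viewed as a bimodule over itself with the right action twisted by the quadruple-dual autoequivalence $(-)^{****}: \cF \to \cF$. Meanwhile, the identity 1-morphism of $\cF$ is the untwisted bimodule $\prescript{}{\cF}\cF_\cF$. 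So the Radford isomorphism gives a canonical isomorphism of bimodule categories $\prescript{}{\cF}\cF_{\cF^{****}} \cong \prescript{}{\cF}\cF_\cF$.

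The final step is to translate this bimodule isomorphism into a monoidal natural isomorphism $(-)^{****} \cong \mathrm{id}_\cF$ as tensor endofunctors. For any tensor autoequivalence $\alpha$ of $\cF$, the assignment $\alpha \mapsto \prescript{}{\cF}\cF_{\cF^\alpha}$ identifies the group(oid) of tensor autoequivalences and monoidal natural isomorphisms with a full subgroupoid of invertible bimodule endomorphisms of $\cF$; an isomorphism of bimodules $\prescript{}{\cF}\cF_{\cF^\alpha} \cong \prescript{}{\cF}\cF_{\cF^\beta}$ is the same datum as a monoidal natural isomorphism $\alpha \cong \beta$. (Semisimplicity is used here to reconstruct the autoequivalence from the bimodule structure on $\cF$: a bimodule isomorphism is a left-$\cF$-linear equivalence $\cF \to \cF$ intertwining the two right actions, and such an equivalence is determined up to unique isomorphism by its value on the unit object, where the two right actions coincide up to $\alpha$ and $\beta$ respectively.) Applying this with $\alpha = (-)^{****}$ and $\beta = \mathrm{id}$ converts the Radford bimodule isomorphism into the desired monoidal natural isomorphism.

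The main obstacle is the dictionary in the last step between twisted bimodule categories and monoidal natural transformations of autoequivalences; once this is established (this is the content carried out in detail in the cited work \cite{DSPS_DTC1}), canonicity and naturality of the Etingof--Nikshych--Ostrik isomorphism follow from canonicity of the Radford isomorphism produced by full dualizability. The point of this approach, in contrast to the original proof via weak Hopf algebras, is that no non-canonical presentation of $\cF$ is ever chosen: the isomorphism $X^{****} \cong X$ is manufactured purely from the duality structure intrinsic to $\cF$ as a fully dualizable object of the 3-category of tensor categories.
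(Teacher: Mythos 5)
Your argument follows the same route as the paper: the paper deduces this corollary in one line from the Radford isomorphism $R\colon S_\cF\circ S_\cF\cong \mathrm{id}$, remarking only that ``passing from the bimodule to the endofunctor uses semisimplicity,'' and your three steps (apply $R$ to the fully dualizable object $\cF$, identify $S_\cF\circ S_\cF\cong {}_{\cF}\cF_{\cF^{\ast\ast\ast\ast}}$ via the relative Deligne product, translate the bimodule equivalence into a monoidal isomorphism $(-)^{\ast\ast\ast\ast}\cong\mathrm{id}$) are exactly that deduction spelled out. One caveat: your stated dictionary is slightly too strong, since a bimodule equivalence ${}_{\cF}\cF_{\cF^{\alpha}}\simeq{}_{\cF}\cF_{\cF^{\beta}}$ is in general the datum of an invertible object $X$ (its value on the unit) together with a monoidal isomorphism $\alpha\cong X\otimes\beta(-)\otimes X^{-1}$, so one must still argue that the Radford equivalence carries the unit to the unit --- and that is precisely the semisimplicity input which both you and the paper defer to \cite{DSPS_DTC1}.
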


\section{Exercises} \label{sec:exercises_4}

\begin{exercise}
	Fill in the details for the proof of Lemma \ref{3catlemma}.
\end{exercise}

\specialsection*{The Unicity Theorem}





\section{Introduction to the Unicity Theorem} \label{sec:unicity_intro}

The {\em Unicity Theorem} and the accompanying machinery of \cite{BarSch1112} give a axiomatization of the theory of $(\infty,n)$-categories as well as several tools for verifying these axioms and producing comparisons in specific cases. I would like to explain the background and ideas behind the Unicity Theorem, as well as some of its consequences, but before we get to the business of addressing what exactly $(\infty,n)$-categories ought to be and how we should go about axiomatizing them, I would like us to begin by pondering two seemingly unrelated questions:

\vskip 1ex
\hskip 2cm What is a homotopy theory?

\vskip 1ex
\hskip 2cm What is an $(\infty,1)$-category?

\vskip 1ex
\noindent
We will discover that these questions are not really independent and that in fact $(\infty,1)$-categories and homotopy theories are really two sides of the same coin. 

In what follows we begin in \S\ref{sec:HomotopyTheory} by surveying a few popular approaches to the notion of abstract homotopy theory. In \S\ref{sec:Infty1Cats} we continue by surveying some notions of $(\infty,1)$-categories. This naturally leads us in \S\ref{sec:Comparions} to some of the known comparisons of these models, as well as the known comparisons of analogous theories of $(\infty,n)$-categories. We find that there are two fundamental problems. Briefly, in the $(\infty,1)$-case there are too many such comparisons, while for the higher $(\infty,n)$-case there are too few. 

After this we will turn in \S\ref{sec:Unicity}  to the resolution of these problems via the unicity theorem and related results. We will discuss various properties that the theory of $(\infty,n)$-categories ought to have, and how these ultimately lead us to the axiomatization of \cite{BarSch1112}.

\section{Homotopy theories} \label{sec:HomotopyTheory}

\subsection{Quillen Model Categories}

It is not surprising that there are several different notions of what an abstract homotopy theory should be. Let us recall some of these notions and how they compare with each other. The first and probably the most popular notion is 
Quillen model categories, $(\cM,\cC,\cF,\cW)$; where $\cM$ is a category and $\cW$ is the class of weak equivalences.  The classes $\cF$ and $\cC$ are the fibrations and cofibrations respectively.  The quadruple $(\cM,\cC,\cF,\cW)$ must satisfy a number of axioms expressing various lifting properties and closure properties of the classes $\cW$, $\cF$, and $\cC$.  

The structure of Quillen model categories allows one to closely mimic many constructions in classical homotopy theory. These include, under some assumptions, the ability to form {\em mapping  spaces} between objects, to form homotopically meaningful and invariant notions of limit and colimit (cleverly named {\em homotopy limits} and {\em homotopy colimits}), and to construct various {\em derived functors}, functors which preserve weak equivalences and approximate functors which don't. 

With these computational benefits and the existence of a multitude of important and useful examples, it is little wonder that Quillen model categories have become a standard tool. By now it is clear that they are a useful concept, however the notion of Quillen model category also suffers from a few defects. The first is that the required structure of a Quillen model category is quite stringent, and for some examples which one would hope to include as abstract homotopy theories it is not possible to produce the desired Quillen model structure. 

Secondly, the notion of Quillen model category suffers from an excess of structure. The first hint of this comes from the notion of equivalence of homotopy theory which is called {\it Quillen equivalence}. A Quillen equivalence between model categories consists of a pair of adjoint functors which, contrary to what one might initially suspect, only preserve a portion of the structure of a Quillen model category. They preserve only about half of this structure.

I learned of the following example from Tom Goodwillie (via the question and answer website MathOverflow \cite{Goodwillie_MO}):
\begin{prop} On the category of sets there exist exactly nine model structures;  there are three Quillen equivalence classes.\footnote{These three Quillen equivalence classes correspond to the theories of 0-types, $(-1)$-types, and $(-2)$-types.}
\end{prop}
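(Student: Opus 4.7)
My approach would be a direct enumeration organized by the class $\cW$ of weak equivalences, exploiting the rigid structure of $\mathrm{Set}$: a single isomorphism class per cardinality and an initial object $\emptyset$ that admits no maps from nonempty sets.

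First, I would classify the admissible classes $\cW$. Since $\cW$ contains all isomorphisms, is closed under retracts, and satisfies 2-out-of-3, I would use these axioms to show that if $\cW$ contains any non-isomorphism between nonempty sets then it must contain every such map (by repeatedly composing with collapse and inclusion maps and invoking 2-out-of-3). Analyzing the exceptional role of $\emptyset$ should then yield exactly three possibilities: $\cW_0 = \mathrm{Iso}$; $\cW_1$ consisting of $\mathrm{id}_\emptyset$ together with all maps between nonempty sets; and $\cW_2 = \mathrm{Mor}(\mathrm{Set})$.

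Second, for each admissible $\cW$ I would enumerate the compatible pairs $(\cC, \cF)$ by translating the axioms into the requirement that $(\cC, \cF \cap \cW)$ and $(\cC \cap \cW, \cF)$ each form a weak factorization system on $\mathrm{Set}$. I would classify the relevant weak factorization systems by analyzing how each such system treats the two fundamental test maps $\emptyset \hookrightarrow \{*\}$ and $\{*\} \sqcup \{*\} \twoheadrightarrow \{*\}$, using retract closure and the axiom of choice to verify the lifting conditions. Combining the allowed pairs with each choice of $\cW$, and handling the idiosyncrasies of $\emptyset$ case by case, should produce the total count of nine model structures.

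Finally, to identify the Quillen equivalence classes I would compute the localization $\cW^{-1}\mathrm{Set}$ for each $\cW$, obtaining $\mathrm{Set}$ itself, the two-object poset $\{\emptyset, *\}$, and the terminal category --- precisely the theories of $0$-types, $(-1)$-types, and $(-2)$-types. Since Quillen equivalent model categories have equivalent homotopy categories, the three values of $\cW$ yield at least three distinct Quillen equivalence classes; to confirm that each contains precisely the model structures with that $\cW$, I would exhibit explicit Quillen equivalences (typically via the identity functor, possibly through a zigzag) between model structures sharing a common $\cW$, using that when the weak equivalences agree the identity is a Quillen equivalence whenever the cofibrations of one structure contain those of the other. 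The main obstacle is the enumeration in the second step: the interaction of $\emptyset$ with the factorization and lifting axioms produces several ``exotic'' weak factorization systems beyond the familiar $(\mathrm{Inj}, \mathrm{Surj})$ and $(\mathrm{Surj}, \mathrm{Inj})$, so a careful case analysis distinguishing maps by whether source or target is empty will be essential to arrive at the precise count of nine.
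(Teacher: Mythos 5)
The paper itself offers no proof of this proposition --- it is quoted from Tom Goodwillie's MathOverflow answer --- so I can only measure your plan against the standard enumeration, and in outline your plan is exactly that argument: the count comes out as $9 = 1 + 2 + 6$ (one structure with $\cW = \mathrm{Iso}$, two with $\cW = \{$maps between nonempty sets$\} \cup \{\mathrm{id}_\emptyset\}$, six with $\cW = $ all maps, the last case being in bijection with the weak factorization systems on $\Set$), and your treatment of the Quillen equivalence classes via the three homotopy categories and identity Quillen equivalences is correct.

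There are, however, two concrete soft spots. First, your Step 1 claim is false as stated: the closure properties ``contains isomorphisms, closed under retracts, 2-out-of-3'' do \emph{not} force the trichotomy, because the class $\cW' = \mathrm{Iso} \cup \{\emptyset \to \{*\}\text{ (up to iso)}\}$ satisfies all three. (Your retract/composition argument does correctly show that one non-isomorphism between nonempty sets forces all of them, but the ``exceptional role of $\emptyset$'' analysis cannot be completed with those axioms alone.) To kill $\cW'$ you must invoke the lifting/factorization axioms --- e.g.\ factor $\emptyset \to \{*\}$ as a trivial cofibration followed by a trivial fibration and use that trivial cofibrations are stable under pushout (which would put $\{*\} \to \{*\}\sqcup\{*\}$ in $\cW'$) and trivial fibrations under pullback (which would put every $\emptyset \to X$ in $\cW'$) --- or else carry $\cW'$ into Step 2 and observe that no compatible $(\cC,\cF)$ exists. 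Second, in Step 2 the two test maps $\emptyset \to \{*\}$ and $\{*\}\sqcup\{*\} \to \{*\}$ do not separate the six weak factorization systems: the system $(\mathrm{Iso}, \mathrm{All})$ and the exotic system (injections with nonempty source together with $\mathrm{id}_\emptyset$; surjections together with all maps out of $\emptyset$) treat both test maps identically, and likewise $(\mathrm{Surj}, \mathrm{Inj})$ versus (all maps with nonempty source plus $\mathrm{id}_\emptyset$; isomorphisms plus maps out of $\emptyset$). You also need the map $\{*\} \to \{*\}\sqcup\{*\}$ as a generator/test map --- the two exotic systems are precisely the ones cofibrantly generated by it, alone or together with the fold map. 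With these repairs your enumeration does close up and yields the stated nine structures in three Quillen equivalence classes.
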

\noindent What this illustrates is the failure of Quillen model structures to closely align with the notion of homotopy theory they are supposed to represent. Even on the category of sets, which is very simple compared to most examples which arise as Quillen model categories, there are three times as many Quillen model structures as there are corresponding homotopy theories. Quillen model categories give a good model of abstract homotopy theories; they certainly get the job done,  but they are wearing a tuxedo while they are doing it. We will see some other notions which are more general and have fewer cufflinks and bow-ties, which give a more revealing description of a homotopy theory.   


\subsection{Simplicial and Relative Categories}

There are several alternative approaches to abstract homotopy theories. Here are two more:
\begin{itemize}	
\item Relative categories, $(\cC,\cW)$; where $\cC$ is a category and $\cW$ is the class of weak equivalences.  The only condition is that $\cW$ contains all the identities of $\cC$.

\item Simplicial categories, by which we mean categories enriched in simplicial sets.	


\end{itemize}
The notion of abstract homotopy theory was clarified by the work of Dwyer and Kan \cite{MR584566}.
They considered a very minimal notion consisting of a category together with a subcategory  of ``weak equivalences'' which is only required to contain the identities (such a pair is now called a {\em relative category} \cite{MR2877401}). From this they constructed a functor called the {\it hammock localization}: 
\[
L^H : \mathrm{RelCat} \to \mathrm{Cat}_\Delta .
\]
which takes a relative category $(\cM, \cW)$ and produces a simplicial category $L^H(\cM, \cW)$. 
Dwyer and Kan were able to show that, in principle, much of the structure provided by a model structure, such as the mapping spaces, can be recovered from this simplicial category, and hence from the weak equivalences alone. However without additional assumptions on the weak equivalences, extracting such information is usually impractical. 

It is often said that the Quillen equivalences between model categories behave, themselves, something like the weak equivalences of a homotopy theory. This is not literally true, as the 2-categorical structure would also have to be incorporated, however it raises the question of whether there could be a `homotopy theory of homotopy theories'? 

In their work, Dwyer and Kan also provided a definition of weak equivalence between simplicial categories. As this makes the collection of simplicial categories itself into a relative category\footnote{Of course there are size issues which must be addressed to interpret this rigorously. These can be handled in one of the standard ways \cite{0810.1279}, and we will continue to ignore these issues in this expository account.},   this can be viewed as the first construction a {\em homotopy theory of homotopy theories}. This was later improved by the work of Julie Bergner \cite{MR2276611} who constructed a Quillen model structure on the category of simplicial categories in which the weak equivalences are precisely the Dwyer-Kan equivalences.

\subsection{Rezk's homotopy theory of homotopy theories} Another model of abstract homotopy theory was introduced by Rezk in \cite{MR1804411} with the express purpose of developing the homotopy theory of homotopy theories more fully. 
The rough idea is that to a homotopy theory $X$ of some unspecified sort we should be able to associate several `moduli spaces' or classifying spaces. First we can let $X_0$ be the classifying space of objects. This will look essentially like a disjoint union of spaces of the form $B\Aut^h(x)$, where $x$ is an object of the homotopy theory and $\Aut^h(x)$ is the {\em derived mapping space of automorphisms} of that object. The disjoint union will be over all isomorphism classes of objects. Next we form the space $X_1$, which is a similarly constructed classifying space for arrows in the homotopy theory $X$. The space $X_2$ will be the classifying space for pairs of composable arrows, the space $X_3$ will be the classifying space of triples of composable arrows, and so on, giving rise to a simplical space $X_\bullet$. 

The collection of these classifying spaces, which together form a simplicial space, forms the basis of Rezk's model of homotopy theories. In fact he constructs a model category structure on the category of all simplicial spaces, thereby giving another model of the homotopy theory of all homotopy theories. The fibrant objects of that model structure are now known as {\em complete Segal spaces} (CSS). One of Rezk's insights was that this model category is much better behaved that other previous attempts. The complete Segal space model category is both a simplicial and cartesian model category. In particular given two homotopy theories in Rezk's sense it is easy to construct a mapping object between these, which will again be a homotopy theory. 

Thanks to the work of Bergner, we now know that Rezk's model contains essentially the same information as the simplicial categories studied by Dwyer and Kan (they are Quillen equivalent model categories). We will come back to this point later.   Let us just remark that in his original work, Rezk also constructed the {\em classification diagram} functor
\[
cd:\mathrm{RelCat} \to \mathrm{CSS}.
\]
which takes a relative category and produces a complete Segal space, thereby giving a direct way to compare these notions as well.

\section[(infty,1)-categories]{$(\infty,1)$-categories} \label{sec:Infty1Cats}

Let's temporarily leave the world of homotopy theory and discuss higher category theory. 
An ordinary category has objects and morphisms between the objects. These morphisms compose associatively and there are identities. Similarly, a higher category is supposed to have objects, morphisms between the objects (called 1-morphisms), morphisms between the 1-morphisms (called 2-morphisms), and so on. In addition there should be various means of composing these various morphisms, as well as identities. 

The phrase `and so on' is ambiguous. It can mean that we continue until some finite stage $n$, where we have $n$-morphisms between the $(n-1)$-morphisms. This gives us $n$-categories. For $n=\infty$ (i.e. for $\infty$-categories) we should allow $n$-morphisms of arbitrary dimension. Of course this is not so much a definition as an informal philosophy about what sort of ingredients should enter in a definition of higher category. 

Making this amorphous philosophical idea into a precise mathematical definition is not an easy undertaking, nor is there a single clear route for obtaining such a definition. On the contrary, there are numerous competing definitions of higher category and the search for a useful comparison of these various notions has been an elusive and long-standing goal \cite{MR1883478}. In their proposal for the 2004 IMA program on $n$-categories, Baez and May underlined the difficulties of this \emph{Comparison Problem}:
\begin{quote}
	It is not a question as to whether or not a good definition exists. Not one, but many, good definitions already do exist 
	[\dots].
	There is growing general agreement on the basic desiderata of a good definition of $n$-category, but there does not yet exist an axiomatization, and there are grounds for believing that only a partial axiomatization may be in the cards.
\end{quote}

\noindent One of these basic desiderata is that the theory of $n$-categories should satisfy the {\em homotopy hypothesis}, to which we now turn. 

\subsection{The Homotopy Hypothesis and Simplicial Categories}

One possible litmus test for any proposed theory of higher categories is the {\it homotopy hypothesis}. A baby version of this is that equivalence classes of $n$-groupoids should be in natural bijection with equivalence classes of homotopy $n$-types. A stronger version would require an equivalence of homotopy categories, and a still stronger version would require that there is a homotopy theory of higher categories inducing an equivalence of homotopy theories between the $n$-groupoids and $n$-types. 
The most well-known instance of this is the equivalence of the theory of 1-groupoids with the theory of homotopy 1-types, which is implemented by the {\em fundamental groupoid} functor (and its weak inverse the classifying space functor). Letting $n$ pass to $\infty$ the homotopy hypothesis asserts that the homotopy theory of $(\infty,0)$-categories (a. k. a. $\infty$-groupoids) is the same as the homotopy theory of topological spaces. 

In an $(\infty,1)$-category we are supposed to have $(\infty,0)$-categories of morphisms between any two given objects, that is, by the preceding paradigm, we should have mapping spaces between any two objects. Thus an $(\infty,1)$-category should be something similar to a category enriched in spaces. We have such a homotopy theory for the category of simplicially enriched categories, namely Bergner's model category \cite{MR2276611}. 
But then we see immediately that, in this instance, the homotopy theory of $(\infty,1)$-categories is the same as the homotopy theory of homotopy theories.

\subsection{Quasicategories and Segal Categories} A central theme of higher category theory is that composition of morphisms should not necessarily be strictly associative, but only associative up to higher coherent morphisms. The model of $(\infty,1)$-categories as (strict) simplicial categories goes against this and is fairly rigid. Rezk's model of complete Segal spaces can be viewed as allowing for weaker compositions, but there are also other models. I will mention two more,  based on generalizing the nerve construction. The first notion, that of {\em Segal categories}, was already described in section \ref{sec:SegalCats}, but we will review the definition here for the convenience of the reader.

Let $\cC$ be an ordinary category and recall the nerve functor from categories to simplicial sets
\[
N : \mathrm{Cat} \to \mathrm{sSet}.
\]
The 0-simplices of $N\cC$ are the objects of $\cC$ and the $n$-simplices are given by $n$-tuples of composable morphisms in $\cC$. We saw in section \ref{sec:SegalCats} that the nerve functor is fully-faithful and that we can characterise its image terms of the {\em Segal maps}.

Recall that the {\em spine} $S_n$ of the simplex $\Delta[n]$ is the sub-simplicial set consisting of the union of all the consecutive 1-simplices, The inclusion
\begin{equation*}
	s_n: S_n = \Delta^{\{0,1\}} \cup^{\Delta^{\{1\}}} \Delta^{\{1,2\}} \cup^{\Delta^{\{2\}}} \cdots  \cup^{\Delta^{\{n-1\}}}  \Delta^{\{n-1,n\}} \to \Delta[n],
\end{equation*}
 corepresents the $n^\textrm{th}$ {\em Segal map}:
\begin{equation*}
	s_n: X_n \to X(S_n) = X_1 \times_{X_0}  X_1 \times_{X_0} \cdots \times_{X_0}  X_1.
\end{equation*}
As we saw in section \ref{sec:categorical_nerve}, 
a simplicial set is isomorphic to the nerve of a category precisely when each Segal map is a bijection for $n \geq 1$.

It is helpful to understand how a category arises from such a simplicial set. The 0-simplices of the simplicial set form the objects of the corresponding category and the 1-simplices form the morphisms (the two face maps $X_1 \rightrightarrows X_0$ give the source and target of morphisms). The composition of composable morphisms is given by considering the following diagram:
\begin{equation*}
	X_1 \times_{X_0} X_1 \stackrel{(d_0,d_2)}{\longleftarrow} X_2 \stackrel{d_1}{\rightarrow} X_1.
\end{equation*}
The leftward map, the Segal map, is an isomorphism, and replacing the leftward map with its inverse we obtain the composition map. The simplicial identities ensure there are identity morphisms, and the associativity of composition is ensured by considering the Segal map for the 3-simplices. 

A simplicial category also has a nerve which is a simplicial space (i.e., a bisimplicial set). Again we can characterize those simplicial spaces which are the nerves of simplicial categories: they are precisely those for which the Segal maps are isomorphisms and for which the space of 0-simplices is discrete (i.e. a constant simplicial set). 

A {\em Segal category} \cite{MR81h:55018, math.AG/9807049, MR2276611} (see~section \ref{sec:Segal_cats_subsection}), like a simplicial category, consists of a simplicial space for which the space of 0-simplices is discrete. However instead of requiring the Segal maps to be isomorphisms, we only require them to be weak equivalences of spaces, that is for each $n$ we have a homotopy equivalence of simplicial sets:
\[
\cC_n \xrightarrow{\simeq} \underbrace{\cC_1 \times_{\cC_0} \cC_1 \times_{\cC_0} \dotsb \times_{\cC_0} \cC_1}_{n \text{ factors}}  .
\]
We still have a diagram:
\begin{equation*}
	X_1 \times_{X_0} X_1 \stackrel{\simeq}{\longleftarrow} X_2 \stackrel{d_1}{\rightarrow} X_1.
\end{equation*}
By choosing a homotopy inverse to the first map\footnote{This requires that the spaces involved are fibrant.} we obtain a composition map. Even better, the space of homotopy inverses (including the homotopies $s_2 \circ s_2^{-1} \simeq id$ and $s_2^{-1} \circ s_2 \simeq id$)  is a contractible space parameterizing the potential composition maps. These will generally fail to be associative on the nose, but will be associative up to homotopy. Better, the space of homotopy inverses to the third Segal map $s_3$ (another contractible space parametrizing `triple compositions') can be used to obtain a contractible space of homotopies witnessing the coherent associativity of the composition maps. Similarly the remaining spaces in the Segal category provide still higher homotopical coherence data. 



We would be remiss if we didn't mention a final theory of $(\infty,1)$-categories, the {\em quasicategories}, which were first introduced by Boardman and Vogt \cite{MR0420609} in their work on homotopy coherent diagrams. This model has become especially important partly for its ease of use and largely because of the extensive theory developed by Joyal \cite{Joyal_CRM, Joyal_notes} and Lurie \cite{MR2522659}. This body of work includes $(\infty,1)$-categorical notions of limit, colimit, Kan extension, localizations, and many other constructions. 

Recall the {\em horn} $\Lambda^i[n]$ ($0 \leq i \leq n$) which is a subcomplex of $\Delta[n]$ obtained by removing the single non-degenerate $n$-simplex and the $i^\textrm{th}$ face. A simplicial set is a {\em Kan complex} if `all horns have fillers', that is for every map $\Lambda^i[n] \to X$ the dashed arrow in the diagram below exists and makes it a commutative diagram.
\begin{center}
\begin{tikzpicture}
	\node (LT) at (0, 1.5) {$\Lambda^i[n]$};
	\node (LB) at (0, 0) {$\Delta[n]$};
	\node (RT) at (2, 1.5) {$X$};
	\draw [right hook->] (LT) -- node [left] {$$} (LB);
	\draw [->] (LT) -- node [above] {$$} (RT);
	\draw [->, dashed] (LB) -- node [below] {$$} (RT);
\end{tikzpicture}
\end{center}
Those simplicial sets which are isomorphic to the nerve of a category can equivalently be characterized as those simplicial set which have {\em unique filler} for the {\em inner horns}. That is they only are guaranteed to have fillers for the inner horns ($0< i < n$), and in this case the dashed arrow above is unique. A {\em quasicategory} is a simplicial set which has fillers (possibly non-unique) for every inner horn. This notion generalizes both Kan complexes and the nerves of categories.


\subsection{Previous comparisons of theories of $(\infty,1)$-categories} \label{sec:Comparions}

The first substantial comparison of homotopy theories of $(\infty,1)$-categories was the work of Julia Bergner \cite{MR2276611}. In 2005 she first constructed a model structure on the category of simplicial categories with the weak equivalences those given by Dwyer and Kan. The following year she constructed the following zig-zag of Quillen equivalences (only right Quillen functors are shown):
\begin{equation*}
	\cat_\Delta \to \Seg_{proj} \leftarrow \Seg_{inj} \leftarrow \CSS
\end{equation*}
These connect the theory of simplicial categories, two versions of Segal categories, and Rezk's complete Segal spaces. Since then the flood gates were released and many more comparisons have come pouring through. Shortly after Bergner's comparison Joyal and Teirney \cite{MR2342834} produced two distinct Quillen equivalences between Segal categories and quasicategories (with adjunctions in opposite directions) and two distinct Quillen equivalences between Rezk's complete Segal spaces and quasicategories. Around the same time it was observed \cite{MR2522659} that the homotopy coherent nerve of Porter and Cordier \cite{MR838654} provided a Quillen equivalence between quasicategories and simplicial categories. 

In 2010 Dugger and Spivak generalized this providing a plethora of variations on the homotopy coherent nerve \cite{DSa, DSb}. More recently Barwick and Kan have constructed a model category structure on the category of relative categories and have provided Quillen equivalences with both quasicategories and complete Segal spaces \cite{MR2877401}. They have also shown that the Hammock localization of Dwyer and Kan, while not part of a Quillen adjunction, is still an equivalence of relative categories \cite{MR2877402}.  
So we find that the various theories of $(\infty,1)$-categories are connected by an intricate web of Quillen equivalences. Figure \ref{fig:ComparisonsN=1} shows a diagram of some of these Quillen equivalences (only the right Quillen functors are shown)\footnote{The dashed arrow \tikz[baseline=-3pt]{\draw [thick, loosely dotted] (0,0) -- (0.75,0);} represents the hammock localization. It is not a Quillen equivalence, but is an equivalence of relative categories.}. 

%
%
%

\begin{figure}[htbp]
\begin{center}
	\begin{tikzpicture}[thick, baseline=1cm]
		\node (B) at (0, 2) [minimum width=0.5cm, rectangle, draw,  label=right:Bergner] {};
		\node (JT) at (0, 1.5) [minimum width=0.5cm, densely dashed, rectangle, draw, label=right:{Joyal, Teirney}] {};
		\node (J) at (0, 1) [minimum width=0.5cm, loosely dashed, rectangle, draw, label=right:Joyal] {};
		\node (DS) at (0, 0.5) [minimum width=0.5cm, densely dotted, rectangle, draw, label=right:{Dugger, Spivak}] {};
		\node (BK) at (0, 0) [minimum width=0.5cm, dashdotdotted, rectangle, draw, label=right:{Barwick, Kan}] {};
		\node (BDK) at (0, -0.5) [minimum width=0.5cm, loosely dotted, rectangle, draw, label=right:{Barwick, Dwyer, Kan}] {};
	\end{tikzpicture}
\begin{tikzpicture}[thick, baseline = 0cm]
	\node (Qcat) at (0,0) {$\mathrm{QCat}$};
	\node (RelCat) at (18:2.5cm) {$\mathrm{RelCat}$};
	\node (CSS) at (306:2.5cm) {$\CSS$};
	\node (Segcatc) at (234:2.5cm) {$\Seg^{inj}$};
	\node (Segcatf) at (162:2.5cm) {$\Seg^{proj}$};
	\node (sCat) at (90:2.5cm) {$\cat_\Delta$};
	\draw [->] (sCat) to [bend right=28] node [above left] { } (Segcatf);
	\draw [->] (Segcatc) to [bend left] node [left] { } (Segcatf);
	\draw [->] (CSS) to [bend left=20] node [below] {} (Segcatc);
	\draw [->, dashdotdotted] (RelCat) to [bend left] node [right] {} (CSS);
	\draw [->, densely dotted] (sCat) to [bend left] node [above right] {} (Qcat); 
	\draw [->, loosely dashed] (sCat) to [bend right] node [left] {} (Qcat);
	\draw [->, densely dashed] (Segcatc) to [bend right=10] node [right] {} (Qcat);
	\draw [<-, densely dashed] (Segcatc) to [bend left=10] node [above left] {} (Qcat);
	\draw [->, densely dashed] (CSS) to [bend right=10] node [right] {} (Qcat);
	\draw [<-,  densely dashed] (CSS) to [bend left=10] node [left] {} (Qcat);
	\draw [->, dashdotdotted] (RelCat) to node [above=0.1cm] {} (Qcat);
	\draw [<->, loosely dotted] (RelCat) to [bend right=28] node [above right] {} (sCat);
\end{tikzpicture} 
\end{center}

\caption{Right Quillen equivalences between some homotopy theories of $(\infty,1)$-categories.}
\label{fig:ComparisonsN=1}
\end{figure}
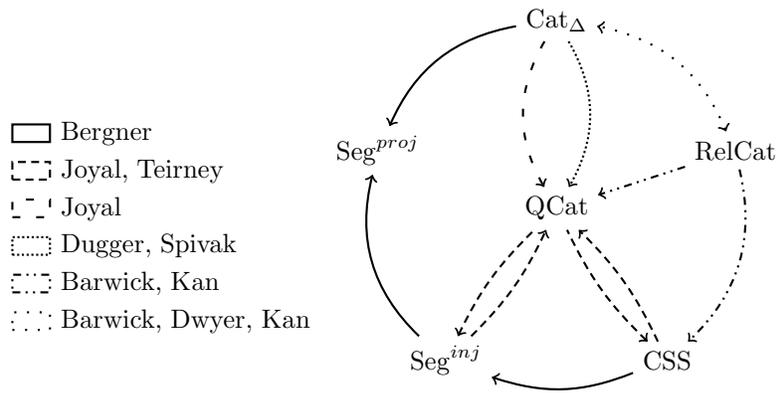

Thus we see that all the previously mentioned models for the theory of $(\infty,1)$-categories or the theory of homotopy theories are in fact equivalent. However, the situation is a bit troubling as there appear to be many possible ways in which they are equivalent. How do we know that it doesn't matter {\em how} we pass around the diagram, say from simplicial categories to Segal categories?

In fact the diagram in Figure~\ref{fig:ComparisonsN=1} is not commutative, even up to natural weak equivalence! There is a monodromy problem.  In one case, a difficult result of Bergner \cite{MR2439415} shows that two important paths in the Figure~\ref{fig:ComparisonsN=1} can be connected by a zig-zag of natural weak equivalences. We will see later, as a consequence of To\"en's theorems, that in fact the same holds for every pair of paths.

\subsection{Comparisons of theories of $(\infty,n)$-categories}

There are higher dimensional analogs of each of these theories and we can similarly ask to compare these homotopy theories of $(\infty,n)$-categories. In this case, however, the situation is quite different. Instead of having a wealth of equivalences, we find ourselves with a dearth. Let $\cM$ be a model category modeling $(\infty,n-1)$-categories. Some of these higher generalizations include:
\begin{itemize}
	\item $\cat_\cM$, categories enriched in $\cM$, if $\cM$ is sufficiently nice.
	\item $\Seg_\cM^{inj}$ and $\Seg_\cM^{proj}$,  Segal categories enriched in $\cM$, if $\cM$ is sufficiently nice. Iterating this starting with Segal categories produces {\em Segal $n$-categories} $\Seg_n$.
	\item $\CSS_\cM$, complete Segal spaces enriched in $\cM$, if $\cM$ is sufficiently nice (this notion of nice is different from the last one). Iterating this starting from Rezk's complete Segal spaces yield's Barwick's {\em $n$-fold complete Segal spaces} which featured in Lurie's work on the Cobordism Hypothesis \cite{MR2555928}. 
	\item Other models based on localizing presheaves of spaces:
	\begin{itemize}
		\item $\Theta_nSp$ Rezk's complete $\Theta_n$-spaces. 
		\item $\pitchfork$-$Sh$, Ayala and Rozenblyum's {\em transversality sheaves}.
	\end{itemize}
	\item Models similar to quasicategories:
	\begin{itemize}
		\item $\mathrm{Comp}_n$ Verity's weak complicial sets \cite{Verity20081081, MR2342841}.
		\item $\mathrm{QCat}_n$ Ara's $n$-quasicategories (based on $\Theta_n$-sets) \cite{Ara1206}.
	\end{itemize}
	\item $\mathrm{RelCat}_n$ the $n$-relative categories of Barwick and Kan \cite{BarKan1102}. 
\end{itemize}

\noindent There are two previous general existence/comparison results:
\begin{theorem}[\cite{Lur0905} Pr.~1.5.4 and Pr.~2.3.1]
	Let $\cS$ be a model category which is combinatorial, left proper, where every monomorphism is a cofibration, where filtered colimits are left exact in the underlying quasicategory, and where the underlying quasicategory is an `absolute distributor' \cite{Lur0905}. If $\cS$ is also simplicial, then the model structures $\CSS_\cS$ and $\Seg^{inj}_\cS$ exist and there is a Quillen equivalence:
	\begin{equation*}
		\CSS_\cS \rightleftarrows \Seg^{inj}_\cS.
	\end{equation*}
	Moreover $\CSS_\cS$ will again be simplicial and satisfy the above properties. 
\end{theorem}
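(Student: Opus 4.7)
The plan is to construct both model structures as successive left Bousfield localizations of the injective model structure on the category $s\cS = \Fun(\bDelta^{\op}, \cS)$ of simplicial objects in $\cS$. Since $\cS$ is combinatorial and left proper, $s\cS_{inj}$ is again combinatorial and left proper, and the simplicial enrichment on $\cS$ induces a compatible simplicial enrichment on $s\cS_{inj}$. Because every monomorphism in $\cS$ is a cofibration, the cofibrations of $s\cS_{inj}$ are precisely the monomorphisms. First I would construct $\Seg^{inj}_\cS$ as the left Bousfield localization of $s\cS_{inj}$ at a set of morphisms cut out by the Segal condition (maps $S_n \otimes X \to \Delta[n] \otimes X$ for $X$ ranging over a generating set in $\cS$) together with the relevant homotopical discreteness condition on $X_0$. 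Smith's theorem, combined with left properness, guarantees this localization exists as a combinatorial, left proper, simplicial model category. Then $\CSS_\cS$ is obtained by further localizing at a single map encoding Rezk's completeness condition.

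Next I would establish the Quillen equivalence $\CSS_\cS \rightleftarrows \Seg^{inj}_\cS$. Since $\CSS_\cS$ is, by construction, a left Bousfield localization of $\Seg^{inj}_\cS$, the identity adjunction is automatically a Quillen adjunction. To see it is a Quillen equivalence, I would show that fibrant replacement in $\CSS_\cS$ of a Segal object is obtained by ``completing'' it, and that this completion functor does not change the underlying homotopy theory on the level of hom-objects or homotopy categories of objects --- essentially extending Rezk's and Bergner's argument in the $n=1$ case. The key input is that the completeness localization only identifies isomorphic objects up to a contractible choice, so fibrant replacement from $\Seg^{inj}_\cS$ to $\CSS_\cS$ is a levelwise weak equivalence on mapping objects and a surjection on equivalence classes of objects.

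Finally I would verify that $\CSS_\cS$ satisfies the same hypotheses as $\cS$, enabling iteration. Combinatoriality, left properness, the simplicial enrichment, and monomorphisms being cofibrations all follow from standard properties of left Bousfield localizations of combinatorial, left proper, simplicial model categories in which monomorphisms are cofibrations. The remaining conditions concern the underlying quasicategory $\cS^\infty_{\CSS}$. Left exactness of filtered colimits can be checked by identifying $\cS^\infty_{\CSS}$ with the $\infty$-category of Segal objects in $\cS^\infty$ satisfying completeness, and then using that filtered colimits in this category are computed levelwise, so they inherit left exactness from the corresponding property of $\cS^\infty$. The main obstacle, and the technical heart of the argument, is the absolute distributor condition. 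This is a condition on the $\infty$-categorical interplay of colimits and pullbacks that does not follow formally from the Bousfield localization machinery; instead one must verify directly that if $\cS^\infty$ is an absolute distributor, then so is the $\infty$-category of complete Segal objects. I would attack this by decomposing arbitrary colimits into filtered colimits and small colimits of Segal objects, reducing the verification to descent-type statements for the source of the Segal structure, which is precisely Lurie's argument in the referenced Proposition 2.3.1.
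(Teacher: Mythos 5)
The paper itself offers no proof of this statement --- it is imported verbatim from Lurie's \cite{Lur0905} (Pr.~1.5.4 and Pr.~2.3.1) --- so your proposal has to be measured against the argument there. Your treatment of $\CSS_\cS$ is essentially that argument: one localizes the injective model structure on $\Fun(\bDelta^{\op},\cS)$ at the Segal and completeness maps, using combinatoriality and left properness to get existence, and the entire content of the iteration statement is the stability of the hypotheses, above all the absolute-distributor condition, under this construction. Note, though, that on this last point you defer to ``precisely Lurie's argument,'' so the hardest part of the theorem is assumed rather than proved.

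The genuine gap is in your construction of $\Seg^{inj}_\cS$ and, consequently, in the claimed comparison. Segal categories are not simplicial objects satisfying a homotopical discreteness condition on $X_0$: they form a different underlying category, the precategories, in which $X_0$ is \emph{strictly} discrete, and the model structures there (Hirschowitz--Simpson, Pellissier, Bergner, and Lurie in the enriched setting) are built directly on that subcategory, not by left Bousfield localization of the injective structure on all simplicial objects. This is not cosmetic. In any left Bousfield localization the weak equivalences between local objects coincide with the original (here levelwise) equivalences, whereas the weak equivalences between fibrant Segal categories are Dwyer--Kan equivalences, which are strictly coarser: the free-walking isomorphism, viewed as a levelwise-discrete Segal object (or an injectively fibrant replacement of it), maps to the terminal object by a DK-equivalence that is not a levelwise equivalence, yet both objects are Segal with homotopy-discrete $X_0$ and hence would be local for your proposed localization. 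So localizing at the Segal maps together with homotopy-discreteness of $X_0$ yields the wrong homotopy theory; it is exactly the completeness localization, not a discreteness condition, that repairs this defect on the $\CSS$ side. As a result the ``identity Quillen adjunction'' you invoke does not exist: $\CSS_\cS$ and $\Seg^{inj}_\cS$ have different underlying categories, $\CSS_\cS$ is not a further localization of $\Seg^{inj}_\cS$, and the Quillen equivalence of the statement is implemented by the inclusion of precategories into simplicial objects and its adjoint. The substantive step is then the comparison of DK-equivalences with levelwise equivalences after completion --- the part of Lurie's Pr.~2.3.1 (and, for $n=1$, of Bergner's and Joyal--Tierney's theorems) that your outline bypasses.
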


\begin{theorem}[ \cite{Lur0905} Th.~2.2.16 and \cite{MR2883823} Th.~21.3.2]
	Let $\cK$ be a model category which is combinatorial, where every monomorphism is a cofibration, and where the class of weak equivalences is closed under filtered colimits. Then, if $\cK$ is cartesian, the model structures $\cat_\cK$, $\Seg^{proj}_\cK$, and $\Seg^{inj}_\cK$ exist and we have natural Quillen equivalences:
	\begin{equation*}
		\cat_\cK \to \Seg^{proj}_\cK \leftarrow \Seg^{inj}_\cK.
	\end{equation*}
	Moreover $\Seg^{inj}_\cK$ is again cartesian and satisfies these properties. 
\end{theorem}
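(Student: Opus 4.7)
The plan is to follow the strategy of Lurie and Simpson in three stages: construct the three model structures, establish the Quillen equivalences between them, and finally verify that $\Seg^{inj}_\cK$ inherits the hypotheses of the theorem (which is what enables the inductive definition of Segal $n$-categories). First I would build $\Seg^{inj}_\cK$ as a left Bousfield localization of the injective model structure on simplicial objects of $\cK$ with discrete object-sets, localized at the Segal maps; the combinatorial hypothesis on $\cK$ is inherited by diagram categories, and the assumption that monomorphisms are cofibrations guarantees left properness, so Jeff Smith's localization theorem applies. The structure $\Seg^{proj}_\cK$ is produced identically from the projective model structure, and the identity functor furnishes a Quillen equivalence $\Seg^{proj}_\cK \to \Seg^{inj}_\cK$ for free. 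For $\cat_\cK$, I would use a Dwyer--Kan--Bergner style transfer along the forgetful functor to $\cK$-enriched graphs, where the cartesian closure of $\cK$ and the combinatorial hypotheses provide exactly the ingredients required.

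Next, for the adjunction $\cat_\cK \rightleftarrows \Seg^{proj}_\cK$ I would take the right adjoint to be the nerve construction: the nerve of an enriched category $X$ is the simplicial object whose value at $[n]$ is the disjoint union, over composable $n$-tuples of objects of $X$, of the iterated products of mapping objects in $\cK$. Its left adjoint is a strictification $\tau_1$, realized internally to $\cK$ via a Boardman--Vogt style $W$-construction. Showing this is a Quillen adjunction reduces to verifying that the nerve of a fibrant enriched category strictly satisfies the Segal condition (immediate) and is appropriately fibrant as a simplicial object of $\cK$ (using injective-fibrancy of the mapping objects).

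The main obstacle, and the most technical step, is to prove that this adjunction is a Quillen \emph{equivalence}. Following Bergner's argument in the simplicial case and Lurie's generalization, one must show that for every fibrant $Y \in \Seg^{proj}_\cK$, the unit map $Y \to N\tau_1(Y)$ is a weak equivalence. This requires computing $\tau_1(Y)(a,b)$ as a homotopy colimit of zig-zags weighted by iterated composition data and matching it, levelwise in $\cK$, with $Y_1(a,b)$. Here the hypothesis that weak equivalences in $\cK$ are closed under filtered colimits is essential to control the transfinite composition defining $\tau_1$, and the cartesian-closedness of $\cK$ is what guarantees that the relevant products of mapping objects behave homotopically correctly. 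The argument is an enriched, homotopy-coherent analogue of the familiar fact that an ordinary category is recovered from its nerve.

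Finally, to show that $\Seg^{inj}_\cK$ is again cartesian and inherits the hypotheses of the theorem, the key point is that the Segal condition is preserved by products: products commute with the finite limits defining the Segal maps, and weak equivalences in $\cK$ are stable under products as a consequence of the cartesian hypothesis. In the injective model structure cofibrations are exactly the monomorphisms, so the pushout--product axiom for monomorphisms follows formally, and the hypothesis that monomorphisms are cofibrations is inherited by construction. Closure of the local weak equivalences under filtered colimits reduces to the analogous property in $\cK$ together with the observation that both the Segal maps and the local-object conditions used in the Bousfield localization are expressible in terms of finite limits, which commute with filtered colimits up to weak equivalence. This last verification is what permits the iteration $\cK \mapsto \Seg^{inj}_\cK$ and thereby the definition of Segal $n$-categories for all $n$.
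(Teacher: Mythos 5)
This theorem is not proved in the paper at all: it is imported verbatim from Lurie (Th.~2.2.16 of the Goodwillie-calculus preprint) and Simpson (Th.~21.3.2), so there is no internal argument to compare with; your outline does follow the general architecture of those sources (build the two precategory structures and $\cat_\cK$, compare via a nerve/strictification adjunction, then verify that $\Seg^{inj}_\cK$ inherits the hypotheses). However, as a proof the sketch has gaps at exactly the points where the cited works do their real labor. First, the category of $\cK$-enriched precategories is not a diagram category: its objects are pairs of a set $S$ and a functor $(\Delta_S)^{\op}\to\cK$ with varying $S$, so you cannot simply quote the existence of injective/projective levelwise structures ``inherited by diagram categories'' and then left-Bousfield-localize at the Segal maps. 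Moreover your justification of the localization is wrong as stated: ``every monomorphism is a cofibration'' does not yield left properness (for that you would need, e.g., every object cofibrant), and without left properness the localization theorem you invoke is not available. This is precisely why Lurie and Simpson construct the model structures directly, via Smith's recognition theorem applied to an explicitly described class of weak equivalences (Dwyer--Kan equivalences after Segalification / Simpson's calculus of generators and relations), rather than as an off-the-shelf localization.

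Second, the concluding step is asserted far too cheaply. That cofibrations in $\Seg^{inj}_\cK$ are monomorphisms gives only the cofibration half of the pushout--product axiom; the acyclicity half amounts to showing that $X\times(-)$ preserves the localized weak equivalences, i.e.\ that a product of Dwyer--Kan equivalences of precategories is again one. This is the central technical theorem in Simpson's treatment of cartesianness (and the reason the statement singles out the injective structure), and it does not ``follow formally.'' Likewise, closure of the localized weak equivalences under filtered colimits needs an argument involving the homotopy-category/truncation functor, not merely commutation of finite limits with filtered colimits. Finally, in the Quillen-equivalence step the derived unit must be checked on cofibrant precategories against a fibrant replacement of the strictification; checking the strict unit on fibrant $Y$, as you wrote, is not the right condition. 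So the plan is aligned with the literature the paper cites, but the steps labelled ``for free'' and ``formally'' are the theorem's actual content and remain unproved in your sketch.
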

\noindent There are also a spattering of other specific comparisons, including a recent and lucid treatment by Bergner-Rezk \cite{1204.2013} of this last result in the case $\cK = \Theta_{n-1}Sp$. These previous comparisons are summarized in Figure~\ref{fig:ComparisonsN}.


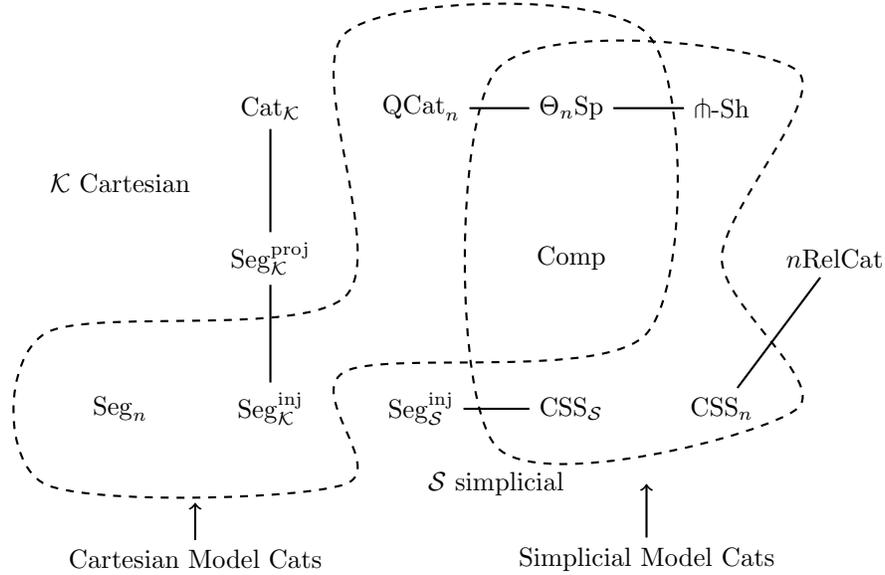
\begin{figure}[htbp]

\begin{center}
\begin{tikzpicture}[thick]

\node (K) at (0, 3) {$\cK$ Cartesian};
\node (Segn) at (0,0) {$\mathrm{Seg}_n$};

\node (C) at (2,4) {$\mathrm{Cat}_\cK$};
\node (SegP) at (2,2) {$\mathrm{Seg}_\cK^\textrm{proj}$};
\node (SegIk) at (2,0) {$\mathrm{Seg}_\cK^\textrm{inj}$};
\node (QCatn) at (4,4) {$\mathrm{QCat}_n$};
\node (SegIs) at (4,0) {$\mathrm{Seg}_\cS^\textrm{inj}$};
\node (T) at (6,4) {$\Theta_n \mathrm{Sp}$};
\node (Comp) at (6,2) {Comp};
\node (CSS) at (6,0) {$\mathrm{CSS}_\cS$};
\node (S) at (5,-1) {$\cS$ simplicial};
\node (TS) at (8,4) {$\pitchfork$-Sh};
\node (CSSn) at (8,0) {$\mathrm{CSS}_n$};
\node (RC) at (9.5,2) {$n\mathrm{RelCat}$};

\draw (C) -- (SegP) -- (SegIk);
\draw (QCatn) -- (T) -- (TS);
\draw (SegIs) -- (CSS);
\draw (CSSn) -- (RC);
\draw [dashed] plot [smooth, smooth cycle] coordinates {(-1,1) (3,1.5) (3,5) (7,5) (7,1) (3, 0.5) (3, -1) (-1,-1)} -- cycle;
\draw [dashed] plot [smooth, smooth cycle] coordinates {(5,-0.5) (5,4.5) (9,4.5) (8,2) (9, 0)};

\node (A) at (1,-2) {Cartesian Model Cats};
\node (B) at (7,-2) {Simplicial Model Cats};
\draw [->] (A) -- +(0,0.75);
\draw [->] (B) -- +(0,1);

\end{tikzpicture}
\end{center}
\caption{Comparison of theories of $(\infty,n)$-categories, before unicity. The lines denote equivalences of homotopy theories. $\cS$ and $\cK$ represent nice model categories of $(\infty,n-1)$-categories which are, respectively, simplicial or Cartesian. Note: $n\mathrm{RelCat}$ is not a model cateogry, merely a relative category.}
\label{fig:ComparisonsN}
\end{figure}

%
%
%
%

These comparisons, however, are not sufficient.
The above theorems can only be applied in conjunction if the model category $\cM$ is simultaneously simplicial and cartesian. However the category of complete Segal spaces enriched in $\cM$ is usually not cartesian, while Segal categories enriched in $\cM$ is almost never simplicial. Thus we are left at an impasse.

\section{The Unicity Theorem} \label{sec:Unicity}
These problems, the equivalence-monodromy problem when $n=1$ and the existence of equivalences for higher $n$, are solved by the unicity theorems, which were first proven by To\"en in the case $n=1$. To\"en stated his results as a pair of theorems, the first one providing axioms which characterized the homotopy theory of homotopy theories up to equivalence, and a second theorem which settled the ambiguity of this equivalence. 

Each of the theories proposed above includes, at a minimum, a category and a notion of weak equivalence, hence a relative category (a.k.a. a homotopy theory). Thus, for example, in the case $n=1$ we may regard Figure~\ref{fig:ComparisonsN=1} as a diagram in the model category $\mathrm{RelCat}$. If we prefer one of the other models of homotopy theories, such as quasicategories, then we may regard Figure~\ref{fig:ComparisonsN=1} as a diagram in that category using our favorite preferred comparison functor. 

In any case, as a diagram of equivalences in a model category, the question of what extent it commutes or can be made to commute up to higher homotopical data is completely governed by the derived (topological) automorphism group of any one of the objects. This group, which a priori has interesting higher topological information, was computed in the $n=1$ case by To\"en to be $\mathrm{Aut}^h \left ( L^H \mathrm{CSS} \right ) \simeq \ZZ/2$. So in fact this is a discrete group.

 Moreover this involution is the one which sends a category to its opposite, and hence is completely detected by its restriction to a certain full subcategory consisting of two objects, the {\em $0$-cell} $C_0 = pt$ and the {\em $1$-cell} $C_1$. This later is the `free walking arrow', it has two objects, $0$ and $1$, and a unique non-identity arrow $0 \to 1$. It is easy to check that all of the comparison functors of Figure~\ref{fig:ComparisonsN=1} induce the identity action on this subcategory, and as a corollary, the diagram of Figure~\ref{fig:ComparisonsN=1} commutes when considered as a diagram of homotopy theories.

More generally we have:

\begin{theorem*}
There exist four axioms which characterize the quasicategory of $(\infty,n)$-categories up to equivalence. Moreover the space of quasicategories satisfying these axioms is $\left (B\ZZ/2 \right )^n$.
\end{theorem*}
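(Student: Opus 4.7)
The plan is to prove the theorem in two stages: first establish that four carefully chosen axioms characterize $\mathrm{Cat}_{(\infty,n)}$ up to equivalence, then compute the derived automorphism space of any model satisfying them. For the axioms I would choose: (i) a \emph{strong generation} axiom saying that the cells $C_0, C_1, \dots, C_n$ generate the quasicategory under (homotopy) colimits; (ii) an \emph{internal hom / enrichment} axiom ensuring the theory is cartesian closed, so that mapping objects $\Fun(C_k, X)$ again lie in the theory and agree with the inductively defined $(\infty, n-1)$-category structure; (iii) a \emph{completeness / homotopy hypothesis} axiom pinning down the full subcategory of invertible cells at each level to model spaces (homotopy $k$-types at level $k$); and (iv) a universality/localization axiom singling out the initial such presentable quasicategory.

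Existence is established by verifying that at least one known model---say Rezk's complete $\Theta_n$-spaces or Barwick's $n$-fold complete Segal spaces---satisfies all four axioms, which is essentially a repackaging of the definitions. Uniqueness then proceeds by induction on $n$, using To\"en's $n=1$ result as the base case. Given two models $\mathcal{X}$, $\mathcal{Y}$ satisfying the axioms, the generation axiom presents each as a localization of presheaves on the full subcategory spanned by $\{C_0, \dots, C_n\}$; the internal-hom and completeness axioms pin down the mapping spaces between cells inductively (since each $\Fun(C_k, C_\ell)$ is an $(\infty,n-1)$-category satisfying the axioms one dimension down); and a left Kan extension along the cell subcategory produces the required comparison equivalence.

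For the automorphism computation, I would exhibit $n$ commuting involutions $\tau_1, \dots, \tau_n$, where $\tau_k$ reverses the direction of $k$-morphisms---so $\tau_1$ is the ``op'' functor, $\tau_2$ is ``co,'' and so on. These are constructed either directly on the bordism-like model or, more canonically, by exploiting the automorphisms of the cell subcategory itself. This yields an injection $(\ZZ/2)^n \hookrightarrow \pi_0 \mathrm{Aut}^h(\mathrm{Cat}_{(\infty,n)})$, detected by the induced action on the set of cells. Surjectivity and the vanishing of higher homotopy in the automorphism space both follow from strong generation: any self-equivalence is determined, up to contractible ambiguity, by its restriction to the cell subcategory, whose automorphism space is visibly the discrete group $(\ZZ/2)^n$ (each $C_k$ has automorphism group $\ZZ/2$ with no higher homotopy, and these act independently). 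Taking $B$ yields $B\mathrm{Aut}^h \simeq (B\ZZ/2)^n$, as claimed.

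The hard part will be the final coherence step: showing that the restriction map from global automorphisms to automorphisms of the cell subcategory is actually a trivial fibration. Concretely, one must verify that any self-equivalence which is coherently trivial on cells is canonically---not merely up to homotopy---equivalent to the identity. This requires extracting enough rigidity from the axioms to control all higher coherences, and is where the interaction between the generation axiom and the enrichment axiom becomes essential. It is the direct analog of the delicate calculation To\"en carried out in the $n=1$ case, and it mirrors the monodromy argument hinted at just before the theorem statement, where commutativity of the comparison diagram of Figure~\ref{fig:ComparisonsN=1} was traced to the vanishing of higher automorphisms of the theory.
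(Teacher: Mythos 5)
There is a genuine gap, and it sits exactly at your axiom (i) and everything downstream of it. You take ``strong generation by the cells'' and then use it in its strong, Kan-extension sense: you present each model as a localization of presheaves on the full subcategory $\{C_0,\dots,C_n\}$ and you claim any self-equivalence is determined, up to contractible ambiguity, by its restriction to the cell subcategory. But the intended theory is \emph{not} strongly generated by the cells: the canonical map $\colim_{C_i \to X} C_i \to X$ fails to be an equivalence already for $n=1$ and $X=\Delta^2$, where the colimit over cells reproduces the category generated by $\partial\Delta^2$ (two composable arrows plus a separate arrow $0\to 2$) rather than $\Delta^2$ itself. So in the strong reading your axiom (i) is violated by $\cat_{(\infty,n)}$, and in the weak reading (the cells generate under iterated homotopy colimits, which is true) it does not yield the presheaf presentation nor the rigidity you invoke for the comparison functor and for the automorphism computation. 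The actual axiomatization repairs this by enlarging the generating subcategory to $\Upsilon_n$, the gaunt $n$-categories built from cells by retracts and fiber products over cells, by demanding internal homs not just for $\cC$ but for the slices $\cC/C_k$ (your axiom (ii) is the weaker, plain cartesian-closedness, which does not let you generate the length-$k$ Segal decompositions from the length-$2$ one via $(-)\times_{C_1}(-)$), and by imposing an explicit \emph{finite} list of colimit equations (Segal-type decompositions of $C_i\times_{C_j}C_k$ plus completeness); your axiom (iii) supplies only the completeness part, so without the gluing relations your universality axiom (iv) does not pin down the correct localization.

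The same issue undermines the automorphism computation. The parenthetical ``each $C_k$ has automorphism group $\ZZ/2$'' is not right: each individual cell is gaunt and has trivial automorphism group; the group $(\ZZ/2)^n$ arises as the automorphisms of the (ordinary) cell subcategory, acting by permuting the maps \emph{between} cells, and even this only bounds the action on objects. Pinning down the action on all morphisms and higher coherences --- the step you correctly flag as ``the hard part'' --- is precisely where one needs strong generation by the ordinary category $\Upsilon_n$ (whose automorphisms are computed directly to be the discrete group $(\ZZ/2)^n$), so that restriction to $\Upsilon_n$ controls the whole self-equivalence; with only the cells as generators this step cannot be completed, because left Kan extension from the cells does not return the identity functor. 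Finally, the proposed induction on $n$ via To\"en's theorem, with $\Fun(C_k,C_\ell)$ ``satisfying the axioms one dimension down,'' presupposes an identification of the mapping objects with a theory of $(\infty,n-1)$-categories that your axioms do not provide; the actual argument avoids this by working with a single generating ordinary category and a finite localizing class in all dimensions at once.
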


\noindent Here quasicategories could be replaced with any of the equivalent notions listed in Figure~\ref{fig:ComparisonsN=1}. 
We will make the details of this axiomatization more precise below. Having an axiomatization is only useful if there are examples which can be shown to satisfy these axioms. Fortunately we have:

\begin{theorem} \label{thm:All_Models_Sat_Axioms}
With the exception of complicial sets, all the models of $(\infty,n)$-categories appearing in the diagram of Figure~\ref{fig:ComparisonsN} above satisfy the four axioms.  
\end{theorem}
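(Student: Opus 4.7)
The plan is to exploit the fact that the four axioms, being intrinsic properties of a homotopy theory, are invariant under equivalence of homotopy theories. Consequently, to establish the theorem it suffices to verify the axioms on a spanning collection of models, and then to propagate via the known Quillen equivalences cataloged in the two comparison theorems immediately preceding the theorem (Lurie's $\CSS_\cS \simeq \Seg^{inj}_\cS$ and the general comparison $\cat_\cK \simeq \Seg^{proj}_\cK \simeq \Seg^{inj}_\cK$). So the first step is to identify, within each dashed region of Figure~\ref{fig:ComparisonsN}, a single model where the axioms can be verified directly; then it is a matter of bookkeeping to transport the axioms along the equivalences to the remaining models in that region.

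Next I would proceed by induction on $n$, with the base case $n=0$ being the homotopy theory of spaces (equivalently, Kan complexes), where the axioms are essentially verified by definition or by To\"en's original theorem for $n=1$ applied one level down. For the inductive step, the natural candidates to verify directly are the iteratively defined models: Segal $n$-categories $\Seg_n$ on the cartesian side and $n$-fold complete Segal spaces $\CSS_n$ on the simplicial side. For these, the axioms (which concern such features as the category of cells, enrichment, and the completeness/strong generation properties of the putative theory) can be unpacked into statements at level $n-1$, where one applies the inductive hypothesis together with the general comparison theorems quoted in the excerpt. The complete Segal $\Theta_n$-spaces $\Theta_n Sp$ are then handled either by the Bergner--Rezk comparison of $\Theta_n Sp$ with $\CSS_{\Theta_{n-1}Sp}$, or by verifying the axioms directly from the combinatorial definition of $\Theta_n$, which is well suited to identifying the correct category of cells.

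The remaining, ``standalone'' models are the ones I expect to be the main obstacle, since they are not produced by any of the inductive procedures for which the comparison theorems are directly available. Specifically, Ara's $n$-quasicategories $\mathrm{QCat}_n$, the transversality sheaves $\pitchfork$-Sh of Ayala--Rozenblyum, and the $n$-relative categories $\mathrm{RelCat}_n$ of Barwick--Kan each require a dedicated argument. For each of these, I would construct an explicit comparison functor to one of the already-verified models (most naturally $\Theta_n Sp$ or $\CSS_n$): for $\mathrm{QCat}_n$ this is an $n$-dimensional analog of Joyal--Tierney, for $\pitchfork$-Sh it is an inspection of the sheaf condition against the generators of $\Theta_n$, and for $\mathrm{RelCat}_n$ an $n$-dimensional hammock/classification-diagram construction. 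The genuine work will be in assembling these comparisons and checking that they respect the homotopy theoretic structure used by the axioms; this is where the bulk of the technical labor lies. Weak complicial sets are excluded precisely because no such comparison is currently known in full generality. Finally, with the axioms in hand for every listed model, the unicity half of the theorem automatically identifies all of these homotopy theories with one another up to the previously computed ambiguity $(B\ZZ/2)^n$.
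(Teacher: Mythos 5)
Your overall architecture---the axioms are intrinsic to the underlying homotopy theory, so one verifies them on an anchor in each connected component of Figure~\ref{fig:ComparisonsN} and transports them along the known equivalences, handling the iterated models by induction on $n$ through the two comparison theorems quoted just before the statement---is indeed the shape of the argument in \cite{BarSch1112} (these notes do not reprove the theorem; the verification lives there). But there is a genuine gap: you never say how the axioms would actually be verified for \emph{any} anchor model. Transport along Quillen equivalences can only move the axioms once they are established somewhere, and ``unpack the axioms into statements at level $n-1$'' is a restatement of the problem rather than an argument: you do not address strong generation by $\Upsilon_n$ (A1), the internal homs of the overcategories $\cC/C_k$ (A2), the finite list of colimit relations (A3), or---most seriously---the universality axiom (A4), which is not a condition one can check levelwise without further tools. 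The missing idea is the recognition principle that is the technical heart of \cite{BarSch1112}: a presentable quasicategory satisfies (A1)--(A4) exactly when it is exhibited as a localization of presheaves of spaces on a suitable category $R$ with $\Theta_n \subseteq R \subseteq \Upsilon_n$, inverting at least the fundamental (Segal-type and completeness) maps and no more than the resulting weak equivalences; proving this, and showing via the analysis of fundamental pushouts that generation by $\Theta_n$ and by $\Upsilon_n$ yield the same localization, is what makes the direct verification for Rezk's $\Theta_n\mathrm{Sp}$ possible and what powers the inductive step for $\cat_\cK$, $\Seg^{proj/inj}$, $\CSS_\cS$, $\Seg_n$ and $\mathrm{CSS}_n$ when combined with Lurie's and Simpson's theorems. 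Without some such reduction your inductive step has no content.

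A secondary misallocation: the models you single out as needing new ``dedicated'' comparisons---Ara's $\mathrm{QCat}_n$, the transversality sheaves, and $n\mathrm{RelCat}$---are precisely the ones already joined to verified models by the lines in Figure~\ref{fig:ComparisonsN} (Ara's equivalence with $\Theta_n\mathrm{Sp}$ \cite{Ara1206}, Ayala--Rozenblyum for $\pitchfork$-Sh, and Barwick--Kan's equivalence of $n\mathrm{RelCat}$ with $\mathrm{CSS}_n$ \cite{BarKan1102}); no new Joyal--Tierney- or hammock-style construction is needed, only the observation that the axioms are equivalence-invariant, together with comparisons such as \cite{1204.2013} linking $\Theta_n\mathrm{Sp}$ to the iterated towers. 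Conversely, the step you treat as routine (direct verification for $\mathrm{CSS}_n$, $\Seg_n$, $\Theta_n\mathrm{Sp}$) is where essentially all the labor is concentrated. The plan needs to be reorganized around the presheaf-localization recognition theorem before it can count as a proof.
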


\noindent It is an open problem as to whether a variant of Verity's complicial sets satisfies the axioms.

\subsection{First Properties}

There are many desiderata one could imagine for the homotopy theory of $(\infty,n)$-categories. 
Before stating the axioms let us describe a few of these. First, strict $n$-categories should be {examples} of weak $n$-categories and hence $(\infty,n)$-categories. We should expect that there is a functor from the category of strict $n$-categories to our potential theory $\cC$. However we don't expect it to be fully-faithful in any sense as there should be many more weak functors than just the strict ones, and moreover there should be weak natural isomorphisms between these functors, and higher morphisms between those, etc. 

One of the most important examples of strict $n$-categories are the {\em cells} $C_i$ $0 \leq i \leq n$. The $i$-cell is the free walking $i$-morphism. $C_0 = pt$ is the terminal category, $C_1 = \{ 0 \to 1 \}$ has two objects and a single non-trivial morphism between them. $C_2$ looks as follows:
\begin{center}
	\begin{tikzpicture}
	\node (A) at (0,0) {*};
	\node (B) at (2,0) {*};
	\node at (1,0) {$\Downarrow$};
	\draw [->] (A) to [out=45, in=135] (B);
	\draw [->] (A) to [out=-45, in=-135] (B);
\end{tikzpicture}
\end{center}
\noindent They can be defined inductively as follows: $C_i$ has exactly two objects, $0$ and $1$. The only non-identity morphisms occur from $0$ to $1$, and we have $\hom_{C_i}(0,1) = C_{i-1}$. 
The cells are important as they form the basic building blocks with which we can obtain any higher category. By gluing cells together using homotopy colimits we expect to be able to build any possible $(\infty,n)$-category. 

\vskip 1ex
\begin{itemize}
	\item [(P1)] The quasicategory $\cC$ is generated under (homotopy) colimits by the cells, that is the smallest full sub-quasicategory containing the cells and closed under colimits is $\cC$ itself.
\end{itemize}

In any homotopy theory $\cD$, there exists a distinguished subcategory $\tau_{\leq 0} \cD$ of {\em 0-truncated objects}. This is the full subcategory of objects $X \in \cD$ such that the derived mapping spaces $\cD(D,X)$ are (homotopically) discrete for any $D$. The category $\tau_{\leq 0} \cD$ is an ordinary category and it consists of that part of $\cD$ which has a trivial homotopy theory. For example  when $\cD = Top$, then $\tau_{\leq 0} \cD \simeq Set$, the category of sets, realized as the discrete spaces. 

Recall that the homotopy hypothesis is the statement that the homotopy theory of $n$-groupoids is equivalent to the homotopy theory of $n$-types. Thus we see that the homotopy theory of ordinary 1-categories must contain a non-trivial, albeit simple, portion. Namely it contains a portion which is equivalent to the theory of 1-types. For a category $X$ to be in $\tau_{\leq 0} \mathrm{Cat}$, it is necessary that for every category $D$ the groupoid of fucntors $D \to X$ and natural isomorphisms between them must be `homotopically discrete'. Taking the case $D=pt$, we see that, in particular, the maximal subgroupoid of $X$ must be homotopically discrete. That is to say, any two isomorphic objects of $X$ are uniquely isomorphic. 

In fact, up to equivalence, we may take $\tau_{\leq 0} \mathrm{Cat}$ to consist precisely of those categories which have no non-trivial isomorphisms, i.e., every isomorphism is an identity map. 
Two functors between such categories are equivalent if and only if they are identical, in which case the equivalence is the identity. Thus the groupoid of functors and natural isomorphisms between such categories is discrete. 

The 2-dimensional case is also quite instructive. We can similarly consider $\tau_{\leq 0} \mathrm{Bicat}$. Up to equivalence this consists of those bicategories for which the only invertible 2-morphisms are identities and for which the only weakly invertible 1-morphisms are also identities. In this case we learn several things. First, since the coherence isomorphisms are necessarily identities, such a bicategory is automatically a {\em strict} 2-category. Moreover, instead of considering 1-morphisms which were weakly invertible, we could equivalently have considered those 1-morphism with strict inverses. There is no difference as long as the only invertible 2-morphisms are identities. This motivates the following definition:

\begin{definition}
	A strict $n$-category is {\em gaunt} if for all $k\geq 1$ the only invertible $k$-morphisms are identities.  
\end{definition}

\noindent Moreover we see that the homotopy theory of $(\infty,n)$-categories should satisfy the following additional property:

\vskip 1ex 
\begin{itemize}
	\item [(P2)] The category of 0-truncted objects of $\cC$, $\tau_{\leq 0} \cC$, is equivalent to the category of gaunt $n$-categories. 
\end{itemize}

\noindent This property is satisfied by all of the equivalent notions of $(\infty,1)$-category, as well as Rezk's $\Theta_n$-spaces \cite{MR2578310, MR2740648}. In fact these two simple properties are enough to recover part of the unicity results:

\begin{quote_prop} \label{Prop:automorphisms}
	If $\cC$ is a quasicategory satisfying properties (P1) (plus a strong generation property described in the next section) and (P2), then $\Aut^h(\cC)$ is equivalent to a subgroup of the discrete group $ (\ZZ/2)^n$. 
\end{quote_prop}

\begin{proof}[Proof Sketch]
	Any equivalence of $\cC$ must preserve the subcategory $\tau_{\leq 0} \cC$, hence restricts to a self-equivalence of the category of gaunt $n$-categories. Also, any equivalence must preserve homotopy colimits, so by (P1) we see that the value of any such functor on objects is in fact completely determined by the restriction to the cells. A direct calculation \cite{BarSch1112} shows that there are at most $(\ZZ/2)^n$ such equivalences, and moreover that they preserve the cells up to isomorphism (though they permute the maps between cells). However to pin down the value of the automorphism on morphisms as well, we will need a strong generation property, as described in the next section. 
\end{proof}

Thus we see that these two properties, desirable for any homotopy theory of $(\infty,n)$-categories, are enough to solve the monodromy problem. However these properties alone do not determine the theory. 

\subsection{Strong Generation}

One of the advantages of using the language of quasicategories to express the notion of homotopy theory is that it allows one formulate universal properties of the homotopy theory that could be difficult to formulate otherwise. As an example consider the following statement: the quasicategory $Top$ of spaces is freely generated under homotopy colimits by the singleton space $pt$. 

Such a statement corresponds to a universal property for the theory of spaces. To begin with, every functor $F: Top \to \cD$ which preserves homotopy colimits is determined up to equivalence by its restriction along $i:\{ pt \} \subseteq Top$. In fact $F$ is its own (homotopy) left Kan extension of its restriction to $\{ pt \}$. 

\begin{equation*}
	F(X)  \simeq  \colim_{pt \to X} F(pt).
\end{equation*}

\noindent This (homotopy) colimit is taken over the $(\infty,1)$-category of maps $pt \to X$. In particular we can apply this to the case when $D = Top$ and $F$ is the identity functor. This gives a universal formula for how to build any space as a homotopy colimit of contractible spaces. 
\begin{center}
\begin{tikzpicture}
	\node (LT) at (0, 1.5) {$\{ pt \}$};
	\node (LB) at (0, 0) {$Top$};
	\node (RT) at (2, 1.5) {$Top$};
	\draw [right hook->] (LT) -- node [left] {$i$} (LB);
	\draw [right hook->] (LT) -- node [above] {$i$} (RT);
	\draw [->, dashed] (LB) -- node [below right] {$Lan_i i = id_{Top}$} (RT);
\end{tikzpicture}
\end{center}
\begin{definition}
	Let $f:\cD' \to \cD$ be map of (presentable) quasicategories, then we say that $f$ {\em strongly generates} $\cD$ if the homotopy left Kan extension of $f$ along $f$ is the identity functor of $\cD$. 
\end{definition}

We can equivalently write this by saying that for all $D \in \cD$, the following canonical map is an equivalence:
\begin{equation*}
	D \simeq \colim_{D' \in \cD', \; f(D') \to D} f(D')
\end{equation*}
\noindent For example the homotopy theory of spaces is strongly generated by the inclusion of the terminal object $\{pt\} \hookrightarrow Top$. 

The category of spaces is {\em universal} with this property in the following sense. If $\cD$ is any presentable quasicategory which is strongly generated by its terminal object, then $\cD$ is a {\em localization} of $Top$, i.e., there exists a adjunction $L: Top \leftrightarrows \cD: R$ with $R$ fully-faithful. The functor $L$ is determined by the image of $\{pt\}$, which is the terminal object in $\cD$. 

Since the theory of quasicategories builds the homotopy theory of spaces into its framework, this is perhaps not terribly surprising, but it leads us to ask whether the theory of $(\infty,n)$-categories might have an analogous universal property? 
Indeed property (P1) above tells us that that every object can be generated under (possibly iterated) homotopy colimits by the cells. While we certainly want property (P1) to hold, it does not lend itself to a universal property as there is no mention of {\em how} the objects are built from the cells. In particular if we look at the full subcategory of cells $\GG$ and consider the canonical colimit
\begin{equation*}
	\colim_{C_i \in \GG, \; C_i \to X} C_i
\end{equation*}
this will almost never be equivalent to the object $X$, even in the $n=1$ case. 

\begin{example*}
	 If $X = \Delta^2$ is the `free-walking composition', i.e., the three element totally ordered set $(0 < 1 < 2)$, then the above colimit reproduces the category generated by $\partial \Delta^2$ instead of $\Delta^2$ itself. The former is the category which has three objects $0,1$ and $2$, no non-identity automorphisms, a single morphism $f_{01}: 0 \to 1$, a single morphism $f_{12}: 1 \to 2$, and two distinct morphisms from $0$ to $2$, $f_{02}$ and $f_{12} \circ f_{01}$. 
\end{example*}

It is natural to suppose that the theory of $(\infty,n)$-categories is strongly generated by some subcategory $\cR$ with contains the cells, but which is large enough so that the canonical homotopy colimit
\begin{equation*}
	\colim_{r \in \cR, \; r \to X} r
\end{equation*}
does reproduce the object $X$, for every object $X \in \cC$. Indeed in an extreme case we could take $\cR$ to be the entirety of all $(\infty,n)$-categories, although this would be circular as part of a definition of the theory of $(\infty,n)$-categories. We expect that there should be a much smaller $\cR$ which will work. 

This is closely related to Dan Dugger's notion of a {\em presentation} for a homotopy theory \cite{Dugger_CMHP, MR1870515}. 
If $\cC$ is a presentable quasicategory which is strongly generated by the subcategory $\cR$, then $\cC \simeq S^{-1}\Pre(\cR)$ is a localization of $\Pre(\cR)$, the quasicategory of presheaves of spaces on $\cR$, by a saturated class of morphisms $S$. 
This also gives rise to a universal property. If $\cD$ is any other presentable quasicategory which is strongly generated by a functor $f:\cR \to \cD$ (which induces a functor $\Pre(\cR) \to \cD$) and for which the morphisms of $S$ become equivalence, then $\cD$ is a localization of $\cC \simeq S^{-1}\Pre(\cR)$.

For the theory of $(\infty,n)$-categories, we suppose that $\cR$ must contain the cells, but the choice of $\cR$ is not unique. We may always enlarge it, for if we have containments $\cR \subseteq \cR' \subseteq \cC$ and $\cR$ strongly generates $\cC$, then so does $\cR'$ \cite[Rk. 4.4.7]{Lur0905}. 

A consequence of the techniques of \cite{BarSch1112} is that in fact we may obtain many  equivalent axiomizations of the theory of $(\infty,n)$-categories by allowing the category $\cR$ to vary. Some of the competing factors include:
\begin{itemize}
	\item The larger the subcategory $\cR$, the weaker the assumption that $\cR$ strongly generates $\cC$, and
	\item the larger the subcategory $\cR$, the easier it is to build comparison maps to theories strongly generated by smaller subcategories. However, 
	\item the larger the the subcategory $\cR$ is the larger the localizing class $S$ must be. For judicious choices of $\cR$, this class might have a simple description. 
	\item Finally, and perhaps most importantly, in order for an argument similar to the proof sketch of Proposition \ref{Prop:automorphisms} to hold, we must be able to show that $\cR$ is preserved by any automorphism and also that the automorphisms of $\cR$ are discrete and determined by the cells.  
\end{itemize}

This last item is essential for applications of the unicity theorem. If $\cR$ is too large, for example if $\cR$ is not an ordinary category but an full fledged $(\infty,1)$-category, then it might be difficult to compute its automorphisms explicitly. We will have gained nothing. 

This is the problem with trying to take $\cR$ to be, say, all strict $n$-categories. We would have to compute its automorphisms as a full subcategory of the theory of $(\infty,n)$-categories, which is tantamount to knowing precisely what the weak functors between strict $n$-categories should be. 

Another obvious candidate is to take $\cR$ to be the category of all gaunt $n$-categories. This should coincide with $\tau_{\leq 0} \cC$, and so is an ordinary category. It is easy to compute that automorphisms of $\cR$ in this case are the discrete group $(\ZZ/2)^n$, and so this seems like a great starting point. However in this case it is, as of this writing, unclear how to describe an appropriate class $S$ in any sort of explicit fashion. 

In a different direction, the quasicategory version of Rezk's theory of $\Theta_n$-spaces has a description of this form. In this case $\cR = \Theta_n$, and the class $S$ is generated by a countable collection of maps which corepresent higher versions of the Segal maps together with maps corepresenting certain `completeness maps' \cite{MR2578310, MR2740648}. In other words we may characterize Rezk's theory of $\Theta_n$-spaces by the following properties:
\begin{itemize}
	\item It is strongly generated by the subcategory $\Theta_n$.
	\item The countable collection of maps corepresenting the higher Segal and completeness maps are equivalences. 
	\item It is universal with respect to these first two properties.   
\end{itemize}
This last condition means that any presentable quasicategory satisfying the first two properties is a localization of Rezk's theory. Of course the above is not so much an axiomatization as the definition Rezk's theory. 

In the next section we will describe some additional properties that the theory of $(\infty,n)$-categories should possess
 that will enable us to reduce the description of $S$ to a finite amount of data. It will also lead us to consider a larger and more general category for $\cR$.

\subsection{Inner Homs} A fundamental property of the theory of $(\infty,n)$-categories is that it should have Cartesian products and internal homs. That is for any pair of $(\infty,n)$-cateogries $X$ and $Y$, there should be an $(\infty,n)$-category $\Fun(X,Y)$ whose objects are the (weak) functors from $X$ to $Y$, whose morphisms are the weak transformations between these, etc. 
If $\cC$ is a presentable quasicategory, which we are tacitly assuming, this is equivalent to the statement that for all $X \in \cC$ the functor
\begin{equation*}
	X \times (-): \cC \to \cC/X
\end{equation*}
preserves (homotopy) colimits. 

In fact there is a stronger property that the theory of $(\infty,n)$-categories $\cC$ satisfies. Not only does $\cC$ have internal homs, but also
\begin{itemize}
	\item [(P3)] For each $k$, the overcategories $\cC / C_k$ (over the $k$-cell $C_k$) admit internal homs.
\end{itemize}
 This is equivalent to the statement that for all $X \to C_k$ the functor
\begin{equation*}
	X \times_{C_k} (-): \cC/{C_k} \to C/X
\end{equation*}
preserves (homotopy) colimits. In the case $n=1$, this was proven for quasicategories by Joyal \cite[Th. 7.9]{Joyal_CRM}. For higher $n$ this is a calculation which must be carried out in each model. For Rezk's $\Theta_n$-spaces, the case of fiber products over the $0$-cell (i.e. ordinary products) follows from the main results of \cite{MR2578310, MR2740648}.

Note that even when $n=1$, for general $Z$ the over categories $\cat_{(\infty,1)}/Z$ do {\em not} possess internal homs. This is one of the reasons that $\cat_{(\infty,n)}$ is {\em not} an $\infty$-topos. For example consider the follow square, which is a pushout square in $\cat_{(\infty,n)}$ (and also $\cat$):
\begin{center}
\begin{tikzpicture}
	\node (LT) at (0, 1.5) {$\Delta^{\{1\}}$};
	\node (LB) at (0, 0) {$\Delta^{\{0,1\}}$};
	\node (RT) at (2, 1.5) {$\Delta^{\{1,2\}}$};
	\node (RB) at (2, 0) {$\Delta^{\{0,1,2\}}$};
	\draw [->] (LT) -- node [left] {$$} (LB);
	\draw [->] (LT) -- node [above] {$$} (RT);
	\draw [->] (RT) -- node [right] {$$} (RB);
	\draw [->] (LB) -- node [below] {$$} (RB);
	\node at (1.5, 0.5) {$\lrcorner$};
\end{tikzpicture}
\end{center}
This may be considered as a diagram of objects over $Z = \Delta^{\{0,1,2\}}$, where it remains a pushout square. If the quasi-category $\cat_{(\infty,1)}/Z$ had internal homs, then this would remain a pushout square after taking fiber products $(-) \times_Z Y$ for any $Y \to Z = \Delta^{\{0,1,2\}}$. Let $Y = \Delta^{\{0,2\}}$, with its inclusion into $Z$. Then the square of fiber products is as follows:
\begin{center}
\begin{tikzpicture}
	\node (LT) at (0, 1.5) {$\emptyset$};
	\node (LB) at (0, 0) {$\Delta^{\{0\}}$};
	\node (RT) at (2, 1.5) {$\Delta^{\{2\}}$};
	\node (RB) at (2, 0) {$\Delta^{\{0,2\}}$};
	\draw [->] (LT) -- node [left] {$$} (LB);
	\draw [->] (LT) -- node [above] {$$} (RT);
	\draw [->] (RT) -- node [right] {$$} (RB);
	\draw [->] (LB) -- node [below] {$$} (RB);
\end{tikzpicture}
\end{center}
Since $\Delta^{\{0,2\}} \not\simeq \Delta^{\{0\}} \sqcup \Delta^{\{2\}}$, this fails to be a pushout square. 

The $k$-cells are special. The quasicategories of objects over the cells do admit internal homs. One reason for this is that a map to a cell can be understood as an analog of a {\em correspondence} or {\em distributor} of higher categories. For example suppose that $M \to \Delta^{[1]}$ is a map of $(\infty,n)$-categories. The fibers $M_0$ and $M_1$ over $0$ and $1$ will be $(\infty,n)$-categories. The rest of the data of $M$ consists of a functor
\begin{equation*}
	M_1 \times M_0^\op \to \cat_{(\infty,n-1)}.
\end{equation*}
as was already well-known for quasicategories in the $n=1$ case from Joyal's work \cite{Joyal_CRM} (see also \cite[Rk. 2.3.1.4]{MR2522659}). 

In the higher categorical situation one expects maps to higher cells to similarly be described as higher correspondences between lower correspondences. If one was able to prove such a translation, then the existence of internal homs for the over categories $\cat_{(\infty,n)}/C_k$ would follow formally from the existence of ordinary internal homs in $\cat_{(\infty,n)}$. Moreover these higher correspondences should eventually help in the construction of a higher version of a `pro-arrow equipment' and a theory of `formal' higher category theory \cite{Shu0911_NCC}. This point of view is still conjectural and any more discussion would take us much too far afield, but in any case it is certainly expected that the over categories $\cat_{(\infty,n)}/C_k$ should have internal homs.

With property (P3) the infinite number of Segal maps used, for example, to construct Rezk's localization defining $\Theta_n$-spaces can be recovered from a finite number. For example the length-three Segal map
\begin{equation*}
	\Delta^{\{0,1\}} \cup^{\Delta^{\{1\}}} \Delta^{\{1,2,3\}} \to \Delta^{\{0,1, 2,3\}}
\end{equation*}
can be obtained from the length-two Segal map
\begin{equation*}
	\Delta^{\{0,1\}} \cup^{\Delta^{\{1\}}} \Delta^{\{1,2\}} \to \Delta^{\{0,1, 2\}}
\end{equation*}
by taking the fiber product with $\Delta^{\{1,2,3\}}$ over $\Delta^{\{1,2\}}$, where the maps
\begin{align*}
	\Delta^{\{0,1, 2\}} & \to \Delta^{\{1, 2\}} \\
	\Delta^{\{1, 2, 3\}} & \to \Delta^{\{1, 2\}}
\end{align*}
are define by sending $0$ and $1$ to $1$, and $2$ and $3$ to $2$. 
\begin{center}
\begin{tikzpicture}[inner sep=2pt, decoration=snake, xscale=0.9]
	\node (A) at (0, 1.5) [circle,draw,fill=blue!20] {};
		\node [below] at (A.south) {$0$};
	\node (B) at (1, 1.5 ) [circle,draw,fill=blue!20] {};
		\node [below] at (B.south) {$1$};
	\node (C) at (1.5, 2) [circle,draw,fill=blue!20] {};
		\node [above] at (C.north) {$1$};
	\node (D) at (2, 1.5) [circle,draw,fill=blue!20] {};
		\node [below] at (D.south) {$1$};
	\node (E) at (3, 1.5) [circle,draw,fill=blue!20] {};
		\node [below] at (E.south) {$2$};
	\draw [-stealth'] (A) -- (B);
	\draw [-stealth'] (D) -- (E);
	\draw [->, shorten <=2pt, shorten >=2pt] (C) -- (B);
	\draw [->, shorten <=2pt, shorten >=2pt] (C) -- (D);
	
	\node (F) at (3, 0) [circle,draw,fill=blue!20] {};
		\node [below] at (F.south) {$1$};
	\node (G) at (4, 0) [circle,draw,fill=blue!20] {};
		\node [below] at (G.south) {$2$};
	\draw [-stealth'] (F) -- (G);
	
	\node (H) at (4, 1.5) [circle,draw,fill=blue!20] {};
		\node [below] at (H.south) {$1$};
	\node (I) at (5, 1.5) [circle,draw,fill=blue!20] {};
		\node [below] at (I.south) {$2$};
	\node (J) at (6, 1.5) [circle,draw,fill=blue!20] {};
		\node [below] at (J.south) {$3$};
	\draw [-stealth'] (H) -- (I);
	\draw [-stealth'] (I) -- (J);
	
	\draw [->, shorten <=5mm, shorten >=1mm] (E) -- (G);
	\draw [->, shorten <=5mm, shorten >=1mm] (D) -- (F);
	\draw [->, shorten <=5mm, shorten >=1mm] (B) -- (F);
	\draw [->, shorten <=5mm, shorten >=1mm] (A) -- (F);
	
	\draw [->, shorten <=5mm, shorten >=1mm] (H) -- (F);
	\draw [->, shorten <=5mm, shorten >=1mm] (I) -- (G);
	\draw [->, shorten <=5mm, shorten >=1mm] (J) -- (G);
	
	\draw [decorate, ->] (7,1) -- (8,1);
	
	\node (AA) at (9, 1.5) [circle,draw,fill=blue!20] {};
		\node [below] at (AA.south) {$0$};
	\node (BB) at (10, 1.5 ) [circle,draw,fill=blue!20] {};
		\node [below] at (BB.south) {$1$};
	\node (CC) at (10.5, 2) [circle,draw,fill=blue!20] {};
		\node [above] at (CC.north) {$1$};
	\node (DD) at (11, 1.5) [circle,draw,fill=blue!20] {};
		\node [below] at (DD.south) {$1$};
	\node (EE) at (12, 1.5) [circle,draw,fill=blue!20] {};
		\node [below] at (EE.south) {$2$};
	\node (FF) at (13, 1.5) [circle,draw,fill=blue!20] {};
		\node [below] at (FF.south) {$3$};
	
	\draw [-stealth'] (AA) -- (BB);
	\draw [-stealth'] (DD) -- (EE);
	\draw [-stealth'] (EE) -- (FF);
	\draw [->, shorten <=2pt, shorten >=2pt] (CC) -- (BB);
	\draw [->, shorten <=2pt, shorten >=2pt] (CC) -- (DD);

\end{tikzpicture}
\end{center}

\subsection{The Axioms} 
At last we are ready to give the axiomatization. In light of the above discussion it is natural to introduce the category $\Upsilon_n$ which is the smallest subcategory of the gaunt $n$-categories which contains the cells and is closed under retracts and pullbacks over cells $(-) \times_{C_i} (-)$. 

The four axioms are the following.  Let $\cC$ be a presentable quasicategory equipped with a fully-faithful functor $\Upsilon_n \to \cC$ (we will later see that up to the automorphisms of $\Upsilon_n$ this functor is uniquely determined from $\cC$ alone). 
\begin{enumerate}
\item [(A1)] $\cC$ is strongly generated from $\Upsilon_n$, so that the canonical map 
\begin{equation*}
	\colim_{r \in \Upsilon_n, \; r \to X} r  \to X
\end{equation*}
is an equivalence for all $X \in \cC$;
\item [(A2)] For each cell $C_k$, $0 \leq k \leq n$, the over category $\cC/C_k$ has internal homs. Equivalently, for each object $X$, the functor 
\[
X \times_{C_i} (-): \cC/C_i \to \cC
\]
preserves homotopy colimits;
\item [(A3)] In $\cC$ a certain finite list of colimit equations is satisfied;
\item [(A4)] $\cC$ is universal with respect to the preceding three axioms.  That is if $\cD$ also satisfies (A1)-(A3), then we have a pair of adjoint functors $\cC \leftrightarrows \cD$ with the right adjoint an inclusion. This pair is unique if we require it to preserve the inclusion of $\Upsilon_n$. 
\end{enumerate}

\noindent The colimit equations in (A3) are all of the form: a certain colimit of objects in the image of $\Upsilon_n$ is equivalent (via the canonical map) to another object in the image of $\Upsilon_n$. In the case $n=1$, we need exactly four such colimit equations:
\begin{align*}
	\emptyset & \stackrel{\sim}{\to} f(\emptyset) \\
	f(C_1) \cup^{f(C_0)} f(C_1) & \stackrel{\sim}{\to} f(\Delta^{[2]}) \\
	f(\Delta^{\{0,1,2\}}) \cup^{f(\Delta^{\{0,2\}})} f(\Delta^{\{0,1,2\}}) & \stackrel{\sim}{\to} f(C_1 \times C_1) \\
	\left( f(C_0) \sqcup f(C_0) \right) \cup^{\left( f(\Delta^{\{0,2\}}) \sqcup f(\Delta^{\{1,3\}}) \right)} f(\Delta^{[3]}) & \stackrel{\sim}{\to} C_0
\end{align*}
where $f: \Upsilon_n \to \cC$ denotes the inclusion. The first three of these have a nice conceptual interpretation: they are precisely the relations needed to write $C_1 \times C_1$ and $C_0 \times_{C_1} C_0$ as iterated colimits of $C_0$ and $C_1$. The last equation implements completeness. 

For general $n$, the same pattern persists. There will be three families of equations which, conceptually, are needed to write $C_i \times_{C_j} C_k$ as an iterated colimit of cells, and there will be a fourth family expressing completeness. In total the number of equations grows approximately as $n^2$. 

There are several useful consequences of these axioms \cite{BarSch1112}:
\begin{itemize}
\item The cells do, in fact, generate everything under homotopy colimits;
\item $\tau_{\leq 0} \cat_{(\infty,n)}$ does consist of exactly the gaunt $n$-categories;
\item The derived topological group of automorphisms of $\cat_{(\infty,n)}$, or indeed an $\cC$ satisfying (A1)-(A3), is the discrete group $(\ZZ/2)^n$.
\item If $\cM$ and $\cN$ are combinatorial model categories whose homotopy theories satisfy the axioms, then $\cM$ and $\cN$ are connected by a zig-zag of Quillen equivalences (\cite[Rk. A.3.7.7]{MR2522659}, \cite{Dugger_CMHP}).
\item If $\cM$ and $\cN$ are model categories whose homotopy theories satisfy the axioms, if $L: \cM \rightleftarrows \cN :R$ is a Quillen adjunction, and  if $L$ preserves cells (up to weak equivalences), then $(L,R)$ is a Quillen equivalence.
\end{itemize}
\noindent Theorem \ref{thm:All_Models_Sat_Axioms} now applies and says that nearly all the models described so far satisfy these axioms, and hence are equivalent. 
Moreover we have some immediate consequences:

\begin{theorem*}[Rezk \cite{MR2578310}]
 $\Theta_n$-spaces, $\Theta_n \mathrm{Sp}$, satisfy the homotopy hypothesis.
\end{theorem*}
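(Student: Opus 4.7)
The plan is to deduce the homotopy hypothesis for $\Theta_n\mathrm{Sp}$ from the corresponding result for Segal $n$-categories (attributed to Simpson in the excerpt) by transferring it across the unicity equivalence. The first step is to invoke Theorem~\ref{thm:All_Models_Sat_Axioms}: both the quasicategorical localization of $\Theta_n\mathrm{Sp}$ and the quasicategorical localization of $\Seg_n$ satisfy axioms (A1)--(A4). By the universal property in (A4), there is an essentially unique equivalence of quasicategories
\[
 \Phi: \Theta_n\mathrm{Sp} \stackrel{\simeq}{\longrightarrow} \Seg_n
\]
compatible with the preferred inclusions of $\Upsilon_n$ (and hence, in particular, with the cells $C_k$).

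The second step is to identify the subcategory of $n$-groupoids on both sides and show that $\Phi$ identifies them. Intrinsically, an object $X$ of a model $\cC$ of $(\infty,n)$-categories is an $n$-groupoid exactly when it is local with respect to the collection of maps $C_k \to E_k$ (for $1 \leq k \leq n$), where $E_k$ denotes the ``free-walking invertible $k$-morphism'' obtained as a colimit of cells in $\Upsilon_n$. Because $E_k$ itself lies in $\Upsilon_n$ (it is constructed by finitely many pushouts along cell inclusions) and $\Phi$ preserves $\Upsilon_n$ and colimits, $\Phi$ carries the inverting maps in $\Theta_n\mathrm{Sp}$ precisely to the inverting maps in $\Seg_n$. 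Consequently $\Phi$ restricts to an equivalence of the full reflective sub-quasicategories of $n$-groupoids:
\[
 \Phi^{\mathrm{gpd}}: (\Theta_n\mathrm{Sp})^{\mathrm{gpd}} \stackrel{\simeq}{\longrightarrow} (\Seg_n)^{\mathrm{gpd}}.
\]
A minor wrinkle is that the automorphism $(\ZZ/2)^n$ of the theory may act on this diagram, but each generator merely reverses the direction of $k$-morphisms, which is an involution preserving invertibility, so the identification of groupoid subcategories is unaffected.

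The third step is to quote the result of Hirschowitz--Simpson--Pellissier, as recorded in Simpson's book, that $(\Seg_n)^{\mathrm{gpd}}$ is equivalent (as a homotopy theory) to the homotopy theory $\Top$ of spaces, via the realization/fundamental-$n$-groupoid adjunction iterated $n$ times starting from the base case $(\Seg_1)^{\mathrm{gpd}} \simeq \Top$ (which reduces to the classical fact that Kan complexes model spaces). Composing with $\Phi^{\mathrm{gpd}}$ yields the desired equivalence $(\Theta_n\mathrm{Sp})^{\mathrm{gpd}} \simeq \Top$.

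The main obstacle, and the one deserving the most care, is the second step: making precise the intrinsic characterization of $n$-groupoids and verifying that it is preserved by the canonical equivalence $\Phi$. One has to check that the ``invertibility'' of a $k$-morphism is detected by a diagram in $\Upsilon_n$ (so that $\Phi$, which preserves $\Upsilon_n$ and colimits, transports the notion faithfully) rather than by some a priori larger family of test diagrams. Once this is done, the homotopy hypothesis for $\Theta_n\mathrm{Sp}$ follows formally from the unicity theorem together with the corresponding statement for Segal $n$-categories.
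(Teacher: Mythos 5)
You should know that the paper does not actually prove this statement: it is quoted verbatim as Rezk's theorem \cite{MR2578310}, i.e.\ as an external input established by a direct argument inside the model category $\Theta_n\mathrm{Sp}$ itself, and the unicity machinery is then used in the \emph{opposite} direction, to deduce the corollary that all the other equivalent models inherit the homotopy hypothesis. Your proposal runs the transfer the other way, taking Simpson's homotopy hypothesis for $\Seg_n$ as the input and pushing it across the canonical equivalence supplied by Theorem~\ref{thm:All_Models_Sat_Axioms} and axiom (A4); this is a legitimate and genuinely different route (and not circular, since the verification of the axioms for $\Theta_n\mathrm{Sp}$ does not use Rezk's homotopy-hypothesis result), but it trades Rezk's self-contained computation for a dependence on Simpson's theorem plus the invariance bookkeeping you describe in your second step. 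Two points there need repair or more care. First, your claim that the free-walking invertible $k$-morphism $E_k$ ``lies in $\Upsilon_n$'' is false: $E_k$ has non-identity invertible morphisms, so it is not gaunt, hence not an object of $\Upsilon_n$; what is true (and suffices) is that $E_k$ is a finite colimit of cells --- for $k=1$ it is exactly the object $\bigl(C_0 \sqcup C_0\bigr) \cup^{\bigl(\Delta^{\{0,2\}} \sqcup \Delta^{\{1,3\}}\bigr)} \Delta^{[3]}$ appearing in the completeness relation of (A3), which the axioms force to be equivalent to $C_{k-1}$ --- and any equivalence compatible with the cells preserves such colimits. Second, for the transfer to prove the theorem \emph{as stated for $\Theta_n\mathrm{Sp}$}, you must check that this intrinsic, locality-based notion of $n$-groupoid agrees with the notion of groupoid object used in Rezk's formulation on the $\Theta_n\mathrm{Sp}$ side and with the Segal $n$-groupoids in Simpson's theorem on the $\Seg_n$ side (and that both results are stated against the same target, spaces rather than truncated types); you flag this correctly, but it is exactly the content that must be supplied, whereas the paper's route avoids it entirely by citing Rezk.
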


\begin{cor*}
All of the equivalent models satisfy the homotopy hypothesis.
\end{cor*}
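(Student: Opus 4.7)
The plan is to deduce the corollary as a formal consequence of the unicity theorem together with Rezk's result. By Theorem \ref{thm:All_Models_Sat_Axioms}, every model $\cC$ under consideration satisfies axioms (A1)--(A3), and by (A4) there is an essentially unique equivalence $\Phi: \Theta_n\mathrm{Sp} \stackrel{\simeq}{\to} \cC$ respecting the inclusions from $\Upsilon_n$. The task reduces to showing that $\Phi$ restricts to an equivalence between the full sub-quasicategories of $n$-groupoids on either side; combined with Rezk's theorem, which identifies the groupoidal part of $\Theta_n\mathrm{Sp}$ with the homotopy theory of $n$-types, this yields the homotopy hypothesis for $\cC$.

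The first step is to formulate the notion of ``$n$-groupoid'' intrinsically inside any $\cC$ satisfying the axioms, in such a way that it is manifestly preserved by cell-preserving equivalences. For each $1 \leq k \leq n$, construct inside $\cC$ the groupoidification $G_k$ of the cell $C_k$: it is the localization obtained by formally inverting the generating non-identity $k$-morphism, built by iterated pushouts of cells (together with the standard ``invertibility'' cell-diagrams) along maps in $\Upsilon_n$. Declare a $k$-morphism $C_k \to X$ to be invertible if it extends along the canonical map $C_k \to G_k$, and call $X \in \cC$ an $n$-groupoid if every such extension problem, for every $1 \leq k \leq n$, admits a solution. Let $\mathrm{Grpd}_n(\cC) \subseteq \cC$ be the full sub-quasicategory these objects span.

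The second step is to verify that $\Phi$ restricts to an equivalence $\mathrm{Grpd}_n(\Theta_n\mathrm{Sp}) \simeq \mathrm{Grpd}_n(\cC)$. Since $\Phi$ preserves the image of $\Upsilon_n$, it preserves each cell $C_k$, and because it preserves homotopy colimits, it preserves the construction of $G_k$ as an iterated pushout of cells. The $(\ZZ/2)^n$-ambiguity in the choice of equivalence only permutes sources with targets and orientations of $k$-morphisms, and so preserves the class of invertible $k$-morphisms. Consequently the intrinsic property ``every $k$-morphism extends along $C_k \to G_k$'' is preserved by $\Phi$, proving the desired restriction. Composing with Rezk's equivalence $\mathrm{Grpd}_n(\Theta_n\mathrm{Sp}) \simeq n\text{-}\mathrm{Types}$ gives $\mathrm{Grpd}_n(\cC) \simeq n\text{-}\mathrm{Types}$, which is the homotopy hypothesis for $\cC$.

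The main obstacle is confirming that the groupoidification $G_k$ of the $k$-cell can be built as an iterated colimit of cells inside $\cC$ using only the structure guaranteed by (A1)--(A3), and that the notion of ``$n$-groupoid'' so defined agrees with the intended one in each specific model (Segal $n$-categories, $n$-fold CSS, $n$-quasicategories, etc.). In each case one must check that the intrinsic, cell-based definition coincides with the model-native one — e.g.\ that a Segal $n$-category is groupoidal in the intrinsic sense iff its underlying multisimplicial space is levelwise a Kan complex. Granted this coincidence in the $\Theta_n\mathrm{Sp}$ model (where Rezk proves exactly such a characterization), the equivalence $\Phi$ transports the agreement to every other model via its preservation of cells and colimits.
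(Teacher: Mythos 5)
Your core strategy is the paper's: the homotopy hypothesis is a property of the homotopy theory itself, Rezk established it for $\Theta_n\mathrm{Sp}$, and by Theorem \ref{thm:All_Models_Sat_Axioms} together with the Unicity Theorem every other model presents an equivalent homotopy theory, so the property transports. The paper treats the corollary as immediate on exactly these grounds, with no intrinsic reformulation of ``$n$-groupoid''; your cell-based groupoidification $G_k$ is an attempt to make the invariance explicit, and your remark that the residual $(\ZZ/2)^n$ ambiguity only exchanges sources and targets (hence preserves invertibility) is correct and worth having. Two points, however, need attention. First, the construction of $G_k$ is subtler than ``iterated pushouts along the standard invertibility cell-diagrams'': a finite pushout adjoining an inverse and two witnessing $(k+1)$-cells produces the walking \emph{incoherently} invertible $k$-morphism, whereas the walking coherently invertible $k$-morphism is, by completeness, equivalent to $C_{k-1}$; you must say which object you intend and check that the extension property along $C_k \to G_k$ detects the intended class of equivalences --- plausible, but not a formal consequence of (A1)--(A3) as written.

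The more serious issue is your final sentence, where you claim that $\Phi$ ``transports the agreement to every other model.'' The agreement in question is between your intrinsic notion of groupoid and each model's \emph{native} notion (e.g.\ that a Segal $n$-category is groupoidal iff its underlying multisimplicial space is levelwise Kan). The native notion is not part of the data that the equivalence of homotopy theories sees: $\Phi$ transports the intrinsic full subcategory $\mathrm{Grpd}_n(\Theta_n\mathrm{Sp})$ to $\mathrm{Grpd}_n(\cC)$, but whether the latter coincides with $\cC$'s hand-defined class of groupoids is a model-by-model verification that cannot be deduced from Rezk's characterization in the $\Theta_n$-space model. If the corollary is read invariantly --- the homotopy theory of $(\infty,n)$-categories contains a full subcategory of groupoid objects equivalent to $n$-types --- then this step is unnecessary and your argument collapses to the paper's short one; if it is read as a statement about each model's native groupoids, the step is genuinely needed and your proposal leaves it open.
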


\begin{theorem}[Simpson \cite{MR2883823}]
	The Segal $n$-categories satisfy the stabilization hypothesis.
\end{theorem}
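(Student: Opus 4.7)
The plan is to reduce the stabilization hypothesis for Segal $n$-categories to a statement about grouplike $E_k$-algebras in $\Seg_{n+1}$, and then invoke a categorified Freudenthal suspension theorem in the stable range. Let the forgetful functor of the hypothesis be $F: (\Seg_{n+k+1}^{\bullet})_{\geq k} \to (\Seg_{n+k}^{\bullet})_{\geq k-1}$, where the subscripts encode the connectivity conditions as in the excerpt. The goal is to show $F$ is an equivalence of relative categories for $k \geq n+2$.

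First I would exhibit both sides as grouplike $E_k$- and $E_{k-1}$-algebras. A pointed $k$-connected Segal $(n+k+1)$-category $C$ is, up to equivalence, concentrated in dimensions $k$ through $n+k+1$; taking the Segal $(n+1)$-category $\Omega^k C$ of these top morphisms (based at the essentially unique $k$-chain of identities) and iterating Eckmann-Hilton (Exercise \ref{ex:Eckman-Hilton}) through the $k$ trivial layers endows $\Omega^k C$ with a grouplike $E_k$-algebra structure. A quasi-inverse is constructed via an iterated $\Gamma$-object/bar construction, of the sort already used in the excerpt to encode symmetric monoidal structure on Segal $n$-categories. This produces an equivalence of relative categories
\[
  (\Seg_{n+k+1}^{\bullet})_{\geq k} \;\simeq\; E_k\text{-}\mathrm{Alg}^{\mathrm{gp}}(\Seg_{n+1}),
\]
under which $F$ becomes the restriction $E_k\text{-}\mathrm{Alg}^{\mathrm{gp}}(\Seg_{n+1}) \to E_{k-1}\text{-}\mathrm{Alg}^{\mathrm{gp}}(\Seg_{n+1})$.

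The second step is to prove this restriction is an equivalence for $k \geq n+2$. The $\infty$-groupoid sub-case, namely restricting attention to those $E_k$-algebras whose underlying Segal $(n+1)$-category is invertible (so lives in $\Seg_0 = \sSet$ after shifting), is handled by the homotopy hypothesis together with Freudenthal: grouplike $E_k$-algebras in $(n+1)$-truncated spaces correspond to $(n+k+1)$-truncated $(k-1)$-connected pointed spaces, and the forgetful map $E_k \to E_{k-1}$ corresponds under delooping to the $\Omega$ adjunction between such connected-truncated spaces, which the Freudenthal suspension theorem shows is an equivalence in the stable range $k \geq n+2$. For general Segal $(n+1)$-categories, the result is proven by Simpson \cite{MR2883823} via a model-categorical analysis: the model structure on $n$-fold simplicial spaces with discreteness conditions is used to construct the inverse functor explicitly, with the stable range visible through connectivity estimates on the generating cofibrations of the induced $E_k$-algebra model structure.

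The main obstacle is this last step, namely transporting the stable-range equivalence from the $\infty$-groupoid sub-case to arbitrary grouplike $E_k$-algebras in Segal $(n+1)$-categories, since the mapping Segal $n$-categories between distinct objects are not $\infty$-groupoids and so Freudenthal does not apply directly to them. Simpson's resolution uses the iterated Segal construction to package the delooping, combined with a careful analysis of the connectivity of the bar resolutions, to reduce back to the homotopy-hypothesis-controlled groupoid situation. Combining this with the first step's identification of the two sides of the stabilization hypothesis with grouplike $E_k$- and $E_{k-1}$-algebras yields the asserted equivalence of homotopy theories, i.e., the stabilization hypothesis for Segal $n$-categories.
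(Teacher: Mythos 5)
The first thing to say is that the paper contains no proof of this statement: it is quoted from Simpson's book \cite{MR2883823}, so the only ``argument'' in the paper is that citation, and your proposal must stand on its own. It does not, because the reduction you build around the citation contains a genuine error. You assert an equivalence between pointed $k$-connected Segal $(n+k+1)$-categories and \emph{grouplike} $E_k$-algebras in $\Seg_{n+1}$. No grouplike condition belongs here. Delooping a monoidal (higher) category requires no invertibility whatsoever --- $B(\cC,\otimes)$ has a single object regardless of whether objects of $\cC$ are $\otimes$-invertible --- and conversely, taking iterated loops of a pointed $k$-connected $(n+k+1)$-category at its essentially unique $k$-fold identity produces an $E_k$-monoidal pointed connected $(n+1)$-category (equivalently, by the periodic table, an $E_{k+1}$-monoidal $n$-category; note also your off-by-one: the nontrivial morphisms live in dimensions $k+1$ through $n+k+1$, and the correspondence is not with all of $\Seg_{n+1}$) whose objects and morphisms need not be invertible in any sense. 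Imposing grouplikeness restricts precisely to the $\infty$-groupoid situation, i.e.\ to $(k-1)$-connected truncated spaces, which is the special case the paper already notes follows from the homotopy hypothesis together with the classical Freudenthal suspension theorem. The entire content of the stabilization hypothesis, as the paper emphasizes, is the extension to higher categories in which not all morphisms are invertible --- exactly the case your Freudenthal argument cannot touch, since Freudenthal is a statement about spaces.

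Once the word ``grouplike'' is deleted and the indices corrected, your ``first step'' is essentially a restatement of the stabilization hypothesis in monoidal language, so all of the content sits in your ``second step''; and there your only argument is the citation to Simpson, dressed with a description of his method (``connectivity estimates on the generating cofibrations of the induced $E_k$-algebra model structure'') that you have not verified and that does not obviously correspond to what is actually done in \cite{MR2883823}. So the proposal does not supply a proof beyond the citation the paper itself makes, and the reduction it adds is where it goes wrong: the passage from the grouplike (space-level, Freudenthal-controlled) case to the general non-invertible case is not a technical detail to be delegated --- it \emph{is} the theorem.
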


\begin{cor*}
All of the equivalent models satisfy the stabilization hypothesis.
\end{cor*}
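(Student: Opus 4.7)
The plan is to deduce the corollary from Simpson's theorem for Segal $n$-categories together with the unicity theorem. The key observation is that the stabilization hypothesis is purely a statement about the homotopy theory of $(\infty,n)$-categories for varying $n$: it asserts that a certain forgetful functor between (relative/quasi-)categories of pointed, connected $(\infty,n+k+1)$- and $(\infty,n+k)$-categories is an equivalence of homotopy theories when $k \ge n+2$. Since the unicity theorem produces equivalences of quasi-categories between the various models, any such intrinsic assertion transports along these equivalences.

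First I would verify that the notions involved in the stabilization hypothesis are preserved by the unicity equivalences. Specifically, being pointed is encoded by a map from $C_0$; being $(k)$-connected means that the $k$-truncation is terminal, a property expressible via the cells $C_0,\dots,C_k$; and the forgetful functor itself corresponds to a hom-object construction at the basepoint, which can be described as an over-category construction $\mathrm{Hom}_X(p,p)$ that is preserved by any equivalence respecting the subcategory $\Upsilon_{n+k+1}$. Because the unicity functors come with compatibility with the inclusion of $\Upsilon_n$ (and in particular the cells), each of these structures, and the forgetful functor between them, commute up to natural equivalence with the chosen comparison functors between any two models.

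Next I would invoke Theorem~\ref{thm:All_Models_Sat_Axioms}, which asserts that all of the models listed in Figure~\ref{fig:ComparisonsN} (apart from complicial sets) satisfy axioms (A1)--(A4), together with the consequence that any two such presentable quasi-categories are connected by canonical equivalences (and that any two model categories modeling them are connected by a zig-zag of Quillen equivalences). In particular Simpson's model $\Seg_n$ is equivalent to each of the other models, and the forgetful functor between pointed connected higher categories corresponds under these equivalences to the analogous forgetful functor in any other model.

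The hard part is not the deduction itself, which is essentially formal once the translation is set up, but rather ensuring that the ``pointed $(k-1)$-connected'' structure and the forgetful functor are transported correctly by the comparison equivalences. This is where the extra strength of the unicity theorem, namely that the equivalences are determined up to the discrete group $(\mathbb{Z}/2)^n$ and preserve the cells, is essential: it guarantees that the structural data underlying the stabilization statement is intrinsic to the quasi-category of $(\infty,n)$-categories and hence pulls back from $\Seg_n$, where Simpson has established the result, to every other model satisfying the axioms.
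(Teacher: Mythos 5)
Your proposal is correct and follows essentially the same route as the paper, which presents this corollary as an immediate consequence of Simpson's theorem for Segal $n$-categories combined with the unicity theorem: since the stabilization hypothesis is a statement intrinsic to the homotopy theory of $(\infty,n)$-categories, it transports along the equivalences between the models satisfying the axioms. Your additional care in checking that pointedness, connectivity, and the forgetful functor are preserved by the comparison equivalences only makes explicit what the paper leaves implicit.
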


\begin{theorem*}[Lurie \cite{MR2555928}]
Barwick's $n$-fold complete Segal spaces, $\mathrm{CSS}_n$, satisfy the cobordism hypothesis.
\end{theorem*}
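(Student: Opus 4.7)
The plan is to proceed by a double induction, mirroring the inductive description $\mathrm{CSS}_n = \mathrm{CSS}(\mathrm{CSS}_{n-1})$. First I would realise $\Bord_n^{fr}$ explicitly as a simplicial object in $\mathrm{CSS}_{n-1}$ whose $k$-th level records $k$-tuples of composable $(n-1)$-dimensional framed bordisms together with all higher framed bordisms between them; verifying the Segal and completeness conditions here is essentially a parametrised smoothing and collaring calculation which shows that the relevant moduli spaces of bordisms really do decompose as iterated homotopy fibre products. The base case $n = 1$ is the proposition already proved in the lectures: Morse theory yields the generators-and-relations presentation of $\Bord_1^{fr}$, from which the universal property (and hence the cobordism hypothesis in dimension one) can be read off directly, with the passage to other tangential structures implemented by the $O(1)$-homotopy-quotient argument.

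\textbf{Inductive step.} For the inductive step I would introduce the Morse-theoretic index filtration
\[
\Bord_n^{\leq 0} \hookrightarrow \Bord_n^{\leq 1} \hookrightarrow \cdots \hookrightarrow \Bord_n^{\leq n} = \Bord_n^{fr},
\]
in which $\Bord_n^{\leq i}$ consists of bordisms admitting a framed Morse function with critical points of index at most $i$. The initial stage $\Bord_n^{\leq 0}$ is essentially a free symmetric monoidal $\infty$-groupoid on one generator, whose underlying space can be computed via the Galatius--Madsen--Tillmann--Weiss theorem as an appropriate infinite loop space of a Madsen--Tillmann Thom spectrum. Each successive inclusion $\Bord_n^{\leq i-1} \hookrightarrow \Bord_n^{\leq i}$ is then identified as the universal operation of freely adjoining, to the $(i-1)$-morphisms, both left and right duals of the prescribed kind. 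The internal homs in $\cC / C_k$ guaranteed by axiom (A2) of the unicity theorem are exactly what make such universal adjunctions well-defined in the $(\infty,n)$-setting: they let each adjunction axiom be phrased as a corepresentable lifting problem against the cells, and hence permit the filtration to be built step-by-step by attaching representing objects.

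\textbf{Main obstacle.} The hardest part will be matching the framing data carried by an elementary framed handle against the full tower of duality data associated with an $n$-fully-dualizable object. If one strips framings away, only ambidextrous adjunctions appear; the framings are precisely what produce the Serre automorphism at $n = 2$, the Radford isomorphism at $n = 3$, and the higher coherence data in general. To handle this uniformly in $n$ I would use Lurie's \emph{unfolding} construction, which rewrites $\Bord_n^{fr}$, fibred over a suitable slice, as a cobordism-of-cobordisms category in dimension $n-1$. This reduces the coherent matching problem to the inductive hypothesis combined with a GMTW-style gluing argument identifying the mapping spaces between objects. The final step is to assemble the filtration into an equivalence between the free symmetric monoidal $(\infty,n)$-category on a fully-dualizable object and $\Bord_n^{fr}$; once this equivalence is in hand, the corepresentability statement
\[
\mathrm{Fun}^\otimes(\Bord_n^{fr}, \cC) \simeq \sK(\cC^{fd})
\]
is a formal consequence, and the tangential-structure variants are obtained by passing to $G$-homotopy fixed points as in the $n = 1$ case.
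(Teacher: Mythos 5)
The paper does not prove this statement at all: it is an attribution to Lurie \cite{MR2555928}, and within these notes it functions purely as an input which, combined with the Unicity Theorem and Theorem \ref{thm:All_Models_Sat_Axioms}, yields the corollary that every equivalent model of $(\infty,n)$-categories satisfies the cobordism hypothesis. So there is no in-paper argument to measure your proposal against; what you have written is, in outline, a paraphrase of Lurie's own published strategy (realize $\Bord_n^{fr}$ as an $n$-fold complete Segal space, induct on $n$ via unfolding, filter by Morse index, interpret each handle attachment as freely adjoining adjoints), and that strategy is itself only an elaborate sketch, as the paper itself emphasizes.

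Taken as a proof sketch, however, yours has genuine gaps. First, the assertion that each inclusion $\Bord_n^{\leq i-1}\hookrightarrow \Bord_n^{\leq i}$ ``is the universal operation of freely adjoining adjoints'' is precisely the hard content of Lurie's argument, and it rests on a geometric input you never invoke: Cerf theory and Igusa's connectivity theorem for spaces of framed (generalized Morse) functions, which is what makes the moduli of handle decompositions sufficiently connected for such a universal property to hold. Without that, the filtration step is a restatement of the theorem, not a proof of it. Second, your appeal to axiom (A2) of the unicity theorem is misplaced and close to circular: the unicity axioms play no role in establishing the cobordism hypothesis in the $\mathrm{CSS}_n$ model --- in this paper they are used only afterwards, to transport the result to other models --- and the existence of internal homs over cells does not by itself produce or corepresent free adjunction of adjoints. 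Third, the base case you cite, the generators-and-relations presentation of $\Bord_1^{or}$, proves a statement about symmetric monoidal $1$-categories, whereas the induction requires the $(\infty,1)$-categorical cobordism hypothesis with values in arbitrary symmetric monoidal $(\infty,1)$-categories, which does not follow from that presentation. Relatedly, $\Bord_n^{\leq 0}$ is not the free symmetric monoidal $\infty$-groupoid on one generator, and the Galatius--Madsen--Tillmann--Weiss theorem computes the classifying spectrum of the whole bordism category (it enters Lurie's account as motivation and for the invertible case), so it cannot be used to identify that initial stage; the actual engine of the induction is the framed-function connectivity result together with a careful construction of $\Bord_n^{fr}$ as an $n$-fold complete Segal space, neither of which your sketch supplies.
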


\begin{cor*}
All of the equivalent models satisfy the cobordism hypothesis.
\end{cor*}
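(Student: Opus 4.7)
The plan is to transport Lurie's theorem along the equivalences produced by the unicity theorem. Let $\mathrm{CSS}_n$ denote Barwick's model, in which Lurie proves the cobordism hypothesis, and let $\cC$ be any other model satisfying (A1)--(A4). The first step is to invoke axiom (A4) (or the preceding bullet point regarding Quillen equivalences and the zig-zag of comparisons in Figure \ref{fig:ComparisonsN}) to produce an essentially unique equivalence of presentable quasicategories $\Phi: \mathrm{CSS}_n \xrightarrow{\simeq} \cC$ which preserves the inclusion of $\Upsilon_n$, hence in particular sends cells to cells. The main content of the unicity theorem is that such a $\Phi$ exists and is unique up to the discrete automorphism group $(\ZZ/2)^n$, which (by the discussion of $\tau_{\leq 0}$) acts trivially on the framed bordism category since the framings break all the reflectional symmetries.

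Next I would argue that $\Phi$ respects all of the structure entering the cobordism hypothesis. Since $\Phi$ is an equivalence of presentable quasicategories, it automatically preserves finite products and, by axiom (A2) applied on both sides, the cartesian internal hom; thus $\Phi$ is a symmetric monoidal equivalence with respect to the cartesian monoidal structures. Consequently $\Phi$ induces equivalences on the quasicategories of commutative-monoid (i.e.\ symmetric monoidal) objects and on internal mapping objects of symmetric monoidal functors. Because the notion of \emph{dual} for an object and of \emph{adjoint} for a $k$-morphism is defined purely in terms of the symmetric monoidal structure together with the 2-categorical data encoded in $h_2^{(k)}$, and because $\Phi$ preserves this 2-categorical data (it preserves the cells and all mapping spaces), $\Phi$ sends the maximal fully-dualizable sub-$(\infty,n)$-category $\cD^{fd}$ of any symmetric monoidal $\cD$ to the corresponding subcategory of $\Phi(\cD)$, and likewise for the core $\sK(-)$.

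It then remains to identify the framed bordism $(\infty,n)$-category in $\cC$ as $\Phi(\Bord_n^{fr})$, where the right-hand side is Barwick's geometric construction. Under this definition, Lurie's equivalence
\[
\mathrm{Fun}^\otimes(\Bord_n^{fr}, \cD) \simeq \sK(\cD^{fd}) \qquad (\cD \in \mathrm{CSS}_n)
\]
transports along $\Phi$, using the three preservation properties noted above, to the analogous equivalence in $\cC$. I expect the main obstacle to be verifying carefully that $\Phi$ preserves the \emph{symmetric monoidal} structure on functor categories (rather than just the underlying quasicategory of functors), and that the $O(n)$-action on $\Bord_n^{fr}$ induced by change of framing is carried by $\Phi$ to the corresponding action in $\cC$; both points, however, reduce to the fact that any auto-equivalence of $\cC$ preserving $\Upsilon_n$ up to isomorphism is essentially unique, which is exactly the consequence of (A1)--(A4) asserting $\mathrm{Aut}^h(\cC) \simeq (\ZZ/2)^n$.

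Independently, one may give a more direct argument for any model not explicitly compared to $\mathrm{CSS}_n$ in the literature by combining the Quillen-equivalence criterion listed after the axioms (``if $L \dashv R$ is a Quillen adjunction between models of the axioms and $L$ preserves cells, then it is a Quillen equivalence'') with a geometric comparison of framed bordism constructions in the two models. This avoids having to trace through the zig-zag of Figure \ref{fig:ComparisonsN} explicitly and confirms that the cobordism hypothesis is a property of the abstract homotopy theory of symmetric monoidal $(\infty,n)$-categories rather than of any particular presentation.
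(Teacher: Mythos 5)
Your transport argument is exactly the paper's (implicit) proof: the paper treats the corollary as an immediate consequence of the unicity theorem, since the cobordism hypothesis is a statement about structure (symmetric monoidal functor categories, duals and adjoints, cores) that is invariant under the equivalences the axioms provide, and Lurie established it in $\mathrm{CSS}_n$. Your write-up simply makes this transport explicit (your aside that $(\ZZ/2)^n$ acts trivially on $\Bord_n^{fr}$ is not needed and not obviously right, but nothing rests on it), so the proposal is correct and follows the same route.
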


While the unicity theorems are initially about the homotopy theory or $(\infty,1)$-category of $(\infty,n)$-categories, part of the axiomatization includes the existence of internal homs. Thus different theories of $(\infty,n)$-categories, which are equivalent as $(\infty,1)$-categories, will also give rise to equivalent `categories enriched in $(\infty,n)$-categories'. (The quotation marks indicate that this will probably be a weak enrichment which we will not make precise). As such categories will also be a model of $(\infty,n+1)$-categories, we may also deduce the uniqueness of the $(\infty,n+1)$-category of $(\infty,n)$-categories.

\bibliographystyle{amsalpha}
\bibliography{Bibliography_CSP}

\providecommand{\bysame}{\leavevmode\hbox to3em{\hrulefill}\thinspace}
\providecommand{\MR}{\relax\ifhmode\unskip\space\fi MR }
\providecommand{\MRhref}[2]{%
  \href{http://www.ams.org/mathscinet-getitem?mr=#1}{#2}
}
\providecommand{\href}[2]{#2}
\begin{thebibliography}{Dug01b}

\bibitem[Ara12]{Ara1206}
Dimitri Ara, \emph{Higher quasi-categories vs higher rezk spaces}.

\bibitem[Ati88]{MR1001453}
Michael Atiyah, \emph{Topological quantum field theories}, Inst. Hautes
  {\'E}tudes Sci. Publ. Math. (1988), no.~68, 175--186 (1989). \MR{1001453
  (90e:57059)}

\bibitem[Ati90]{MR1046621}
\bysame, \emph{On framings of {$3$}-manifolds}, Topology \textbf{29} (1990),
  no.~1, 1--7. \MR{1046621 (91g:57025)}

\bibitem[BD95]{MR1355899}
John~C. Baez and James Dolan, \emph{Higher-dimensional algebra and topological
  quantum field theory}, J. Math. Phys. \textbf{36} (1995), no.~11, 6073--6105.
  \MR{1355899 (97f:18003)}

\bibitem[Ber07a]{MR2341955}
Julia~E. Bergner, \emph{A characterization of fibrant {S}egal categories},
  Proc. Amer. Math. Soc. \textbf{135} (2007), no.~12, 4031--4037 (electronic).
  \MR{2341955 (2008h:55031)}

\bibitem[Ber07b]{MR2276611}
\bysame, \emph{A model category structure on the category of simplicial
  categories}, Trans. Amer. Math. Soc. \textbf{359} (2007), no.~5, 2043--2058.
  \MR{2276611 (2007i:18014)}

\bibitem[Ber09]{MR2439415}
\bysame, \emph{Complete {S}egal spaces arising from simplicial categories},
  Trans. Amer. Math. Soc. \textbf{361} (2009), no.~1, 525--546. \MR{2439415
  (2009g:55033)}

\bibitem[Ber10]{MR2664620}
\bysame, \emph{A survey of {$(\infty,1)$}-categories}, Towards higher
  categories, IMA Vol. Math. Appl., vol. 152, Springer, New York, 2010,
  pp.~69--83. \MR{2664620 (2011e:18001)}

\bibitem[BK11]{BarKan1102}
C.~Barwick and D.~M. Kan, \emph{$n$-relative categories}, Preprint.
  \texttt{arxiv.org/abs/1102.0186}, 2011.

\bibitem[BK12a]{MR2877402}
C.~Barwick and D.~M. Kan, \emph{A characterization of simplicial localization
  functors and a discussion of {DK} equivalences}, Indag. Math. (N.S.)
  \textbf{23} (2012), no.~1-2, 69--79. \MR{2877402}

\bibitem[BK12b]{MR2877401}
\bysame, \emph{Relative categories: another model for the homotopy theory of
  homotopy theories}, Indag. Math. (N.S.) \textbf{23} (2012), no.~1-2, 42--68.
  \MR{2877401 (2012m:55019)}

\bibitem[BR12]{1204.2013}
Julia~E. Bergner and Charles Rezk, \emph{Comparison of models for $(\infty,
  n)$-categories, {I}}, \href{http://arxiv.org/abs/1204.2013}{arxiv:1204.2013}.

\bibitem[BSP11]{BarSch1112}
Clark Barwick and Christopher Schommer-Pries, \emph{On the unicity of the
  homotopy theory of higher categories}.

\bibitem[BV73]{MR0420609}
J.~M. Boardman and R.~M. Vogt, \emph{Homotopy invariant algebraic structures on
  topological spaces}, Lecture Notes in Mathematics, Vol. 347, Springer-Verlag,
  Berlin, 1973. \MR{0420609 (54 \#8623a)}

\bibitem[CP86]{MR838654}
Jean-Marc Cordier and Timothy Porter, \emph{Vogt's theorem on categories of
  homotopy coherent diagrams}, Math. Proc. Cambridge Philos. Soc. \textbf{100}
  (1986), no.~1, 65--90. \MR{838654 (87i:55027)}

\bibitem[DK80a]{MR584566}
W.~G. Dwyer and D.~M. Kan, \emph{Function complexes in homotopical algebra},
  Topology \textbf{19} (1980), no.~4, 427--440. \MR{584566 (81m:55018)}

\bibitem[DK80b]{MR81h:55018}
\bysame, \emph{Simplicial localizations of categories}, J. Pure Appl. Algebra
  \textbf{17} (1980), no.~3, 267--284. \MR{81h:55018}

\bibitem[DKS89]{MR984042}
W.~G. Dwyer, D.~M. Kan, and J.~H. Smith, \emph{Homotopy commutative diagrams
  and their realizations}, J. Pure Appl. Algebra \textbf{57} (1989), no.~1,
  5--24. \MR{984042 (90d:18007)}

\bibitem[DS11a]{DSb}
Daniel Dugger and David~I. Spivak, \emph{Mapping spaces in quasi-categories},
  Algebr. Geom. Topol. \textbf{11} (2011), no.~1, 263--325. \MR{2764043}

\bibitem[DS11b]{DSa}
\bysame, \emph{Rigidification of quasi-categories}, Algebr. Geom. Topol.
  \textbf{11} (2011), no.~1, 225--261. \MR{2764042}

\bibitem[DSPSa]{DSPS_TC3}
C.~Douglas, C.~Schommer-Pries, and N.~Snyder, \emph{The 3-category of tensor
  categories}.

\bibitem[DSPSb]{DSPS_DTC1}
\bysame, \emph{Dualizable tensor categories {I}}.

\bibitem[DSPSc]{DSPS_DTC2}
\bysame, \emph{Dualizable tensor categories {II}: {H}omotopy {$O(3)$}-actions}.

\bibitem[DT58]{MR0097062}
Albrecht Dold and Ren{{\'e}} Thom, \emph{Quasifaserungen und unendliche
  symmetrische {P}rodukte}, Ann. of Math. (2) \textbf{67} (1958), 239--281.
  \MR{0097062 (20 \#3542)}

\bibitem[Dug01a]{Dugger_CMHP}
Daniel Dugger, \emph{Combinatorial model categories have presentations.}, Adv.
  Math. \textbf{164} (2001), no.~1, 177--201.

\bibitem[Dug01b]{MR1870515}
\bysame, \emph{Universal homotopy theories}, Adv. Math. \textbf{164} (2001),
  no.~1, 144--176. \MR{1870515 (2002k:18021)}

\bibitem[ENO05]{MR2183279}
Pavel Etingof, Dmitri Nikshych, and Viktor Ostrik, \emph{On fusion categories},
  Ann. of Math. (2) \textbf{162} (2005), no.~2, 581--642. \MR{2183279
  (2006m:16051)}

\bibitem[Goo]{Goodwillie_MO}
Tom Goodwillie\phantom{x}(mathoverflow.net/users/6666), \emph{What determines a
  model structure?}, \url{http://mathoverflow.net/questions/29653} (version:
  2010-06-27).

\bibitem[Gro83]{Grothendieck_PS}
Alexander Grothendieck, \emph{Pursuing stacks}, 1983.

\bibitem[Hat02]{MR1867354}
Allen Hatcher, \emph{Algebraic topology}, Cambridge University Press,
  Cambridge, 2002. \MR{1867354 (2002k:55001)}

\bibitem[HS]{math.AG/9807049}
A.~Hirschowitz and C.~Simpson, \emph{{Descente pour les {$n$}--champs (Descent
  for {$n$}--stacks)}}, Preprint available from \textit{arXiv:math/9807049v3}.

\bibitem[HS05]{MR2192936}
M.~J. Hopkins and I.~M. Singer, \emph{Quadratic functions in geometry,
  topology, and {M}-theory}, J. Differential Geom. \textbf{70} (2005), no.~3,
  329--452. \MR{2192936 (2007b:53052)}

\bibitem[Joy08a]{Joyal_notes}
Andr{{\'e}} Joyal, \emph{Notes on quasi-categories}, Preprint.
  \texttt{www.math.uchicago.edu/$\sim$may/IMA/JOYAL/}, December 2008.

\bibitem[Joy08b]{Joyal_CRM}
Andr{\'e} Joyal, \emph{The theory of quasi-categories and its applications},
  lectures at Simplicial Methods in Higher Categories,, Feb 2008.

\bibitem[JS93]{MR1250465}
Andr{{\'e}} Joyal and Ross Street, \emph{Braided tensor categories}, Adv. Math.
  \textbf{102} (1993), no.~1, 20--78. \MR{1250465 (94m:18008)}

\bibitem[JT07]{MR2342834}
Andr{{\'e}} Joyal and Myles Tierney, \emph{Quasi-categories vs {S}egal spaces},
  Categories in algebra, geometry and mathematical physics, Contemp. Math.,
  vol. 431, Amer. Math. Soc., Providence, RI, 2007, pp.~277--326. \MR{2342834
  (2008k:55037)}

\bibitem[Koc04]{MR2037238}
Joachim Kock, \emph{Frobenius algebras and 2{D} topological quantum field
  theories}, London Mathematical Society Student Texts, vol.~59, Cambridge
  University Press, Cambridge, 2004. \MR{2037238 (2005a:57028)}

\bibitem[Lei02]{MR1883478}
Tom Leinster, \emph{A survey of definitions of {$n$}-category}, Theory Appl.
  Categ. \textbf{10} (2002), 1--70 (electronic). \MR{1883478 (2002i:18008)}

\bibitem[LP08]{MR2366560}
Stephen Lack and Simona Paoli, \emph{2-nerves for bicategories}, $K$-Theory
  \textbf{38} (2008), no.~2, 153--175. \MR{2366560 (2009b:18009)}

\bibitem[Lur09a]{MR2522659}
Jacob Lurie, \emph{Higher topos theory}, Annals of Mathematics Studies, vol.
  170, Princeton University Press, Princeton, NJ, 2009. \MR{2522659
  (2010j:18001)}

\bibitem[Lur09b]{Lur0905}
Jacob Lurie, \emph{(infinity,2)-categories and the goodwillie calculus i},
  \href{arXiv:0905.0462}{http://arxiv.org/abs/0905.0462}.

\bibitem[Lur09c]{MR2555928}
Jacob Lurie, \emph{On the classification of topological field theories},
  Current developments in mathematics, 2008, Int. Press, Somerville, MA, 2009,
  pp.~129--280. \MR{2555928 (2010k:57064)}

\bibitem[MT78]{MR508885}
J.~P. May and R.~Thomason, \emph{The uniqueness of infinite loop space
  machines}, Topology \textbf{17} (1978), no.~3, 205--224. \MR{508885
  (80g:55015)}

\bibitem[Pel]{Pellissier_WEC}
Regis Pellissier, \emph{Weak enriched categories - categories enrichies
  faibles}, \href{http://arxiv.org/abs/math/0308246}{arXiv:math/0308246}.

\bibitem[Rez01]{MR1804411}
Charles Rezk, \emph{A model for the homotopy theory of homotopy theory}, Trans.
  Amer. Math. Soc. \textbf{353} (2001), no.~3, 973--1007 (electronic).
  \MR{1804411 (2002a:55020)}

\bibitem[Rez10a]{MR2578310}
\bysame, \emph{A {C}artesian presentation of weak {$n$}-categories}, Geom.
  Topol. \textbf{14} (2010), no.~1, 521--571. \MR{2578310 (2010m:18005)}

\bibitem[Rez10b]{MR2740648}
\bysame, \emph{Correction to ``{A} {C}artesian presentation of weak
  {$n$}-categories'' [mr2578310]}, Geom. Topol. \textbf{14} (2010), no.~4,
  2301--2304. \MR{2740648 (2011j:18008)}

\bibitem[Seg04]{MR2079383}
Graeme Segal, \emph{The definition of conformal field theory}, Topology,
  geometry and quantum field theory, London Math. Soc. Lecture Note Ser., vol.
  308, Cambridge Univ. Press, Cambridge, 2004, pp.~421--577. \MR{2079383
  (2005h:81334)}

\bibitem[Shu08]{0810.1279}
Michael~A. Shulman, \emph{Set theory for category theory}.

\bibitem[Shu09]{Shu0911_NCC}
Mike Shulman, \emph{Equipments}, November 2009,
  \url{http://golem.ph.utexas.edu/category/2009/11/equipments.html}.

\bibitem[Sim12]{MR2883823}
Carlos Simpson, \emph{Homotopy theory of higher categories}, New Mathematical
  Monographs, vol.~19, Cambridge University Press, Cambridge, 2012. \MR{2883823
  (2012m:18019)}

\bibitem[S{\'i}n75]{Sinh_thesis}
Ho{\`a}ng~Xu{\^a}n S{\'i}nh, \emph{Gr-cat\'egories}, Ph.D. thesis, Universit\'e
  Paris VII, 22 Mai 1975, These de doctorat d'\'etat.

\bibitem[S{\'i}n82]{Sinh_1982}
\bysame, \emph{Cat{\'e}gories de picard restreintes}, Acta Mathematica
  Vietnamica \textbf{7} (1982), no.~1, 117--122.

\bibitem[SP09]{MR2713992}
Christopher~John Schommer-Pries, \emph{The classification of two-dimensional
  extended topological field theories}, ProQuest LLC, Ann Arbor, MI, 2009,
  Thesis (Ph.D.)--University of California, Berkeley. \MR{2713992}

\bibitem[SV92]{MR1317592}
R.~Schw{{\"a}}nzl and R.~Vogt, \emph{Homotopy homomorphisms and the hammock
  localization}, Bol. Soc. Mat. Mexicana (2) \textbf{37} (1992), no.~1-2,
  431--448, Papers in honor of Jos{{\'e}} Adem (Spanish). \MR{1317592
  (95j:55036)}

\bibitem[Ver07]{MR2342841}
Dominic Verity, \emph{Weak complicial sets. {II}. {N}erves of complicial
  {G}ray-categories}, Categories in algebra, geometry and mathematical physics,
  Contemp. Math., vol. 431, Amer. Math. Soc., Providence, RI, 2007,
  pp.~441--467. \MR{2342841 (2008g:18011)}

\bibitem[Ver08]{Verity20081081}
D.R.B. Verity, \emph{Weak complicial sets i. basic homotopy theory}, Advances
  in Mathematics \textbf{219} (2008), no.~4, 1081 -- 1149.

\bibitem[Whi50]{MR0035997}
J.~H.~C. Whitehead, \emph{A certain exact sequence}, Ann. of Math. (2)
  \textbf{52} (1950), 51--110. \MR{0035997 (12,43c)}

\end{thebibliography}

\end{document}